\theoremstyle{definition}
\newtheorem{theorem}{\textsc{\textbf{Theorem}}}[section]
\newtheorem{corollary}[theorem]{\textsc{\textbf{Corollary}}}
\newtheorem{proposition}[theorem]{\textsc{\textbf{Proposition}}}
\newtheorem{lemma}[theorem]{\textsc{\textbf{Lemma}}}
\newtheorem*{theoremNN}{Theorem}
\newtheorem*{corollaryNN}{Corollary}
\theoremstyle{definition}
\newtheorem{definition}[theorem]{\textsc{\textbf{Definition}}}
\newtheorem{example}[theorem]{\textsc{\textbf{Example}}}
\newtheorem{notation}[theorem]{\textsc{\textbf{Notation}}}
\newtheorem{remark}[theorem]{\textsc{\textbf{Remark}}}
\newtheorem{conjecture}[theorem]{\textsc{\textbf{Conjecture}}}
\newcommand{\numberset}{\mathbb}
\newcommand{\N}{\numberset{N}}
\newcommand{\Z}{\numberset{Z}}
\newcommand{\Q}{\numberset{Q}}
\newcommand{\R}{\numberset{R}}
\newcommand{\F}{\numberset{F}}
\newcommand{\mP}{\mathcal{P}}
\newcommand{\mL}{\mathcal{L}}
\newcommand{\mB}{\mathcal{B}}
\newcommand{\mM}{\mathcal{M}}
\newcommand{\dH}{d_\textnormal{H}}
\newcommand{\sH}{\sigma_\textnormal{H}}
\newcommand{\mH}{\mathcal{H}}
\newcommand{\at}{\textnormal{At}}
\newcommand{\Pro}{\numberset{P}}
\newcommand{\pe}{\nu}
\newcommand{\mee}{\vartriangleright}
\newcommand{\EE}{{E}}
\newcommand{\OO}{{O}}
\newcommand{\oo}{\"o}
\newcommand\qbin[3]{\left[\begin{matrix} #1 \\ #2 \end{matrix} \right]_{#3}}
\newcommand{\EL}{\textnormal{ESF}}
\newcommand{\HH}{\textnormal{H}}
\newcommand{\rk}{\textnormal{rk}}
\newcommand{\crit}{\textnormal{crit}}
\newcommand{\inn}{\textnormal{in}}
\newcommand{\lead}{\textnormal{le}}
\newcommand{\wH}{\omega_\textnormal{H}}
\newcommand{\defi}{\triangleq}
\newcommand{\meet}{\wedge}
\newcommand{\join}{\vee}
\numberwithin{equation}{section}
\setlist[enumerate]{topsep=3pt}
\setlist[itemize]{topsep=3pt}
\title[Whitney Numbers of Combin. Geometries and Higher-Weight Dowling Lattices]{Whitney Numbers of Combinatorial Geometries \\ and Higher-Weight Dowling Lattices}
\author[Alberto Ravagnani]{Alberto Ravagnani$^*$}
\address{School of Mathematics and Statistics, University College Dublin, Belfield, Ireland}
\curraddr{}
\email{alberto.ravagnani@ucd.ie}
\thanks{$^*$The author was partially supported by the Swiss National Science Foundation through grant n. P2NEP2\_168527 and by the the Marie Curie Research Grants
Scheme, grant n. 740880.}
\subjclass[2010]{06C10, 05A15, 05A16, 68P30}
\keywords{Whitney number, combinatorial geometry, geometric lattice, subspace distribution, 
higher-weight Dowling lattice, error-correcting code}
\setlist[itemize]{leftmargin=\parindent}
\setlist[enumerate]{leftmargin=\parindent}
\begin{document}

\maketitle
\thispagestyle{empty}

\begin{abstract}
We study the Whitney numbers of the first kind of combinatorial geometries. 
The first part of the paper is devoted to general results relating the M\oo bius functions of nested atomistic lattices,
extending some classical theorems in combinatorics. We then specialize our results to restriction geometries, i.e., 
to sublattices $\mL(A)$ of the lattice of subspaces of an $\F_q$-linear space, say $X$, generated by a set of projective points $A \subseteq X$.
In this context, we introduce the notion of \textit{subspace distribution}, and show that 
partial knowledge of the latter is equivalent to partial knowledge of the Whitney numbers of $\mL(A)$.
This refines a classical result by Dowling.

The most interesting applications of our results are to be seen in the theory of \textit{higher-weight Dowling lattices} (HWDLs), to which we dovote the 
second and most substantive part of the paper.
These combinatorial geometries were introduced by Dowling in 1971 in connection with fundamental problems in coding theory, and further studied, among others,
 by Zaslavsky, Bonin, Kung, Brini, and Games. To date, still very little is known about these lattices. In particular, the techniques to 
 compute their Whitney numbers have not been discovered yet. In this paper, we bring forward the 
theory of HWDLs, computing their Whitney numbers for new infinite families of parameters.
Moreover, we show that the second Whitney numbers of HWDLs are polynomials in the underlying field size $q$, whose coefficients are 
expressions involving the Bernoulli numbers. This reveals a new link between combinatorics, coding theory, and number theory.
We also study the asymptotics of the Whitney numbers of HWDLs as the field size grows, giving upper bounds and exact estimates in some cases. 
In passing, we obtain new results on the density functions of error-correcting codes.
\end{abstract}


\bigskip

\section*{Introduction and Motivation}

The characteristic polynomial 
 is one of the most important combinatorial invariants of a finite graded lattice. 
Its coefficients are integers called \textit{Whitney numbers} (\textit{of the first kind}), and capture properties of the M\oo bius functions of the lattice elements having a fixed rank. 

The information encoded by characteristic polynomials of lattices is particularly relevant to problems in enumerative and extremal combinatorics. Important applications can be seen, for example, to the theory of hyperplane arrangements, to graph colouring questions, and to the theory of Critical Problems introduced by Crapo and Rota; see~\cite{crapo1970foundations,kung1996critical} among many others.

Giving explicit expressions for the characteristic polynomials of families of lattices, and therefore for their Whitney numbers, is a central problem in combinatorics. Several techniques have been developed since Rota's seminal paper on M\oo bius functions~\cite{rotaGC}. Some of these techniques, including Stanley's Modular Factorization Theorem and its generalizations, rely on the fact that certain characteristic polynomials split into linear factors in $\Z[\lambda]$, and that such a (unique) factorization can be given a precise combinatorial interpretation \cite{stanley1971modular,stanley1972supersolvable,
terao1981generalized,zaslavsky1982signed,
blass1997mobius,blass1998characteristic,sagan1999characteristic,hallam2015factoring}. Other methods reduce the computation of characteristic polynomials to point-counting problems over finite fields. These find applications in the theory of real hyperplane and subspace arrangements; see for example~\cite{blass1998characteristic,athanasiadis1996characteristic,bjorner1997subspace}. 
Unfortunately, not all characteristic polynomials of lattices can be computed by applying one of the above mentioned general methods.

%

In this paper, we focus on the Whitney numbers of certain 
\textit{combinatorial geometries} and, more generally, of
\textit{atomistic lattices}. 
  Given an $\F_q$-linear space $X$ and a set of vectors $A \subseteq X$, we consider the geometric lattice $\mL(A)$ whose elements are those subspaces of $X$ spanned by some elements of~$A$. Such a lattice is sometimes called a \textit{restriction geometry} of the lattice of subspaces of $X$.

We start by establishing some simple ``decomposition'' formul{\ae}
for the M\oo bius function of an arbitrary atomistic lattice, extending some classical combinatorial results by Whitney, Stanley, and Crapo. These are then applied to restriction  geometries. More in detail, inspired by the coding theory literature, we study the Whitney numbers of a lattice of the form~$\mL(A)$ in connection with the concept of \textit{subspace distribution}. The latter counts the number of subspaces of $X$ of given dimension $k$ that do not contain any vector from $A$. In this context, we show that \textit{partial} information on subspace distributions yields \textit{partial} information on characteristic polynomials, and vice versa.
This refines a theorem of Dowling \cite{dowling1971codes}, and allows one to explicitly compute the Whitney numbers of certain geometries whose characteristic polynomials do not split into linear factors.


Interesting applications of these results connecting Whitney numbers and subspace distributions can be found in the theory of \textit{higher-weight Dowling lattices} (HWDLs), to which we devote most of the paper. These geometries were introduced by Dowling in 1971, and have so far resisted any attempt to compute their Whitney numbers or characteristic polynomials, with only few exceptions~\cite{zaslavsky1987mobius,bonin1993modular,bonin1993automorphism,
kung1996critical, brini1982some, games1983packing}. 
Recall that
the higher-weight Dowling lattice $\mH(q,n,d)$ is the subgeometry of $\F_q^n$ generated by those vectors whose \textit{Hamming weight} is upper-bounded by $d$; see Definition \ref{def:HWDL}.
By generalizing one of these geometries (namely, the $q$-analogue of the \textit{partition lattice}), Dowling introduced in \cite{dowling1973class} the class of \textit{Dowling lattices} based on finite groups. Higher-weight Dowling lattices are closely
 connected to a fundamental unsolved problem intersecting coding theory, extremal combinatorics, and finite geometry, known as the \textit{MDS Conjecture}. The latter was proposed Segre in the 50's \cite{segre1955curve}. Being able to compute the Whitney numbers of $\mH(q,n,d)$ for all parameter sets would lead to solving this important open problem.

At the time of writing this article, the techniques for studying higher-weight Dowling lattices have not been discovered yet. 
As Zaslavsky observes in \cite[Section 7.6]{zaslavsky1987mobius}, ``this is one of the important open problems in matroid theory''. 

In this paper, we bring forward the theory of higher-weight Dowling lattices, studying properties of these of various flavours (duality, explicit formul\ae, asymptotics, polynomiality). In particular, we compute their Whitney numbers for some new infinite parameter sets. One of our main results is the following formula for the second Whitney number of HWDLs.

%
%

 \begin{theoremNN} 
 For all integers $n \ge d \ge 2$ and all prime powers $q$, the second Whitney number of $\mH(q,n,d)$ is given by
 \begin{align*} w_2(q,n,d) &= (q^{n-1}-1) \, \sum_{j=1}^d \binom{n}{j} (q-1)^{j-2} \ - \sum_{1 \le \ell_1<\ell_2 \le n} \Biggl[ q^{n-\ell_1-1}\left( \sum_{j=0}^{d-1} \binom{n-\ell_2}{j} (q-1)^j\right)  \\ 
&+ \sum_{j=d}^{n-\ell_2}\sum_{h=0}^{d-1} \binom{n-\ell_2}{j}   \binom{n-\ell_1-1}{h}(q-1)^{j+h} \\ 
&+ \sum_{s=d}^{n-\ell_2} \ \sum_{t=0}^{d-2} \binom{n-\ell_2}{s} \binom{n-\ell_1-1-s}{t} (q-1)^{s+t} \sum_{\nu=d-t}^s \gamma_q(s,s-d+t+2,\nu) \Biggr],
 \end{align*} 
 where the $\gamma_a(b,c,\nu)$'s are the \textit{agreement numbers} introduced in  Definition \ref{def:cn}. 
 \end{theoremNN}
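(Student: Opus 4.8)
The plan is to use the Möbius-theoretic results of the previous sections to reduce the statement to counting the two-dimensional subspaces of $\F_q^n$ that meet the point set defining $\mH(q,n,d)$, and then to carry out that count by organizing subspaces according to their pivot columns.

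\textbf{Reduction to a line count.} Write $\mH(q,n,d)=\mL(A)$ with $A$ the set of projective points represented by the nonzero vectors of $\F_q^n$ of Hamming weight at most $d$; since $n\ge d\ge 2$, the lattice $\mL(A)$ is geometric of rank $n$ with top $\F_q^n$. Its rank-two elements are exactly the $2$-dimensional subspaces $V\le\F_q^n$ containing at least two distinct points of $A$, and for such a $V$ the interval $[\hat 0,V]$ is a rank-two lattice with $a_V\defi|\Pro(V)\cap A|$ atoms, whence $\mu(\hat 0,V)=a_V-1$. A short manipulation of the constrained sums then gives
\begin{equation*}
w_2(q,n,d)=\sum_{V:\,a_V\ge 2}(a_V-1)=\Biggl(\sum_{V}a_V\Biggr)-\#\{\,V:a_V\ge 1\,\},
\end{equation*}
where both sums now range over all $2$-dimensional subspaces of $\F_q^n$. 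Double counting incident pairs (point of $A$, line through it) yields $\sum_V a_V=|A|\cdot\frac{q^{n-1}-1}{q-1}$, and inserting $|A|=\sum_{j=1}^d\binom nj(q-1)^{j-1}$ produces exactly the first term of the statement. It therefore remains to identify $\#\{V:a_V\ge 1\}$ — in the language of the previous section, $\qbin{n}{2}{q}$ minus the second subspace-distribution number of $A$ — with the displayed sum over pairs $1\le\ell_1<\ell_2\le n$.

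\textbf{Organizing subspaces by pivot columns.} Every $2$-dimensional $V$ has a unique reduced row echelon basis $(v_1,v_2)$ with pivot columns $\ell_1<\ell_2$: $v_1$ carries a $1$ in column $\ell_1$, a $0$ in column $\ell_2$ and in all columns $<\ell_1$, and free entries $x_k$ in the remaining $n-\ell_1-1$ columns; $v_2$ carries a $1$ in column $\ell_2$, $0$'s in all columns $\le\ell_2-1$, and free entries $y_k$ in the $n-\ell_2$ columns after $\ell_2$. Every nonzero point of $\Pro(V)$ is represented by $v_2$ or by $v_1+\gamma v_2$ for a unique $\gamma\in\F_q$, so $a_V\ge 1$ if and only if one of the following holds: (i) $\wt(v_2)\le d$; (ii) $\wt(v_1)\le d$; (iii) $\wt(v_1+\gamma v_2)\le d$ for some $\gamma\in\F_q^*$. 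For fixed $(\ell_1,\ell_2)$ I would count, in turn, the $V$ satisfying (i), those satisfying (ii) but not (i), and those satisfying (iii) but neither (i) nor (ii); these three disjoint families account for the three summands inside the bracket. In family (i), $v_1$ is arbitrary ($q^{n-\ell_1-1}$ choices) and $v_2$ has at most $d-1$ nonzero free entries, giving the first summand; in family (ii) minus (i), $v_2$ is heavy and $v_1$ is light, counted independently, giving the second summand.

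\textbf{The mixed family and the agreement numbers.} In the remaining family, let $T$ be the set of the $s\defi|T|$ columns after $\ell_2$ on which $v_2$ is nonzero (so $s\ge d$ as $v_2$ is heavy); off $T$ the vector $v_1+\gamma v_2$ agrees with $v_1$ and contributes some $t\le d-2$ nonzero positions among the $n-\ell_1-1-s$ free columns of $v_1$ outside $T$, while on $T$ its weight equals $s$ minus the number of columns where $x_k=-\gamma y_k$. Requiring ``$v_1$ heavy'' — which forces $v_1$ to have $\nu\ge d-t$ nonzero entries on $T$ — together with ``$2+t+(s-\nu)\le d$ for some $\gamma\in\F_q^*$'', i.e.\ at least $s-d+t+2$ coordinates on $T$ cancelling for some $\gamma$, gives (after dividing out the fixed nonzero values of $v_2$ on $T$) precisely the agreement number $\gamma_q(s,\,s-d+t+2,\,\nu)$ of Definition~\ref{def:cn}. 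Summing over $s$, $t$ and $\nu$, over the $\binom{n-\ell_2}{s}(q-1)^s$ choices of $T$ with its $v_2$-values, and over the $\binom{n-\ell_1-1-s}{t}(q-1)^t$ choices of the off-$T$ entries of $v_1$, produces the third summand, and hence the formula.

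\textbf{Main obstacle.} The delicate step is family (iii): a single subspace $V$ may contain several weight-$\le d$ vectors of ``mixed'' type, for different values of $\gamma$, and one must count each such $V$ exactly once while cleanly excluding those already logged in families (i) and (ii). The whole point of the agreement numbers is to make this bookkeeping exact: after conditioning on $T$, on the number $t$ of off-$T$ nonzero entries of $v_1$, and on the number $\nu$ of nonzero entries of $v_1$ on $T$, the residual count — the number of admissible patterns of $v_1$ on $T$ for which \emph{some} scalar produces enough cancellations — is a purely numerical quantity $\gamma_q(s,c,\nu)$, independent of the particular nonzero values of $v_2$ on $T$. Establishing this independence, and checking that the three families genuinely partition the set of lines meeting $A$ with no residual overlap (which relies on the uniqueness of the reduced row echelon basis to make $s$, $t$, $\nu$ well defined), is the crux of the argument.
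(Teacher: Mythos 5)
Your proposal is correct and follows essentially the same route as the paper: the formula is obtained as $(q^{n-1}-1)\sum_{j}\binom{n}{j}(q-1)^{j-2}$ minus $\beta_2(q,n,d)$, the number of two-dimensional codes of minimum Hamming distance at most $d$, and the latter is counted exactly as in Theorem~\ref{thm:beta2expl} --- by reduced row-echelon pivots $\ell_1<\ell_2$ and the same three disjoint families, with the agreement numbers handling the mixed family via Lemma~\ref{lem:gvect}. The only (harmless) difference is that you derive the reduction $w_2=\qbin{n-1}{1}{q}\beta_1-\beta_2$ by a direct M\"obius computation on rank-two intervals and double counting, rather than via the general inversion formula of Theorem~\ref{th:beta}.
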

 
The agreement numbers are integers defined recursively, which can be given a precise combinatorial interpretation. It turns out that $\gamma_a(b,c,\nu)$ is a polynomial in the variable $a$ (for any fixed $b$, $c$ and $\nu$) whose coefficients are are expressions
in the Bernoulli numbers; see Remark~\ref{rem:ber} for a more precise statement. This fact connects combinatorics, coding theory and number theory. Furthermore, it implies the following polynomiality result.

 \begin{corollaryNN} 
 For all $n \ge d \ge 2$, the second Whitney number of $\mH(q,n,d)$ is a polynomial in $q$.
 \end{corollaryNN}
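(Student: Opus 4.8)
The plan is to read the corollary off the closed formula for $w_2(q,n,d)$ proved in the Theorem, by checking that every ingredient of that formula lies in $\Q[q]$. Fix $n \ge d \ge 2$. All the summations occurring in the formula --- over $j$, over the pairs $\ell_1 < \ell_2$, and over $s$, $t$, $h$, $\nu$ --- have index ranges depending only on $n$ and $d$, so each is a \emph{finite} sum whose number of terms does not depend on $q$. Since $\Q[q]$ is a ring, it is enough to show that each individual summand is a polynomial in $q$.

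First I would dispose of the summands that do not involve the agreement numbers. These are built out of integer binomial coefficients (constants in $q$) and factors of the form $q^a$ and $(q-1)^b$; whenever $a,b\ge 0$ such factors are polynomials in $q$. The only exponent that can be negative is the $j-2$ in $(q^{n-1}-1)\sum_{j=1}^{d}\binom nj (q-1)^{j-2}$, for $j=1$. There one uses $q^{n-1}-1=(q-1)(1+q+\cdots+q^{n-2})$, so that the $j=1$ term equals $n\,(1+q+\cdots+q^{n-2})\in\Z[q]$, while all terms with $j\ge 2$ are manifestly polynomial. In the remaining sums one has $s \ge d \ge 2$ (and $j \ge d$, $h \ge 0$, $\ell_1 \le n-1$), so the exponents $s+t$, $j+h$, $n-\ell_1-1$ are nonnegative and no such problem arises. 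Hence everything except the innermost sum over $\nu$ already lies in $\Z[q]$.

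It then remains to show that $\sum_{\nu=d-t}^{s}\gamma_q(s,\,s-d+t+2,\,\nu)\in\Q[q]$, and for this it suffices to know that $\gamma_q(b,c,\nu)$ is a polynomial in $q$ for each fixed integer triple $(b,c,\nu)$, since the $\nu$-range is finite and independent of $q$. This is precisely the assertion of Remark~\ref{rem:ber}: from the recursive Definition~\ref{def:cn}, $\gamma_a(b,c,\nu)$ is, for fixed $b$, $c$, $\nu$, a polynomial in $a$ with coefficients that are expressions in the Bernoulli numbers; specialising $a=q$ gives the claim. Assembling the pieces, $w_2(q,n,d)$ is a finite $\Q$-linear combination of products of polynomials in $q$, hence a polynomial in $q$, which is the assertion.

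The only genuinely substantive input is the polynomiality of the agreement numbers, i.e.\ Remark~\ref{rem:ber} itself; the rest of the argument is bookkeeping. If one wanted a self-contained proof, one would establish that statement by induction along the recursion defining $\gamma_a(b,c,\nu)$, the inductive step producing power sums $\sum_{k} k^m$ over integer intervals, which are polynomial in the endpoints by Faulhaber's formula --- and this is exactly where the Bernoulli numbers enter. I expect that induction, and not the verification sketched above, to be the main obstacle.
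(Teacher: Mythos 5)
Your proposal is correct and follows the paper's own route: the paper likewise obtains the corollary by combining the closed formula of Theorem~\ref{th:main2} (via Theorem~\ref{thm:beta2expl}) with the polynomiality of the agreement numbers established in Theorem~\ref{th:polygamma}, whose proof is exactly the induction along the recursion using Faulhaber's formula that you anticipate. Your explicit check that the $j=1$ term $(q^{n-1}-1)(q-1)^{-1}$ is still a polynomial is a detail the paper leaves implicit, but otherwise the arguments coincide.
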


We include a more detailed description of the contributions made by this paper. This also serves to illustrate how the article is organized.

\begin{itemize} \setlength\itemsep{0.1em}
\item After a short introduction to posets and combinatorial geometries, in Section \ref{sec:atomistic} 
we establish various formul{\ae} that relate the M\oo bius function of nested atomistic lattices.

\item In Section \ref{sec:wnsd} we apply some of the results of Section~\ref{sec:atomistic} to restriction geometries of the form $\mL(A)$. This allows us to refine a result by Dowling, and to relate Whitney numbers and subspace distributions via invertible transformations. We also include examples of how these formul{\ae} can be used to compute the Whitney numbers  of certain non-supersolvable lattices.

\item In Sections \ref{sec:dms} we study combinatorial geometries of the form $\mL(A_1 \cup \cdots \cup A_L)$, where the $A_\ell$'s are linear spaces. The main result is a (partial) duality theorem between the Whitney numbers of $\mL(A_1 \cup \cdots \cup A_L)$ and those of $\mL(A_1^\perp \cup \cdots \cup A_L^\perp)$, where $A_\ell^\perp$ is the orthogonal 
of~$A_\ell$ with respect to a symmetric non-degenerate bilinear.

\item In Sections \ref{sec:HWDL1}--\ref{sec:HWDL6} we apply and extend the results of the first part of the paper to study HWDLs. Section \ref{sec:HWDL1} contains the main definitions and briefly surveys the known results. 

\item Section \ref{sec:HWDL2} contains some new general properties of HWDLs, including ``reduction formul{\ae}'' for their Whitney numbers and duality results.

\item Sections \ref{sec:HWDL3} and \ref{sec:HWDL4} are devoted to computational results. Applying various techniques, in Section~\ref{sec:HWDL3} we give closed formul{\ae} for the Whitney numbers of higher-weight Dowling lattices for some new infinite parameter sets. In Section \ref{sec:HWDL3} we instead define the \textit{agreement numbers} via a recursion, and express the second Whitney number of $\mH(q,n,d)$ in terms of these.


\item 
Although giving formul{\ae} for the Whitney numbers of these lattices seems to be very difficult in general, quite detailed information can be obtained on their \textit{asymptotic behaviour} as the field size grows. We study this problem in
Sections \ref{sec:HWDL5a} and \ref{sec:HWDL5}. We start by establishing some 
results on the \textit{density} of error-correcting codes (Section~\ref{sec:HWDL5}).
In Section \ref{sec:HWDL5a}, we apply these to determine the asymptotics of the Whitney numbers of higher-weight Dowling lattices.

\item We conclude the paper with some results on the polynomiality in $q$ of the Whitney numbers of higher-weight Dowling lattices and their intriguing connection with Bernoulli numbers.

\end{itemize}

\bigskip

\section{Preliminaries} \label{sec:prel}


\subsection{Posets and Lattices}

We briefly recall some definitions from poset theory, and establish the notation for the remainder of the paper. We refer the reader to \cite[Chapter 3]{stanleyec} for further details.

A \textbf{poset} is a pair $(\mP,\le)$ where $\mP$ is a non-empty set and $\le$ is an order relation on $\mP$. In the sequel we abuse notation and denote by $\mP$ the pair $(\mP,\le)$, when the order is clear from context. 
We say that $\mP$ is \textbf{trivial} if it has cardinality $|\mP|=1$.

If $\mP$ has a minimum element, say $0_\mL$, then an \textbf{atom} of $\mP$ is an element $a \in \mL$ with $0_\mL \lessdot a$, where $\lessdot$ denotes the covering relation \cite[p. 279]{stanleyec}.
The set of atoms of $\mP$ is $\at(\mP)$.

We denote by $\mu_\mP$ the 
M\"obius function of a poset $\mP$. If $\mP$ has a minimum element $0_\mL$, then we simply write
$\mu_\mP(x)$ for $\mu_\mP(0_\mL,x)$. 

In this paper we focus on lattices. A \textbf{lattice} is a poset $(\mL, \le)$ where every elements $x,y \in \mL$ have a unique meet, $x \wedge y$, and a unique join, $x \vee y$.
The rank function of a graded lattice $\mL$ is denoted by
$\rk_\mL: \mL \to \N$. The rank of $\mL$ is $\rk(\mL)$.
Every finite lattice $\mL$ has a minimum and a maximum element, denoted by $0_\mL$ and $1_\mL$ respectively (or simply by $0$ and $1$ if no confusion arises). If $\mL$ is graded, these are the only elements of rank $0$ and $\rk(\mL)$, respectively.  Observe moreover that for any finite lattice $\mL$ we have $\at(\mL) \neq \emptyset$ whenever $\mL$ is non-trivial. 

Very desirable lattice properties are semi-modularity and modularity. 
A graded lattice $\mL$ is called \textbf{semi-modular} if its rank function satisfies the  inequality
\begin{equation} \label{semimod}
\rk_\mL(x \meet y) + \rk_\mL(x \join y) \le \rk_\mL(x) + \rk_\mL(y) \quad \mbox{for all } x,y \in \mL.
\end{equation}
An element $x \in \mL$ is \textbf{modular} if equality holds in   
(\ref{semimod}) for all $y \in \mL$. The lattice $\mL$ is \textbf{modular}
if all its elements are modular.
A lattice $\mL$ is \textbf{atomistic} if every non-zero element $x \in \mL$ is the join of a set of atoms of $\mL$, and it
is \textbf{geometric} when it is finite, graded, semi-modular, and atomistic.

We also recall some fundamental combinatorial invariants of a  lattice.

\begin{definition}
Let $\mL$ be a finite graded lattice, and let $i \in \N$. The $i$-th \textbf{Whitney number} (\textbf{of the first kind}) of~$\mL$ is the integer
$$w_i(\mL) \defi\sum_{\substack{x \in \mL \\ \rk_\mL(x)=i}} \mu_{\mL}(x),$$
where the sum over an empty index set is $0 \in \Z$ by definition. The \textbf{characteristic polynomial} of~$\mL$, denoted by
$\chi(\mL;\lambda) \in \Z[\lambda]$, is
defined as
$$\chi(\mL;\lambda) \defi \sum_{x \in \mL} \mu_\mL(x) \  \lambda^{\rk(\mL)-
\rk_\mL(x)} = \sum_{i \in \N} w_i(\mL) \ \lambda^{\rk(\mL)-i}.$$
\end{definition}

The following notation will be used throughout the paper to make statements and proofs more compact.

\begin{notation} \label{notaz1}
If $\mL$ is a lattice and 
 $A \subseteq \at(\mL)$ is a (possibly empty) subset, we denote by $\mL(A)$ the sublattice of $\mL$ made of
those elements that can be written as the join of some elements of $A$, together with $0 \in \mL$.
Clearly, $\mL(A)$ is an atomistic lattice with respect to the order induced by~$\mL$, whenever $A \neq \emptyset$.
We denote the M\"{o}bius function of $\mL(A)$ by $\mu_A$, if~$\mL$ is clear from context. Moreover, 
for $x \in \mL$, we denote the maximum element $y \in \mL(A)$ such that $y \le x$ by 
$$x^A \defi \bigvee \left( \left\{ a \in A \mid a \le x \right\} \cup \{0\} \right).$$
\end{notation}

A standard example of a geometric lattice is the set of subspaces of a finite-dimensional vector space $X$ over a finite field. 
 Its combinatorial invariants can be conveniently expressed in terms of $q$-ary binomial coefficients. 


\begin{example} \label{ex:primo}
Let $X$ be a non-zero vector space of finite dimension over a finite field $\F_q$. Let $n$ denote its dimension. The set $\mL$ of subspaces of $X$, ordered by inclusion, is a geometric lattice. The atoms of $\mL$ are the 1-dimensional subspaces of $X$. The meet of 
$U,V \le X$ is $U \cap V$, and their join is $U+V$.
It is easy to see that
$\mL$ is modular, and that its rank is the dimension of $X$ over $\F_q$. More generally, the rank of a subspace $V \le X$ its $\F_q$-dimension. The characteristic polynomial and the Whitney numbers of $\mL$ are given by
$$\chi(\mL;\lambda) \ = \ \prod_{i=0}^n (\lambda-q^i), \qquad w_i(\mL)=(-1)^iq^{\binom{i}{2}}\qbin{n}{i}{q} \mbox{ for $i \in \N$}.$$
 \end{example}
The formul{\ae} in Example \ref{ex:primo} are very well-known. In the sequel, we will use these without referring to them explicitly.

We conclude this section by mentioning some properties of $q$-ary coefficients will be used repeatedly throughout the paper.

\begin{lemma} \label{proprq}
For all $r,s,t \in \Z$ we have
$$\qbin{r}{s}{q}=\qbin{r}{r-s}{q}, \qquad \qbin{r}{s}{q} \qbin{s}{t}{q}=\qbin{r}{t}{q} \qbin{r-t}{r-s}{q}.$$
\end{lemma}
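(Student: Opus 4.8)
The plan is to deduce both identities from the closed product form of the $q$-binomial coefficient, after pinning down the convention that governs degenerate arguments. Recall that for $r,s\in\Z$ one sets $\qbin{r}{s}{q}=0$ unless $0\le s\le r$, in which case $\qbin{r}{s}{q}=[r]_q!/([s]_q!\,[r-s]_q!)$, where $[m]_q!=\prod_{i=1}^{m}(q^i-1)$ and $[0]_q!=1$. First I would dispose of the out-of-range cases, in which both identities hold trivially: in the first one, $\qbin{r}{s}{q}$ and $\qbin{r}{r-s}{q}$ are nonzero under the same condition $0\le s\le r$; in the second one, $\qbin{r-t}{r-s}{q}\ne0$ forces $0\le r-s\le r-t$, i.e.\ $t\le s\le r$, which is exactly the range in which all four displayed coefficients are nonzero (and in which $r-t\ge r-s\ge0$ as well). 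Hence it remains to prove the equalities under the assumption $0\le t\le s\le r$.

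For the symmetry identity I would simply observe that the expression $[r]_q!/([s]_q!\,[r-s]_q!)$ is invariant under $s\leftrightarrow r-s$. Equivalently, by Example~\ref{ex:primo}, the map $V\mapsto V^\perp$ (orthogonal complement with respect to a fixed non-degenerate symmetric bilinear form on $\F_q^r$) is a bijection from the $s$-dimensional onto the $(r-s)$-dimensional subspaces of $\F_q^r$, so the two sides count the same family.

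For the second identity I would expand each side in $q$-factorials: the left-hand side $\qbin{r}{s}{q}\qbin{s}{t}{q}$ simplifies to $[r]_q!/([r-s]_q!\,[t]_q!\,[s-t]_q!)$, and, using the elementary identity $(r-t)-(r-s)=s-t$, the right-hand side $\qbin{r}{t}{q}\qbin{r-t}{r-s}{q}$ simplifies to the very same expression. A combinatorial alternative is to double-count the pairs of subspaces $W\le U\le\F_q^r$ with $\dim W=t$ and $\dim U=s$: counting $U$ first and then $W$ gives $\qbin{r}{s}{q}\qbin{s}{t}{q}$, while counting $W$ first and then $U$ gives $\qbin{r}{t}{q}\qbin{r-t}{s-t}{q}$, since the $s$-dimensional subspaces of $\F_q^r$ containing a fixed $W$ correspond to the $(s-t)$-dimensional subspaces of $\F_q^r/W\cong\F_q^{r-t}$; one then rewrites $\qbin{r-t}{s-t}{q}=\qbin{r-t}{r-s}{q}$ via the first identity.

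I do not anticipate any real obstacle here: the only point requiring care is the bookkeeping of the out-of-range cases, settled once and for all by the zero convention together with the inequality chain $0\le t\le s\le r$; the remaining content is a one-line cancellation of $q$-factorials (or, along the combinatorial route, a standard flag-counting argument).
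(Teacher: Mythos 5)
Your proof is correct. Note that the paper itself offers no proof of this lemma --- it is stated as standard and the reader is referred to textbooks such as \cite{stanleyec} --- so there is no argument to compare against; your verification via cancellation of $q$-factorials, together with the explicit check that the zero convention makes both sides of each identity vanish under exactly the same range conditions on the integers $r,s,t$, is the standard one and is complete. The only substantive care needed for a statement quantified over all of $\Z$ is precisely the out-of-range bookkeeping, which you handle correctly: both sides of the second identity are nonzero exactly when $0\le t\le s\le r$, and in that range the identity $(r-t)-(r-s)=s-t$ reduces both products to $[r]_q!/\bigl([t]_q!\,[s-t]_q!\,[r-s]_q!\bigr)$. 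The combinatorial alternatives you sketch (duality of subspaces for the symmetry, flag counting for the product identity) are also valid and consistent with the subspace-counting interpretation used throughout the paper.
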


More properties of $q$-ary coefficients can be found in various combinatorics textbooks; see~\cite{stanleyec} among many others.

\subsection{Combinatorial Geometries and Their Whitney Numbers} \label{sec:CP}

In this paper we are interested in the Whitney numbers of combinatorial geometries that are sublattices of the lattice of subspaces of a linear space over a finite field. In the sequel, $q$ is a prime power and $\F_q$ is the finite field with $q$ elements. We denote by $\rk(V)$ the $\F_q$-dimension of a linear space $V$.

\begin{notation} \label{notaz X}
Throughout this section, $X$ is a non-zero finite-dimensional vector space $X$ over~$\F_q$. The dimension of $X$ is $n$. We let $\mL$ be the lattice of subspaces of~$X$ ordered by the inclusion $\le$.
For a subset $A \subseteq X$, let $\Pro(A)$ be the set (possibly empty)
of 1-dimensional subspaces of~$X$ spanned by some element of $A$. 
Then $\mL(A) \defi \mL(\Pro(A))$ is the lattice of subspaces of~$X$ that have a basis made of elements of $A$, together with the zero space
(see Notation~\ref{notaz1}). The M\"obius function of $\mL(A)$ is denoted by $\mu_A$.
We also let
$V^A \defi V^{\Pro(A)}$ for all $V \le X$, and denote by $\langle A \rangle$ the smallest subspace of~$X$ containing all the elements of $A$. Its dimension is 
$\rk(A)$. We call a lattice of the form $\mL(A)$ the \textbf{combinatorial} \textbf{geometry} over the set $A$.
\end{notation}

In this work, we study the Whitney numbers of combinatorial geometries in connection to the concept of subspace distribution.

\begin{definition} \label{defavoids} \label{def:alpha}
Let $X$ be a finite-dimensional space over $\F_q$, and let 
$A \subseteq X$ be a subset. 
A space $V \le X$ is said to \textbf{distinguish} (or \textbf{avoid}) $A$ if
either $V \cap A = \emptyset$, or $V \cap A = \{0\}$. If this is not the case, then we say that $V$ \textbf{meets} $A$ and write $V \mee A$.
 For $k \in \N$, we let
 $$\alpha_k(X,A) \defi \#\{V \le X \mid \rk(V)=k, \ V \not\mee A\}.$$ The sequence 
$(\alpha_k(X,A) \mid k \in \N)$ is the \textbf{subspace distribution} associated with $X$ and $A$. More generally, for $U \le X$ we let
$\alpha_k(U,A) \defi \#\{V \le U \mid \rk(V)=k, \ U \not\mee A\}$.
\end{definition}

\begin{remark} \label{rem:imp}
Computing $\alpha_k(X,A)$ for an arbitrary $A \subseteq X$ is difficult in general. This is connected to a fundamental question in extremal combinatorics, known as the \textbf{Critical Problem}. The latter was proposed by Crapo and Rota in 1970
and asks to compute the \textbf{critical exponent} of $\mL(A)$, defined by
$$\crit(\mL(A)) \defi \min \{r \in \N \mid \chi(\mL(A);q^r) \neq 0\}.$$
It follows from~\cite[Chapter 16]{crapo1970foundations} that
$$\crit(\mL(A)) =  n-\max\{k \in \N \mid \alpha_k(X,A) \neq 0\}  =  \rk(A)-\max\{k \in \N \mid \alpha_k(\langle A \rangle,A) \neq 0\}.$$
\end{remark}

The Critical Problem 
admits several generalizations to matroids, polymatroids, relations, and to other combinatorial structures; see \cite{kung1996critical} and the references therein for a detailed overview. Various foundamental questions in combinatorics, such as coloring problems, can be formulated as an instance of the Critical Problem.

In Section \ref{sec:wnsd} we will return to the connections between Whitney numbers of combinatorial geometries and subspace distributions.

\section{The M\"{o}bius Function of an Atomistic Lattice}
\label{sec:atomistic}

In this section we prove some formul{\ae} that relate the M\"{o}bius function of an atomistic lattice~$\mL$ to the M\"{o}bius function of certain sublattices $\mL' \subseteq \mL$. These will be applied in later sections. As these results seem  interesting in their own right and we could not find them in any reference, we state them for arbitrary finite atomistic lattices and group them together in an independent section.

The formul{\ae}  that we present extend some classical theorems in combinatorics, such as {Whitney Theorem}, Stanley's {Modular Factorization Theorem} for geometric lattices, and Crapo's formula for nested geometries.

\begin{notation} 
In the sequel, we work with a fixed non-trivial finite and atomistic lattice $\mL$. We do \textit{not} assume that $\mL$ is graded or geometric, unless explicitly stated. We follow the notation of Section~\ref{sec:prel}. Recall in particular Notation \ref{notaz1}.
\end{notation}

We start with a simple lemma, that will be used repeatedly throughout the paper. 

\begin{lemma} \label{lem:toA}
Let $A \subseteq \at(\mL)$ be a set, $x \in \mL(A)$, and $y \in \mL$.  Then $x \le y$ if and only if $x \le y^A$.
\end{lemma}

The first result that we present is a formula that relates the M\"obius functions of  two nested lattices. It generalizes~\cite[Corollary 5]{crapo1969mobius}. 

\begin{theorem} \label{th:gencrapo}
Let $A \subseteq B \subseteq \at(\mL)$, and let $S \subseteq \mL(B)$ be any subset. For all $x \in \mL(A)$ we have
$$\sum_{z \in \mL(A) \cap S} \mu_A(x,z)= \sum_{\substack{t \in \mL(B) \\ t^A=x}} \sum_{\substack{y \in S \\ y \ge t}}\mu_B(t,y).$$
\end{theorem}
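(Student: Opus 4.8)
The plan is to peel off the outer summation over $S$ and reduce the identity to a single ``pointwise'' statement indexed by $y$. Since every term $\mu_B(t,y)$ appearing on the right-hand side already forces $t\le y$, and since $S\subseteq\mL(B)$, I would first exchange the order of summation and rewrite the right-hand side as $\sum_{y\in S}\ \sum_{t\in\mL(B),\,t\le y,\,t^A=x}\mu_B(t,y)$. It then suffices to prove the local claim that, for every $y\in\mL(B)$ and every $x\in\mL(A)$,
\[
\sum_{\substack{t\in\mL(B)\\ t\le y,\ t^A=x}}\mu_B(t,y)=
\begin{cases}\mu_A(x,y)&\text{if }y\in\mL(A),\\ 0&\text{otherwise,}\end{cases}
\]
with the convention $\mu_A(x,y)=0$ when $x\not\le y$. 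Granting this, summing over $y\in S$ and discarding the vanishing terms leaves $\sum_{y\in S\cap\mL(A)}\mu_A(x,y)=\sum_{z\in\mL(A)\cap S}\mu_A(x,z)$, which is exactly the left-hand side.

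To prove the local claim, the key device is Lemma~\ref{lem:toA}: for $x'\in\mL(A)$ and $t\in\mL(B)\subseteq\mL$ one has $x'\le t$ if and only if $x'\le t^A$. Since $t^A\in\mL(A)$ for every $t\in\mL(B)$, the defining property of $\mu_A$ gives, for each such $t$, the identity $[\,t^A=x\,]=\sum_{x'\in\mL(A),\,x'\ge x}\mu_A(x,x')\,[\,x'\le t^A\,]$ (the right side being $\sum_{x'\in[x,t^A]}\mu_A(x,x')$ when $x\le t^A$, and $0$ otherwise), and then Lemma~\ref{lem:toA} replaces $[\,x'\le t^A\,]$ by $[\,x'\le t\,]$. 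Substituting this into $\sum_{t\in\mL(B),\,t\le y}[\,t^A=x\,]\,\mu_B(t,y)$ and interchanging the two finite sums, the inner sum becomes $\sum_{t\in\mL(B),\,x'\le t\le y}\mu_B(t,y)$, which equals $\delta_{x',y}$ when $x'\le y$ and $0$ otherwise (empty index set), by the analogous defining property of $\mu_B$. Only the term $x'=y$ can survive, and it occurs precisely when $y\in\mL(A)$ and $x\le y$; this is the local claim.

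I expect the only real subtlety to be organizational rather than computational: keeping the interchange of summations honest, and in particular noticing that the step $[\,x'\le t^A\,]=[\,x'\le t\,]$ genuinely needs $x'\in\mL(A)$ (so that Lemma~\ref{lem:toA} applies) while the summation variable $t$ is allowed to roam over the larger lattice $\mL(B)$. One should also track the degenerate cases $x\not\le y$ and $y\notin\mL(A)$, in which both sides vanish for trivial reasons; the stated conventions absorb these cleanly. Note that no graded, semi-modular, or geometric hypothesis on $\mL$ is used anywhere — only that $\mL(A)\subseteq\mL(B)$ are the sublattices generated by the atom sets $A\subseteq B$, together with Lemma~\ref{lem:toA}.
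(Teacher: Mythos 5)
Your proof is correct and follows essentially the same route as the paper: both arguments reduce to a pointwise identity for each $y\in S$ and establish it by evaluating the double sum $\sum_{z\in\mL(A),\,t\in\mL(B),\,z\le t\le y}\mu_A(x,z)\,\mu_B(t,y)$ in two ways, with Lemma~\ref{lem:toA} converting the condition $z\le t$ into $z\le t^A$ so that the delta property of $\mu_A$ produces the indicator $[\,t^A=x\,]$. Your presentation merely reverses the order (expanding the indicator rather than collapsing to it), which is the same computation.
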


\begin{proof}
For all $y \in S$ define 
$$\Sigma_y \defi \sum_{z \in \mL(A) \cap \{y\}} \ \sum_{\substack{t \in \mL(B) \\ t \ge z}} \mu_A(x,z) \; \mu_B(t,y).$$
By the properties of $\mu_B$ we have
\begin{equation} \label{ee.1}
\Sigma_y=\sum_{z \in \mL(A)} \mu_A(x,z) \sum_{\substack{t \in \mL(B) \\ z \le t \le y}}  \mu_B(t,y) = \sum_{z \in \mL(A) \cap \{y\}} \mu_A(x,z).
\end{equation}
Similarly, by Lemma \ref{lem:toA} and the properties of $\mu_A$ we have
\begin{equation}
\Sigma_y = \sum_{t \in \mL(B)} \mu_B(t,y) \sum_{\substack{z \in \mL(A) \\ z \le t}} \mu_A(x,z)
= \sum_{t \in \mL(B)} \mu_B(t,y) \sum_{\substack{z \in \mL(A) \\ z \le t^A}} \mu_A(x,z) 
= \sum_{\substack{t \in \mL(B) \\ t^A=x}} \mu_B(t,y). \label{ee.2}
\end{equation}
Combining (\ref{ee.1}) and (\ref{ee.2}) we see that, for all $y \in S$, 
$$\sum_{z \in \mL(A) \cap \{y\}} \mu_A(x,z) =  \sum_{\substack{t \in \mL(B) \\ t^A=x}} \mu_B(t,y).$$
Now sum over $y \in S$ and exchange the order of summation.
\end{proof}

The next result is a decomposition formula for the 
M\"{o}bius function of $\mL$ in terms of the M\"{o}bius functions of certain sublattices generated by atoms. Note that the result implies Whitney Theorem \cite[Proposition 3.11.3]{stanleyec} as a simple corollary.

\begin{theorem} \label{th:decomp}
Let $A_1,...,A_L \subseteq \at(\mL)$ be any subsets with $A_1 \cup \cdots \cup A_L = \at(\mL)$. For all $x \in \mL$ we have
\begin{equation} \label{eq:expr0}
\mu_\mL(x) = \sum_{ \substack{(x_1,...,x_L) \in \mL(A_1) \times \cdots \times \mL(A_L) \\ x_1 \vee \cdots \vee x_L=x}} \ \ \prod_{\ell=1}^L \mu_{A_\ell}(x_\ell).
\end{equation}
\end{theorem}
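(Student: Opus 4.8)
The plan is to prove the identity by Möbius inversion, showing that both sides agree after summing over $y \le x$. Concretely, fix $x \in \mL$ and define $f(x)$ to be the right-hand side of \eqref{eq:expr0}; I want to show $f = \mu_\mL$. Since $\mu_\mL$ is characterized by $\sum_{y \le x} \mu_\mL(y) = [x = 0_\mL]$, it suffices to prove that $\sum_{y \le x} f(y) = [x = 0_\mL]$ for every $x \in \mL$. First I would rewrite $\sum_{y \le x} f(y)$ by expanding each $f(y)$ as a sum over tuples $(x_1,\dots,x_L) \in \mL(A_1) \times \cdots \times \mL(A_L)$ with join equal to $y$; collapsing the constraint $y \le x$ together with $x_1 \vee \cdots \vee x_L = y$ gives
$$\sum_{y \le x} f(y) = \sum_{\substack{(x_1,\dots,x_L) \\ x_\ell \in \mL(A_\ell),\ x_1 \vee \cdots \vee x_L \le x}} \ \prod_{\ell=1}^L \mu_{A_\ell}(x_\ell).$$

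The next step is to observe that the condition $x_1 \vee \cdots \vee x_L \le x$ is equivalent to $x_\ell \le x$ for all $\ell$, and then — using Lemma \ref{lem:toA}, since $x_\ell \in \mL(A_\ell)$ — equivalent to $x_\ell \le x^{A_\ell}$ for all $\ell$. This decouples the tuple, so the sum factors:
$$\sum_{y \le x} f(y) = \prod_{\ell=1}^L \left( \sum_{\substack{x_\ell \in \mL(A_\ell) \\ x_\ell \le x^{A_\ell}}} \mu_{A_\ell}(x_\ell) \right) = \prod_{\ell=1}^L \left( \sum_{x_\ell \le x^{A_\ell}} \mu_{A_\ell}(0_{\mL(A_\ell)}, x_\ell) \right).$$
By the defining property of the Möbius function of $\mL(A_\ell)$, the inner sum equals $[x^{A_\ell} = 0]$. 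Hence $\sum_{y \le x} f(y) = 1$ if $x^{A_\ell} = 0$ for every $\ell$, and $0$ otherwise. It remains to check that $x^{A_\ell} = 0$ for all $\ell$ if and only if $x = 0_\mL$: the ``if'' direction is clear, and for ``only if'' note that if $x \ne 0_\mL$ then, $\mL$ being atomistic, there is an atom $a \le x$; since $a \in \at(\mL) = A_1 \cup \cdots \cup A_L$, we have $a \in A_\ell$ for some $\ell$, whence $0 \ne a \le x^{A_\ell}$. This yields $\sum_{y \le x} f(y) = [x = 0_\mL]$, and Möbius inversion gives $f = \mu_\mL$.

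I do not expect a serious obstacle here; the argument is a clean application of Möbius inversion plus Lemma \ref{lem:toA}. The one point requiring a little care is the bookkeeping when interchanging the order of summation and collapsing the two constraints $y \le x$ and $x_1 \vee \cdots \vee x_L = y$ into the single constraint $x_1 \vee \cdots \vee x_L \le x$ — one should make sure every $y$ in the range $0_\mL \le y \le x$ is accounted for exactly once, which holds because $y$ is uniquely recovered as $x_1 \vee \cdots \vee x_L$. Also worth noting, but routine: each $\mL(A_\ell)$ is a lattice in its own right with minimum $0$, so its Möbius function and the inversion formula genuinely apply; and if some $A_\ell = \emptyset$ then $\mL(A_\ell) = \{0\}$ is trivial and the corresponding factor is simply $[x^{A_\ell} = 0] = 1$, consistent with the convention that an empty join is $0$. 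Finally, to recover Whitney's Theorem as the promised corollary, one specializes to $L = |\at(\mL)|$ with each $A_\ell$ a distinct singleton, so that each $\mL(A_\ell)$ is a two-element chain and $\mu_{A_\ell}$ is supported on $\{0, a_\ell\}$ with values $1$ and $-1$.
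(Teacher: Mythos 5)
Your proof is correct. It differs from the paper's argument mainly in packaging: the paper defines the pair of maps $\varphi(x_1,\dots,x_L)=x_1\vee\cdots\vee x_L$ and $\psi(x)=(x^{A_1},\dots,x^{A_L})$, checks via Lemma \ref{lem:toA} that $(\varphi,\psi)$ is a monotone Galois connection between the product lattice $\mL(A_1)\times\cdots\times\mL(A_L)$ and $\mL$, and then invokes Rota's Galois connection theorem (in Greene's form) to equate the right-hand side of \eqref{eq:expr0} with $\sum_{t,\ \psi(t)=(0,\dots,0)}\mu_\mL(t,x)$, finally using atomisticity and the covering hypothesis $A_1\cup\cdots\cup A_L=\at(\mL)$ to reduce that sum to the single term $t=0$. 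You instead verify directly that the right-hand side satisfies the defining recursion $\sum_{y\le x}f(y)=[x=0_\mL]$ of $\mu_\mL$, using Lemma \ref{lem:toA} to decouple the constraint $x_1\vee\cdots\vee x_L\le x$ into $x_\ell\le x^{A_\ell}$ and factor the sum. In substance your computation is an inlined, self-contained proof of exactly the instance of Rota's theorem that the paper cites (the final step, where atomisticity forces $x^{A_\ell}=0$ for all $\ell$ only when $x=0$, is identical in both). What your route buys is independence from the external reference and a completely elementary argument; what the paper's route buys is that the structural reason for the identity (a Galois connection) is made explicit, and the intermediate identity remains valid and informative even when $A_1\cup\cdots\cup A_L$ fails to cover $\at(\mL)$. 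Your side remarks (the case $A_\ell=\emptyset$, the recovery of Whitney's Theorem from singletons) are accurate.
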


\begin{proof}
Consider the product lattice $\mL(A_1) \times \cdots \times \mL(A_L)$
endowed with the coordinatewise order. Define a map
$\varphi:\mL(A_1) \times \cdots \times \mL(A_L) \to \mL$ by
 $(x_1,...,x_L) \mapsto x_1 \vee \cdots \vee x_L$, and a map 
$\psi: \mL \to \mL(A_1) \times \cdots \times \mL(A_L)$ by 
$x \mapsto (x^{A_1},...,x^{A_L})$. Using Lemma \ref{lem:toA} one checks that $(\varphi,\psi)$ is a monotone Galois connection in the sense of Rota~\cite{rotaGC}. Applying \cite[Theorem 1]{rotaGC} in the form given by Greene 
\cite[p. 563]{greene82} we obtain that, for all $x \in \mL$,
\begin{equation} \label{iden}
\sum_{ \substack{(x_1,...,x_L) \in \mL(A_1) \times \cdots \times \mL(A_L) \\ x_1 \vee \cdots \vee x_L=x}} \ \ \prod_{\ell=1}^L \mu_{A_\ell}(x_\ell) \ = 
\sum_{\substack{t \in \mL \\ \psi(t)=(0,...,0)}} \mu_\mL(t,x).
\end{equation}
 Since $A_1 \cup \cdots \cup A_L=\at(\mL)$ and $\mL$ is atomistic, the only $t \in \mL$ with $\psi(t)=(0,...,0)$ is the zero element of $\mL$. Therefore the theorem follows from (\ref{iden}).
\end{proof}




We conclude this section with a slightly more involved decomposition of the M\"obius function of a lattice. 
 In passing, we will obtain a proof of Stanley's {Modular Factorization Theorem}~\cite[Theorem~2]{stanley1971modular}. 
The proof of the following result can be found in Appendix~\ref{app:A}.

\begin{theorem} \label{teo:av} 
Let $A, B \subseteq \at(\mL)$ be sets with $A \cup B= \at(\mL)$. For all $x \in \mL$ we have
$$\mu_\mL(x) = \sum_{\substack{x_A \in \mL(A), \: x_B \in \mL(B) \\ x_B^{A}=0 \\ x_A \vee x_B =x}} \mu_{A}(x_A) \: \mu_{B}(x_B).$$
\end{theorem}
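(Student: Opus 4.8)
The strategy is to mimic the proof of Theorem~\ref{th:decomp}, but with a cleverer Galois connection that ``quotients out'' the part of an element lying in $\mL(A)$ before recording its $B$-shadow. Fix $x \in \mL$. I would work with the product lattice $\mL(A) \times \mL(B)$ under the coordinatewise order, but restrict attention to the subposet
$$\mP \defi \{(x_A, x_B) \in \mL(A) \times \mL(B) \mid x_B^A = 0\}.$$
The idea is that the constraint $x_B^A = 0$ picks out, in each fibre of the join map, a canonical representative of $x_B$ ``modulo $A$''. First I would set up a pair of maps: $\varphi \colon \mP \to \mL$ by $(x_A,x_B)\mapsto x_A \vee x_B$, and $\psi \colon \mL \to \mP$ by $z \mapsto (z^A, \,?\,)$, where the second coordinate must be a suitably chosen element of $\mL(B)$ with trivial $A$-shadow whose join with $z^A$ recaptures $z^{A \cup B} = z$ (since $A \cup B = \at(\mL)$ and $\mL$ is atomistic, the latter equals $z$). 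The natural candidate is built from the atoms in $B \setminus A$ below $z$, or more precisely $\bigvee\{a \in B : a \le z,\ a \not\le z^A\}$; one then has to check this lands in the right set and that $(\varphi,\psi)$ forms a monotone Galois connection in Rota's sense.

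Once the Galois connection is in place, I would invoke \cite[Theorem 1]{rotaGC} in Greene's form \cite[p.~563]{greene82}, exactly as in the proof of Theorem~\ref{th:decomp}. This yields, for all $x \in \mL$,
$$\sum_{\substack{(x_A,x_B)\in\mP \\ x_A \vee x_B = x}} \mu_{\mP}\bigl(\hat 0, (x_A,x_B)\bigr) = \sum_{\substack{t \in \mL \\ \psi(t) = \hat 0_{\mP}}} \mu_\mL(t,x),$$
where $\hat 0_\mP = (0,0)$. The right-hand side collapses to $\mu_\mL(0,x) = \mu_\mL(x)$ once one verifies that the only $t$ with $\psi(t) = (0,0)$ is $t = 0$: this uses $t^A = 0$ together with the vanishing of the second coordinate forcing no atom of $B$ below $t$, hence (as $A\cup B = \at(\mL)$ and $\mL$ is atomistic) $t = 0$. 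For the left-hand side, I would compute the M\"obius function of $\mP$ at $(x_A,x_B)$. Here I expect the key observation to be that $\mP$, while not literally a product, has M\"obius function that factors as $\mu_A(x_A)\,\mu_B(x_B)$ — essentially because the condition $x_B^A = 0$ is a ``downward-closed in the second coordinate relative to the first'' type of constraint that does not interfere with the convolution identity defining $\mu$. Concretely, I would either check directly that $\mu_\mP(\hat 0,(x_A,x_B)) = \mu_A(x_A)\mu_B(x_B)$ by inducting on $\rk$ or $|\mP|$, or realise $\mP$ as an order-theoretic construction (a "Galois sub-connection" of the product) whose M\"obius function is already known to multiply.

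\textbf{Main obstacle.} The delicate point is the definition and verification of the map $\psi$ — specifically, producing a \emph{well-defined}, order-preserving second coordinate landing in $\{x_B \in \mL(B) : x_B^A = 0\}$, and confirming the adjunction inequalities $\varphi(\psi(z)) \le z$ (in fact $=z$) and $\psi(\varphi(x_A,x_B)) \le (x_A,x_B)$ in $\mP$. One has to be careful that joining atoms of $B$ not below $z^A$ does not accidentally create a join that \emph{does} meet $A$ nontrivially, and that monotonicity genuinely holds; a bad choice of representative breaks the Galois connection. The secondary technical step is the factorization of $\mu_\mP$, which I expect to follow from a short induction once the poset structure of $\mP$ is pinned down. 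Everything else — the application of Rota's Galois connection theorem, the identification of the sole preimage of $(0,0)$ — runs parallel to Theorem~\ref{th:decomp} and should be routine. Since the paper defers this proof to Appendix~\ref{app:A} and remarks that it also recovers Stanley's Modular Factorization Theorem, I would additionally, at the end, specialize to the case where $\mL$ is geometric and $A$ is generated by a modular element to extract that corollary, checking that the constraint $x_B^A = 0$ becomes the usual complementation condition in the modular-factorization setup.
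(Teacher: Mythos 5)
Your plan diverges from the paper's proof, and it contains a genuine gap at precisely the point you flag as the ``main obstacle'': the monotone Galois connection you need does not exist in general. For $(\varphi,\psi)$ to be a Galois connection with $\varphi(x_A,x_B)=x_A\vee x_B$, the set $\{(x_A,x_B)\in\mP \mid x_A\vee x_B\le z\}$ must have a maximum for every $z\in\mL$, and in particular the set $\{x_B\in\mL(B)\mid x_B\le z,\ x_B^A=0\}$ must have a maximum. It need not: take $\mL$ to be the lattice of subspaces of $\F_2^2$, $A=\{\langle e_1\rangle\}$, $B=\{\langle e_2\rangle,\langle e_1+e_2\rangle\}$, and $z=\F_2^2$. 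Both atoms of $B$ lie below $z$ and have trivial $A$-shadow, but their join is $\F_2^2$, whose $A$-shadow is $\langle e_1\rangle\neq 0$; so the set has two maximal elements and no maximum, and no choice of second coordinate for $\psi(z)$ (in particular not your candidate $\bigvee\{a\in B: a\le z,\ a\not\le z^A\}$, which here lands outside $\mP$) can satisfy the adjunction. Since Rota's theorem in Greene's form requires the connection, the whole derivation of the identity collapses. For what it is worth, the secondary step you were worried about is actually fine: the interval $[\hat 0,(x_A,x_B)]$ in $\mP$ coincides with the interval in the full product (because $y_B\le x_B$ and $x_B^A=0$ force $y_B^A=0$), so $\mu_\mP(\hat 0,(x_A,x_B))=\mu_A(x_A)\mu_B(x_B)$ would hold if the rest of the argument went through.

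The paper's proof avoids Galois connections entirely. It introduces the triple sum
$$\Sigma=\sum_{\substack{t\in\mL\\ t\le x}}\ \sum_{\substack{x_A\in\mL(A),\ x_B\in\mL(B)\\ x_B^A=0\\ x_A\vee x_B\le t}}\mu_\mL(t,x)\,\mu_A(x_A)\,\mu_B(x_B)$$
and evaluates it in two orders: summing over $t$ first gives the right-hand side of the theorem (the inner sum of $\mu_\mL(t,x)$ over $x_A\vee x_B\le t\le x$ is a delta function); summing over $(x_A,x_B)$ first shows, via Lemma \ref{lem:toA} and the defining property of $\mu_A$ and $\mu_B$, that the inner sum $\Sigma_t$ vanishes unless $t^A=t^B=0$, i.e.\ unless $t=0$, which yields $\mu_\mL(x)$. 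If you want to salvage your outline, replace the Galois connection step by this direct interchange-of-summation argument; the constraint $x_B^A=0$ is handled there by the observation that it is implied by $x_B\le t$ once $t^A=0$, which is exactly the structural fact your approach was trying (and failing) to encode in a right adjoint.
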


 The previous theorem has the following consequence.

 \begin{corollary} \label{coro:three}
Suppose that $\mL$ is geometric and that $t \in \mL$ is modular. 
Let $[0,t]$ be the interval $\{t' \in \mL \mid 0 \le t' \le t\}$.
For all $B \subseteq \at(\mL)$ with $B \cup \{a \in \at(\mL) \mid a \le t\}=\at(\mL)$ we have
\begin{equation} \label{f2}
w_i(\mL) = \sum_{j=0}^i w_j([0,t]) \sum_{\substack{x \in \mL(B) \\ r_\mL(x)=i-j \\ x \wedge t =0}} \mu_B(x).
\end{equation}
\end{corollary}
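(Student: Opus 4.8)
The plan is to derive Corollary \ref{coro:three} from Theorem \ref{teo:av} by setting $A = \{a \in \at(\mL) \mid a \le t\}$, so that $\mL(A) = [0,t]$. Indeed, since $\mL$ is geometric (hence atomistic) and $t$ is an element of $\mL$, every atom below $t$ joins to an element $\le t$, and conversely every element of $[0,t]$ is a join of atoms below $t$; thus $\mL(A) = [0,t]$ as lattices, and $\mu_A = \mu_{[0,t]}$. The hypothesis on $B$ is exactly $A \cup B = \at(\mL)$, so Theorem \ref{teo:av} applies and gives, for every $x \in \mL$,
\[
\mu_\mL(x) = \sum_{\substack{x_A \in [0,t], \: x_B \in \mL(B) \\ x_B^{A}=0 \\ x_A \vee x_B = x}} \mu_{[0,t]}(x_A)\, \mu_B(x_B).
\]

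Next I would sum this identity over all $x \in \mL$ of a fixed rank $i$. Since $\mL$ is geometric it is graded, and by semimodularity $\rk_\mL(x_A \vee x_B) \le \rk_\mL(x_A) + \rk_\mL(x_B)$; the key point is that the constraint $x_B^A = 0$, i.e. $x_B \wedge t = 0$ (here one uses Lemma \ref{lem:toA}: $x_B \wedge t$ is an element of $[0,t] = \mL(A)$, and it equals $x_B^A$), together with the modularity of $t$ forces equality $\rk_\mL(x_A \vee x_B) = \rk_\mL(x_A) + \rk_\mL(x_B)$. This is where modularity of $t$ is essential: modularity says $\rk(x_B \wedge t) + \rk(x_B \vee t) = \rk(x_B) + \rk(t)$ for all $x_B$, and combined with $x_B \wedge t = 0$ this gives $\rk(x_B \vee t) = \rk(x_B) + \rk(t)$; since $x_A \le t$ one then deduces by a short semimodularity argument that $\rk(x_A \vee x_B) = \rk(x_A) + \rk(x_B)$. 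Hence in the sum for $w_i(\mL) = \sum_{\rk_\mL(x)=i}\mu_\mL(x)$, writing $j = \rk_\mL(x_A)$ forces $\rk_\mL(x_B) = i - j$, and the triple sum reorganizes as
\[
w_i(\mL) = \sum_{j=0}^i \ \sum_{\substack{x_A \in [0,t]\\ \rk_\mL(x_A)=j}} \mu_{[0,t]}(x_A) \ \sum_{\substack{x_B \in \mL(B)\\ \rk_\mL(x_B)=i-j\\ x_B \wedge t = 0}} \mu_B(x_B),
\]
where the pairs $(x_A,x_B)$ with $x_A \wedge x_B$-condition automatically recover each $x$ of rank $i$ exactly once because $x = x_A \vee x_B$, $x_A = x^A = x \wedge t$, and $x_B$ ranges freely subject to $x_B \wedge t = 0$ and $x_A \vee x_B = x$ — one must check that for fixed $x$ the inner correspondence is a genuine partition, but this is precisely what Theorem \ref{teo:av} already encodes when summed. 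Recognizing $\sum_{x_A \in [0,t],\ \rk(x_A)=j}\mu_{[0,t]}(x_A) = w_j([0,t])$ and noting $\rk_\mL$ restricted to $[0,t]$ is the rank function of $[0,t]$ (as $0 \le x_A \le t$, grading is inherited), we obtain (\ref{f2}) with $\rk_\mL(x) = i-j$ in the inner sum, which is the claimed formula (the paper's "$r_\mL$" being $\rk_\mL$).

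The main obstacle I anticipate is the rank bookkeeping: verifying carefully that the constraint $x_B^A = 0$ is equivalent to $x_B \wedge t = 0$ (via Lemma \ref{lem:toA}), and then that modularity of $t$ upgrades the semimodular inequality to the additivity $\rk_\mL(x_A \vee x_B) = \rk_\mL(x_A) + \rk_\mL(x_B)$ whenever $x_A \le t$ and $x_B \wedge t = 0$. Everything else is a straightforward reindexing of the sum in Theorem \ref{teo:av}: splitting by $j = \rk_\mL(x_A)$, pulling the $[0,t]$-sum out front, and identifying the resulting partial sums as Whitney numbers of $[0,t]$. No new combinatorial input beyond Theorem \ref{teo:av}, Lemma \ref{lem:toA}, and the definition of modular element is needed.
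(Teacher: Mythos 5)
Your proposal is correct and follows essentially the same route as the paper: it deduces the identity from Theorem \ref{teo:av} with $A=\{a\in\at(\mL)\mid a\le t\}$, identifies $\mL(A)$ with $[0,t]$ and the condition $x_B^A=0$ with $x_B\wedge t=0$, and uses modularity of $t$ to obtain the rank additivity $\rk_\mL(x_A\vee x_B)=\rk_\mL(x_A)+\rk_\mL(x_B)$ that makes the reindexing by $j=\rk_\mL(x_A)$ work. The only difference is that you spell out the rank-additivity argument, which the paper dismisses as ``known and not difficult to see.''
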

 \begin{proof}
Define $A \defi \{a \in\at(\mL) \mid a \le t\}$.
 Since $\mL$ is atomistic, we have that $\mL(A)$ coincides with the interval $[0,t]$ in $\mL$. Moreover, for all $x \in \mL(B)$ we have $x^A=0$ if and only if $x \wedge t=0$.  Therefore to deduce (\ref{f2}) from Theorem \ref{teo:av} it suffices to show 
 that $\rk(y \vee x) = \rk(y) + \rk(x)$ for all $y \in [0,t]$ and all $x \in \mL$ with 
$x \wedge t=0$. This fact is known and not difficult to see. 
%
 \end{proof}

Note that Stanley's {Modular Factorization Theorem} can now be obtained from the definition of characteristic polynomial and 
 Corollary~\ref{coro:three}, taking $B=\at(\mL)$.

\begin{corollary}[Stanley] \label{coro:MFT}
Suppose that $\mL$ is geometric of rank $n$, and that $t \in \mL$ is modular. We have
\begin{equation*} 
\chi(\mL;\lambda) = \chi([0,t];\lambda) \left( \sum_{\substack{x \in \mL \\ x \wedge t=0}} \mu_\mL(x) \: \lambda^{n-\rk_\mL(t)-\rk_\mL(x)} \right).
\end{equation*}
\end{corollary}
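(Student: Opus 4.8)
The plan is to derive Corollary~\ref{coro:MFT} directly from Corollary~\ref{coro:three} by specializing $B=\at(\mL)$ and then translating the resulting statement about Whitney numbers into one about characteristic polynomials. First I would note that $B=\at(\mL)$ is an admissible choice in Corollary~\ref{coro:three}, since trivially $B \cup \{a \in \at(\mL) \mid a \le t\} = \at(\mL)$; that $\mL(B)=\mL$ because $\mL$ is atomistic; and hence that $\mu_B=\mu_\mL$ and $\rk_\mL$ restricts to the rank function of $\mL(B)=\mL$. With these identifications, Corollary~\ref{coro:three} becomes, for every $i \in \N$,
$$w_i(\mL) = \sum_{j=0}^i w_j([0,t]) \sum_{\substack{x \in \mL \\ \rk_\mL(x)=i-j \\ x \wedge t=0}} \mu_\mL(x).$$

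Next, recalling that $\chi(\mL;\lambda)=\sum_{i \in \N} w_i(\mL)\,\lambda^{n-i}$, I would multiply both sides above by $\lambda^{n-i}$, sum over $i \in \N$, and reindex. Writing $m \defi \rk_\mL(t)=\rk([0,t])$ and substituting $k \defi i-j$, the pair $(j,k)$ runs over $\N \times \N$, and the exponent splits as $n-i=(m-j)+(n-m-k)$, so that $\lambda^{n-i}=\lambda^{m-j}\,\lambda^{n-m-k}$. By Fubini the double sum factors as
$$\chi(\mL;\lambda) = \left( \sum_{j \in \N} w_j([0,t])\,\lambda^{m-j} \right)\left( \sum_{k \in \N} \Biggl( \sum_{\substack{x \in \mL \\ \rk_\mL(x)=k \\ x \wedge t=0}} \mu_\mL(x) \Biggr) \lambda^{n-m-k} \right),$$
and recognizing the first factor as $\chi([0,t];\lambda)$ and collapsing the $k$-sum in the second factor yields exactly the asserted identity.

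The one subtlety worth checking is that this manipulation is legitimate as an identity in $\Z[\lambda]$, i.e.\ that no negative powers of $\lambda$ occur. For the first factor this is clear, since $w_j([0,t])=0$ unless $0 \le j \le m$. For the second factor it follows from the rank-additivity fact already invoked in the proof of Corollary~\ref{coro:three}: when $x \wedge t=0$ one has $\rk_\mL(x)+\rk_\mL(t)=\rk_\mL(x \vee t)\le n$, hence $n-m-\rk_\mL(x)\ge 0$. I do not anticipate any genuine obstacle: all the combinatorial substance sits in Corollary~\ref{coro:three} (hence ultimately in Theorem~\ref{teo:av}), and what remains is the routine bookkeeping of reindexing a convolution sum and separating the two parts of the $\lambda$-exponent.
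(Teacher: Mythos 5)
Your proposal is correct and follows exactly the route the paper indicates: specialize Corollary~\ref{coro:three} to $B=\at(\mL)$ and convert the resulting convolution of Whitney numbers into a product of polynomials via the definition of $\chi$. Your additional check that no negative powers of $\lambda$ arise (using $\rk_\mL(x)+\rk_\mL(t)=\rk_\mL(x\vee t)\le n$ when $x\wedge t=0$) is a welcome piece of diligence that the paper leaves implicit.
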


As illustrated by Stanley, an important application of Corollary \ref{coro:MFT} is the computation of the characteristic polynomials of supersolvable geometric lattices \cite{stanley1971modular, stanley1972supersolvable}. 

\begin{remark} \label{rem:MFT}
Recall that a geometric lattice $\mL$ is \textbf{supersolvable} \label{supers} if it has a maximal chain of modular elements $0=t_0 \lessdot t_1 \lessdot \cdots \lessdot t_r=1$, where $r=\rk(\mL)$. Applying Corollary \ref{coro:MFT} to such a lattice with $t=t_{r-1}$ one obtains
$\chi(\mL;\lambda)=\chi([0,t_{r-1}])\cdot \left(\lambda - \#\{a \in \at(\mL) \mid a \not\le t_{r-1}\} \right)$.
This factorization method can be iterated, arriving at
$$\chi(\mL;\lambda) = \prod_{i=1}^r \left( \lambda - \#\{a \in \at(\mL) \mid a \le t_r, \; a \not\leq t_{r-1}\} \right).$$
The reader is referred to \cite[Section 3]{stanley1971modular} for more details.
Corollary \ref{coro:MFT} is a powerful tool for computing characteristic polynomials of lattices. It has been studied and extended by various 
authors; see e.g.~\cite{brylawski1975modular,brylawski1980several,greene82,hallam2015factoring,blass1997mobius}.
Unfortunately, most lattices studied in this paper (including higher-weight Dowling lattices) are not supersolvable in general. Moreover, their characteristic polynomials do not split into linear factors, not even over $\R$.
In particular, Stanely's Modular Factorization Theorem and its generalizations cannot be directly applied to compute them.
\end{remark}

\section{Whitney Numbers and Subspace Distributions}
\label{sec:wnsd}

In this section we establish some relations between Whitney numbers and subspace distributions. These will be applied in several instances to study geometric lattices that are not supersolvable, and whose characteristic polynomials do not split into linear factors.

In the sequel, we follow the notation of Section \ref{sec:prel} and work with a fixed space $X$ of dimension $n \ge 1$; see in particular Notation \ref{notaz X}.

In \cite[Theorem 2]{dowling1971codes}, Dowling expresses the number of $k$-dimensional spaces $V \le X$ that distinguish a certain set $A \subseteq X$ in terms of the evaluations of $\chi(\mL(A);\lambda)$ at prescribed integer values; see also \cite[Section~7.6]{zaslavsky1987mobius} and \cite[Section 4.4]{kung1996critical}. We start by refining this result and proving its converse. More precisely, we show that for all $A \subseteq X$ and for all $0 \le t \le n$, the following sets of numerical quantities determine each other:
\begin{itemize}[noitemsep]
\item the numbers $\{\alpha_k(X,A) \mid 0 \le k \le t \}$,
\item the Whitney numbers $\{w_i(\mL(A)) \mid 0 \le i \le t\}$.
\end{itemize}
We also provide explicit formul{\ae} that relate the two sets of numbers.
This shows that \textit{partial} information on the subspace distribution of $(X,A)$ yields \textit{partial} information on $\chi(\mL(A))$, and vice versa.
In later sections, we will use this observation to derive new properties of higher-weight Dowling lattices and compute their Whitney numbers in some cases; see Sections \ref{sec:HWDL3}, \ref{sec:HWDL4}, and \ref{sec:HWDL5}.

\begin{theorem} \label{prop2}
Let $A \subseteq X$ be a subset. For all $i \in \N$ we have
$$w_i(\mL(A)) = \sum_{k=0}^i \alpha_k(X,A) \qbin{n-k}{i-k}{q} (-1)^{i-k} q^{\binom{i-k}{2}}.$$ 
\end{theorem}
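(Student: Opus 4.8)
The plan is to obtain the formula as a direct consequence of Theorem~\ref{th:gencrapo}. Let $\mL$ be the full lattice of subspaces of $X$ as in Notation~\ref{notaz X}; since $\mL$ is geometric it is atomistic, so taking $B=\at(\mL)$, the set of all $1$-dimensional subspaces, gives $\mL(B)=\mL$ and $\mu_B=\mu_\mL$. I would apply Theorem~\ref{th:gencrapo} with this $B$, with the atom set $\Pro(A)\subseteq\at(\mL)$ in the role of ``$A$'', with $x=0$, and with $S=\{V\le X \mid \rk(V)=i\}$. Because $\mL(\Pro(A))=\mL(A)$ and $\mu_{\Pro(A)}(0,z)=\mu_A(z)$, the left-hand side of the identity in Theorem~\ref{th:gencrapo} collapses to $\sum_{z\in\mL(A),\,\rk(z)=i}\mu_A(z)=w_i(\mL(A))$, while the right-hand side becomes
$$w_i(\mL(A)) \;=\; \sum_{\substack{t\le X\\ t^A=0}}\ \sum_{\substack{V\le X,\ V\ge t\\ \rk(V)=i}} \mu_\mL(t,V).$$

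The second step is to evaluate the two ingredients on the right. First, $t^A=0$ means exactly that $t$ contains no $1$-dimensional subspace spanned by a vector of $A$, i.e.\ that $t\cap A\subseteq\{0\}$, which by Definition~\ref{def:alpha} is precisely the condition $t\not\mee A$; hence, grouping the outer sum according to $k=\rk(t)$, the number of admissible $t$ with $\rk(t)=k$ is $\alpha_k(X,A)$. Second, every interval $[t,V]$ in $\mL$ is isomorphic to the subspace lattice of a space of dimension $\rk(V)-\rk(t)$, so by Example~\ref{ex:primo} one has $\mu_\mL(t,V)=(-1)^{i-\rk(t)}q^{\binom{i-\rk(t)}{2}}$, depending only on the two ranks; and for a fixed $t$ with $\rk(t)=k$ the $i$-dimensional subspaces $V\ge t$ are in bijection with the $(i-k)$-dimensional subspaces of $X/t$, of which there are $\qbin{n-k}{i-k}{q}$, independently of the particular $t$. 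Substituting these values into the double sum and reorganizing it as a sum over $k$ from $0$ to $i$ (the inner sum being empty, hence $0$, once $k>i$) yields
$$w_i(\mL(A)) \;=\; \sum_{k=0}^{i} \alpha_k(X,A)\,\qbin{n-k}{i-k}{q}\,(-1)^{i-k}q^{\binom{i-k}{2}},$$
which is the assertion.

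I do not expect a genuine obstacle here: the substance is carried by Theorem~\ref{th:gencrapo}, and what remains is the routine dictionary between ``$t^A=0$'' and ``$t\not\mee A$'', together with the standard evaluation of the M\"obius function of the subspace lattice and the Gaussian-binomial count of subspaces lying above a fixed flat. The only points meriting care are notational — keeping the set $A\subseteq X$ distinct from the atom set $\Pro(A)$ that is fed into Theorem~\ref{th:gencrapo} — and the summation conventions, namely that the term indexed by $k$ vanishes automatically for $k>i$, which is why the final sum runs only up to $i$. An alternative, slightly less economical route would be to perform M\"obius inversion directly in $\mL$ by means of the Galois-connection argument underlying Theorem~\ref{th:decomp}; I find routing through Theorem~\ref{th:gencrapo} cleaner.
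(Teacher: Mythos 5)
Your proposal is correct and follows the same route as the paper: apply Theorem~\ref{th:gencrapo} with $x=0$ and $S$ the set of $i$-dimensional subspaces of $X$, identify the left-hand side with $w_i(\mL(A))$, and evaluate the right-hand side by grouping the spaces $t$ with $t^A=0$ according to their rank $k$ (giving $\alpha_k(X,A)$) and using the known M\"obius function and subspace counts of $\mL(X)$. The only difference is that you spell out the evaluation of the inner sum in more detail than the paper does; the substance is identical.
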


\begin{proof}
Fix $i \in \N$. We apply Theorem \ref{th:gencrapo} to $\mL(A)$ and $\mL(X)$, setting $x=0$ and taking as $S$ the set of subspaces of $X$ of dimension $i$. We obtain
\begin{equation} \label{intt}
\sum_{\substack{U \in \mL(A) \\ \rk(U)=i}} \mu_A(U) = \sum_{\substack{V \le X \\ V^A=\{0\}}} \ \sum_{\substack{Y \le X \\ \dim(Y)=i \\ Y \ge V}} \mu_X(V,Y).
\end{equation}
The left-hand side of (\ref{intt}) is $w_i(\mL(A))$, while the right-hand side can be re-written as
$$\sum_{k \in \N} \alpha_k(X,A) \qbin{n-k}{i-k}{q} (-1)^{i-k} q^{\binom{i-k}{2}}=\sum_{k=0}^i \alpha_k(X,A) \qbin{n-k}{i-k}{q} (-1)^{i-k} q^{\binom{i-k}{2}},$$
concluding the proof.
\end{proof}

The following result shows that $\alpha_k(X,A)$ only depends on the \textit{first} $k$ Whitney numbers of $\mL(A)$. It establishes the inverse of Theorem \ref{prop2}.

\begin{theorem} \label{prop1}
Let $A \subseteq X$ be a subset. For all $k \in \N$ we have
$$\alpha_k(X,A)= \sum_{i=0}^k w_i(\mL(A)) \qbin{n-i}{k-i}{q}.$$
\end{theorem}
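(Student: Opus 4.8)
The plan is to obtain Theorem~\ref{prop1} by inverting the linear transformation of Theorem~\ref{prop2}, which has just been established. That is, I would substitute the formula of Theorem~\ref{prop2} into the right-hand side of the claimed identity, exchange the order of summation, and check that what remains collapses to $\alpha_k(X,A)$. Explicitly,
\begin{align*}
\sum_{i=0}^k w_i(\mL(A))\,\qbin{n-i}{k-i}{q}
&= \sum_{i=0}^k \qbin{n-i}{k-i}{q}\sum_{j=0}^i \alpha_j(X,A)\,\qbin{n-j}{i-j}{q}(-1)^{i-j}q^{\binom{i-j}{2}} \\
&= \sum_{j=0}^k \alpha_j(X,A)\sum_{i=j}^k \qbin{n-i}{k-i}{q}\,\qbin{n-j}{i-j}{q}(-1)^{i-j}q^{\binom{i-j}{2}},
\end{align*}
so everything reduces to showing that the inner sum over $i$ equals the Kronecker delta $\delta_{j,k}$.

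To evaluate the inner sum I would first rewrite the product of $q$-binomials. Writing $\qbin{n-i}{k-i}{q}=\qbin{(n-j)-(i-j)}{k-i}{q}$ and using the ``trinomial revision'' identity $\qbin{a}{b}{q}\qbin{a-b}{c}{q}=\qbin{a}{b+c}{q}\qbin{b+c}{c}{q}$ (which follows from Lemma~\ref{proprq}), one gets $\qbin{n-j}{i-j}{q}\qbin{n-i}{k-i}{q}=\qbin{n-j}{k-j}{q}\qbin{k-j}{k-i}{q}$. Factoring out $\qbin{n-j}{k-j}{q}$ and substituting $m=i-j$, so that $\qbin{k-j}{k-i}{q}=\qbin{k-j}{m}{q}$, turns the inner sum into
$$\qbin{n-j}{k-j}{q}\sum_{m=0}^{k-j}(-1)^m q^{\binom{m}{2}}\qbin{k-j}{m}{q}.$$
Now the $q$-binomial theorem $\prod_{t=0}^{N-1}(1-q^tx)=\sum_{m=0}^N(-1)^m q^{\binom{m}{2}}\qbin{N}{m}{q}x^m$, evaluated at $x=1$, gives $\sum_{m=0}^N(-1)^m q^{\binom{m}{2}}\qbin{N}{m}{q}=\delta_{N,0}$; hence the inner sum equals $\qbin{n-j}{k-j}{q}\,\delta_{j,k}=\qbin{n-k}{0}{q}\,\delta_{j,k}=\delta_{j,k}$. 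Plugging this back, the double sum becomes $\sum_{j=0}^k\alpha_j(X,A)\,\delta_{j,k}=\alpha_k(X,A)$, which is the assertion.

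Since Theorem~\ref{prop2} is already in hand, no real obstacle remains: the argument is elementary $q$-series bookkeeping, and the only place that needs attention is the manipulation of the $q$-binomial product and keeping the summation ranges straight when exchanging the order of summation. If one wished to avoid relying on Theorem~\ref{prop2}, the same statement could instead be proved from scratch by applying Theorem~\ref{th:gencrapo} to $\mL(A)\subseteq\mL(X)$ with a suitably chosen set $S$ and then summing; but the inversion route above is the most economical, as it recycles work already done.
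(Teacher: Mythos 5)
Your proposal is correct and follows essentially the same route as the paper: substitute Theorem~\ref{prop2} into the right-hand side, swap the order of summation, reduce the product of $q$-binomials via Lemma~\ref{proprq}, and collapse the inner sum to a Kronecker delta with the $q$-Binomial Theorem. The only cosmetic difference is that the paper first symmetrizes the $q$-binomials before applying trinomial revision, whereas you apply it directly; the computation is identical in substance.
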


\begin{proof} Fix $k \in \N$. Using Theorem \ref{prop2} we find
\begin{eqnarray*}
\sum_{i=0}^k w_i(\mL(A)) \qbin{n-i}{k-i}{q} &=& \sum_{j=0}^k \alpha_j(X,A) \sum_{i=j}^k \qbin{n-j}{i-j}{q} \qbin{n-i}{k-i}{q} (-1)^{i-j} q^{\binom{i-j}{2}}\\
&=&  \sum_{j=0}^k \alpha_j(X,A) \sum_{i=j}^k \qbin{n-j}{n-i}{q} \qbin{n-i}{n-k}{q} (-1)^{i-j} q^{\binom{i-j}{2}},
\end{eqnarray*}
where the latter equality follows from the first part of Lemma \ref{proprq} applied twice. Combining the the second part of the lemma with standard computations and the $q$-Binomial Theorem one obtains
\begin{equation*}
\sum_{i=0}^k w_i(\mL(A)) \qbin{n-i}{k-i}{q} = \alpha_k(X,A). \qedhere
\end{equation*}
\end{proof}

Using Theorem \ref{prop1} one can obtain a  formula for the number of $k$-dimensional spaces $V \le X$ that avoid a given linear space $A \le X$.
The result is very well-known but we include it for completeness.

\begin{corollary} \label{coro:av}
Let $A \le X$ be a linear subspace of dimension $a$. For all $k \in \N$, the number of $k$-dimensional subspaces of $X$ that distinguish $A$ is
$$\alpha_k(X,A)= \sum_{i=0}^k {(-1)}^i q^{\binom{i}{2}} \qbin{a}{i}{q}\qbin{n-i}{k-i}{q}.$$
\end{corollary}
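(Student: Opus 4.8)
The plan is to reduce everything to Theorem~\ref{prop1} together with the known Whitney numbers of a subspace lattice recorded in Example~\ref{ex:primo}. The only genuine observation needed is the identification of the combinatorial geometry $\mL(A)$ when $A$ happens to be a \emph{linear} subspace of $X$.

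First I would note that, by Notation~\ref{notaz X}, $\mL(A) = \mL(\Pro(A))$ where $\Pro(A)$ is the set of $1$-dimensional subspaces of $X$ spanned by elements of $A$; since $A$ is linear of dimension $a$, $\Pro(A)$ is precisely the set of all atoms contained in $A$, i.e. all $1$-dimensional subspaces of $A$. A subspace of $X$ is a join of such atoms together with $0$ exactly when it has a basis consisting of vectors of $A$, which (again using that $A$ is linear) is exactly the condition that it be a subspace of $A$. Hence $\mL(A)$ is canonically the lattice of all subspaces of the $a$-dimensional space $A$, with rank function the $\F_q$-dimension. This is a geometric (indeed modular) lattice, and Example~\ref{ex:primo} applies verbatim to it with $n$ replaced by $a$, giving
\[
w_i(\mL(A)) = (-1)^i \, q^{\binom{i}{2}} \qbin{a}{i}{q} \qquad \text{for all } i \in \N.
\]

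Then I would simply substitute this expression into the formula of Theorem~\ref{prop1}, namely $\alpha_k(X,A) = \sum_{i=0}^k w_i(\mL(A)) \qbin{n-i}{k-i}{q}$, to obtain
\[
\alpha_k(X,A) = \sum_{i=0}^k (-1)^i \, q^{\binom{i}{2}} \qbin{a}{i}{q} \qbin{n-i}{k-i}{q},
\]
which is exactly the claimed identity. No further computation is required.

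There is essentially no obstacle here: the content is the (routine but worth stating) identification $\mL(A) \cong$ subspace lattice of $A$, after which the result is a one-line specialization of Theorem~\ref{prop1}. The only point to be slightly careful about is the degenerate cases ($a = 0$, or $k > a$, or $k > n$), which are handled automatically by the convention that $\qbin{r}{s}{q} = 0$ when $s < 0$ or $s > r$, so that the stated sum truncates correctly and reduces to the empty-sum value $0$ when appropriate.
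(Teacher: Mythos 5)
Your proof is correct and is exactly the route the paper intends: the paper gives no written proof, stating only that the (well-known) formula follows from Theorem~\ref{prop1}, and your argument — identifying $\mL(A)$ with the subspace lattice of $A$, reading off $w_i(\mL(A))=(-1)^i q^{\binom{i}{2}}\qbin{a}{i}{q}$ from Example~\ref{ex:primo}, and substituting into Theorem~\ref{prop1} — is the intended one-line specialization.
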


As an application of Theorem \ref{prop2}, we now show that a lower bound on the critical exponent of $\mL(A)$ gives recursions for the Whitney numbers of the lattice $\mL(A)$. More precisely, the next result proves that the Whitney numbers of $\mL(A)$ are fully determined by the first $t$ of them, where $t=\rk(A)-\crit(\mL(A))$.

\begin{corollary}
Let $A \subseteq X$ be a subset. For all $i \in \N$ with $\rk(A)-\crit(\mL(A)) < i \le \rk(A)$ we have 
$$w_i(\mL(A)) = - \sum_{j=0}^{i-1} w_j(\mL(A)) \qbin{\rk(A)-j}{i-j}{q}.$$
In particular, the Whitney numbers of $\mL(A)$ are uniquely determined by
$\rk(A)$, $\crit(\mL(A))$, and the Whitney numbers $w_i(\mL(A))$ with
$0 \le i \le \rk(A)-\crit(\mL(A))$.
\end{corollary}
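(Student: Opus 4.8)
The plan is to read the recursion directly off Theorem~\ref{prop1}, applied not to the ambient space $X$ but to $\langle A \rangle$. The point is that the lattice $\mL(A)$, and hence every Whitney number $w_i(\mL(A))$, depends only on $A$ and not on the space in which $A$ is embedded: $\mL(A)$ is the same combinatorial geometry whether we view $A$ as a subset of $X$ or of $\langle A \rangle$, the latter having dimension $a \defi \rk(A)$. If $a=0$, i.e.\ $A \subseteq \{0\}$, the range of indices in the statement is empty and there is nothing to prove, so assume $a \ge 1$ and apply Theorem~\ref{prop1} with $X$ replaced by $\langle A \rangle$. This gives, for all $k \in \N$,
$$\alpha_k(\langle A \rangle, A) = \sum_{j=0}^k w_j(\mL(A)) \qbin{a-j}{k-j}{q}.$$

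Next I would invoke the description of the critical exponent recalled in Remark~\ref{rem:imp}, namely $\crit(\mL(A)) = a - \max\{k \in \N \mid \alpha_k(\langle A \rangle, A) \ne 0\}$. Setting $c \defi \crit(\mL(A))$, this says exactly that $\alpha_k(\langle A \rangle, A) = 0$ whenever $k > a-c$. Hence, for each $i$ with $a-c < i \le a$,
$$0 = \sum_{j=0}^i w_j(\mL(A)) \qbin{a-j}{i-j}{q}.$$
In this range $a-i \ge 0$, so the top term $j=i$ contributes $w_i(\mL(A)) \qbin{a-i}{0}{q} = w_i(\mL(A))$; isolating it yields precisely the claimed identity
$$w_i(\mL(A)) = - \sum_{j=0}^{i-1} w_j(\mL(A)) \qbin{\rk(A)-j}{i-j}{q}, \qquad \rk(A)-\crit(\mL(A)) < i \le \rk(A).$$

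Finally, for the second part of the statement I would argue by induction on $i$. The numbers $w_0(\mL(A)), \dots, w_{a-c}(\mL(A))$ are given; for $i = a-c+1, \dots, a$ the displayed recursion expresses $w_i(\mL(A))$ in terms of $w_0(\mL(A)), \dots, w_{i-1}(\mL(A))$, which are already determined; and since $\mL(A)$ is a geometric lattice of rank $\dim \langle A \rangle = a$, we have $w_i(\mL(A)) = 0$ for all $i > a$. Thus every Whitney number of $\mL(A)$ is pinned down by $\rk(A)$, $\crit(\mL(A))$, and $w_0(\mL(A)), \dots, w_{\rk(A)-\crit(\mL(A))}(\mL(A))$. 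There is no real obstacle in this argument: the only points requiring a moment's care are the change of ambient space from $X$ to $\langle A \rangle$ (this is what makes $\rk(A)$, rather than $\dim X$, appear in the recursion), the trivial observation that $\qbin{a-i}{0}{q}=1$ for $i \le a$, and the degenerate case $A \subseteq \{0\}$.
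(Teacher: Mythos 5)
Your argument is correct and is essentially the paper's own proof: the paper also reduces to the case $\rk(A)=\dim X$ (your replacement of $X$ by $\langle A\rangle$), then combines Theorem~\ref{prop1} with the vanishing $\alpha_i=0$ for $i>\rk(A)-\crit(\mL(A))$ from Remark~\ref{rem:imp} and isolates the $j=i$ term. Your write-up merely makes the WLOG step, the degenerate case $A\subseteq\{0\}$, and the concluding induction explicit.
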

\begin{proof}
We shall assume $\rk(A)=n$ without loss of generality. By definition of critical exponent and Remark \ref{rem:imp} we have that 
$\alpha_i(X,A)=0$ for all $i>n-\crit(\mL(A))$. Therefore Theorem~\ref{prop1} gives
$$\sum_{j=0}^i w_j(\mL(A)) \qbin{n-j}{i-j}{q}=0$$
 for all $i$ with $n-\crit(\mL(A)) < i \le n$, from which the desired result follows.
\end{proof}

We can show a first concrete application of the results of this section. We will need the following definition from coding theory.

\begin{definition} \label{def:wH}
The \textbf{Hamming weight} of a vector $v \in \F_q^n$ is $\wH(v) \defi |\{1 \le i \le n \mid v_i \neq 0\}|$.
\end{definition}

Let $X=\F_2^n$ with $n \ge 1$, and denote by $\OO_n$ the set of vectors in $\F_2^n$ with odd Hamming weight.
The lattice $\mL(\OO_n)$ consists of all those subspaces of $\F_2^n$ (i.e., binary codes) that have a basis made of odd-weight vectors, ordered by inclusion. We call it the \textbf{odd-weight binary geometry} of order $n$. This lattice is not supersolvable in general, and its characteristic polynomial does not split into linear factors (see Example \ref{exnot}). We can nonetheless compute the Whitney numbers of this lattice (and therefore its characteristic polynomial) using Theorem~\ref{prop2}.

\begin{corollary}
For all $n \ge 1$, the Whitney numbers of the odd-weight binary geometry $\mL(\OO_n)$ are given by the formula
\begin{equation*}
w_i(\mL(\OO_n))= \sum_{k=0}^i \qbin{n-1}{k}{2} \qbin{n-k}{i-k}{2} (-1)^{i-k} \; 2^{\binom{i-k}{2}} \quad \mbox{for all } i \in \N.
\end{equation*}
\end{corollary}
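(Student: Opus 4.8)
The plan is to obtain the formula as a direct specialization of Theorem~\ref{prop2}. Taking $q = 2$, $X = \F_2^n$ and $A = \OO_n$ in that theorem yields
$$w_i(\mL(\OO_n)) = \sum_{k=0}^i \alpha_k(\F_2^n,\OO_n)\, \qbin{n-k}{i-k}{2}\, (-1)^{i-k}\, 2^{\binom{i-k}{2}} \qquad \text{for all } i \in \N,$$
so the whole statement reduces to identifying the subspace distribution of the pair $(\F_2^n, \OO_n)$; concretely, it suffices to prove that $\alpha_k(\F_2^n, \OO_n) = \qbin{n-1}{k}{2}$ for every $k \in \N$.

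The key observation is that a subspace avoids $\OO_n$ exactly when it contains no odd-weight vector. Since $0 \notin \OO_n$, a subspace $V \le \F_2^n$ satisfies $V \not\mee \OO_n$ if and only if $V \cap \OO_n = \emptyset$, i.e.\ if and only if every vector of $V$ has even Hamming weight. Now the set $\EE_n \defi \{ v \in \F_2^n \mid \wH(v) \text{ is even} \}$ is precisely the kernel of the $\F_2$-linear functional $v \mapsto v_1 + \cdots + v_n$ (because $\wH(v) \bmod 2$ equals this sum in $\F_2$), and this functional is nonzero for every $n \ge 1$; hence $\EE_n$ is a subspace of $\F_2^n$ of dimension $n-1$. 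Consequently $V \not\mee \OO_n$ holds if and only if $V \subseteq \EE_n$, and therefore $\alpha_k(\F_2^n, \OO_n)$ equals the number of $k$-dimensional subspaces of the $(n-1)$-dimensional space $\EE_n$, which is $\qbin{n-1}{k}{2}$ by Example~\ref{ex:primo}.

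Substituting $\alpha_k(\F_2^n,\OO_n) = \qbin{n-1}{k}{2}$ into the displayed expansion of $w_i(\mL(\OO_n))$ then gives the claimed identity, valid for all $i \in \N$ (both sides vanish once $i > n$). There is no substantive obstacle in this argument: the computation is routine, and the only step needing a moment of care is the translation ``$V$ avoids $\OO_n$'' $\Longleftrightarrow$ ``$V$ is contained in the even-weight hyperplane $\EE_n$'', which rests on the elementary fact that the even-weight vectors of $\F_2^n$ form a linear subspace. (One may also remark that the standard basis vectors lie in $\OO_n$, so $\langle \OO_n \rangle = \F_2^n$ and $\rk(\mL(\OO_n)) = n$, but this is not needed for the formula.)
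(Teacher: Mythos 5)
Your proof is correct and follows essentially the same route as the paper: both reduce the statement to Theorem~\ref{prop2} together with the identification $\alpha_k(\F_2^n,\OO_n)=\qbin{n-1}{k}{2}$, obtained by observing that a subspace avoids $\OO_n$ exactly when it lies in the $(n-1)$-dimensional even-weight subspace. The only cosmetic difference is that you realize this subspace as the kernel of the parity functional, while the paper notes directly that even-weight vectors are closed under addition.
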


\begin{proof}
Denote by $\EE_n$ the set of vectors in $\F_2^n$ with even weight. Since the sum of two vectors of even weight is again a vector of even weight,
the vectors $v \in \F_2^n$ of even weight form a linear subspace $Y \le \F_q^n$. Moreover, $\mL(\EE_n)$ coincides with $\mL(Y)$, the lattice of subspaces of $Y$.
It is easy to see that $\dim(Y)=n-1$. 
Observe moreover that the elements of $\mL(\EE_n)$ are precisely the even spaces, i.e., those subspaces of $\F_2^n$ that avoid $\OO_n$. As a consequence, 
\begin{equation} \label{pro3}
\alpha_k(\F_2^n,\OO_n) = 
\qbin{n-1}{k}{2} \quad \mbox{for all } k \in \N.
\end{equation}
Finally, combining Theorem~\ref{prop2} with~(\ref{pro3}) one computes 
the Whitney numbers of the odd-weight binary geometry, as desired.
%
\end{proof}


\begin{example} \label{exnot}
Let $n=3$. Then
$\chi(\mL(\OO_3);\lambda) = \lambda^3 - 4\lambda^2 + 6\lambda -3 = 
(\lambda-1)(\lambda^2-3\lambda+3)$. The polynomial $\lambda^2-3\lambda+3$ is irreducible over $\R$. In particular, $\mL(\OO_3)$ is not supersolvable.
\end{example}

The next result is very simple lower bound for the subspace distribution associated with $X$ and $A \subseteq X$. 
In Section \ref{sec:HWDL5} we will use this observation and Theorem~\ref{prop2} to obtain asymptotic estimates for the Whitney numbers of higher-weight Dowling lattices as the field size grows.

\begin{proposition} \label{prop:smalle}
Let $A \subseteq X$ be a set, and let $1 \le k \le n$ be an integer. 
We have 
$$\alpha_k(X,A) \ge \qbin{n}{k}{q} - |A| \cdot \qbin{n-1}{k-1}{q}.$$
\end{proposition}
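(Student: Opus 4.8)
The plan is to bound below the number of $k$-dimensional subspaces that avoid $A$ by a simple union-bound / inclusion-exclusion truncation. The key observation is that $\alpha_k(X,A) = \binom{n}{k}_q - \#\{V \le X \mid \rk(V) = k, \ V \mee A\}$, so it suffices to give an \emph{upper} bound on the number of $k$-dimensional subspaces that \emph{meet} $A$. A subspace $V$ meets $A$ precisely when it contains some nonzero vector spanning a point in $\Pro(A)$; equivalently, when there exists a point $\langle a \rangle \in \Pro(A)$ with $\langle a\rangle \le V$. Hence
$$\#\{V \le X \mid \rk(V) = k, \ V \mee A\} = \#\left( \bigcup_{\langle a \rangle \in \Pro(A)} \{V \le X \mid \rk(V) = k, \ \langle a \rangle \le V \} \right) \le \sum_{\langle a \rangle \in \Pro(A)} \#\{V \le X \mid \rk(V) = k, \ \langle a \rangle \le V\}.$$

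First I would record that for a fixed point (1-dimensional subspace) $P \le X$, the number of $k$-dimensional subspaces $V$ with $P \le V$ equals the number of $(k-1)$-dimensional subspaces of the quotient $X/P$, which is $\binom{n-1}{k-1}_q$; this is the standard interval computation in the subspace lattice and is subsumed by Example~\ref{ex:primo}. Next I would note $|\Pro(A)| \le |A|$, since $\Pro(A)$ is the set of points spanned by elements of $A$ and distinct points need distinct spanning vectors (more crudely, the map $A \setminus\{0\} \to \Pro(A)$ sending $a \mapsto \langle a \rangle$ is surjective, and $0$ contributes nothing). Combining the union bound above with these two facts gives
$$\#\{V \le X \mid \rk(V) = k, \ V \mee A\} \le |\Pro(A)| \cdot \binom{n-1}{k-1}_q \le |A| \cdot \binom{n-1}{k-1}_q,$$
and subtracting from $\binom{n}{k}_q$ yields exactly the claimed inequality $\alpha_k(X,A) \ge \binom{n}{k}_q - |A|\binom{n-1}{k-1}_q$.

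There is essentially no main obstacle here: the only thing to be slightly careful about is the bookkeeping between vectors, the points they span, and the definition of $V \mee A$ from Definition~\ref{def:alpha} — namely that $V \mee A$ exactly when $V$ contains a nonzero element of $A$, hence contains the point it spans. One should also handle the degenerate case $A \subseteq \{0\}$ (then $\Pro(A) = \emptyset$, the union is empty, and the bound is trivially $\binom{n}{k}_q \ge \binom{n}{k}_q$, which holds), and note the bound is only informative when $|A|\binom{n-1}{k-1}_q < \binom{n}{k}_q$, but the inequality itself is valid unconditionally. I would present it in two or three lines following exactly the union-bound structure above.
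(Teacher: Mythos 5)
Your proof is correct and is essentially the same argument as the paper's: a union bound on the $k$-dimensional subspaces meeting $A$, using the count $\qbin{n-1}{k-1}{q}$ of $k$-spaces through a fixed point (the paper sums over nonzero vectors of $A$ rather than over the points of $\Pro(A)$, which is an immaterial difference). No gaps.
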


\begin{proof} The $k$-dimensional subspaces $V \le X$ that intersect $A$ non-trivially are at most
\begin{equation*}
\sum_{v \in A \setminus \{0\}} |\{V \le X \mid \rk(V)=k, \, v \in V \}| = |A| \cdot \qbin{n-1}{k-1}{q}.
\qedhere
\end{equation*}
\end{proof}

The previous proposition gives the best bound when applied to a minimal sets of representatives, defined as follows.

\begin{definition} \label{def:msr}
Let $A \subseteq X$ be a subset. A \textbf{minimal set of representatives} for $A$ is a subset $B \subseteq A$ with the following properties:
\begin{itemize}[noitemsep]
\item $0 \notin B$,
\item for every $v \in A$ with $v \neq 0$ there exists $w \in B$ with $\langle v \rangle = \langle w \rangle$,
\item for every $v,w \in B$ with $v \neq w$, we have $\langle v \rangle \neq \langle w \rangle$.
\end{itemize} 
\end{definition}


\begin{remark} \label{rmk:msr}
Let $A \subseteq X$ be a subset, and let $B \subseteq A$ be a minimal set of representatives for~$A$. Then a subspace $V \le X$ distinguishes (resp., meets) $A$ if and only if it distinguishes (resp., meets)~$B$. In particular, we have $\alpha_k(X,A)=\alpha_k(X,B)$ for all $k \in \N$.
\end{remark}

We conclude this section with a characterization of the M\oo bius function of a lattice of the form~$\mL(A)$ in terms of subspace distributions. 
See Definition~\ref{def:alpha} for the notation.

\begin{theorem} \label{th:muexpl}
Let $A \subseteq X$ be a subset. For all $i \in \N$ and all $U \in \mL(A)$ with $\rk(U)=i$ we have
$$\mu_A(U) = \sum_{j=0}^{i} (-1)^{i-j} q^{\binom{i-j}{2}} \; \alpha(U,A,j).$$
\end{theorem}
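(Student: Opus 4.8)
The plan is to mimic the strategy used to prove Theorem~\ref{prop2}, but applied to the interval $[0,U]$ in $\mL(A)$ rather than to the whole lattice. First I would observe that, since $\mL(A)$ is atomistic and $U \in \mL(A)$, the interval $[0_{\mL(A)},U]$ is itself a lattice of the form $\mL(A_U)$, where $A_U \defi \{a \in A \mid \langle a \rangle \le U\}$ (more precisely $A_U = \{a \in A \mid a \in U\}$), and the ambient space can be taken to be $U$ itself. The Möbius function $\mu_A(0,U) = \mu_A(U)$ coincides with the top Möbius value $\mu_{A_U}(U)$ of this sublattice of the subspace lattice of $U$. So the statement reduces to: for the combinatorial geometry $\mL(A_U)$ inside the $i$-dimensional space $U$, the quantity $w_i(\mL(A_U))$ — which equals $\mu_{A_U}(U)$ because $U$ is the unique top element of rank $i$ — is given by the right-hand side.

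Next I would apply Theorem~\ref{prop2} with $X$ replaced by $U$, with $A$ replaced by $A_U$, and with $i$ the dimension of $U$. Theorem~\ref{prop2} gives
$$w_i(\mL(A_U)) = \sum_{k=0}^i \alpha_k(U, A_U)\qbin{i-k}{i-k}{q}(-1)^{i-k}q^{\binom{i-k}{2}} = \sum_{k=0}^i \alpha_k(U,A_U)(-1)^{i-k}q^{\binom{i-k}{2}},$$
since $\qbin{i-k}{i-k}{q}=1$. It remains only to match $\alpha_k(U,A_U)$ with the notation $\alpha(U,A,j)$ appearing in the statement: a $k$-dimensional $V \le U$ fails to meet $A_U$ precisely when it fails to meet $A$ (any vector of $A$ lying in $V \le U$ already lies in $A_U$), so $\alpha_k(U,A_U) = \alpha_k(U,A)$, and this is exactly the quantity denoted $\alpha(U,A,k)$ in Definition~\ref{def:alpha}. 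Re-indexing $k \to j$ yields the claimed formula.

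The only subtlety — and the step I would be most careful about — is verifying that the interval $[0,U]$ in $\mL(A)$ really is the full combinatorial geometry $\mL(A_U)$ over the space $U$, i.e.\ that the rank function and the Möbius function of this interval agree with those computed intrinsically inside $U$. This follows because $\mL(A)$ is atomistic: every element of $[0,U]$ is a join of atoms of $\mL(A)$ below $U$, these atoms are exactly the points spanned by $A_U$, and joins and meets in the interval are inherited from the subspace lattice of $X$, hence agree with those in the subspace lattice of $U$. The Möbius function of an interval of a poset depends only on that interval, so $\mu_A(U) = \mu_{\mL(A_U)}(U) = w_i(\mL(A_U))$, and the argument closes. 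Alternatively, one can avoid the interval reduction entirely and instead run the Galois-connection / Theorem~\ref{th:gencrapo} argument directly with $x = 0$ replaced by considering $\mL(A) \subseteq \mL(U)$ and summing $\mu_A(U,\,\cdot\,)$ against subspaces $Y \le U$ of dimension $i$; I would present whichever is cleaner, but I expect the interval reduction to be the shortest route.
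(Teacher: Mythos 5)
Your argument is correct, but it is not the route the paper takes. You localize: you identify the interval $[0,U]$ of $\mL(A)$ with the full geometry $\mL(A\cap U)$ inside the ambient space $U$ (using that $U\in\mL(A)$ forces $U=\langle A\cap U\rangle$, that the order on the interval is inherited, and that the M\oo bius function of an interval depends only on that interval), observe that $\mu_A(U)$ is then the top Whitney number $w_i(\mL(A\cap U))$, and invoke Theorem~\ref{prop2} with $X$ replaced by $U$, where the $q$-binomial coefficient degenerates to $1$; the identification $\alpha_k(U,A\cap U)=\alpha_k(U,A)$ is immediate. The paper instead argues globally via Theorem~\ref{th:decomp}: writing $A\setminus\{0\}=\{a_1,\dots,a_L\}$ and $A_\ell=\langle a_\ell\rangle$, it defines $f(V)$ as the decomposition sum $\sum\prod_\ell\mu_{A_\ell}(V_\ell)$ over tuples with $V_1+\cdots+V_L=V$, shows that its summatory function $g(V)=\sum_{V'\le V}f(V')$ is the indicator of ``$V$ distinguishes $A$'', and recovers $f(U)=\mu_A(U)$ by M\oo bius inversion in the subspace lattice of $X$. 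The two proofs rest on different pillars from Section~\ref{sec:atomistic} (Theorem~\ref{th:decomp} for the paper, Theorem~\ref{th:gencrapo} via Theorem~\ref{prop2} for you); yours is shorter given that Theorem~\ref{prop2} is already available, and it makes transparent that Theorem~\ref{th:muexpl} is just the local form of Theorem~\ref{prop2}, while the paper's version is self-contained relative to the decomposition formula and does not pass through the interval identification. The only points you should state explicitly in a written-up version are the degenerate case $U=\{0\}$ (Notation~\ref{notaz X} assumes a non-zero ambient space, so $i=0$ is best checked directly) and the fact that the rank of an element of $\mL(A\cap U)$ is its dimension, so that $U$ really is the unique element of rank $i$; neither is a genuine obstacle.
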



\begin{proof}
The result is immediate if $A=\emptyset$ or $A=\{0\}$. We henceforth assume $A \neq \emptyset$ and $A \neq \{0\}$. Denote by $a_1,...,a_L$ the non-zero elements of $A$.
For all $1 \le \ell \le L$, let $A_\ell$ denote the 1-dimensional space generated by $a_\ell$. Moreover, for all $U \le X$ let
$$f(U) \defi \sum_{ \substack{(U_1,...,U_L) \in \mL(A_1) \times \cdots \times \mL(A_L) \\ U_1+ \cdots +U_L=U}} \mu_{A_1}(U_1) \cdots \mu_{A_L}(U_L),$$
where the sum over an empty index set is zero by definition.
Observe that for all $U \le X$ we have
$$g(U) \defi \sum_{U' \le U} f(U') = \left( \sum_{U_1 \le A_1 \cap U} \mu_{A_1}(U_1) \right) \cdots \left( \sum_{U_L \le A_L \cap U} \mu_{A_L}(U_L) \right).$$
The latter product is zero unless $A_\ell \cap U=\{0\}$ for all $1 \le \ell \le L$, in which case $g(U)=1$. Using M\oo bius inversion in $\mL(X)$ we then obtain
$$f(U) = \sum_{U' \le U} \mu_X(U',U) \; g(U') = \sum_{j=0}^{i} (-1)^{i-j} q^{\binom{i-j}{2}} \; \alpha(U,A,j) \quad \mbox{for all $U \le X$}.$$
Finally, observe that by Theorem \ref{th:decomp} we have
$\mu_A(U)=f(U)$ for all $U \in \mL(A)$. 
\end{proof}

\section{Distinguishing Multiple Spaces and Orthogonality} \label{sec:dms}

Corollary \ref{coro:av} provides a formula for the number of $k$-dimensional subspaces $V \le X$ that distinguish a given subspace $A \le X$. In this section consider the number of subspaces of dimension $k$ that distinguish 
an $L$-tuple $A_1,...,A_L$ of subspaces of $X$, i.e., that distinguishes $A_1 \cup \cdots \cup A_L$.


\begin{remark}
It follows from the definitions and Remark \ref{rem:imp} that
$\crit(\mL(A_1 \cup \cdots \cup A_L)) \ge \max_\ell \dim(A_\ell)$.
Equivalently, the integers of the form $q^i$, for $0 \le i < \max_\ell \dim(A_\ell)$,
are roots of the characteristic polynomial $\chi(\mL(A_1 \cup \cdots \cup A_L);\lambda)$.
\end{remark}

When $L=2$, one can obtain detailed information about the corresponding lattice $\mL(A_1 \cup A_2)$. For lattices of this form, the following theorem explicitly computes their Whitney numbers. The proof uses Theorem \ref{teo:av} and can be found in Appendix \ref{app:A}.

\begin{theorem} \label{twospacesS}
Let $A_1, A_2 \le X$ be subspaces. 
For all $i \in \N$ we have
\begin{equation*} 
w_i(\mL(A_1 \cup A_2)) = \sum_{j=0}^{i} \ \sum_{h=0}^{i-j} (-1)^{i+h} \; q^{\binom{j}{2} + \binom{i-j}{2} + \binom{h}{2}} 
  \qbin{\rk(A_1)}{j}{q} \qbin{\rk(A_1 \cap A_2)}{h}{q} \qbin{\rk(A_2)-h}{i-j-h}{q}.
\end{equation*}
\end{theorem}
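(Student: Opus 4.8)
The plan is to apply Theorem \ref{teo:av} with $\mL$ the lattice of subspaces of $X$, $A = \Pro(A_1)$ and $B = \Pro(A_2)$ (minimal sets of representatives for the two subspaces), so that $A \cup B = \at(\mL(A_1 \cup A_2))$. Working inside the atomistic lattice $\mL(A_1 \cup A_2)$, the theorem decomposes $\mu_{A_1 \cup A_2}(U)$ as a sum over pairs $(U_1, U_2)$ with $U_1 \le A_1$, $U_2 \le A_2$, $U_2^{A} = 0$ (equivalently $U_2 \cap A_1 = 0$), and $U_1 + U_2 = U$. Summing over all $U$ of rank $i$ then gives
\begin{equation*}
w_i(\mL(A_1 \cup A_2)) = \sum_{\substack{U_1 \le A_1, \ U_2 \le A_2 \\ U_2 \cap A_1 = 0}} \mu_{A_1}(U_1) \, \mu_{A_2}(U_2) \, [\rk(U_1 + U_2) = i].
\end{equation*}
Since $\mL(A_1) = [0, A_1]$ and $\mL(A_2) = [0, A_2]$ are full subspace lattices, $\mu_{A_1}(U_1) = (-1)^j q^{\binom{j}{2}}$ where $j = \rk(U_1)$, and similarly $\mu_{A_2}(U_2) = (-1)^{\rk(U_2)} q^{\binom{\rk(U_2)}{2}}$.

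Next I would reorganize the sum by the ranks involved. Fix $j = \rk(U_1)$ and $m = \rk(U_2)$; because $U_2 \cap A_1 = 0$, we automatically have $U_1 \cap U_2 = 0$, hence $\rk(U_1 + U_2) = j + m$, and the condition $\rk(U_1+U_2) = i$ becomes $m = i - j$. So I need to count, for each $j$, the number of pairs $(U_1, U_2)$ with $\rk(U_1) = j$, $U_1 \le A_1$, $\rk(U_2) = i-j$, $U_2 \le A_2$, $U_2 \cap A_1 = 0$. The count of $U_1$ is $\qbin{\rk(A_1)}{j}{q}$. For the $U_2$ count, I would first choose $U_2 \le A_2$ of rank $i-j$ with $U_2 \cap A_1 = 0$ — equivalently $U_2 \cap (A_1 \cap A_2) = 0$ inside $A_2$ — and apply Corollary \ref{coro:av} with ambient space $A_2$ and distinguished subspace $A_1 \cap A_2$: this yields $\sum_{h} (-1)^h q^{\binom{h}{2}} \qbin{\rk(A_1 \cap A_2)}{h}{q} \qbin{\rk(A_2) - h}{i-j-h}{q}$. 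Here I should be slightly careful: Corollary \ref{coro:av} counts subspaces that \emph{distinguish} a set, which for a linear subspace means trivial intersection, so this is exactly the count I want; the inner index $h$ ranges over $0 \le h \le i-j$.

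Assembling these pieces,
\begin{equation*}
w_i(\mL(A_1 \cup A_2)) = \sum_{j=0}^{i} (-1)^j q^{\binom{j}{2}} \qbin{\rk(A_1)}{j}{q} \cdot (-1)^{i-j} q^{\binom{i-j}{2}} \sum_{h=0}^{i-j} (-1)^h q^{\binom{h}{2}} \qbin{\rk(A_1 \cap A_2)}{h}{q} \qbin{\rk(A_2)-h}{i-j-h}{q},
\end{equation*}
and collecting signs $(-1)^j (-1)^{i-j} (-1)^h = (-1)^{i+h}$ and exponents $\binom{j}{2} + \binom{i-j}{2} + \binom{h}{2}$ gives precisely the stated formula. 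The main obstacle is the bookkeeping in the middle step: verifying that $U_2^A = 0$ in the sense of Theorem \ref{teo:av} is genuinely equivalent to $U_2 \cap A_1 = 0$ (this uses that $A = \Pro(A_1)$ so $U_2^A$ is spanned by the points of $A_1$ inside $U_2$), and that this forces $U_1 \cap U_2 = 0$ so ranks add — after that, the identification of the $U_2$-count with Corollary \ref{coro:av} applied inside $A_2$ is the one place where a wrong ambient space or a misread of "distinguish" would derail the computation. Everything else is routine manipulation of $q$-binomial coefficients.
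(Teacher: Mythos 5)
Your proposal is correct and follows essentially the same route as the paper: the paper applies Corollary \ref{coro:three} (itself a direct consequence of Theorem \ref{teo:av}) with the modular element $t=A_1$, identifies the condition $U\cap A_1=\{0\}$ with $U\cap(A_1\cap A_2)=\{0\}$, and finishes with Corollary \ref{coro:av} applied inside $A_2$ --- exactly your computation. The only nitpick is that Theorem \ref{teo:av} must be applied with $\mL=\mL(A_1\cup A_2)$ (so that $\Pro(A_1)\cup\Pro(A_2)=\at(\mL)$), not with the full subspace lattice of $X$ as your first sentence suggests; your subsequent phrase ``working inside $\mL(A_1\cup A_2)$'' already fixes this.
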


A particularly interesting case is when all the spaces $A_1,...,A_L \le X$ have the same dimension, say~$a$. In this situation we have $\crit(\mL(A_1 \cup \cdots \cup A_L)) =a$ if and only if the spaces $A_1,...,A_L$ share a common complement in the lattice $\mL(X)$ of subspaces of $X$.
In the reminder of the section we concentrate on the equidimensional case, and prove a partial duality theorem between the Whitney numbers of $\mL(A_1 \cup \cdots \cup A_L)$ and $\mL(A_1^\perp \cup \cdots \cup A_L^\perp)$, where
$A_\ell^\perp$ denotes the orthogonal of $A_\ell$ (defined below). This result will be applied later in Sections \ref{sec:HWDL2} and \ref{sec:HWDL5} to derive some properties of the Whitney numbers of higher-weight Dowling lattices.

\begin{notation}
In the sequel, we fix a symmetric, non-degenerate bilinear form $b : X \times X \to \F_q$. For $V \le X$ we let $V^\perp \defi \{x \in X \mid b(x,y)=0 \mbox{ for all } y \in V\} \le X$ denote the \textbf{orthogonal} of 
$V$ with respect to $b$.
None of the results in this section depends on the specific choice of~$b$.
\end{notation}

The next lemma summarizes some well-known properties of
orthogonal spaces.  

\begin{lemma} \label{dualiz} \label{relation}
Let $U,V \le X$ be subspaces. The following hold.
\begin{enumerate}[label=(\arabic*), noitemsep]
\item $\rk(V^\perp)=n-\rk(V)$.
\item $V^{\perp \perp}=V$.
\item $(U+V)^\perp=U^\perp \cap V^\perp$ and 
$(U\cap V)^\perp=U^\perp + V^\perp$.
\item $\rk(U \cap V) = \rk(U)-n+\rk(V)+\rk(U^\perp \cap V^\perp)$.
\end{enumerate}
\end{lemma}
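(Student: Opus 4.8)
The statement to prove is Lemma~\ref{dualiz}, which collects four standard facts about orthogonal complements with respect to a symmetric non-degenerate bilinear form $b$ on an $n$-dimensional $\F_q$-space $X$. These are textbook facts, so the plan is to give a clean, self-contained derivation in the order the items are listed, using only elementary linear algebra.

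\medskip

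\textbf{Item (1).} The plan is to view $V^\perp$ as the kernel of the linear map $X \to V^*$ sending $x$ to the functional $y \mapsto b(x,y)$ restricted to $V$. Non-degeneracy of $b$ makes $X \to X^*$ an isomorphism, so this composite map is surjective onto $V^*$, which has dimension $\rk(V)$; the rank-nullity theorem then gives $\rk(V^\perp) = n - \rk(V)$.

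\textbf{Item (2).} First note $V \subseteq V^{\perp\perp}$ directly from the definition (every $x \in V$ is orthogonal to everything orthogonal to $V$). Then apply item (1) twice: $\rk(V^{\perp\perp}) = n - \rk(V^\perp) = n - (n - \rk(V)) = \rk(V)$. An inclusion of subspaces of equal finite dimension is an equality, so $V^{\perp\perp} = V$.

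\textbf{Item (3).} For the first identity, $x \in (U+V)^\perp$ iff $b(x, u+v) = 0$ for all $u \in U$, $v \in V$, which (taking $v = 0$, then $u = 0$) is equivalent to $x \in U^\perp$ and $x \in V^\perp$; hence $(U+V)^\perp = U^\perp \cap V^\perp$. The second identity follows by applying the first to $U^\perp$ and $V^\perp$ in place of $U$ and $V$ and then using item (2): $(U^\perp + V^\perp)^\perp = U^{\perp\perp} \cap V^{\perp\perp} = U \cap V$, and taking $\perp$ of both sides (using item (2) again) yields $U^\perp + V^\perp = (U \cap V)^\perp$.

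\textbf{Item (4).} The plan is to combine item (1), item (3), and the modular (dimension) law for subspaces. Applying item (1) to $U \cap V$ and then item (3),
\begin{equation*}
\rk(U \cap V) = n - \rk\bigl((U\cap V)^\perp\bigr) = n - \rk(U^\perp + V^\perp).
\end{equation*}
By the dimension formula $\rk(U^\perp + V^\perp) = \rk(U^\perp) + \rk(V^\perp) - \rk(U^\perp \cap V^\perp)$, and substituting $\rk(U^\perp) = n - \rk(U)$, $\rk(V^\perp) = n - \rk(V)$ from item (1), one gets
\begin{equation*}
\rk(U \cap V) = n - \bigl( (n-\rk(U)) + (n-\rk(V)) - \rk(U^\perp \cap V^\perp) \bigr) = \rk(U) - n + \rk(V) + \rk(U^\perp \cap V^\perp),
\end{equation*}
as claimed.

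\medskip

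There is no real obstacle here: the only point requiring a little care is the use of non-degeneracy in item (1) to guarantee surjectivity onto $V^*$ (equivalently, that $\dim V + \dim V^\perp = n$ exactly rather than with an inequality), and after that items (2)--(4) are purely formal consequences of (1), (3), and the modular law, with no finiteness or characteristic subtleties.
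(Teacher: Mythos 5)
Your proof is correct and follows exactly the route the paper indicates: items (1)--(3) are the standard linear-algebra facts, and item (4) is obtained by taking dimensions in the identity $(U\cap V)^\perp = U^\perp + V^\perp$ together with item (1) and the dimension formula, which is precisely the deduction the paper sketches after the lemma.
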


The first three properties above are straightforward. The fourth can be deduced from the identity
$(U\cap V)^\perp =U^\perp +V^\perp$ taking dimensions.

It is natural to ask how the Whitney numbers of $\mL(A_1 \cup \cdots \cup A_L)$ and $\mL(A_1^\perp \cup \cdots \cup A_L^\perp)$ relate to each other. A partial answer to this question is given by the following result. We will apply it later to higher-weight Dowling lattices.

\begin{theorem} \label{thm:dua}
Let $A_1,...,A_L \le X$ be subspaces, all of which have the same dimension $a$. Then
$$\sum_{i=0}^{n-a} w_i(\mL(A_1 \cup \cdots \cup A_L)) \qbin{n-i}{a}{q} = \ 
\sum_{i=0}^{a} w_i(\mL(A_1^\perp \cup \cdots \cup A_L^\perp)) \qbin{n-i}{a-i}{q}.$$
\end{theorem}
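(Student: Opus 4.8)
The plan is to reduce both sides of the claimed identity to a single value of the subspace distribution, and then to match those two values by means of the orthogonality map $V \mapsto V^\perp$.

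First I would set $A \defi A_1 \cup \cdots \cup A_L$ and $A^\perp \defi A_1^\perp \cup \cdots \cup A_L^\perp$, both viewed as subsets of $X$, and apply Theorem \ref{prop1}. Taking $k = n-a$ for the subset $A$ and using the symmetry $\qbin{n-i}{n-a-i}{q} = \qbin{n-i}{a}{q}$ from Lemma \ref{proprq}, the left-hand side becomes exactly $\alpha_{n-a}(X,A)$. Taking $k=a$ for the subset $A^\perp$, the right-hand side becomes exactly $\alpha_a(X,A^\perp)$; note that Theorem \ref{prop1} applies to any subset of $X$, so the fact that each $A_\ell^\perp$ has dimension $n-a$ is irrelevant here. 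Thus the theorem is equivalent to the single equality $\alpha_{n-a}(X,A) = \alpha_a(X,A^\perp)$.

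Next I would prove this equality bijectively. Observe first that a subspace $V \le X$ distinguishes $A$ if and only if $V \cap A_\ell = \{0\}$ for every $\ell$, since $V \cap A = \bigcup_\ell (V \cap A_\ell)$ and $0 \in V \cap A_\ell$ always. By Lemma \ref{dualiz}(1)--(2) the map $V \mapsto V^\perp$ is an involutive bijection of $\mL(X)$ that sends the subspaces of dimension $n-a$ onto those of dimension $a$. It remains to check that it carries subspaces distinguishing $A$ to subspaces distinguishing $A^\perp$, and conversely: applying Lemma \ref{dualiz}(4) to the pair $(V, A_\ell)$ with $\rk(V) = n-a$ and $\rk(A_\ell) = a$, the rank formula collapses to $\rk(V \cap A_\ell) = \rk(V^\perp \cap A_\ell^\perp)$, so $V \cap A_\ell = \{0\}$ if and only if $V^\perp \cap A_\ell^\perp = \{0\}$; taking the conjunction over $\ell$ yields the claim. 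Hence $V \mapsto V^\perp$ restricts to a bijection between the set counted by $\alpha_{n-a}(X,A)$ and the set counted by $\alpha_a(X,A^\perp)$, so these two numbers coincide, which completes the proof.

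The argument is short, and the only step that genuinely uses the hypothesis is the collapse of Lemma \ref{dualiz}(4) to an equality of intersection ranks: this requires $\rk(V) + \rk(A_\ell) = n$, which is precisely why one must pair dimension $n-a$ subspaces with dimension $a$ subspaces and assume that all $A_\ell$ share the common dimension $a$. Consequently the "hard part'' is really just the bookkeeping of choosing the correct value of $k$ in Theorem \ref{prop1} on each side so that the orthogonality bijection has matching source and target dimensions; once that is arranged, everything follows from the cited lemmas.
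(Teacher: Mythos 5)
Your proposal is correct and follows essentially the same route as the paper's proof: both sides are reduced via Theorem \ref{prop1} to $\alpha_{n-a}(X,A_1\cup\cdots\cup A_L)$ and $\alpha_a(X,A_1^\perp\cup\cdots\cup A_L^\perp)$ respectively, and these are matched by the bijection $V\mapsto V^\perp$ using the rank identity of Lemma \ref{dualiz}(4). Your write-up is if anything slightly more explicit about why the rank formula collapses to $\rk(V\cap A_\ell)=\rk(V^\perp\cap A_\ell^\perp)$, which is exactly the point where the equidimensionality hypothesis enters.
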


\begin{proof}
Suppose that $V \le X$ is a space of dimension $n-a$ with $\rk(V \cap A_\ell)=0$ for all $1 \le \ell \le L$. Then by Lemma \ref{relation} we have that $V^\perp \le X$ is a space of dimension $a$ such that $\rk(V^\perp \cap A_\ell^\perp)=0$ for all $1 \le \ell \le L$. Similarly, if $V \le X$ is a space of dimension $a$ with $\rk(V \cap A^\perp_\ell)=0$ for all $1 \le \ell \le L$, then 
$V^\perp \le X$ is a space of dimension $n-a$ such that $\rk(V^\perp \cap A_\ell)=0$ for all $1 \le \ell \le L$. All of this shows that 
 the map $V \mapsto V^\perp$ is a bijection 
\begin{multline*}
\{V \le X \mid \rk(V)=n-a, \ \rk(V \cap A_\ell)=0 \mbox{ for all } 1 \le \ell \le L\} \to \\ \to
\{V \le X \mid \rk(V)=a, \ \rk(V \cap A^\perp_\ell)=0 \mbox{ for all } 1 \le \ell \le L\}.
\end{multline*}
In particular, $\alpha_{n-a}(X,A_1 \cup \cdots \cup A_L) = \alpha_a(X,A_1^\perp \cup \cdots \cup A_L^\perp)$.
Applying Theorem \ref{prop1} to $A_1 \cup \cdots \cup A_L$ we obtain
$$
\alpha_{n-a}(X,A_1 \cup \cdots \cup A_L) = \sum_{i=0}^{n-a} w_i(\mL(A_1 \cup \cdots \cup A_L)) \qbin{n-i}{n-a-i}{q}.
$$
Applying the same Theorem to $A_1^\perp \cup \cdots \cup A_L^\perp$ we get
$$
\alpha_a(X,A_1^\perp \cup \cdots \cup A_L^\perp) = \sum_{i=0}^a w_i(\mL(A_1^\perp \cup \cdots \cup A_L^\perp)) \qbin{n-i}{a-i}{q}.$$
The result follows.
\end{proof}

In the remainder of the paper we concentrate on a class of lattices introduced by Dowling in 1971, and apply the results of previous sections to study various aspects of their general theory.

\section{HWDL -- Introduction, Definitions, and First Properties}
\label{sec:HWDL1}

%
%
%
%
%
%
%
%
%
%

The next seven sections are devoted to a special class of combinatorial geometries, known as \textit{higher-weight Dowling lattices} (HWDL in short). These were introduced in 1971 by Dowling \cite{dowling1971codes}, in connection to fundamental problems in coding theory. They were further studied, among others, by Zaslavsky \cite{zaslavsky1987mobius}, Bonin \cite{bonin1993modular,bonin1993automorphism}, Kung \cite{kung1996critical}, 
Brini \cite{brini1982some}, and Games \cite{games1983packing}. 

To date, still very little is known about higher-weight Dowling lattices, and the techniques for studying them have not been discovered yet  \cite[Section 7.6]{zaslavsky1987mobius}. 

In the remainder of the paper we apply the results of the previous sections to bring forward the theory of higher-weight Dowiling lattices. We study their general properties, give explicit formul{\ae} for some of their Whitney numbers, obtain bounds on the asymptotic growth of these, and discuss their polynomiality in $q$. 
In passing, we also obtain results on the enumerative combinatorics of error-correcting codes endowed with the Hamming metric.

In this first section we recall the basic definitions, describe the connection between higher-weight Dowling lattices and error-correcting codes, and survey the main known results on the subject obtained by Dowling, Bonin, and Kung.

\begin{definition} \label{def:HWDL}
Given a prime power $q$ and integers $n,d \ge 1$, 
we denote by $\HH(q,n,d)$ the set of vectors $v \in \F_q^n$ with $1 \le \wH(v) \le d$, where $\wH$ is the Hamming weight; see Definition \ref{def:wH}. 
The \textbf{higher-weight Dowling lattice} associated to $(q,n,d)$ is
$$\mH(q,n,d) \defi \mL(\HH(q,n,d)).$$
In other words, $\mH(q,n,d)$ is the geometric sublattice of $\mL(\F_q^n)$ whose atoms are the 1-dimensional subspaces of $\F_q^n$ generated by a vector $v$ of Hamming weight $1 \le \wH(v) \le d$.
\end{definition}

\begin{notation}
For simplicity, the M\oo bius function of $\mH(q,n,d)$ is denoted by $\mu_{q,n,d}$, and the $i$-th Whitney number (of the first kind) of $\mH(q,n,d)$ is denoted by $w_i(q,n,d)$. Finally, for all integers $k \ge 0$ we let
$$\alpha_k(q,n,d) \defi \alpha_k(\F_q^n,\HH(q,n,d)).$$
In the sequel, $q$ always denote a prime power.
\end{notation}

We also briefly recall the needed coding theory terminology.

\begin{definition}
 A \textbf{code} is an $\F_q$-linear subspace $C \le \F_q^n$. The \textbf{minimum Hamming distance} of a non-zero code $C \le \F_q^n$ is the integer
$\dH(C) \defi \min\{\wH(v) \mid v \in C, \, v \neq 0\}$. We let $+\infty$ be the minimum Hamming distance of the zero code $\{0\} \le \F_q^n$.
\end{definition}

The following facts follow easily from the definitions.

\begin{remark} \label{frems}
Let $n,d \ge 1$ be integers.
\begin{enumerate}[label=(\arabic*), noitemsep]
\item The elements of $\mH(q,n,d)$ are those subspaces of $\F_q^n$ that have a basis made of vectors of Hamming weight $\le d$. 
\item For all integers $k \ge 0$, the number of $k$-dimensional codes $C \le \F_q^n$ with minimum Hamming distance $d_\HH(C)>d$ is 
$\alpha_k(q,n,d)$.
\item $\mH(q,n,1)$ is isomorphic to the Boolean algebra over the set $\{1,...,n\}$, for any $q$.
\item If $d \ge n$, then $\mH(q,n,d)$ coincides with the lattice of subspaces of $\F_q^n$.
\end{enumerate}
\end{remark}

The next result explains the terminology ``higher-weight Dowling lattice''. See for example~\cite[page~134]{zaslavsky1987mobius}, \cite{dowling1973q}, or \cite{dowling1973class}.

\begin{theorem}[Dowling] \label{th:down2}
The lattice $\mH(n,2)$ is isomorphic to the Dowling lattice 
$Q_n(\F_q^*)$, where~$\F_q^*$ denotes the multiplicative group of $\F_q$.
\end{theorem}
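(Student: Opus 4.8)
The plan is to realize both lattices as the restriction geometry (in the sense of Notation~\ref{notaz X}) of one and the same set of projective points of $\F_q^n$, so that the isomorphism becomes an identity of point configurations. First I would recall Dowling's linear representation of the group-based geometry $Q_n(G)$: whenever $G$ is isomorphic to a subgroup of the multiplicative group $\F^*$ of a field $\F$, the lattice $Q_n(G)$ is isomorphic to the restriction geometry $\mL(P)\subseteq\mL(\F^n)$, where
$$P=\{\langle e_i\rangle : 1\le i\le n\}\ \cup\ \{\langle e_i-g\,e_j\rangle : 1\le i<j\le n,\ g\in G\}$$
and $e_1,\dots,e_n$ is the standard basis of $\F^n$; see \cite{dowling1973class,dowling1973q,zaslavsky1987mobius}.

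Specializing $\F=\F_q$ and $G=\F_q^*$, the next step is to check that $P$ is exactly the atom set of $\mH(q,n,2)$. This is elementary: a nonzero $v\in\F_q^n$ has $\wH(v)=1$ precisely when $\langle v\rangle=\langle e_i\rangle$ for some $i$, and it has $\wH(v)=2$ with support $\{i,j\}$, $i<j$, precisely when $\langle v\rangle=\langle e_i+c\,e_j\rangle=\langle e_i-(-c)\,e_j\rangle$ for a unique $c\in\F_q^*$. Hence $\Pro(\HH(q,n,2))=P$, and a count ($|P|=n+\binom{n}{2}(q-1)=n+|\F_q^*|\cdot\binom{n}{2}$) confirms that no point is obtained twice. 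Since by Definition~\ref{def:HWDL} we have $\mH(q,n,2)=\mL(\HH(q,n,2))$, and $\mL(A)$ depends on $A$ only through $\Pro(A)$, this yields $\mH(q,n,2)=\mL(P)\cong Q_n(\F_q^*)$, which is the claim.

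The only non-routine ingredient in this argument is Dowling's representability statement for $Q_n(G)$ with $G\le\F^*$, which I would quote rather than reprove. The point that needs care is a matter of convention: several sources define $Q_n(G)$ combinatorially, as the poset of $G$-labelled partial partitions of $\{1,\dots,n\}$ ordered by refinement, rather than as a vector matroid. If a self-contained proof is preferred, I would instead build the isomorphism by hand, sending a partial partition given by a set $Z\subseteq\{1,\dots,n\}$, a partition $\{B_1,\dots,B_k\}$ of its complement, and for each block a coloring $f_\ell:B_\ell\to\F_q^*$ taken up to simultaneous scaling, to the subspace
$$\Big\langle\ e_i\ (i\in Z),\quad f_\ell(i)^{-1}e_i-f_\ell(i')^{-1}e_{i'}\ (1\le\ell\le k,\ i,i'\in B_\ell)\ \Big\rangle\le\F_q^n.$$
One checks this is well defined on equivalence classes (scaling $f_\ell$ by $g$ rescales the associated generators by $g^{-1}$), takes values in $\mH(q,n,2)$, and is a bijection onto it. The hard part will be the last step, namely matching the combinatorial refinement order with subspace inclusion; here I would argue by induction on the rank, using that both sides carry rank $|Z|+\sum_\ell(|B_\ell|-1)=n-k$ and comparing covers. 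The representation-theoretic route of the first two paragraphs avoids this verification entirely by invoking Dowling's theorem.
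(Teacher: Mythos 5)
The paper gives no proof of this statement; it simply cites Dowling and Zaslavsky, and the content of those citations is exactly Dowling's linear-representation theorem for $Q_n(G)$ with $G\le\F^*$ that you invoke. Your identification of the atom set of $\mH(q,n,2)$ with the standard representing point configuration $\{\langle e_i\rangle\}\cup\{\langle e_i-g\,e_j\rangle\}$ is correct (including the count $n+\binom{n}{2}(q-1)$), so your argument is sound and is essentially the same route the paper takes by reference.
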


The connection between  coding theory and geometric lattices generated by small-weight vectors was first pointed out by Dowling. In \cite[Section 5]{dowling1971codes}, he proved that the number of codes $C \le \F_q^n$ of dimension $k$ and minimum Hamming distance $\dH(C) >d$ can be expressed in terms of the evaluation of the polynomial $\chi(\mH(q,n,d);\lambda)$ at prescribed integer values. See also \cite[Proposition~7.6.6]{zaslavsky1987mobius} and \cite[Proposition 4.13]{kung1996critical}. We can make this connection more precise using Theorems~\ref{prop2} and~\ref{prop1}, showing that the problem of computing the number of such codes is in fact \textit{equivalent} to that of computing the \textit{first}~$k$ Whitney numbers of the lattice~$\mH(q,n,d)$.

\begin{corollary} \label{coro:relations}
Let $n \ge d \ge 1$ be integers. The following hold.
\begin{enumerate}[label=(\arabic*), noitemsep]
\item There is a code $C \le \F_q^n$ of dimension $k$ with $d_\HH(C) >d$ if and only if $\chi(\mH(q,n,d);q^{n-k}) \neq 0$, i.e., if and only if $\crit(\mH(q,n,d)) \le n-k$.
\item For all $k \ge 0$, the number of codes $C \le \F_q^n$ of dimension $k$  with $d_\HH(C) >d$ is
$$\alpha_k(q,n,d) = \sum_{i=0}^k w_i(q,n,d) \qbin{n-i}{k-i}{q}.$$
\item For all $i \ge 0$, the $i$-th Whitney number of $\mH(q,n,d)$ is given by
\begin{equation*}
w_i(q,n,d)=\sum_{k=0}^i \alpha_k(q,n,d) \qbin{n-k}{i-k}{q} (-1)^{i-k} q^{\binom{i-k}{2}}.
\end{equation*}
\end{enumerate}
\end{corollary}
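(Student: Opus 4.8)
The plan is to get parts (2) and (3) for free as special cases of the general Theorems~\ref{prop1} and~\ref{prop2}, and then to derive part (1) from part (3) by a short $q$-Binomial computation. First, for parts (2) and (3): put $X=\F_q^n$ and $A=\HH(q,n,d)$, so that $\mL(A)=\mH(q,n,d)$, and hence $w_i(\mL(A))=w_i(q,n,d)$ and $\alpha_k(X,A)=\alpha_k(q,n,d)$ by our notational conventions. A subspace $V\le \F_q^n$ avoids $\HH(q,n,d)$ precisely when it contains no nonzero vector of Hamming weight $\le d$, i.e.\ when $\dH(V)>d$; thus by Remark~\ref{frems}(2) the integer $\alpha_k(q,n,d)$ equals the number of $k$-dimensional codes $C\le \F_q^n$ with $\dH(C)>d$. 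With these identifications, part (2) is exactly Theorem~\ref{prop1} applied to this $A$, and part (3) is exactly Theorem~\ref{prop2}. Nothing further is needed for these two items.

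For part (1) I would first record that $\rk(\mH(q,n,d))=n$, since the standard basis vectors $e_1,\dots,e_n\in\HH(q,n,d)$ (each has $\wH(e_i)=1\le d$). Then, starting from $\chi(\mH(q,n,d);q^r)=\sum_{i=0}^n w_i(q,n,d)\,q^{r(n-i)}$, substitute the formula for $w_i(q,n,d)$ from part (3), exchange the order of summation, and apply the $q$-Binomial Theorem (just as in the proof of Theorem~\ref{prop1}) to arrive at
$$\chi(\mH(q,n,d);q^r) \ = \ \sum_{j=0}^n \alpha_j(q,n,d)\; q^{r(n-j)} \prod_{\ell=0}^{n-j-1}\bigl(1-q^{\ell-r}\bigr).$$
The use of this identity is that the factor with $\ell=r$ vanishes whenever $j<n-r$, so only the terms with $j\ge n-r$ survive, and for those $j$ every remaining factor $1-q^{\ell-r}$ is strictly positive. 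Hence the right-hand side is a sum of \emph{non-negative} terms, the $j$-th of which is positive precisely when $\alpha_j(q,n,d)\ne 0$.

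Taking $r=n-k$ then shows $\chi(\mH(q,n,d);q^{n-k})\ne 0$ iff $\alpha_j(q,n,d)\ne 0$ for some $j\ge k$; and since any $k$-dimensional subspace of a $j$-dimensional code with $\dH>d$ again has $\dH>d$, this happens iff $\alpha_k(q,n,d)\ne 0$, i.e.\ iff there is a code of dimension $k$ with $\dH>d$. This proves the first equivalence of part (1). For the reformulation via the critical exponent: the implication $\chi(\mH(q,n,d);q^{n-k})\ne 0\Rightarrow\crit(\mH(q,n,d))\le n-k$ is immediate from the definition of $\crit$ as a minimum; conversely, $\crit(\mH(q,n,d))\le n-k$ forces, via the displayed identity evaluated at $r=\crit(\mH(q,n,d))$, some $\alpha_j(q,n,d)\ne 0$ with $j\ge n-\crit(\mH(q,n,d))\ge k$, and the positivity observation above then yields $\chi(\mH(q,n,d);q^{n-k})\ne 0$. (Alternatively, one may simply quote the description of $\crit$ from Remark~\ref{rem:imp}.)

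I expect the only step requiring genuine care to be the $q$-Binomial manipulation producing the displayed formula, together with the resulting observation that $\chi(\mH(q,n,d);q^r)$ is a non-negative combination of the $\alpha_j(q,n,d)$'s: this is what makes non-vanishing of $\chi$ at $q^{n-k}$ truly \emph{equivalent} to $\crit(\mH(q,n,d))\le n-k$, rather than merely implied by it. Everything else in the argument is bookkeeping and unwinding of definitions.
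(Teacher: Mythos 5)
Your proposal is correct and follows essentially the same route as the paper: parts (2) and (3) are exactly Theorems~\ref{prop1} and~\ref{prop2} applied to $A=\HH(q,n,d)$, and part (1) rests on the identity $\crit(\mH(q,n,d))=n-\max\{k\mid\alpha_k(q,n,d)\neq0\}$, which the paper simply quotes from Remark~\ref{rem:imp}. Your $q$-Binomial derivation of the displayed formula for $\chi(\mH(q,n,d);q^r)$, together with the positivity of the surviving factors and the observation that $\alpha_j\neq0$ for some $j\ge k$ forces $\alpha_k\neq0$, is a correct self-contained proof of that remark, so nothing is missing.
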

\begin{proof}
The three statements follow, respectively, from Remark \ref{rem:imp}, Theorems~\ref{prop1} and \ref{prop2}.
\end{proof}

Recall from the Singleton Bound \cite{singleton1964maximum} that any code $C \le \F_q^n$ of dimension $k \ge 1$ satisfies
$d_\HH(C) \le n-k+1$. By Remark \ref{rem:imp}, we can re-state this result as follows.
\begin{theorem}[Singleton] \label{th:sing}
Let $n \ge d \ge 1$ be integers. We have $\crit(\mH(q,n,d)) \ge d$.
\end{theorem}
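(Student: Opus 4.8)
The plan is to connect the critical exponent $\crit(\mH(q,n,d))$ directly to the Singleton Bound via the characterization of the critical exponent in terms of subspace distributions, which was recalled in Remark~\ref{rem:imp}. Recall that $\crit(\mH(q,n,d)) = n - \max\{k \in \N \mid \alpha_k(q,n,d) \neq 0\}$, where $\alpha_k(q,n,d)$ counts the $k$-dimensional subspaces of $\F_q^n$ that avoid all vectors of Hamming weight between $1$ and $d$. By Remark~\ref{frems}(2), $\alpha_k(q,n,d)$ is precisely the number of $k$-dimensional codes $C \le \F_q^n$ with $d_\HH(C) > d$.

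First I would let $k^* \defi \max\{k \in \N \mid \alpha_k(q,n,d) \neq 0\}$, so that $\crit(\mH(q,n,d)) = n - k^*$; note $k^*$ is well-defined since $\alpha_0(q,n,d) = 1$ (the zero space trivially avoids $\HH(q,n,d)$). By definition of $k^*$, there exists a code $C \le \F_q^n$ of dimension $k^*$ with $d_\HH(C) > d$, i.e., $d_\HH(C) \ge d+1$. Now apply the Singleton Bound: for a code of dimension $k^* \ge 1$ we have $d_\HH(C) \le n - k^* + 1$. If $k^* = 0$ then $C = \{0\}$, which has $d_\HH(C) = +\infty > d$, but then $\crit(\mH(q,n,d)) = n \ge d$ trivially since $n \ge d$ by hypothesis; so assume $k^* \ge 1$. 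Combining the two inequalities gives $d + 1 \le d_\HH(C) \le n - k^* + 1$, hence $d \le n - k^*$, which is exactly $d \le \crit(\mH(q,n,d))$.

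There is no real obstacle here — the result is essentially a restatement of the Singleton Bound through the dictionary between critical exponents and minimum-distance codes established in Remark~\ref{rem:imp} and Remark~\ref{frems}. The only point requiring a moment's care is the degenerate case $k^* = 0$, which is handled by the hypothesis $n \ge d$, and the fact that $\alpha_0(q,n,d) \neq 0$ guarantees $k^*$ exists. One could alternatively phrase the argument in terms of evaluations of $\chi(\mH(q,n,d);\lambda)$ using Corollary~\ref{coro:relations}(1), but the subspace-distribution formulation is the most direct. I would write it as a short two-line proof invoking the Singleton Bound and Remark~\ref{rem:imp}.
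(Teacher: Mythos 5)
Your proof is correct and follows exactly the route the paper intends: the paper states this theorem as an immediate restatement of the Singleton Bound via the identity $\crit(\mH(q,n,d)) = n - \max\{k \mid \alpha_k(q,n,d) \neq 0\}$ from Remark~\ref{rem:imp}, which is precisely your argument. Your explicit treatment of the degenerate case $k^*=0$ is a small but welcome addition of care that the paper omits.
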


 Codes meeting the Singleton Bound are called \textbf{MDS} ({Maximum Distance Separable}). 
Determining the tuples $(n,k,q)$ for which there exists a $k$-dimensional MDS code $C \le \F_q^n$ is an open problem since the 50's. In particular, the following conjecture by Segre is open since 1955; see \cite{segre1955curve}.

\begin{conjecture}[Segre] \label{conj:segre}
Let $n \ge d \ge 1$ be integers with $2 \le k \le q$ and $d=n-k+1$. Suppose that $\alpha_k(q,n,d) \neq 0$. The following hold.
\begin{enumerate}[noitemsep,label=(\arabic*)]
\item If $q \equiv 0 \mod 2$ and $k \in \{3,q-1\}$, then $n \le q+2$.
\item In all other cases, $n \le q+1$.
\end{enumerate}
\end{conjecture}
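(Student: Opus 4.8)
\medskip

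\noindent\emph{Towards a proof.} The hypotheses describe the setting of \emph{Maximum Distance Separable} codes: a $k$-dimensional code $C\le\F_q^n$ meeting the Singleton Bound (Theorem~\ref{th:sing}), equivalently an $n$-point \emph{arc} in $\mathrm{PG}(k-1,q)$ — a set of $n$ points of $\Pro(\F_q^k)$, no $k$ of which lie on a common hyperplane. The plan is to reduce the statement to plane arcs and to run the argument going back to Segre. First I would settle the base case $k=3$: a tangent-counting argument shows that each point of an $n$-arc in $\mathrm{PG}(2,q)$ lies on exactly $q+2-n$ tangent lines, and Segre's \emph{lemma of tangents} produces, for every triangle of arc points, a homogeneous identity over $\F_q$ relating the tangents at its three vertices; for $q$ odd this forces $n\le q+1$ (with the extremal arcs being conics), while for $q$ even it admits precisely the hyperoval, yielding $n\le q+2$.

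For the induction on $k$, I would combine two reductions that sit naturally in the framework of this paper. Projecting from a point of the arc turns an $n$-arc in $\mathrm{PG}(k-1,q)$ into an $(n-1)$-arc in $\mathrm{PG}(k-2,q)$, lowering $k$; and passing to the dual code interchanges $k$ and $n-k$ — this is precisely the duality between $\mL(A_1\cup\cdots\cup A_L)$ and $\mL(A_1^\perp\cup\cdots\cup A_L^\perp)$ of Theorem~\ref{thm:dua}, applied with each $A_\ell=\langle a_\ell\rangle$ one-dimensional. Feeding the tangent identities through these reductions, one would try to propagate the bound $n\le q+1$ and to carve out the exceptional cases ($q$ even, $k\in\{3,q-1\}$), which are the hyperoval in $\mathrm{PG}(2,q)$ and its dual in $\mathrm{PG}(q-2,q)$.

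The main obstacle — and the reason the assertion is recorded as a conjecture — is that the lemma of tangents completes the argument only when $q$ is prime: there a polynomial argument of Ball rules out every solution of the tangent identity other than the conic and its higher-dimensional analogues, whereas for a proper prime power $q$ further solutions are a priori possible and the problem is genuinely open (only the prime case, and various small or structured cases, being settled to date). I therefore do not expect to give a complete proof here. What the present paper does contribute is the equivalence recorded in Corollary~\ref{coro:relations}(1) between the existence of the arc and the non-vanishing of $\chi(\mH(q,n,d);q^{n-k})$ at the critical value; in particular, a complete determination of the Whitney numbers $w_i(q,n,d)$ — the program pursued in the following sections for new ranges of parameters — would in principle resolve the conjecture.
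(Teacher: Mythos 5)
The statement you were asked to prove is Segre's MDS Conjecture, which the paper records precisely as a \emph{conjecture}: it is an open problem dating to 1955, and the paper offers no proof of it (indeed, the surrounding text explains that computing $\crit(\mH(q,n,d))$ for all parameters would \emph{solve} this open problem). You correctly recognize this, decline to claim a proof, and instead give an accurate account of the state of the art: the identification of $k$-dimensional codes meeting the Singleton Bound with $n$-arcs in $\mathrm{PG}(k-1,q)$, the tangent-counting and lemma-of-tangents argument that settles the planar case $k=3$ (yielding $n\le q+1$ for $q$ odd and the hyperoval bound $n\le q+2$ for $q$ even), the projection and duality reductions, and the fact that Ball's polynomial method closes the argument only for $q$ prime. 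Your observation that the paper's own contribution here is the equivalence of Corollary~\ref{coro:relations}(1) — existence of the arc if and only if $\chi(\mH(q,n,d);q^{n-k})\neq 0$ — and that a full determination of the Whitney numbers would in principle resolve the conjecture, matches exactly the role the conjecture plays in the paper's narrative. In short: there is no gap in your response, because there is no proof to be had; you have correctly identified the statement as open and situated it within the paper's framework.
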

Conjecture \ref{conj:segre} is known as the ``MDS Conjecture''. It is often referred to as the main problem in classical coding theory.
By Corollary \ref{rem:imp}, Segre's Conjecture is an instance of the Critical Problem for higher-weight Dowling lattices: Computing $\crit (\mH(q,n,d))$ for all values of $q,n,d$ would lead to solving this fundamental problem.

Unfortunately, standard methods for studying characteristic polynomials of lattices (such as Stanley's {Modular Factorization Theorem} and its generalizations) do not apply to higher-weight Dowling geometries.
In fact, Bonin showed that higher-weight Dowling lattices are not supersolvable, with only very few exceptions; see 
\cite[page 8, right after Lemma 3.2]{bonin1993modular}. 

\begin{theorem}[Bonin] \label{bonin1}
The following are the only supersolvable higher-weight Dowling lattices.
\begin{itemize}[noitemsep]
\item $\mH(q,n,1)$ for all $q$ and $n \ge 1$,
\item $\mH(q,n,2)$ for all $q$ and $n \ge 1$,
\item $\mH(n,n-1)$ for all $q$ and $\ge 2$,
\item $\mH(q,n,d)$ for all $q$ and $d \ge n$.
\end{itemize}
\end{theorem}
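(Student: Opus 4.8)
The plan is to prove the two inclusions separately: every lattice in the list is supersolvable, and no other higher-weight Dowling lattice is. For the first inclusion, three of the four families are immediate: $\mH(q,n,1)$ is the Boolean algebra on $\{1,\dots,n\}$ (Remark~\ref{frems}(3)), hence distributive, hence supersolvable; $\mH(q,n,d)=\mL(\F_q^n)$ for $d\ge n$ (Remark~\ref{frems}(4)) is modular, hence supersolvable; and $\mH(q,n,2)\cong Q_n(\F_q^*)$ by Theorem~\ref{th:down2}, while Dowling lattices are known to be supersolvable (see \cite{dowling1973class}).

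The case $d=n-1$ I would treat by hand, exhibiting an explicit modular chain. The key observation is that every coordinate subspace $\F_q^S$ with $|S|\le n-1$ is a modular element of $\mH(q,n,n-1)$: all vectors of $\F_q^S$ have Hamming weight $\le|S|\le n-1$, so \emph{every} subspace of $\F_q^S$ already lies in $\mH(q,n,n-1)$; consequently the meet and join in $\mH(q,n,n-1)$ of $\F_q^S$ with any $y$ coincide with $\F_q^S\cap y$ and $\F_q^S+y$, and the modular identity is inherited from the modular lattice $\mL(\F_q^n)$. Since the top of a geometric lattice is always modular, the maximal chain $0\lessdot\F_q^{\{1\}}\lessdot\cdots\lessdot\F_q^{\{1,\dots,n-1\}}\lessdot\F_q^n$ consists of modular elements, so $\mH(q,n,n-1)$ is supersolvable.

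For the converse, the remaining parameters are $3\le d\le n-2$, so $n\ge5$ and $\rk(\mH(q,n,d))=n$. A supersolvable geometric lattice of positive rank has a modular coatom (the penultimate element of its modular chain; cf.\ Remark~\ref{rem:MFT}), so it suffices to show $\mH(q,n,d)$ has no modular coatom. A coatom $H$ is a hyperplane of $\F_q^n$ lying in $\mH(q,n,d)$; by the definition of the meet in the geometric sublattice $\mL(\HH(q,n,d))$, the meet in $\mH(q,n,d)$ of $H$ and $y\in\mH(q,n,d)$ is $(y\cap H)^{\HH(q,n,d)}$, the largest subspace of $y\cap H$ spanned by weight-$\le d$ vectors (notation of~\ref{notaz X}). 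Comparing ranks in the semimodular inequality, $H$ is modular if and only if $y\cap H\in\mH(q,n,d)$ for every $y$. I would then produce, for each coatom $H$, a witness against this: a $2$-dimensional $y\in\mH(q,n,d)$ with $y\cap H=\langle w\rangle$ where $\wH(w)=d+1>d$, so $\langle w\rangle$ is not an atom and $\langle w\rangle\notin\mH(q,n,d)$. Write $H=c^\perp$ with $c\ne0$. If $c$ has at least two nonzero coordinates, pick $T$ with $|T|=d+1$ and $|T\cap\mathrm{supp}(c)|\ge2$ (possible since $d+1\le n$), choose $w\in H$ with support exactly $T$ and $i_0\in T$ with $c_{i_0}\ne0$, and set $u=w|_{\{i_0\}}$, $v=-w|_{T\setminus\{i_0\}}$; then $u,v$ have weights $1$ and $d$, both lie outside $H$, $u-v=w$, and $y=\langle u,v\rangle$ meets $H$ in $\langle w\rangle$. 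If instead $H=\{x_j=0\}$, every vector of $H$ avoids coordinate $j$; choose $T\subseteq\{1,\dots,n\}\setminus\{j\}$ with $|T|=d+1$ (possible since $d+1\le n-1$), $w\in\F_q^T$ of full support, a partition $T=T_1\sqcup T_2$ with $|T_1|=2$, $|T_2|=d-1$ (possible since $d\ge3$), and set $u=w|_{T_1}+e_j$, $v=-w|_{T_2}+e_j$; the $e_j$-terms cancel, $u-v=w$, the weights are $3$ and $d$, $u,v\notin H$, and again $y=\langle u,v\rangle$ meets $H$ in $\langle w\rangle$. In either case $H$ is not modular, so $\mH(q,n,d)$ has no modular coatom and is not supersolvable.

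I expect the main obstacle to be the converse direction, and within it two points in particular. First, one must carefully extract the criterion ``$H$ modular $\iff$ $y\cap H\in\mH(q,n,d)$ for all $y$'' from the definition of modularity inside a geometric \emph{sublattice}, where meets are not subspace intersections. Second, the witness construction must be made uniform over all coatoms, which forces the separate treatment of coordinate hyperplanes and uses the boundary inputs $d\ge3$ and $d+1\le n-1$; these are precisely the inequalities that fail for $\mH(q,n,2)$ and $\mH(q,n,n-1)$, explaining why the classification is sharp and why $\{x_j=0\}$ is modular exactly when $d=n-1$.
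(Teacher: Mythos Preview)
The paper does not give its own proof of this theorem; it is stated as a result of Bonin and cited to \cite{bonin1993modular}, so there is no in-paper argument to compare against. Your overall plan---checking the four listed families directly, and for $3\le d\le n-2$ showing that no coatom of $\mH(q,n,d)$ is modular by exhibiting, for each hyperplane $H\in\mH(q,n,d)$, a $2$-plane $y\in\mH(q,n,d)$ whose intersection with $H$ is a line generated by a vector of weight $>d$---is sound and is the natural approach.

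There is, however, a genuine gap in Case~1 of your converse argument. You assert that after picking $T$ with $|T|=d+1$ and $|T\cap\sH(c)|\ge2$ one can ``choose $w\in H$ with support exactly $T$''. Over $\F_2$ this can fail: the constraint $c\cdot w=0$ forces $|\sH(w)\cap\sH(c)|$ to be even, which your choice of $T$ does not ensure. In the extreme case $c=(1,\dots,1)\in\F_2^n$, the hyperplane $H=c^\perp$ is the even-weight code, and for $d$ even there is \emph{no} vector of weight $d+1$ in $H$ at all, so your construction produces nothing. This coatom is nonetheless non-modular---take $w\in H$ of weight $d+2\le n$ and split $\sH(w)$ into two disjoint pieces of odd sizes $3$ and $d-1$ (both $\le d$, since $d\ge4$ here); the restrictions $u,v$ of $w$ to these pieces have odd weight, hence lie outside $H$, and $y=\langle u,v\rangle\in\mH(2,n,d)$ meets $H$ exactly in $\langle w\rangle$---but your construction as written does not cover this case, and your weight-$1$/weight-$d$ splitting cannot be salvaged when $|\sH(w)|=d+2$. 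For $q\ge3$ your step does go through: with $|T\cap\sH(c)|\ge2$ the hyperplane $\sum_{i\in T\cap\sH(c)}c_iw_i=0$ always contains a point of $(\F_q^*)^{|T\cap\sH(c)|}$.
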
 

Theorem \ref{bonin1} shows that most lattices of the form $\mH(q,n,d)$ are not supersolvable. In the following Example \ref{exroots}, we show that in general their characteristic polynomials do not split into linear factors, not even over $\R$.

\begin{example} \label{exroots}
Let $(q,n,d)=(3,6,3)$. Then the characteristic polynomail of the lattice $\mH(q,n,d)$ is $(\lambda-1)(\lambda-3)(\lambda-9)(\lambda-27)(\lambda^2-76\lambda+1515) \in \Z[\lambda]$. As $76^2-4\cdot 1515=-284$, the polynomial 
$\lambda^2-76\lambda+1515$ is irreducible over $\R$.
\end{example}

Another interesting fact is that the property of splitting into linear factors heavily depends on the value of $q$. More precisely, one can find pairs $(q,n,d)$ and $(q',n,d)$ for which $\chi(\mH(q,n,d);\lambda)$ splits into linear factors over $\Z$, while 
$\chi(\mH(q',n,d);\lambda)$ does not (not even over $\R$). We illustrate this in the next example.

\begin{example}
Table \ref{tabb} contains the factorization over $\R$ of the characteristic polynomials associated to the parameters
$(2,5,3)$, $(3,5,3)$, and $(4,5,3)$.
\begin{table}[H]
\centering
\begin{tabular}{|c|c|}
\hline
Parameters $(q,n,d)$ & Factorization of $\chi(\mH(q,n,d);\lambda) \in \Z[\lambda]$ over $\R$ \\
\hline 
\hline 
$(2,5,3)$ &  $(\lambda-1)(\lambda-2)(\lambda-4)(\lambda-8)(\lambda-10)$ \\
\hline 
$(3,5,3)$ &   $(\lambda-1)(\lambda-3)(\lambda-9)(\lambda-25)(\lambda-27)$  \\
\hline
$(4,5,3)$ & $(\lambda-1)(\lambda-4)(\lambda-16)(\lambda^2-104\lambda+2722)$ \\
\hline
\end{tabular}
\caption{The characteristic polynomials of some higher-weight Dowling lattices.}
\label{tabb}
\end{table}
\end{example}

According to Theorem \ref{bonin1}, the lattice $\mH(q,n,n-2)$ is not supersolvable for $n \ge 5$. However, its characteristic polynomial is known. It has been computed by Bonin \cite[Theorem 3.5]{bonin1993modular}.

\begin{theorem}[Bonin] \label{th:bonin2}
Suppose $n \ge 3$. The characteristic polynomial of $\mH(q,n,n-2)$ is given by the formula
\begin{multline*}\chi(\mH(q,n,n-2);\lambda)= \biggl( (\lambda-q^{n-2})(\lambda-q^{n-1}) + (\lambda-q^{n-2}) \bigl( (q-1)^{n-1}+n(q-1)^{n-2} \bigr) \\ + (q-1)^n (q-2) \cdots (q-n+2)  \biggr) \prod_{i=0}^{n-3} (\lambda-q^i).
\end{multline*}
\end{theorem}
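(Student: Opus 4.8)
The plan is to read the characteristic polynomial off the subspace distribution of $\mH(q,n,n-2)$, using the machinery of Section~\ref{sec:wnsd}. Multiplying the formula of Corollary~\ref{coro:relations}(3) by $\lambda^{n-i}$, summing over $i$, and collapsing the inner sum via the $q$-binomial theorem gives the compact identity
\[
\chi(\mH(q,n,d);\lambda) \ = \ \sum_{k \ge 0} \alpha_k(q,n,d) \prod_{m=0}^{n-k-1}(\lambda - q^m),
\]
valid for all $n \ge d \ge 1$ (note $\rk(\mH(q,n,d)) = n$, since the standard basis vectors are atoms whose joins form a maximal flag). So it suffices to compute the numbers $\alpha_k(q,n,n-2)$, which by Remark~\ref{frems}(2) count the $k$-dimensional codes in $\F_q^n$ with minimum Hamming distance at least $n-1$.

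The first step is a truncation: the Singleton Bound (Theorem~\ref{th:sing}) and Remark~\ref{rem:imp} give $\alpha_k(q,n,n-2) = 0$ for all $k \ge 3$, because a $k$-dimensional code with $d_\HH > n-2$ would require $n-k+1 \ge n-1$. Hence only $\alpha_0$, $\alpha_1$, $\alpha_2$ contribute, and the three surviving summands all carry the common factor $\prod_{m=0}^{n-3}(\lambda - q^m) = \prod_{i=0}^{n-3}(\lambda - q^i)$, which will be pulled out at the end.

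Next I would evaluate these three numbers directly. Clearly $\alpha_0(q,n,n-2) = 1$. A $1$-dimensional code $\langle v\rangle$ has minimum distance $\wH(v)$, so $\alpha_1(q,n,n-2)$ is the number of projective points spanned by a vector of Hamming weight $n$ or $n-1$: there are $(q-1)^{n-1}$ of the former and $n(q-1)^{n-2}$ of the latter, whence $\alpha_1(q,n,n-2) = (q-1)^{n-1} + n(q-1)^{n-2}$. For $\alpha_2$, Singleton forces a qualifying $2$-dimensional code to have minimum distance exactly $n-1$, making it an $[n,2]$ MDS code; using $n \ge 3$, this is equivalent to every $2\times 2$ minor of a generator matrix being nonzero (were a column of the generator matrix zero, or two columns proportional, some codeword would vanish on at least two coordinates, contradicting minimum distance $n-1$), and conversely such codes qualify. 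Counting $2\times n$ matrices with pairwise linearly independent columns one column at a time — the $j$-th column must miss the $j-1$ distinct lines through the origin spanned by the earlier columns, and so has $(q-1)(q-j+2)$ admissible values — produces $(q-1)^n\prod_{j=1}^n(q-j+2)$ such matrices. Since $\mathrm{GL}_2(\F_q)$ acts freely on these by row operations, with orbits equal to the sets of generator matrices of a single code, dividing by $|\mathrm{GL}_2(\F_q)| = q(q+1)(q-1)^2$ yields
\[
\alpha_2(q,n,n-2) \ = \ (q-1)^{n-1}(q-2)(q-3)\cdots(q-n+2),
\]
an empty (hence unit) product when $n = 3$, and equal to $0$ once $n \ge q+2$, as it must be. Substituting $\alpha_0$, $\alpha_1$, $\alpha_2$ into the compact identity and factoring out $\prod_{i=0}^{n-3}(\lambda-q^i)$ then gives the characteristic polynomial in the stated product form.

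The one genuinely delicate point is the evaluation of $\alpha_2$: one must argue carefully that for $2$-dimensional codes the MDS property coincides with the nonvanishing of all $2\times 2$ minors — here the hypothesis $n \ge 3$ enters — and then run the $\mathrm{GL}_2(\F_q)$-orbit count with the right bookkeeping, keeping track of the regimes $n \le q+1$ and $n > q+1$. Everything else is the routine combinatorics of Section~\ref{sec:wnsd} and the $q$-binomial theorem. An alternative route would be to exhibit a rank-$(n-2)$ modular element of $\mH(q,n,n-2)$ whose lower interval is the subspace lattice of $\F_q^{n-2}$ and invoke Stanley's Modular Factorization Theorem in the form of Corollary~\ref{coro:MFT}; but the subspace-distribution argument above is shorter and self-contained given the earlier sections.
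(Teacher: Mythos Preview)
The paper does not give its own proof of this theorem; it is quoted from Bonin, whose argument (as the title of his paper and your closing paragraph both indicate) proceeds by exhibiting a rank-$(n-2)$ modular element and applying Stanley's factorization. Your route via the subspace distribution is different but entirely correct, and it is in fact the approach the paper itself adopts when it later re-derives the Whitney numbers of $\mH(q,n,n-2)$ in Theorem~\ref{th:n-2}: there $\alpha_0,\alpha_1,\alpha_2$ are listed (with $\alpha_2$ quoted from Bonin's proof) and fed into Theorem~\ref{prop2}. What you add is a self-contained $\mathrm{GL}_2$-orbit count for $\alpha_2$, and the repackaging of Corollary~\ref{coro:relations}(3) into the identity $\chi=\sum_k \alpha_k\prod_{m=0}^{n-k-1}(\lambda-q^m)$, which makes the factored form of $\chi$ immediate once $\alpha_k=0$ for $k\ge 3$. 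Bonin's modular-element argument buys more structural information (it locates the modular flats of $\mH(q,n,n-2)$); yours is shorter and needs nothing beyond Singleton and elementary counting.

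One small point worth flagging: your value $\alpha_2=(q-1)^{n-1}(q-2)\cdots(q-n+2)$ agrees with the paper's own computation in the proof of Theorem~\ref{th:n-2}, but the constant term printed in the statement of Theorem~\ref{th:bonin2} carries $(q-1)^n$ rather than $(q-1)^{n-1}$. That appears to be a typo in the displayed formula (check $n=3$, where $\mH(q,3,1)$ is Boolean and $\chi$ must be $(\lambda-1)^3$); your derivation gives the correct exponent.
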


\begin{remark}
The approach of \ref{bonin1} has been extended by Kung to study higher-weight Dowling lattices $\mH(q,n,d)$ whose critical exponent is $n-2$; see 
\cite[Corollary 6.11]{kung1996critical} for the precise statement.
\end{remark}

\section{HWDL -- New General Properties}
\label{sec:HWDL2}

In this section we establish some new general properties of the Whitney numbers of higher-weight Dowling lattices. These will be applied in later sections in more concrete contexts.

Our first result shows that, if $n \ge id$, then the value of $w_i(q,n,d)$ only depends on the Whitney numbers $w_i(q,t,d)$ with $1 \le t \le id-1$. More precisely, the following hold.

\begin{theorem} \label{th:compl}
Let $n,i,d \ge 1$ with $i \le n$. Suppose $n \ge id$. Then 
\begin{multline*}
w_i(q,n,d) \ = \ (-1)^i  (q-1)^{i(d-1)} \sum_{1 \le \ell_1< \ell_2 < \cdots < \ell_i \le n-d+1} \quad \left(  \prod_{j=1}^i  \binom{n-\ell_j-d(i-j)}{d-1} \right)  
\\ + \sum_{t=i}^{id-1} \binom{n}{t} w_i(q,t,d)\sum_{s=t}^{id-1} \binom{n-t}{n-s} (-1)^{s-t}.
\end{multline*}
\end{theorem}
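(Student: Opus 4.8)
I need to prove a formula for $w_i(q,n,d)$ when $n \ge id$. The natural entry point is Corollary~\ref{coro:relations}(3), which expresses $w_i(q,n,d)$ in terms of the subspace distribution numbers $\alpha_k(q,n,d)$. So the plan is to analyze $\alpha_k(q,n,d)$ — the number of $k$-dimensional codes in $\F_q^n$ with minimum distance $> d$ — and split it according to the support of the code.

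**Key idea: support decomposition.** For a code $C \le \F_q^n$, let $\operatorname{supp}(C) = \{j : c_j \ne 0 \text{ for some } c \in C\}$, with $s = |\operatorname{supp}(C)|$. A code with $d_\HH(C) > d$ and dimension $k \ge 1$ must have $s \ge d+1$ (since a nonzero codeword has weight $> d$), and more precisely, thinking about it, if $\dim C = i$ then $s \ge id$ is forced when... actually no — the bound from the MDS/Singleton inequality on the restricted code gives $d < s - k + 1$, i.e. $s \ge d + k$; but the relevant extremal case needed here is governed by $n \ge id$. The first (leading) term, with sign $(-1)^i(q-1)^{i(d-1)}$ and the sum over $1 \le \ell_1 < \cdots < \ell_i \le n-d+1$, looks like it comes from the codes whose support is "just barely large enough," decomposed coordinate-by-coordinate: one thinks of building up a basis in reduced echelon form with pivots $\ell_1 < \cdots < \ell_i$, where the $j$-th pivot row, after the pivot, has $d-1$ further nonzero entries it is forced to use (from a remaining window of size $n - \ell_j - d(i-j)$), each contributing a factor $(q-1)$. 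So the plan is: (a) classify the pairs $(k=i, \text{support of size } s)$ and show that for $n \ge id$ only a controlled range of $s$ contributes; (b) isolate the term $s = id$ (or the minimal support size) which yields, via the echelon-form count, exactly the product $\prod_{j=1}^i \binom{n-\ell_j - d(i-j)}{d-1}$ summed over pivot tuples; (c) collect the remaining contributions and re-express them, via $\alpha_k(q, t, d)$ for the "full-support" codes on a $t$-subset of coordinates with $t < id$, as $\binom{n}{t} \alpha_k(q,t,d)$-type terms.

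**Reducing to smaller $n$.** The cleanest route for step (c): write $\alpha_i(q,n,d) = \sum_{t} \binom{n}{t} \beta_i(q,t,d)$ where $\beta_i(q,t,d)$ counts $i$-dimensional codes of \emph{full} support $[t]$ with $d_\HH > d$; then $\beta_i(q,t,d) = \sum_s (-1)^{t-s}\binom{t}{s}\alpha_i(q,s,d)$ by inclusion–exclusion (Möbius inversion on the Boolean lattice of coordinate subsets). Plugging $\alpha_i(q,n,d)$ into Corollary~\ref{coro:relations}(3) and also using Corollary~\ref{coro:relations}(2) to convert $\alpha_i(q,t,d)$ back into $\sum_{j \le i} w_j(q,t,d)\qbin{t-j}{i-j}{q}$, together with the bound that $\beta_i(q,t,d) = 0$ for $t > id$ (a nonzero $i$-dim code of full support on $t$ coords needs $t \le id$ — this is the place where the hypothesis $n \ge id$ becomes the natural threshold, so the $t = n$ term of the outer sum vanishes unless $n \le id$, and when $n \ge id$ only $t \le id$ survives, with $t = id$ giving the leading term and $i \le t \le id - 1$ giving the remainder sum). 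The $q$-binomials $\qbin{t-j}{i-j}{q}$ and the $q^{\binom{i-k}{2}}$ factors must then telescope; I'd expect a $q$-binomial/$q$-Vandermonde identity (Lemma~\ref{proprq} plus the $q$-Binomial Theorem, exactly as in the proof of Theorem~\ref{prop1}) to collapse everything except the ordinary binomial coefficients $\binom{n}{t}$, $\binom{n-t}{n-s}$ and the alternating signs, leaving precisely the stated double sum $\sum_{t=i}^{id-1}\binom{n}{t} w_i(q,t,d)\sum_{s=t}^{id-1}\binom{n-t}{n-s}(-1)^{s-t}$.

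**Main obstacle.** The hard part will be the explicit evaluation of $\beta_i(q, id, d)$ — the count of $i$-dimensional full-support codes on exactly $id$ coordinates with $d_\HH > d$ — and showing it equals $(q-1)^{i(d-1)}\sum_{\ell_1 < \cdots < \ell_i \le n-d+1}\prod_j \binom{n - \ell_j - d(i-j)}{d-1}$ after the outer $\binom{n}{id}$ and sign bookkeeping are folded in. This is a genuine combinatorial enumeration: one fixes a reduced row-echelon generator matrix, argues that the minimum-distance condition forces each pivot row to have weight exactly $d$ with its $d-1$ non-pivot nonzeros confined to a shrinking window, and that the rows' supports must be disjoint (so that no linear combination drops below weight $d+1$ — here one uses that we are in the extremal support size). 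Getting the window sizes $n - \ell_j - d(i-j)$ exactly right, and checking that \emph{every} such choice gives a valid code (no accidental low-weight combinations) and that distinct choices give distinct codes, is the delicate bookkeeping; I would handle it by induction on $i$, peeling off the first pivot row and applying the $(i-1)$-dimensional statement to the remaining $(i-1)d$ coordinates to the right of $\ell_1$'s support. All the rest is the routine $q$-series manipulation already modeled in the proofs of Theorems~\ref{prop2} and~\ref{prop1}.
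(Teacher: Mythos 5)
Your plan has a fatal mismatch between the objects you decompose and the objects for which the support bound holds. You route everything through Corollary~\ref{coro:relations}(3) and then decompose $\alpha_i(q,n,d)$ by support, claiming that your $\beta_i(q,t,d)$ (the number of $i$-dimensional codes with \emph{full} support $[t]$ and $d_\HH>d$) vanishes for $t>id$. This is false: codes with large minimum distance have \emph{large} support, not small. Already for $i=1$, $d=2$, the span of the all-ones vector in $\F_q^n$ has full support $n$ and minimum distance $n>d$, so $\beta_1(q,n,2)\neq 0$ for every $n$; in general a $k$-dimensional code with $d_\HH>d$ has support of size at least $d+k$ and possibly all of $n$, so the sum over supports never truncates at $id$ and the whole reduction collapses. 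The inconsistency also shows in your ``main obstacle'': a reduced-echelon generator matrix with $i$ pivot rows of weight $d$ and pairwise disjoint supports generates a code of minimum distance exactly $d$, which is \emph{excluded} from $\alpha_i(q,n,d)$ — so the enumeration you propose to carry out cannot produce the leading term within your framework.

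The bound $|\sH(U)|\le id$ holds for the rank-$i$ elements $U$ of the lattice $\mH(q,n,d)$ itself, because such $U$ is spanned by $i$ vectors of weight $\le d$; that is the correct place to decompose by support. The paper writes $w_i(q,n,d)=\sum_S f(S)$ with $f(S)=\sum_{\rk(U)=i,\,\sH(U)=S}\mu_{q,n,d}(U)$, observes $|S|\le id$, and treats the two regimes separately: for $|S|\le id-1$ it uses the projection isomorphism $\mu_{q,n,d}(U)=\mu_{q,|T|,d}(\pi_T(U))$ (Lemma~\ref{lem:pi}) so that $\sum_{T'\subseteq T}f(T')=w_i(q,|T|,d)$, and M\oo bius inversion over coordinate subsets produces the double sum $\sum_t\binom{n}{t}w_i(q,t,d)\sum_s\binom{n-t}{n-s}(-1)^{s-t}$; for $|S|=id$ the echelon count you describe is indeed the right enumeration (Proposition~\ref{tecnlm}), but applied to lattice elements spanned by disjoint weight-$d$ rows, together with the observation that the interval $[0,U]$ is then a Boolean algebra, whence $\mu_{q,n,d}(U)=(-1)^i$. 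No $q$-binomial telescoping through $\alpha_k$ is needed or available. Your combinatorial instincts about the leading term are right, but they only make sense once you abandon the $\alpha_k$ route and work inside the lattice.
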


Theorem \ref{th:compl} will find various applications in the remainder of the paper: It will allow us to explicitly compute $w_2(q,n,3)$ for all $q$ and all $n \ge 6$, and to establish an upper bound on the growth rate of $|w_i(q,n,d)|$ as $q \to +\infty$.  See Theorems \ref{th:comput3} and \ref{th:asnlarge} for details.
The proof of Theorem \ref{th:compl} requires some preliminary definitions and results.

\begin{definition}
The \textbf{Hamming support} of a vector $v \in \F_q^n$ is 
$\sH(v) \defi \{1 \le i \le n \mid v_i \neq 0\}$. The \textbf{Hamming support}
of a subspace $U \le \F_q^n$ is the set 
$$\sH(U) \defi \bigcup_{u \in U} \sH(u).$$
\end{definition}

\begin{notation}
For a non-empty set $S \subseteq \{1,...,n\}$ of cardinality $s$, we denote by $\pi_S:\F_q^n \to \F_q^s$ the projection on the coordinates indexed by $S$, i.e., 
$\pi_S: v \mapsto (v_i \mid i \in S) \in \F_q^s$ for all $v \in \F_q^n$.
\end{notation}

We start with the following simple preliminary result.

\begin{lemma} \label{lem:pi}
Let $n \ge d \ge 1$ be integers, and let $S \subseteq \{1,...,n\}$ be a set of cardinality $s \ge 1$. For all $U \in \mH(q,n,d)$ with $\sH(U) \subseteq S$ we have
$$\mu_{q,n,d}(U) = \mu_{q,s,d}(\pi_S(U)).$$
\end{lemma}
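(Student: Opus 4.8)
The plan is to show that the interval $[0,U]$ in $\mH(q,n,d)$ is isomorphic, as a lattice, to the interval $[0,\pi_S(U)]$ in $\mH(q,s,d)$; since the M\"obius function $\mu_\mP(0_\mP,x)$ of a finite lattice depends only on the interval $[0,x]$, this immediately yields the claimed equality $\mu_{q,n,d}(U) = \mu_{q,s,d}(\pi_S(U))$.

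First I would introduce the coordinate subspace $W \defi \{v \in \F_q^n \mid \sH(v) \subseteq S\} \le \F_q^n$ and observe that $\pi_S$ restricts to a linear isomorphism $\pi_S|_W : W \to \F_q^s$ which preserves Hamming weight, i.e.\ $\wH(v) = \wH(\pi_S(v))$ for all $v \in W$ (immediate, since the support of $v \in W$ is contained in $S$). Because $\sH(U) \subseteq S$ we have $U \le W$, and a basis of $U$ made of vectors of weight $\le d$ — which exists since $U \in \mH(q,n,d)$, by Remark \ref{frems}(1) — is mapped by $\pi_S$ to a basis of $\pi_S(U)$ consisting of vectors of weight $\le d$ in $\F_q^s$. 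Hence $\pi_S(U) \in \mH(q,s,d)$ and the statement is well-posed. The degenerate case $U = \{0\}$ is trivial, as both sides equal $1$.

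Next I would use the fact, already exploited in the proof of Corollary \ref{coro:three}, that in an atomistic lattice of the form $\mL(A)$ the interval $[0,V]$ coincides with $\mL(\{a \in A \mid a \le V\})$: every element $\le V$ is a join of atoms of $\mL(A)$, and each such atom is then itself $\le V$. Applying this with $A = \HH(q,n,d)$ and $V = U$, the interval $[0,U]$ in $\mH(q,n,d)$ is the sublattice generated by the one-dimensional spaces $\langle v\rangle$ with $1 \le \wH(v) \le d$ and $\langle v \rangle \le U$; for any such $v$ we have $v \in U \le W$. The weight-preserving isomorphism $\pi_S|_W$ therefore carries this set of atoms bijectively onto the set of atoms of $[0,\pi_S(U)]$ in $\mH(q,s,d)$, namely the $\langle w \rangle$ with $1 \le \wH(w) \le d$ and $\langle w \rangle \le \pi_S(U)$. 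Since $\pi_S|_W$ is a vector-space isomorphism, it commutes with sums and intersections of subspaces of $W$, so this bijection on atoms extends to a genuine lattice isomorphism $[0,U] \to [0,\pi_S(U)]$. Consequently $\mu_{q,n,d}(U) = \mu_{[0,U]}(0,U) = \mu_{[0,\pi_S(U)]}(0,\pi_S(U)) = \mu_{q,s,d}(\pi_S(U))$.

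I do not expect a real obstacle here: the content is entirely the observation that $U$ lives in a coordinate subspace on which $\pi_S$ is a weight-preserving linear isomorphism, plus the routine identification of intervals in atom-generated lattices. The only two points needing care are the verification that $\pi_S|_W$ preserves Hamming weight and that the map induced on intervals is a lattice isomorphism and not merely an order-isomorphism — both of which follow at once from the linearity of $\pi_S|_W$.
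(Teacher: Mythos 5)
Your proof is correct and follows essentially the same route as the paper, which simply observes that $\pi_S$ restricts to a lattice isomorphism between the interval $[0,U]$ in $\mH(q,n,d)$ and $[0,\pi_S(U)]$ in $\mH(q,s,d)$, so the two intervals have the same M\"obius value. You have merely spelled out the verification (weight preservation on the coordinate subspace, correspondence of atoms, compatibility with joins) that the paper leaves implicit.
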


\begin{proof}
Since $\sH(U) \subseteq S$, the projection $\pi_S$ is a lattice isomorphism between the interval $[0,U]$ in $\mH(q,n,d)$ and the interval $[0,\pi_S(U)]$ in $\mH(q,s,d)$. In particular, the two intervals (as lattices) must have the same Euler characteristic.
\end{proof}

The second tool that we need is an explicit formula for the number of spaces $U \in \mH(q,n,d)$ that satisfy some special properties. 

\begin{proposition} \label{tecnlm}
Let $i,d \ge 1$ with $1 \le i \le n$, and suppose $n \ge id$. The number of spaces  $U \in \mH(q,n,d)$ with $\rk(U)=i$ and $|\sH(U)|=id$ is
$$(q-1)^{i(d-1)}\sum_{1 \le \ell_1< \ell_2 < \cdots < \ell_i \le n-d+1} \quad \prod_{j=1}^i \binom{n-\ell_j-d(i-j)}{d-1}.$$
Moreover, for any such $U$ we have 
$$\mu_{q,n,d}(U)=(-1)^i.$$
\end{proposition}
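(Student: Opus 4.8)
The plan is to count in two stages: first choose the Hamming support $S$ of $U$, then count the spaces $U \in \mH(q,n,d)$ with $\sH(U)=S$ exactly. Since $\rk(U)=i$ and $|\sH(U)|=id$, and every element of $U$ has weight $\le d$, the support $S$ must split into $i$ ``blocks'' of size exactly $d$, each block being the support of one member of a particular basis of $U$. First I would argue that a space $U$ with $\rk(U)=i$, $|\sH(U)|=id$, every nonzero vector of weight $\le d$, is exactly a direct sum $U_1 \oplus \cdots \oplus U_i$ where the $U_j$ are $1$-dimensional, $\sH(U_j)$ has size exactly $d$, and the supports $\sH(U_j)$ are pairwise disjoint: indeed, if two basis vectors had overlapping or non-disjoint supports one could produce (by the matroid/circuit structure, or directly) a nonzero vector in $U$ of weight outside the allowed range, or the total support would be smaller than $id$. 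This reduces the count to: choose an ordered-into-blocks partition of an $id$-subset of $\{1,\dots,n\}$ into $i$ blocks of size $d$, and for each block choose a $1$-dimensional subspace of $\F_q^d$ whose generator is fully supported, i.e. has all $d$ coordinates nonzero. The number of such $1$-dimensional subspaces is $(q-1)^{d-1}$ (there are $(q-1)^d$ full-support vectors in $\F_q^d$, and each line contains $q-1$ of them). Across the $i$ blocks this gives the factor $(q-1)^{i(d-1)}$.

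The combinatorial factor is the number of ways to pick $i$ pairwise disjoint $d$-subsets $B_1,\dots,B_i$ of $\{1,\dots,n\}$, where the blocks are \emph{unordered} as a set (since $U$ determines its own decomposition). The cleanest way to get the stated sum $\sum_{1 \le \ell_1<\cdots<\ell_i \le n-d+1} \prod_{j=1}^i \binom{n-\ell_j-d(i-j)}{d-1}$ is to order the blocks canonically by their minimal elements: let $\ell_j$ be the smallest index in the $j$-th block (in increasing order of minima), so $\ell_1 < \ell_2 < \cdots < \ell_i$. Having fixed $\ell_1,\dots,\ell_j$ and all the elements of blocks $1,\dots,j-1$, I would count the ways to complete block $j$: it must contain $\ell_j$, it must avoid $\{1,\dots,\ell_j\}$ except for $\ell_j$ itself, and it must avoid all $d(j-1)$ elements already used in the earlier blocks — but since those earlier blocks have minima $< \ell_j$ they occupy $d(j-1)$ specific slots, and the remaining $d-1$ elements of block $j$ are chosen among the indices greater than $\ell_j$ that are not already used. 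Keeping track of exactly how many such indices are available is where one must be a little careful; the bookkeeping is designed so that the count of ways to fill block $j$ is $\binom{n-\ell_j-d(i-j)}{d-1}$. I would verify this by induction on $j$ (or by a direct slot-counting argument, peeling off the blocks from largest minimum to smallest), and the main obstacle I anticipate is precisely making this index-availability count match the binomial $\binom{n-\ell_j-d(i-j)}{d-1}$ on the nose rather than something equivalent-looking; a sanity check with small $n,i,d$ (e.g. $i=1$, where the sum collapses to $\sum_{\ell_1=1}^{n-d+1}\binom{n-\ell_1}{d-1}=\binom{n}{d}$, the number of $d$-subsets, times $(q-1)^{d-1}$) will confirm the normalization.

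Finally, for the Möbius-function claim $\mu_{q,n,d}(U)=(-1)^i$: by Lemma \ref{lem:pi}, restricting to $S=\sH(U)$ (of size $id$) we have $\mu_{q,n,d}(U)=\mu_{q,id,d}(\pi_S(U))$, so it suffices to treat the case $n=id$ and $U$ of full support. Since $U=U_1\oplus\cdots\oplus U_i$ with disjointly supported $1$-dimensional pieces, the interval $[0,U]$ in $\mH(q,id,d)$ is isomorphic to the product of the intervals $[0,U_j]$, each of which is the two-element lattice $\{0, U_j\}$ (its only nonzero element below $U_j$ of weight $\le d$ is $U_j$ itself, because any proper subspace is $\{0\}$). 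Hence $[0,U] \cong B_i$, the Boolean lattice on $i$ elements, and $\mu_{q,n,d}(U)=\mu_{B_i}(0,\hat 1)=(-1)^i$. Alternatively one can invoke Theorem \ref{th:muexpl} or Theorem \ref{th:decomp} to get the same product formula for $\mu$, but the product-of-intervals argument is the shortest. This completes the plan.
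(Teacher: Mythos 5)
Your proposal is correct and takes essentially the same approach as the paper: decompose $U$ into $i$ disjointly supported weight-$d$ lines (the paper phrases this via reduced row-echelon generator matrices with leading positions $\ell_1<\cdots<\ell_i$), and count by peeling off the blocks from the largest minimum downward, which is exactly what makes $\binom{n-\ell_j-d(i-j)}{d-1}$ come out on the nose, since all $d(i-j)$ already-occupied positions then lie strictly above $\ell_j$; the M\"obius computation via $[0,U]\cong\mB_i$ is also the paper's. One caution: your first bookkeeping direction (filling blocks in increasing order of minima) would not yield a clean product, because blocks $1,\dots,j-1$ can occupy an unpredictable number of slots above $\ell_j$ --- the ``largest minimum to smallest'' alternative you mention is the one that works.
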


\begin{notation}
In the remainder of the section, we denote by $\lead(u)$ the \textbf{leading position} of a vector $u \in \F_q^n \setminus\{0\}$, and by $\inn(u)$ its \textbf{initial entry}. More precisely, we set $\lead(u)=\min\{i \mid u_i \neq 0\}$ and $\inn(u)=u_i \in \F_q$, where $i=\lead(u)$. Moreover, we simply write $[n]$ for $\{1,...,n\}$.
\end{notation}

\begin{proof}[Proof of Proposition \ref{tecnlm}]
Let $U \in \mH(q,n,d)$ be a space with $\rk(U)=i$ and $|\sH(U)|=id$. Let $\{u_1,...,u_i\}$ be a basis of $U$ made of vectors of weight at most $d$. Since $U$ has support size $id$, the $u_j$'s are all of weight exactly $d$ and must have pairwise disjoint supports. By permuting these vectors and scaling them we find a basis $\{u_1,...,u_i\}$ of $U$ with the following properties:
\begin{enumerate}[label=(\arabic*), noitemsep]
\item $\lead(u_1) < \lead(u_2) < \cdots < \lead(u_i)$, \label{cond1}
\item $\inn(u_j)=1$ for all $j$. \label{cond2}
\item $\sH(u_j) \cap \sH(u_k) = \emptyset$ if $j \neq k$, \label{cond3}
\item $\wH(u_j)=d$ for all $j$. \label{cond4}
\end{enumerate}
Note that the matrix whose rows are the $u_j$'s is in
reduced row-echelon form. Moreover, any matrix 
$$G=\begin{pmatrix} u_1 \\ \vdots \\ u_i\end{pmatrix} \in \F_q^{i \times n}.$$
whose rows satisfy \ref{cond1}, \ref{cond2}, \ref{cond3} and \ref{cond4}
is in reduced row-echelon form and generates a space $U \in \mH(q,n,d)$ of rank $i$ and support size $id$.
Therefore it suffices to count these matrices. 

Fix a choice $1 \le \ell_1< \ell_2 < \cdots < \ell_i \le n-d+1$ for the leading entries of the rows of $G$. Then we have 
$$\binom{n-\ell_i}{d-1}(q-1)^{d-1}$$ choices for the $i$-th row. For each of these choices, we have
$$\binom{n-\ell_{i-1}-d}{d-1}(q-1)^{d-1}$$ choices for the $(i-1)$-th row. Once the $(i-1)$-th row is picked, there are
$$\binom{n-\ell_{i-2}-2d}{d-1}(q-1)^{d-1}$$
choices for the $(i-2)$-th row. Continuing in this way we obtain that the number of $i \times n$ matrices $G$ whose rows satisfy
\ref{cond1}, \ref{cond2}, \ref{cond3} and \ref{cond4} and have leading entries $1 \le \ell_1< \ell_2 < \cdots < \ell_j \le n-d+1$ are
$$(q-1)^{i(d-1)} \; \prod_{j=1}^i \binom{n-\ell_j-d(i-j)}{d-1}.$$
Therefore the number of spaces  $U \in \mH(q,n,d)$ with $\rk(U)=i$ and $|\sH(U)|=id$ is
$$\sum_{1 \le \ell_1< \ell_2 < \cdots < \ell_i \le n-d+1} \quad (q-1)^{i(d-1)} \; \prod_{j=1}^i \binom{n-\ell_j-d(i-j)}{d-1},$$
as claimed.

It remains to show that $\mu_{q,n,d}(U)=(-1)^i$ for all spaces $U \in \mH(q,n,d)$ with $\rk(U)=i$ and $|\sH(U)|=id$. This can be seen in various ways. One is the following. Fix $U$ with the desired properties, and let
$\{u_1,...,u_i\}$ be a basis of $U$ that satisfies \ref{cond1}, \ref{cond2}, \ref{cond3} and \ref{cond4}. The only vectors $u \in U$ with $1 \le \wH(u) \le d$ are the $u_j$'s and their multiples. Therefore the interval $[0,U]$ in $\mH(q,n,d)$ is the lattice $\mL(\{u_1,...,u_i\})$. Moreover, it is not difficult to see that $\mL(\{u_1,...,u_i\})$ is isomorphic to the Boolean algebra $\mB_i$ over the set $\{1,...,i\}$. Thus
$\mu_{q,n,d}(U)$ coincides with the Euler characteristic of $\mB_i$, which is $(-1)^i$.
\end{proof}

We are now ready to prove the first main result of this section.

\begin{proof}[Proof of Theorem \ref{th:compl}] \label{pageth:compl}
Fix any $1 \le i \le n$. Observe that every space $U \in \mH(q,n,d)$ of dimension $i$ has $|\sH(U)| \le id$.
Therefore
\begin{equation} \label{sssum}
w_i(q,n,d) = \sum_{\substack{U \in \mH(q,n,d) \\ \rk(U)=i}} \mu_{q,n,d}(U)  
= \sum_{s=0}^{id} \ \ \sum_{\substack{S \subseteq [n] \\ |S|=s}} \ \sum_{\substack{U \in \mH(q,n,d) \\ \rk(U)=i \\ \sH(U)=S}} \mu_{q,n,d}(U)
=\sum_{s=0}^{id} \ \ \sum_{\substack{S \subseteq [n] \\ |S|=s}} f(S),
\end{equation}
where 
$$f(S) \defi  \sum_{\substack{U \in \mH(q,n,d) \\ \rk(U)=i \\ \sH(U)=S}} \mu_{q,n,d}(U) \quad \mbox{for all $S \subseteq [n]$.}$$ 
We will evaluate $f(S)$ in two different ways depending on the cardinality of $S$. Observe first that by Proposition \ref{tecnlm} we have
\begin{equation}\label{ssse1}
\sum_{\substack{S \subseteq [n] \\ |S|=id}} f(S) =  (-1)^i(q-1)^{i(d-1)} \sum_{1 \le \ell_1< \ell_2 < \cdots < \ell_i \le n-d+1} \quad \prod_{j=1}^i  \binom{n-\ell_j-d(i-j)}{d-1}.
\end{equation}
Now fix a subset $S \subseteq [n]$ with $|S| \le id-1$.
For all $T \subseteq S$ define
$$g(T) \defi \sum_{T' \subseteq T} f(T').$$ 
Since $i \ge 1$, we have $g(\emptyset)=0$. Moreover, by Lemma \ref{lem:pi}, for all $T \subseteq S$ with $|T|=t \ge 1$ we have
$$g(T)=\sum_{\substack{U \in \mH(q,n,d) \\ \rk(U)=i \\ \sH(U) \subseteq T}} \mu_{q,n,d}(U) = w_i(q,t,d).$$
Using M\oo bius inversion we conclude that
\begin{equation} \label{ssse2}
f(S)=\sum_{T \subseteq S} g(T) (-1)^{s-|T|} = \sum_{t=1}^s w_i(q,t,d) \binom{s}{t} (-1)^{s-t}, \quad \mbox{where } s=|S|.
\end{equation}
Combining (\ref{sssum}) with (\ref{ssse2}) one obtains
\begin{align*}
w_i(q,n,d) -  \sum_{\substack{S \subseteq [n] \\ |S|=id}} f(S)= \sum_{s=0}^{id-1}  \ \ \sum_{\substack{S \subseteq [n] \\ |S|=s}} f(S) 
 &=\sum_{s=0}^{id-1} \ \ \sum_{\substack{S \subseteq [n] \\ |S|=s}} \ \sum_{t=1}^s w_i(q,t,d) \binom{s}{t} (-1)^{s-t} 
\\ &= \sum_{s=1}^{id-1} \sum_{t=1}^s \binom{n}{s} \binom{s}{t}(-1)^{s-t}w_i(q,t,d)  \\
&= \sum_{t=1}^{id-1} \binom{n}{t} \sum_{s=t}^{id-1} \binom{n-t}{n-s} (-1)^{s-t} w_i(q,t,d).
\end{align*}
Note moreover that $w_i(q,t,d)=0$ for $t \le i-1$, as the lattice $\mH(q,t,d)$ has rank $t$. We then conclude using Eq.~(\ref{ssse1}).
\end{proof}

%

The second main result of this section establishes a (partial) duality between the Whitney numbers of $\mH(q,n,d)$ and $\mH(q,n,n-d)$. 

\begin{theorem} \label{thm:dual_appl}
 For all $n \ge d \ge 1$, the Whitney numbers of $\mH(q,n,d)$ and $\mH(q,n,n-d)$ satisfy 
$$\sum_{i=0}^{n-d} \qbin{n-i}{d}{q} w_i(q,n,d)  \ = \ 
\sum_{i=0}^{d} \qbin{n-i}{d-i}{q} w_i(q,n,n-d).$$
\end{theorem}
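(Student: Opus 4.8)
The plan is to reduce the claimed duality to Theorem~\ref{thm:dua} by choosing the right family of subspaces. Recall that $\mH(q,n,d) = \mL(\HH(q,n,d))$ where $\HH(q,n,d)$ consists of all nonzero vectors of Hamming weight at most $d$. The key observation is that a vector $v \in \F_q^n$ has $\wH(v) \le d$ if and only if $v$ lies in some coordinate subspace $A_S \defi \{x \in \F_q^n \mid \sH(x) \subseteq S\}$ with $|S| = d$; hence $\HH(q,n,d)$ (modulo the zero vector, which is irrelevant to $\mL(\cdot)$) is exactly $\bigcup_{|S|=d} A_S$, the union ranging over the $\binom{n}{d}$ subsets $S \subseteq [n]$ of size $d$. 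Therefore $\mH(q,n,d) = \mL\bigl(\bigcup_{|S|=d} A_S\bigr)$, an equidimensional union of subspaces each of dimension $a = d$, precisely the setup of Theorem~\ref{thm:dua}.

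Next I would identify the orthogonal complements. Using the standard bilinear form $b(x,y) = \sum_i x_i y_i$ (any symmetric non-degenerate form works, per the remark preceding Lemma~\ref{dualiz}), one computes $A_S^\perp = \{x \in \F_q^n \mid \sH(x) \subseteq [n]\setminus S\} = A_{[n]\setminus S}$, the coordinate subspace on the complementary set of size $n-d$. As $S$ ranges over all $d$-subsets, $[n]\setminus S$ ranges over all $(n-d)$-subsets, so $\bigcup_{|S|=d} A_S^\perp = \bigcup_{|T|=n-d} A_T = \HH(q,n,n-d)$ up to the zero vector, giving $\mL\bigl(\bigcup A_S^\perp\bigr) = \mH(q,n,n-d)$.

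Now apply Theorem~\ref{thm:dua} with $X = \F_q^n$, the family $\{A_S : |S|=d\}$, and $a = d$. Its conclusion reads
\[
\sum_{i=0}^{n-d} w_i\bigl(\mL(\textstyle\bigcup_{|S|=d} A_S)\bigr) \qbin{n-i}{d}{q} \ = \ \sum_{i=0}^{d} w_i\bigl(\mL(\textstyle\bigcup_{|S|=d} A_S^\perp)\bigr) \qbin{n-i}{d-i}{q},
\]
and substituting the two lattice identifications above yields exactly
\[
\sum_{i=0}^{n-d} \qbin{n-i}{d}{q} w_i(q,n,d) \ = \ \sum_{i=0}^{d} \qbin{n-i}{d-i}{q} w_i(q,n,n-d),
\]
which is the assertion.

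The only genuine point requiring care — and the place I would expect to have to be slightly careful — is the bookkeeping with the zero vector: $\HH(q,n,d)$ is defined as nonzero vectors of weight $\le d$, whereas $\bigcup_{|S|=d} A_S$ also contains $0$; but Notation~\ref{notaz X} builds $\mL(A)$ from $\Pro(A)$, the set of $1$-dimensional subspaces spanned by elements of $A$, and $0$ contributes no such subspace, so $\mL(\HH(q,n,d)) = \mL\bigl(\bigcup_{|S|=d} A_S\bigr)$ holds on the nose. A secondary routine check is that $A_S^\perp = A_{[n]\setminus S}$ under the dot-product form, which is immediate. Everything else is a direct quotation of Theorem~\ref{thm:dua}, so there is no real obstacle beyond these identifications.
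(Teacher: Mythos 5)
Your proposal is correct and follows essentially the same route as the paper: both decompose $\mH(q,n,d)$ as $\mL\bigl(\bigcup_{|S|=d}\F_q^n(S)\bigr)$, observe that the orthogonal of a coordinate subspace $\F_q^n(S)$ under the standard inner product is $\F_q^n([n]\setminus S)$, and then invoke Theorem~\ref{thm:dua} with $a=d$. The extra remark about the zero vector is a harmless refinement of a point the paper leaves implicit.
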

\begin{proof}
Let $S_1,...,S_L$ be the $d$-subsets of $\{1,...,n\}$, where $L=\binom{n}{d}$. For $1 \le \ell \le L$
define the space
$$A_\ell =\F_q^n(S_\ell)=\{v \in \F_q^n \mid v_i=0 \mbox{ for all $i \notin S_\ell$}\} \le \F_q^n.$$
Observe that $\mH(q,n,d)=\mL(A_1 \cup \cdots \cup A_L)$.
Denote by $A_\ell^\perp$ the orthogonal of $A_\ell$
with respect to the standard inner product of $\F_q^n$. It is easy to see that
$A_\ell^\perp=\F_q^n(S_\ell')$, where $S_\ell'$ is the complement of $S_\ell$ in $\{1,...,n\}$. As $S_\ell$ ranges over the $d$-subsets of $\{1,...,n\}$, $S_\ell'$
ranges over the $(n-d)$-subsets of $\{1,...,n\}$. Thus
$\mL(A_1^\perp \cup \cdots \cup A_L^\perp)=\mH(q,n,n-d)$, and we conclude by Theorem~\ref{thm:dua}.
\end{proof}

\section{HWDL -- Computational Results}
\label{sec:HWDL3}

In this section, we apply the results of this paper to obtain closed formul{\ae} for the Whitney numbers of $\mH(q,n,d)$ for certain values of the parameters $n,d$ and for any $q$.

\begin{theorem} \label{th:n-1}
Let $n, i \ge 2$ be integers. The $i$-th Whitney number of $\mH(q,n,n-1)$ is
\begin{equation*}
w_i(q,n,n-1)=\qbin{n-1}{i}{q}(-1)^iq^{\binom{i}{2}}  -\qbin{n-1}{i-1}{q}(-1)^{i-1}q^{\binom{i-1}{2}} \;  \sum_{j=1}^{n-1} \binom{n-1}{j-1} (q-1)^{j-1}.
\end{equation*}
\end{theorem}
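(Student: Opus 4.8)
The plan is to derive the formula directly from the correspondence between Whitney numbers and subspace distributions recorded in Corollary~\ref{coro:relations}. By part~(3) of that corollary,
\[
w_i(q,n,n-1)=\sum_{k=0}^{i}\alpha_k(q,n,n-1)\,\qbin{n-k}{i-k}{q}(-1)^{i-k}q^{\binom{i-k}{2}},
\]
so everything reduces to computing the numbers $\alpha_k(q,n,n-1)$, i.e., by Remark~\ref{frems}(2), the number of $k$-dimensional codes $C\le\F_q^n$ with $\dH(C)>n-1$.

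First I would dispose of the relevant values of $k$. For $k=0$ the only code is the zero code, whose minimum distance is $+\infty>n-1$, so $\alpha_0(q,n,n-1)=1$. For $k=1$, a line $\langle v\rangle$ has $\dH(\langle v\rangle)=\wH(v)$, so the condition $\dH>n-1$ forces $\wH(v)=n$; there are $(q-1)^n$ such vectors and each line contains $q-1$ of them, whence $\alpha_1(q,n,n-1)=(q-1)^{n-1}$. For $k\ge 2$ the Singleton Bound (Theorem~\ref{th:sing}) gives $\crit(\mH(q,n,n-1))\ge n-1$, and hence (Remark~\ref{rem:imp}) $\alpha_k(q,n,n-1)=0$. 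So only the terms $k=0$ and $k=1$ survive, and the displayed sum becomes
\[
w_i(q,n,n-1)=\qbin{n}{i}{q}(-1)^iq^{\binom{i}{2}}+(q-1)^{n-1}\qbin{n-1}{i-1}{q}(-1)^{i-1}q^{\binom{i-1}{2}}.
\]

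What remains is pure $q$-binomial bookkeeping to bring this into the stated shape. I would apply the $q$-Pascal identity $\qbin{n}{i}{q}=\qbin{n-1}{i}{q}+q^{\,n-i}\qbin{n-1}{i-1}{q}$ to the first term and use $q^{\,n-i}q^{\binom{i}{2}}=q^{\,n-1}q^{\binom{i-1}{2}}$ (valid since $\binom{i}{2}-\binom{i-1}{2}=i-1$) to combine the two contributions carrying the factor $\qbin{n-1}{i-1}{q}$; this yields the coefficient $-\bigl(q^{n-1}-(q-1)^{n-1}\bigr)$ for $\qbin{n-1}{i-1}{q}(-1)^{i-1}q^{\binom{i-1}{2}}$. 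Finally, expanding $q^{n-1}=\bigl((q-1)+1\bigr)^{n-1}$ by the binomial theorem gives $q^{n-1}-(q-1)^{n-1}=\sum_{j=1}^{n-1}\binom{n-1}{j-1}(q-1)^{j-1}$, and substituting this in produces exactly the asserted formula. The only point requiring care — it is not really an obstacle — is tracking the signs and the shift $\binom{i}{2}\leftrightarrow\binom{i-1}{2}$ in the powers of $q$; the argument moreover covers transparently the degenerate ranges $i=n$ (first $q$-binomial vanishes) and $i>n$ (both $q$-binomials vanish, as does $w_i$).
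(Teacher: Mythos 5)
Your proof is correct, but it takes a genuinely different route from the one in the paper. The paper proves Theorem~\ref{th:n-1} via Corollary~\ref{coro:three}: it takes the modular coatom $T=\langle e_1,\ldots,e_{n-1}\rangle$ (every subspace of $T$ lies in $\mH(q,n,n-1)$, so the interval $[0,T]$ is a full subspace lattice), observes that any $U\in\mH(q,n,n-1)$ with $U\wedge T=0$ has rank at most $1$, and reads off $w_i=w_i([0,T])-w_{i-1}([0,T])\cdot\#\{\text{rank-one elements not below }T\}$; the sum $\sum_{j=1}^{n-1}\binom{n-1}{j-1}(q-1)^{j-1}$ appears there directly as the count of weight-$\le n-1$ vectors with nonzero last coordinate (normalized), so the published formula drops out with no rearrangement. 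You instead run the subspace-distribution transform of Theorem~\ref{prop2}/Corollary~\ref{coro:relations}(3) with $\alpha_0=1$, $\alpha_1=(q-1)^{n-1}$, and $\alpha_k=0$ for $k\ge2$ by Singleton --- all three values are justified correctly, and the final bookkeeping checks out: the $q$-Pascal identity together with $q^{\,n-i}q^{\binom{i}{2}}=q^{\,n-1}q^{\binom{i-1}{2}}$ and $q^{n-1}-(q-1)^{n-1}=\sum_{j=1}^{n-1}\binom{n-1}{j-1}(q-1)^{j-1}$ reproduces the stated expression exactly. Amusingly, your method is precisely the one the paper itself uses for the neighbouring case $d=n-2$ (Theorem~\ref{th:n-2}). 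The trade-off: the modular-coatom argument yields the formula in its published shape with essentially no algebra but requires spotting a modular element, whereas yours is mechanical and transfers to any $d$ for which the full subspace distribution is computable, at the cost of a round of $q$-binomial manipulation at the end.
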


\begin{proof}
We use Corollary \ref{coro:three}. Let
$T \le \F_q^n$ be the space generated by the basis elements $e_1,...,e_{n-1}$. It is not difficult to see that every subspace of $T$ belongs to $\mH(q,n,n-1)$. In particular, $T$ is modular in $\mH(q,n,n-1)$, and $U \meet T=U \cap T$ for all $U \in \mH(q,n,d)$. Moreover, an element $U \in \mH(q,n,n-1)$ with $U \cap T=\{0\}$ has either $\rk(U)=0$, or $\rk(U)=1$. Therefore by Corollary~\ref{coro:three} we conclude
\begin{equation} \label{iii} w_i(q,n,n-1)=w_i([0,T])-w_{i-1}([0,T]) \cdot \left| \{U \in \mH(q,n,n-1) \mid \rk(U)=1, \ U \not\le T\}\right|,
\end{equation}
where the square brackets denote an interval of $\mH(q,n,n-1)$. Since $[0,T]$ is isomorphic to the lattice of subspaces of~$T$, we have
\begin{equation} \label{iiii}
w_i([0,T])=\qbin{n-1}{i}{q}(-1)^iq^{\binom{i}{2}}, \qquad 
w_{i-1}([0,T])=\qbin{n-1}{i-1}{q}(-1)^{i-1}q^{\binom{i-1}{2}}.
\end{equation}
Moreover,
\begin{equation}\label{iiiii}
\left| \{U \in \mH(q,n,n-1) \mid \rk(U)=1, \ U \not\le T\}\right| = \sum_{j=1}^{n-1} \binom{n-1}{j-1} (q-1)^{j-1}.
\end{equation}
Combining (\ref{iii}), (\ref{iiii}) and (\ref{iiiii}) one concludes the proof.
\end{proof}

The following theorem gives explicit formul{\ae} for the Whitney numbers of $\mH(q,n,2)$. In Section~\ref{sec:HWDL5} we will combine these with Theorem \ref{thm:dual_appl} to obtain precise information on the asymptotics of $w_2(q,n,d)$ as $q \to +\infty$. The next result is essentially already present in Dowling's paper \cite{dowling1973class}.

\begin{theorem}[Dowling] \label{th:fw2}
For all $n \ge 2$, the Whitney numbers of $\mH(q,n,2)$ are given by the formul{\ae}
$$w_0(q,n,2)=1, \qquad (-1)^i  w_i(q,n,2)=  \sum_{1 \le j_1 < \cdots < j_i \le n} \ \prod_{t=1}^i \left(1+(j_t-1)(q-1) \right) \quad \mbox{for $i \ge 1$}.$$
\end{theorem}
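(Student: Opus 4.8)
\textbf{Proof strategy for Theorem \ref{th:fw2}.} The plan is to identify $\mH(q,n,2)$ with a supersolvable lattice and apply Stanley's Modular Factorization Theorem iteratively, exactly as in Remark \ref{rem:MFT}. By Theorem \ref{bonin1}, $\mH(q,n,2)$ is supersolvable, so the first task is to exhibit an explicit maximal chain of modular elements $0 = t_0 \lessdot t_1 \lessdot \cdots \lessdot t_n = 1$. The natural candidate is $t_k = \langle e_1, \dots, e_k \rangle$, the coordinate subspace spanned by the first $k$ standard basis vectors. First I would check that each $t_k$ lies in $\mH(q,n,2)$ (it is spanned by weight-one vectors) and that every subspace of $t_k$ also lies in $\mH(q,n,2)$, so that the interval $[0, t_k]$ is the full lattice of subspaces of $t_k$; this makes $t_k$ modular in $\mH(q,n,2)$, since a coordinate subspace is modular in $\mL(\F_q^n)$ and the restriction geometry inherits this.

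\textbf{Counting the relevant atoms.} The key computational step is, for each $k$, to count the atoms $a \in \at(\mH(q,n,2))$ with $a \le t_k$ but $a \not\le t_{k-1}$. An atom of $\mH(q,n,2)$ is a line $\langle v \rangle$ with $1 \le \wH(v) \le 2$. Such a line lies in $t_k = \langle e_1, \dots, e_k\rangle$ iff $\sH(v) \subseteq \{1, \dots, k\}$, and it fails to lie in $t_{k-1}$ iff $k \in \sH(v)$. So I need to count lines $\langle v \rangle$ with support contained in $\{1,\dots,k\}$, containing the coordinate $k$, and weight $1$ or $2$. The weight-$1$ contribution is the single line $\langle e_k \rangle$. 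The weight-$2$ contribution: choose the other support coordinate $j \in \{1, \dots, k-1\}$ ($k-1$ ways), and choose the ratio of the two nonzero entries up to scaling ($q-1$ ways, since we may normalize the $k$-th entry to $1$ and pick the $j$-th entry freely in $\F_q^*$). Hence the count is $1 + (k-1)(q-1)$.

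\textbf{Assembling the characteristic polynomial and extracting Whitney numbers.} Applying the iterated form of Corollary \ref{coro:MFT} recorded in Remark \ref{rem:MFT} with $t_r = 1_\mH$ throughout gives
$$\chi(\mH(q,n,2); \lambda) = \prod_{k=1}^n \bigl(\lambda - (1 + (k-1)(q-1))\bigr).$$
Then I would read off the Whitney numbers by expanding this product: $w_i(q,n,2)$ is $(-1)^i$ times the $i$-th elementary symmetric function of the roots $1 + (k-1)(q-1)$ for $k = 1, \dots, n$, which is precisely the stated formula after relabelling the index set as $1 \le j_1 < \cdots < j_i \le n$. The base case $w_0(q,n,2) = 1$ is the leading coefficient. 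The main obstacle, such as it is, is purely bookkeeping: verifying carefully that $t_k$ is genuinely modular in the sublattice $\mH(q,n,2)$ (not merely in $\mL(\F_q^n)$) and that the interval $[0,t_k]$ is the whole subspace lattice of $t_k$, so that the "$\not\le t_{k-1}$" atom count is the only nontrivial input; once that is in place, the factorization and the symmetric-function expansion are routine.
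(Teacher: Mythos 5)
Your overall strategy (exhibit the coordinate chain $t_k=\langle e_1,\dots,e_k\rangle$ as a maximal chain of modular elements and iterate the Modular Factorization Theorem as in Remark \ref{rem:MFT}) is the classical route to Dowling's formula, your atom count $1+(k-1)(q-1)$ is correct, and the final elementary-symmetric-function expansion matches what the paper does. The paper itself takes a shortcut: it invokes Theorem \ref{th:down2} to identify $\mH(q,n,2)$ with the Dowling lattice $Q_n(\F_q^*)$ and simply cites Dowling's known factorization of the characteristic polynomial, so the only work it actually performs is the expansion into elementary symmetric functions.

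However, your verification of the one step you yourself flag as the main obstacle --- modularity of $t_k$ in $\mH(q,n,2)$ --- contains a genuine gap. You claim that every subspace of $t_k$ lies in $\mH(q,n,2)$, so that $[0,t_k]$ is the full lattice of subspaces of $t_k$. This is false for $k\ge 3$: the line $\langle e_1+e_2+e_3\rangle\le t_3$ contains only vectors of Hamming weight $3$ and hence is not an element of $\mH(q,n,2)$. (Contrast the proof of Theorem \ref{th:n-1}, where the analogous claim does hold because there $\dim T=n-1=d$.) The correct statement is $[0,t_k]\cong\mH(q,k,2)$. Worse, your claim is inconsistent with your own conclusion: if $[0,t_{n-1}]$ were the full subspace lattice of $t_{n-1}$, Corollary \ref{coro:MFT} would yield $\chi(\mH(q,n,2);\lambda)=\bigl(\lambda-1-(n-1)(q-1)\bigr)\prod_{i=0}^{n-2}(\lambda-q^i)$, which differs from the product you assert for every $n\ge 3$ and every prime power $q$. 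Finally, the assertion that the restriction geometry ``inherits'' modularity from $\mL(\F_q^n)$ is not valid in general: the meet of $x$ and $t_k$ in $\mH(q,n,2)$ is the largest element of $\mH(q,n,2)$ contained in $x\cap t_k$, which may be strictly smaller than $x\cap t_k$, so modularity must be re-established inside the sublattice. What is missing is an actual argument that for every $x\in\mH(q,n,2)$ the subspace $x\cap t_k$ is again spanned by vectors of weight at most $2$ (so that $x\wedge t_k=x\cap t_k$ and the rank identity transfers); this is the substantive content of Dowling's supersolvability theorem and is not routine bookkeeping. Either supply that argument or, as the paper does, cite Dowling's result directly.
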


\begin{proof}
By Theorem \ref{th:down2}, $\mH(q,n,2)$ is the Dowling lattice $Q_n(\F_q^*)$.
This lattice is supersolvable of rank~$n$, and its characteristic polynomial is known to be 
$$\prod_{i=1}^{n} \left( \lambda - (1+(i-1)(q-1))\right);$$
see e.g. \cite[Theorem 5]{dowling1973class}. 
Let $\gamma_i=1+(i-1)(q-1)$ for all $1 \le i \le n$. Then 
$$\chi(\mH(q,n,2);\lambda)= \prod_{i=1}^{n} (\lambda-\gamma_i)= \sum_{i=0}^n
(-1)^i \; \EL_i(\gamma_1,...,\gamma_n) \; \lambda^{n-i},$$
where $\EL_i$ denotes the $i$-th elementary symmetric function. Therefore the $i$-th Whitney number of $\mH(q,n,2)$ is given by
$w_i(q,n,2)= (-1)^i \, \EL_i(\gamma_1,...,\gamma_n)$.
\end{proof}

We also give an explicit expression for the Whitney numbers of 
the lattice $\mH(q,n,n-2)$. These are the coefficients of $\chi(\mH(q,n,n-2);\lambda)$, which do not seem immediate to compute from the formula in Theorem \ref{th:bonin2}. We will therefore take an indirect approach based on Theorem \ref{prop2}.

\begin{theorem} \label{th:n-2}
For all $n \ge 3$ and $2 \le i \le n$, the $i$-th Whitney number of $\mH(n,n-2)$ is given by 
\begin{multline*}
(-1)^i w_i(q,n,n-2)= \qbin{n}{i}{q}q^{\binom{i}{2}} - \left((q-1)^{n-1} + n(q-1)^{n-2}\right) \qbin{n-1}{i-1}{q}q^{\binom{i-1}{2}}  \\
+ \left((q-1)^{n-1} \prod_{j=2}^{n-2} (q-j)\right) \qbin{n-2}{i-2}{q}q^{\binom{i-2}{2}}.
\end{multline*}
\end{theorem}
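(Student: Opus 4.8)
The plan is to apply Theorem~\ref{prop2} to the set $A = \HH(q,n,n-2)$, for which $\mL(A) = \mH(q,n,n-2)$. This expresses $w_i(q,n,n-2)$ as a linear combination of the subspace–distribution numbers $\alpha_k(q,n,n-2)$ with $0 \le k \le i$. The crucial point is that only $k \in \{0,1,2\}$ can contribute: by Remark~\ref{frems}(2), $\alpha_k(q,n,n-2)$ counts the $k$-dimensional codes $C \le \F_q^n$ with $\dH(C) > n-2$, and by the Singleton bound such a code satisfies $n-1 \le \dH(C) \le n-k+1$, forcing $k \le 2$. Writing $\alpha_k = \alpha_k(q,n,n-2)$ and using $\alpha_0 = 1$ together with $(-1)^i(-1)^{i-k} = (-1)^k$, Theorem~\ref{prop2} collapses to
\begin{equation*}
(-1)^i w_i(q,n,n-2) = \qbin{n}{i}{q} q^{\binom{i}{2}} - \alpha_1 \qbin{n-1}{i-1}{q} q^{\binom{i-1}{2}} + \alpha_2 \qbin{n-2}{i-2}{q} q^{\binom{i-2}{2}}.
\end{equation*}
So everything reduces to computing $\alpha_1$ and $\alpha_2$.

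For $\alpha_1$: a line $\langle v \rangle$ distinguishes $\HH(q,n,n-2)$ iff no nonzero multiple of $v$ has Hamming weight in $\{1,\dots,n-2\}$; since scaling by a nonzero scalar preserves the support, this holds iff $\wH(v) \in \{n-1,n\}$. Counting such vectors ($(q-1)^n$ of weight $n$, and $n(q-1)^{n-1}$ of weight $n-1$) and dividing by $q-1$ gives $\alpha_1 = (q-1)^{n-1} + n(q-1)^{n-2}$, which is exactly the coefficient of the middle term in the statement.

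The main work is $\alpha_2$, the number of $2$-dimensional codes $C \le \F_q^n$ with $\dH(C) > n-2$; since any such code has $\dH(C)=n-1$ by Singleton, these are precisely the $2$-dimensional MDS codes of length $n$. I would count these through generator matrices: $G \in \F_q^{2 \times n}$ generates such a code iff its columns are nonzero and pairwise non-proportional. Building $G$ column by column, the $j$-th column must avoid $0$ and the $(j-1)(q-1)$ nonzero vectors on the $j-1$ previously used lines, leaving $q^2 - 1 - (j-1)(q-1) = (q-1)(q+2-j)$ choices; hence there are $(q-1)^n \prod_{j=1}^n (q+2-j)$ such matrices. Since the action of $GL_2(\F_q)$ on ordered bases is free, each code arises from exactly $|GL_2(\F_q)| = q(q-1)^2(q+1)$ of them, and after cancelling the factor $(q+1)\,q\,(q-1)$ from $\prod_{j=1}^n(q+2-j) = (q+1)\,q\,(q-1)\prod_{j=2}^{n-2}(q-j)$ one obtains $\alpha_2 = (q-1)^{n-1}\prod_{j=2}^{n-2}(q-j)$. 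Substituting $\alpha_1$ and $\alpha_2$ into the displayed identity yields the claimed formula.

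The only delicate points are combinatorial bookkeeping: checking the sign reduction $(-1)^i(-1)^{i-k}=(-1)^k$, handling the empty–product conventions in the boundary cases (e.g. $n=q+1$, or $n\ge q+2$, where $\alpha_2=0$ and indeed $\prod_{j=2}^{n-2}(q-j)=0$), and confirming that every fixed $2$-dimensional code is generated by exactly $|GL_2(\F_q)|$ matrices. As a sanity check one can recover $w_0,w_1,w_2$ from Bonin's formula (Theorem~\ref{th:bonin2}) and compare, but this is not needed for the argument; the main obstacle is simply getting the $\alpha_2$ count and its simplification right.
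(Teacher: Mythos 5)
Your proposal is correct and follows essentially the same route as the paper: apply Theorem~\ref{prop2}, observe that $\alpha_k(q,n,n-2)=0$ for $k\ge 3$ by the Singleton Bound, and plug in $\alpha_0=1$, $\alpha_1=(q-1)^{n-1}+n(q-1)^{n-2}$, and $\alpha_2=(q-1)^{n-1}\prod_{j=2}^{n-2}(q-j)$. The only difference is that you derive the value of $\alpha_2$ from scratch via the generator-matrix count of $2$-dimensional MDS codes (which is correct, including the degenerate cases where the product vanishes), whereas the paper simply imports this value from the proof of Bonin's Theorem~3.5.
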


\begin{proof}
It is easy to show that $\alpha_0(q,n,n-2)=1$ and $\alpha_1(q,n,n-2)=(q-1)^{n-1} + n(q-1)^{n-2}$. Moreover, from the proof of \cite[Theorem 3.5]{bonin1993modular} one sees that
$$\alpha_2(q,n,n-2)=(q-1)^{n-1} \prod_{j=2}^{n-2} (q-j).$$
Finally, for $k \ge 3$ we have $\alpha_k(q,n,n-2)=0$ by the Singleton Bound (Theorem \ref{th:sing}).
Therefore the desired formula follows from Theorem \ref{prop2}.
\end{proof}

We now present an application of Theorem \ref{th:compl}, giving a closed formula for the second Whitney number of the lattice $\mH(q,n,3)$, for all $n \ge 6$. As the following Example \ref{notc} shows, 
$\mH(q,n,3)$ has critical exponent strictly smaller than $n-2$ in general. 
In particular, this lattice has not been studied in previous work.

\begin{example} \label{notc}
For $n \ge 6$ and $q \ge n$, there exists a code $C \le \F_q^n$ with dimension $n-3$ and minimum Hamming distance $\dH(C)=4$. This follows, for example, from the Reed-Solomon construction~\cite[Chapter 10]{macwilliams1977theory}. Therefore, by the definition of critical exponent (Remark \ref{rem:imp}), we have
$\crit(\mH(q,n,3)) \le n-(n-3)=3<n-2$.
\end{example}

\begin{theorem} \label{th:comput3}
Let $n \ge 6$. We have 
\begin{equation*}
w_2(q,n,3) = (q-1)^4 \,
\sum_{\ell=1}^{n-5} \left( \binom{n-\ell}{2}\binom{n-\ell-2}{3} \right) +\sum_{t=2}^5 \binom{n}{t} w_2(q,t,3)\sum_{s=t}^5 \binom{n-t}{n-s} (-1)^{s-t},
\end{equation*}
where the values of the Whitney numbers $w_2(q,t,3)$ for $t=2,3,4,5$ can be found in Example~\ref{ex:primo}, Theorem~\ref{th:fw2}, and Theorem~\ref{th:n-2} respectively.
\end{theorem}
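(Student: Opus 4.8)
The plan is to specialise Theorem~\ref{th:compl} to the case $i=2$, $d=3$ and then simplify the leading sum. Since we assume $n\ge 6=id$, the hypothesis of Theorem~\ref{th:compl} is satisfied. Substituting $i=2$, $d=3$, the term $\sum_{t=i}^{id-1}\binom{n}{t}w_i(q,t,d)\sum_{s=t}^{id-1}\binom{n-t}{n-s}(-1)^{s-t}$ coincides verbatim with the second summand in the statement, with $t$ ranging over $\{2,3,4,5\}$. Using $(-1)^2=1$ and $i(d-1)=4$, the leading summand of Theorem~\ref{th:compl} becomes
\[
(q-1)^4\sum_{1\le \ell_1<\ell_2\le n-2}\binom{n-\ell_1-3}{2}\binom{n-\ell_2}{2},
\]
so the theorem will follow once we establish the purely combinatorial identity
\[
\sum_{1\le \ell_1<\ell_2\le n-2}\binom{n-\ell_1-3}{2}\binom{n-\ell_2}{2}=\sum_{\ell=1}^{n-5}\binom{n-\ell}{2}\binom{n-\ell-2}{3}.
\]

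To prove this identity I would fix $\ell_1$ and first carry out the inner summation over $\ell_2$. As $\ell_2$ runs from $\ell_1+1$ to $n-2$, the index $n-\ell_2$ runs over $\{2,\dots,n-\ell_1-1\}$, so the hockey-stick identity gives $\sum_{\ell_2=\ell_1+1}^{n-2}\binom{n-\ell_2}{2}=\binom{n-\ell_1}{3}$. Hence the left-hand side equals $\sum_{\ell_1}\binom{n-\ell_1-3}{2}\binom{n-\ell_1}{3}$, and since $\binom{n-\ell_1-3}{2}$ vanishes unless $\ell_1\le n-5$, the index $\ell_1$ may be restricted to $\{1,\dots,n-5\}$. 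I would then apply the ``subset of a subset'' identity $\binom{m-3}{2}\binom{m}{3}=\binom{m}{2}\binom{m-2}{3}$ with $m=n-\ell_1$ (both sides count the disjoint pairs consisting of a $2$-subset and a $3$-subset of an $m$-element set, and equal $m(m-1)(m-2)(m-3)(m-4)/12$), and rename $\ell_1$ to $\ell$, obtaining the right-hand side. The boundary values $w_2(q,t,3)$ for $t\in\{2,3,4,5\}$ invoked in the statement are supplied by earlier results: $\mH(q,2,3)$ and $\mH(q,3,3)$ are the full subspace lattices of $\F_q^2$ and $\F_q^3$ (Example~\ref{ex:primo}), while $\mH(q,4,3)=\mH(q,4,4-1)$ and $\mH(q,5,3)=\mH(q,5,5-2)$ are instances of Theorems~\ref{th:n-1} and~\ref{th:n-2}.

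I do not expect a serious obstacle here: the argument is a direct specialisation of Theorem~\ref{th:compl} together with an elementary binomial-coefficient manipulation. The one point calling for a little care is the reduction of the double sum to a single sum, where one must invoke the hockey-stick and ``subset of a subset'' identities correctly and verify that the contributions lying outside the stated ranges vanish because the relevant binomial coefficients are zero.
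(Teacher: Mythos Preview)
Your proof is correct and follows essentially the same route as the paper: specialise Theorem~\ref{th:compl} to $i=2$, $d=3$, then simplify the double sum over $(\ell_1,\ell_2)$ to a single sum. The only difference is in how the binomial identity is verified: the paper isolates it as Lemma~\ref{lem:numer} (stated for general $d$) and proves it by double-counting pairs of disjoint $d$-subsets $(S_1,S_2)$ of $\{1,\dots,n\}$ with $\min(S_1)<\min(S_2)$, whereas you collapse the inner sum via the hockey-stick identity and then invoke the symmetry $\binom{m-3}{2}\binom{m}{3}=\binom{m}{2}\binom{m-2}{3}$. Both arguments are elementary and your version extends to general $d$ just as easily, so there is no substantive difference.
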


We can also give a more explicit formula for $w_2(q,n,3)$ and $n \ge 6$. This will be needed in later section when discussing the asymptotics of the Whitney numbers of higher-weight Dowling lattices.

\begin{theorem} \label{th:comput3bis}
Let $n \ge 6$. We have 
\begin{align*}
w_2(q,n,3) &= \frac{1}{72}q^4n^6 - \frac{1}{12}q^4n^5 + \frac{1}{18}q^4n^4 + \frac{1}{2}q^4n^3 - \frac{77}{72}q^4n^2 +   \frac{7}{12}q^4n - \frac{1}{18}q^3n^6  \\ &+ \frac{5}{12}q^3n^5 - \frac{49}{72}q^3n^4 - \frac{7}{6}q^3n^3 + \frac{269}{72}q^3n^2 - \frac{9}{4}q^3n + \frac{1}{12}q^2n^6 - \frac{3}{4}q^2n^5 \\ &+ 2q^2n^4 - \frac{7}{12}q^2n^3 - \frac{43}{12}q^2n^2 + \frac{17}{6}q^2n - \frac{1}{18}qn^6 + \frac{7}{12}qn^5 - \frac{157}{72}qn^4   \\ &+ \frac{19}{6}qn^3 - \frac{55}{72}qn^2 - \frac{3}{4}qn + \frac{1}{72}n^6 - \frac{1}{6}n^5 + \frac{29}{36}n^4 - \frac{23}{12}n^3 + \frac{157}{72}n^2 - \frac{11}{12}n.
\end{align*}
\end{theorem}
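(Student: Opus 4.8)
The plan is to derive Theorem~\ref{th:comput3bis} directly from Theorem~\ref{th:comput3} by turning each of the two summands appearing there into an explicit polynomial in $n$ with coefficients in $\Q[q]$, and then adding and collecting like terms. No new combinatorial ingredient is needed: the whole argument is a lengthy but elementary symbolic simplification of a formula that is already available, so I would organize it purely for reliability of the bookkeeping.

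First I would treat the ``leading'' summand $(q-1)^4\sum_{\ell=1}^{n-5}\binom{n-\ell}{2}\binom{n-\ell-2}{3}$. Substituting $m=n-\ell$ rewrites it as $(q-1)^4\sum_{m=5}^{n-1}\binom{m}{2}\binom{m-2}{3}$; since the summand vanishes for $m\le 4$, the lower limit may be taken to be $m=0$. I would then expand $\binom{m}{2}\binom{m-2}{3}$, a polynomial of degree $5$ in $m$, as a $\Q$-linear combination of the binomial polynomials $\binom{m}{0},\dots,\binom{m}{5}$, and apply the hockey-stick identity $\sum_{m=0}^{N}\binom{m}{k}=\binom{N+1}{k+1}$ termwise with $N=n-1$. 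This produces a polynomial in $n$ of degree $6$; multiplying by $(q-1)^4$ yields the contribution of this term to the coefficients of $q^0,\dots,q^4$.

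Next I would treat the ``correction'' summand $\sum_{t=2}^{5}\binom{n}{t}\,w_2(q,t,3)\sum_{s=t}^{5}\binom{n-t}{n-s}(-1)^{s-t}$. Here I would substitute the four closed-form values of $w_2(q,t,3)$ recalled in Theorem~\ref{th:comput3}: for $t=2,3$ one has $\mH(q,t,3)=\mL(\F_q^t)$, so by Example~\ref{ex:primo} $w_2(q,2,3)=q$ and $w_2(q,3,3)=q(q^2+q+1)$; for $t=4,5$ one uses the special cases $n=4$ of Theorem~\ref{th:n-1} and $n=5$ of Theorem~\ref{th:n-2}, since $\mH(q,4,3)$ and $\mH(q,5,3)$ are of the forms $\mH(q,n,n-1)$ and $\mH(q,n,n-2)$. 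For each fixed $t$, the inner alternating sum is a polynomial in $n$ of degree $5-t$ and $\binom{n}{t}$ has degree $t$, so every term is a polynomial in $n$ of degree at most $5$; after multiplying by the corresponding $w_2(q,t,3)\in\Z[q]$ and summing over $t$ one gets the second contribution to the coefficient list. Adding the two polynomials and sorting by powers of $q$ and $n$ gives the displayed formula.

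The only genuine obstacle is the size of the computation: the answer is a bivariate polynomial of bidegree $(4,6)$ in $(q,n)$ with some thirty-odd rational coefficients, hence an error-prone hand calculation. In practice I would carry out the arithmetic with a computer algebra system and then sanity-check it independently by evaluating both the claimed polynomial and the honest value of $w_2(q,n,3)$ — computed from Theorem~\ref{th:comput3}, or from the $\alpha_k(q,n,3)$ via Corollary~\ref{coro:relations} — on a grid of small pairs $(q,n)$ with $n\ge 6$ (say five prime powers $q$ and seven values of $n$), which is enough to pin down a polynomial of bidegree $(4,6)$ and thus to certify the hand computation.
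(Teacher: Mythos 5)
Your proposal is correct and follows essentially the same route as the paper: start from Theorem~\ref{th:comput3}, reduce the leading summand to a closed polynomial in $n$ (the paper expands in powers of $\ell$ and applies Faulhaber's formulas, whereas you expand in the binomial basis and use the hockey-stick identity — in fact $\binom{m}{2}\binom{m-2}{3}=10\binom{m}{5}$, so your route gives $A(n)=10\binom{n}{6}$ immediately, matching the paper's expression), substitute the closed forms of $w_2(q,t,3)$ for $t=2,\dots,5$ from Example~\ref{ex:primo} and Theorems~\ref{th:n-1} and~\ref{th:n-2}, and collect terms. The remaining work is the same tedious bivariate bookkeeping the paper performs, and your grid-evaluation sanity check is a sensible way to certify it.
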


In the proofs of Theorems \ref{th:comput3} and \ref{th:comput3bis} we will need the following identity involving sums and products of binomial coefficients.
The proof is in Appendix \ref{app:A}.

\begin{lemma} \label{lem:numer}
Let $d \ge 1$ and $n \ge 2d$ be integers. We have
\begin{equation} \label{fo}
\sum_{1 \le \ell_1<\ell_2 \le n-d+1} \binom{n-\ell_1-d}{d-1}\binom{n-\ell_2}{d-1} = \sum_{\ell=1}^{n-2d+1} \binom{n-\ell}{d-1}\binom{n-\ell-d+1}{d}.
\end{equation}
\end{lemma}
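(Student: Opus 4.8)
The plan is to prove the identity by a direct combinatorial/algebraic manipulation of the left-hand side, turning the double sum over the pair $(\ell_1,\ell_2)$ into a single sum indexed by $\ell_2$ after summing out $\ell_1$. First I would fix $\ell_2$ and consider the inner sum $\sum_{\ell_1=1}^{\ell_2-1} \binom{n-\ell_1-d}{d-1}$. Substituting $m = n-\ell_1-d$, this becomes $\sum_{m = n-\ell_2+1-d}^{n-1-d} \binom{m}{d-1}$, which by the hockey-stick identity $\sum_{m=a}^{b} \binom{m}{d-1} = \binom{b+1}{d} - \binom{a}{d}$ collapses to $\binom{n-d}{d} - \binom{n-\ell_2-d+1}{d}$. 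Thus
\begin{equation*}
\sum_{1 \le \ell_1<\ell_2 \le n-d+1} \binom{n-\ell_1-d}{d-1}\binom{n-\ell_2}{d-1} = \sum_{\ell_2=1}^{n-d+1} \binom{n-\ell_2}{d-1}\left(\binom{n-d}{d} - \binom{n-\ell_2-d+1}{d}\right).
\end{equation*}

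Next I would observe that the term $-\sum_{\ell_2=1}^{n-d+1}\binom{n-\ell_2}{d-1}\binom{n-\ell_2-d+1}{d}$ is, after reindexing $\ell = \ell_2$, almost exactly the negative of the claimed right-hand side, except that the summation range on the right runs only up to $n-2d+1$. The point is that the extra terms $\ell_2 = n-2d+2, \dots, n-d+1$ contribute $\binom{n-\ell_2-d+1}{d} = 0$ (since the upper index $n-\ell_2-d+1$ lies in $\{0,1,\dots,d-1\} < d$), so they vanish and the ranges agree. Hence it remains to show that the leftover piece $\binom{n-d}{d}\sum_{\ell_2=1}^{n-d+1}\binom{n-\ell_2}{d-1}$ equals $2\sum_{\ell=1}^{n-2d+1}\binom{n-\ell}{d-1}\binom{n-\ell-d+1}{d}$; rearranging, the identity to prove reduces to
\begin{equation*}
\binom{n-d}{d}\sum_{\ell=1}^{n-d+1}\binom{n-\ell}{d-1} = 2\sum_{\ell=1}^{n-2d+1}\binom{n-\ell}{d-1}\binom{n-\ell-d+1}{d}.
\end{equation*}
The left sum is again a hockey-stick sum: $\sum_{\ell=1}^{n-d+1}\binom{n-\ell}{d-1} = \binom{n-1}{d} + \binom{n-2}{d-1} + \cdots = \binom{n}{d}$ (telescoping $\sum_{\ell}\binom{n-\ell}{d-1}$ down to $\binom{n-\ell}{d}$ boundary terms), so the left side is $\binom{n-d}{d}\binom{n}{d}$. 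So the whole lemma comes down to the clean binomial identity $\binom{n}{d}\binom{n-d}{d} = 2\sum_{\ell=1}^{n-2d+1}\binom{n-\ell}{d-1}\binom{n-\ell-d+1}{d}$, which I would prove either by recognizing the right-hand side as a Vandermonde-type convolution counting ordered pairs of disjoint $d$-subsets of $[n]$ grouped by the position of the smallest element not in the first set, or by a straightforward generating-function argument with $\frac{1}{(1-x)^d}$.

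Alternatively — and this is probably the cleanest write-up — I would give a single bijective proof from the start: both sides count ordered pairs $(B_1, B_2)$ of disjoint $d$-element subsets of $\{1, \dots, n\}$. Indeed, writing $\ell_1 < \ell_2$ for a suitable pair of "gap" parameters, $\binom{n-\ell_1-d}{d-1}\binom{n-\ell_2}{d-1}$ counts such pairs according to the two smallest elements of $B_1 \cup B_2$, while $\binom{n-\ell}{d-1}\binom{n-\ell-d+1}{d}$ on the right counts them according to the smallest element $\ell$ of $B_1 \cup B_2$ together with the relative configuration of the rest. The main obstacle I anticipate is getting the index shifts in the hockey-stick collapses exactly right (in particular checking that the boundary terms which extend the summation range really do vanish because a binomial coefficient $\binom{m}{d}$ with $0 \le m < d$ is zero), and making sure the roles of $B_1$ and $B_2$ — and hence the factor of $2$ — are tracked consistently between the two sides. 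Once the bijective interpretation is pinned down, the verification is routine.
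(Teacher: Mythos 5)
Your primary (algebraic) route is correct and genuinely different from the paper's. The paper proves the identity in one stroke by double counting: both sides enumerate the pairs $(S_1,S_2)$ of disjoint $d$-subsets of $\{1,\dots,n\}$ with $\min(S_1)<\min(S_2)$ --- the left side conditioning on the pair $(\min S_1,\min S_2)=(\ell_1,\ell_2)$, the right side conditioning only on $\ell=\min S_1=\min(S_1\cup S_2)$. Your hockey-stick computation instead collapses the inner sum over $\ell_1$ to $\binom{n-d}{d}-\binom{n-\ell_2-d+1}{d}$, correctly disposes of the vanishing boundary terms, and reduces everything to the identity $\binom{n}{d}\binom{n-d}{d}=2\sum_{\ell=1}^{n-2d+1}\binom{n-\ell}{d-1}\binom{n-\ell-d+1}{d}$, which is true (the right side counts exactly the ordered pairs of disjoint $d$-sets whose overall minimum lies in the first set, i.e.\ half of all $\binom{n}{d}\binom{n-d}{d}$ ordered pairs); so your reduction is sound, at the cost of needing one more identity whose cleanest proof is itself the paper's symmetry argument. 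Your ``alternative'' bijective write-up is essentially the paper's proof, but as stated it is imprecise in two ways you should fix before writing it up: both sides do \emph{not} count all ordered pairs of disjoint $d$-subsets, only those with $\min(B_1)<\min(B_2)$ (which is where the factor $2$ in your reduced identity comes from), and the left-hand summand is parametrized by $(\min B_1,\min B_2)$, not by ``the two smallest elements of $B_1\cup B_2$'' (e.g.\ for $B_1=\{1,2\}$, $B_2=\{5,6\}$ these differ).
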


\begin{proof}[Proof of Theorem \ref{th:comput3}]
Applying Theorem \ref{th:compl} with $i=2$ we obtain
\begin{multline*}
w_2(q,n,3) = (q-1)^4
\sum_{1 \le \ell_1 < \ell_2 \le n-2}  \left( \binom{n-\ell_1-3}{2} \binom{n-\ell_2}{2} \right)  \\ + \sum_{t=2}^5 \binom{n}{t} w_2(q,t,3)\sum_{s=t}^5 \binom{n-t}{n-s} (-1)^{s-t}.
\end{multline*}
By Lemma \ref{lem:numer} we have
$$\sum_{1 \le \ell_1<\ell_2 \le n-2} \binom{n-\ell_1-3}{2}\binom{n-\ell_2}{2} = \sum_{\ell=1}^{n-5} \binom{n-\ell}{2}\binom{n-\ell-2}{3},$$
and the theorem follows.
\end{proof}

We can now establish also Theorem \ref{th:comput3bis}.

\begin{proof}[Proof of Theorem \ref{th:comput3bis}]
By Theorem \ref{th:comput3} we have
\begin{equation}
w_2(q,n,3) = (q-1)^4 A(n) + B(q,n),
\end{equation}
where
$$A(n) \defi \sum_{\ell=1}^{n-5} \binom{n-\ell}{2}\binom{n-\ell-2}{3},
 \qquad B(q,n) \defi \sum_{t=2}^5 \binom{n}{t} w_2(q,t,3)\sum_{s=t}^5 \binom{n-t}{n-s} (-1)^{s-t}.$$
In the remainder of the proof we will derive explicit expressions for $A(n)$ and $B(q,n)$. The computations are long and technical, and some parts will be omitted.

We first compute $A(n)$. Observe that
\begin{eqnarray*}
f_{n,\ell} \defi  \binom{n-\ell}{2}\binom{n-\ell-2}{3} = \frac{(n-\ell)(n-\ell-1)}{2} \cdot \frac{(n-\ell-2)(n-\ell-3)(n-\ell-4)}{3!}.
\end{eqnarray*}
Expanding the products one obtains
$$12 \cdot f_{n,\ell}= \sum_{j=0}^5 P_j \; \ell^j,$$
where
$$\left\{ \begin{array}{l} 
P_0= n^5-10n^4+35n^3-50n^2+24n, \\
P_1= -5n^4+40n^3-105n^2+100n-24, \\
P_2= 10n^3-60n^2+105n-50, \\
P_3= -10n^2+40n-35, \\
P_4= 5n-10, \\
P_5=-1.
\end{array} 
\right.$$
Setting $m=n-5$ and using Faulhaber's Formulas \cite[page 106]{conway2012book} we then compute
\begin{multline*}
12 \cdot \sum_{\ell=1}^{n-5}f_{n,\ell} = mP_0 + \frac{1}{2} (m^2+m)P_1 +
\frac{1}{6} (2m^3+3m^2+m)P_2 + \frac{1}{4}(m^4+2m^3+m^2)P_3 \\ +
\frac{1}{30} (6m^5+15m^4+10m^3-m)P_4 +
\frac{1}{12} (2m^6+6m^5+5m^4-m^2)P_5.
\end{multline*}
Substituting $m=n-5$, dividing by 12, and making the computations one finds
\begin{equation} \label{exprA}
A(n)=\sum_{\ell=1}^{n-5}f_{n,\ell}=\frac{1}{72}n^6 - \frac{5}{24}n^5 + \frac{85}{72}n^4 - \frac{25}{8}n^3 + \frac{137}{36}n^2 - \frac{5}{3}n,
\end{equation}
which is a closed expression for $A(n)$. 

Using Example \ref{ex:primo}, Theorem \ref{th:n-1} and Theorem \ref{th:n-2} one finds the following formul{\ae} for the Whitney number $w_2(q,t,3)$:
\begin{equation}
w_2(q,t,3) = \left\{   \begin{array}{cl}
q & \mbox{ if $t=2$,} \\
q^3+q^2+q & \mbox{ if $t=3$,} \\
3q^4+q^3+2q^2-q+1 & \mbox{ if $t=4$},\\ 
30q^4-55q^3+60q^2-35q+10 & \mbox{ if $t=5$}.
   \end{array} \right.
\end{equation}
Finally, using Eq. (\ref{exprA}) and the definition of $B(q,n)$, after tedious computations one finds
\begin{align*}
(q-1)^4A(n)+B(q,n) &= \frac{1}{72}q^4n^6 - \frac{1}{12}q^4n^5 + \frac{1}{18}q^4n^4 + \frac{1}{2}q^4n^3 - \frac{77}{72}q^4n^2 +   \frac{7}{12}q^4n - \frac{1}{18}q^3n^6  \\ &+ \frac{5}{12}q^3n^5 - \frac{49}{72}q^3n^4 - \frac{7}{6}q^3n^3 + \frac{269}{72}q^3n^2 - \frac{9}{4}q^3n + \frac{1}{12}q^2n^6 - \frac{3}{4}q^2n^5 \\ &+ 2q^2n^4 - \frac{7}{12}q^2n^3 - \frac{43}{12}q^2n^2 + \frac{17}{6}q^2n - \frac{1}{18}qn^6 + \frac{7}{12}qn^5 - \frac{157}{72}qn^4   \\ &+ \frac{19}{6}qn^3 - \frac{55}{72}qn^2 - \frac{3}{4}qn + \frac{1}{72}n^6 - \frac{1}{6}n^5 + \frac{29}{36}n^4 - \frac{23}{12}n^3 + \frac{157}{72}n^2 - \frac{11}{12}n,
\end{align*}
which is the desired formula.
\end{proof}

We conclude this section by computing the third Whitney number of $\mH(2,n,3)$
for $n$ sufficiently large. Part of our proof relies on computational results, and can be easily extended to other (small) values of $q$ and $d$. 

\begin{theorem}
For all $n \ge 9$ we have
\begin{align*}
-w_3(2,n,3)  &=  \sum_{1 \le \ell_1< \ell_2 < \ell_3 \le n-2} \left(  \prod_{j=1}^3  \binom{n-\ell_j-9+3j)}{2} \right) 
+ 8 \binom{n}{3}  \sum_{s=3}^{8} \binom{n-3}{n-s} (-1)^{s-3} \\ 
& +106 \binom{n}{4} \sum_{s=4}^{8} \binom{n-4}{n-s} (-1)^{s-4}
+  820 \binom{n}{5} \sum_{s=5}^{8} \binom{n-5}{n-s} (-1)^{s-5} \\ 
&+ 4565 \binom{n}{6}  \sum_{s=6}^{8} \binom{n-6}{n-s} (-1)^{s-6} 
+19810 \binom{n}{8}  \sum_{s=7}^{8} \binom{n-7}{n-s} (-1)^{s-7} 
+ 70728\binom{n}{8}.
\end{align*}

\end{theorem}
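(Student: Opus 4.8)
The plan is to specialize Theorem~\ref{th:compl} to $(i,d,q)=(3,3,2)$; since $id=9$, this is exactly the regime $n\ge 9$ in the statement. With these parameters $(q-1)^{i(d-1)}=1$, $(-1)^i=-1$, $d-1=2$, $d(i-j)=9-3j$, the outer index set $1\le\ell_1<\ell_2<\ell_3\le n-d+1$ becomes $1\le\ell_1<\ell_2<\ell_3\le n-2$, and the inner correction runs over $i\le t\le id-1$, i.e.\ $3\le t\le 8$. Thus Theorem~\ref{th:compl} reads
\[
w_3(2,n,3)=-\sum_{1\le\ell_1<\ell_2<\ell_3\le n-2}\ \prod_{j=1}^3\binom{n-\ell_j-9+3j}{2}\;+\;\sum_{t=3}^{8}\binom{n}{t}\,w_3(2,t,3)\sum_{s=t}^{8}\binom{n-t}{n-s}(-1)^{s-t}.
\]
Negating both sides and absorbing the signs into the coefficients, the claimed identity follows once the six numbers $w_3(2,t,3)$ for $t\in\{3,4,5,6,7,8\}$ are known: the coefficient of $\binom{n}{t}$ in the statement should equal $-w_3(2,t,3)$ in each case.

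Three of these values come from earlier results. For $t=3$ one has $d\ge n$, so $\mH(2,3,3)$ is the full subspace lattice of $\F_2^3$ and Example~\ref{ex:primo} gives $w_3(2,3,3)=(-1)^3 2^{\binom{3}{2}}=-8$. For $t=4$ one has $d=n-1$, so Theorem~\ref{th:n-1} with $(n,i,q)=(4,3,2)$ gives $w_3(2,4,3)=-2^3-\qbin{3}{1}{2}\cdot 2\cdot\bigl(\binom{3}{0}+\binom{3}{1}+\binom{3}{2}\bigr)=-8-98=-106$. For $t=5$ one has $d=n-2$, so Theorem~\ref{th:n-2} with $(n,i,q)=(5,3,2)$ applies; since $\prod_{j=2}^{3}(q-j)$ vanishes at $q=2$ the last term drops, leaving $w_3(2,5,3)=-\bigl(\qbin{5}{3}{2}2^{3}-6\,\qbin{4}{2}{2}\,2\bigr)=-(1240-420)=-820$. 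These recover the coefficients $8$, $106$, $820$ appearing in the theorem.

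The remaining values $w_3(2,6,3)$, $w_3(2,7,3)$, $w_3(2,8,3)$ are not covered by any of the previously handled families (the supersolvable cases are listed in Theorem~\ref{bonin1}, and $\mH(2,t,3)$ for $t\in\{6,7,8\}$ is none of $\mH(q,n,1)$, $\mH(q,n,2)$, $\mH(q,n,n-1)$, $\mH(q,n,n-2)$, or $\mH(q,n,d)$ with $d\ge n$), so they must be obtained by a finite computation. The cleanest route is: for each $t\in\{6,7,8\}$ and each $k\in\{0,1,2,3\}$ compute $\alpha_k(2,t,3)$, the number of $k$-dimensional binary codes in $\F_2^t$ of minimum Hamming distance at least $4$, and then feed these into Corollary~\ref{coro:relations}(3), which expresses $w_3(2,t,3)$ as a $\qbin{\cdot}{\cdot}{2}$-weighted alternating sum of $\alpha_0,\dots,\alpha_3$. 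Carrying out these bounded enumerations yields $w_3(2,6,3)=-4565$, $w_3(2,7,3)=-19810$, $w_3(2,8,3)=-70728$; substituting the six values into the correction term and flipping the overall sign produces the stated formula. The only genuine obstacle is this last computational step — everything else is a direct specialization of Theorem~\ref{th:compl} together with routine arithmetic — and the same scheme extends verbatim to other small $q$ and $d$, the only ingredient that changes being the finite list of base-case Whitney numbers.
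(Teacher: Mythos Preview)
Your proposal is correct and follows the same approach as the paper: specialize Theorem~\ref{th:compl} to $(q,i,d)=(2,3,3)$ and plug in the six base values $w_3(2,t,3)$ for $t\in\{3,\dots,8\}$. The paper simply reports all six values as obtained by computer algebra, whereas you derive the first three from Example~\ref{ex:primo} and Theorems~\ref{th:n-1}--\ref{th:n-2} and reserve the computational step for $t\in\{6,7,8\}$; this is a nice refinement but not a different method.
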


\begin{proof}
Using Theorem \ref{th:compl} we find

\begin{equation*}
-w_3(2,n,3)  =  \sum_{1 \le \ell_1< \ell_2 < \ell_3 \le n-2} \left(  \prod_{j=1}^3  \binom{n-\ell_j-9+3j)}{2} \right) - 
 \sum_{t=3}^{8} \binom{n}{t} w_3(2,t,3)\sum_{s=t}^{8} \binom{n-t}{n-s} (-1)^{s-t}.
\end{equation*}

The values of $w_3(2,t,3)$ for $t \in \{3,...,8\}$ can be computed using a computer algebra software. These are the following:
$$-w_3(2,t,3)= \left\{  \begin{array}{cl}
8 & \mbox{if $t=3$,} \\
106 & \mbox{if $t=4$,} \\
820 & \mbox{if $t=5$,} \\
4565 & \mbox{if $t=6$,} \\
19810 & \mbox{if $t=7$,} \\
70728 & \mbox{if $t=8$.}
\end{array}
 \right.$$
 The desired theorem follows easily.
\end{proof}


 \section{HWDL -- Combinatorics of $w_2(q,n,d)$}
 \label{sec:HWDL4}
 
We entirely devote this section to the computation of the second Whitney number of $\mH(q,n,d)$, for all $q$, $n$ and $d$. We will express this number in terms of certain integers defined recursively, which we call ``agreement numbers''. 
As we will see, these can be given a precise combinatorial interpretation.
 
 \begin{definition} \label{def:cn}
 Let $a,b \ge 1$ and $c,\nu \ge 0$ be integers. The \textbf{agreement number} $\gamma_a(b,c,\nu)$ is defined via the recursion
 $$\gamma_a(b,c,\nu) \defi \sum_{s=0}^{c-1} \binom{b}{s} \gamma_{a-1}(b-s,c,\nu-s) + \sum_{s=c}^\nu \binom{b}{s}\binom{b-s}{\nu-s} 
 (a-2)^{\nu-s} \quad \mbox{for } \left\{ \begin{array}{l} a \ge 2, \\ b \ge c+1, \\ \nu \ge c,  \end{array}\right.$$
 {(where $0^0=1$)}
with initial conditions
$$\gamma_a(b,c,\nu) \defi \left\{ \begin{array}{cl} 
0 & \mbox{if $a=1$ or $b<c$ or $\nu < c$ or $\nu>b$,} \\ a-1 & \mbox{if $a \ge 2$ and $b=c=\nu$.} \end{array}\right.$$ 
 \end{definition}

 The main goal of this section is to establish the following formula, which expresses $w_2(q,n,d)$ in terms of the agreement numbers.
 
 \begin{theorem} \label{th:main2}
 For all $n \ge d \ge 2$, the second Whitney number of $\mH(q,n,d)$ is given by
 \begin{align*} w_2(q,n,d) &=(q^{n-1}-1) \, \sum_{j=1}^d \binom{n}{j} (q-1)^{j-2} \ - \sum_{1 \le \ell_1<\ell_2 \le n} \Biggl[ q^{n-\ell_1-1}\left( \sum_{j=0}^{d-1} \binom{n-\ell_2}{j} (q-1)^j\right)  \\
&+ \sum_{j=d}^{n-\ell_2}\sum_{h=0}^{d-1} \binom{n-\ell_2}{j}   \binom{n-\ell_1-1}{h}(q-1)^{j+h} \\ 
&+ \sum_{s=d}^{n-\ell_2} \ \sum_{t=0}^{d-2} \binom{n-\ell_2}{s} \binom{n-\ell_1-1-s}{t} (q-1)^{s+t} \sum_{\nu=d-t}^s \gamma_q(s,s-d+t+2,\nu) \Biggr].
 \end{align*} 
 \end{theorem}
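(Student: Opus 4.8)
The strategy is to compute $w_2(q,n,d)$ directly from Corollary \ref{coro:relations}(3), which expresses $w_2(q,n,d)$ in terms of the subspace-distribution numbers $\alpha_0(q,n,d)$, $\alpha_1(q,n,d)$, $\alpha_2(q,n,d)$, namely
\[
w_2(q,n,d) = \alpha_0(q,n,d) \qbin{n}{2}{q}q^{\binom{2}{2}} - \alpha_1(q,n,d)\qbin{n-1}{1}{q} + \alpha_2(q,n,d).
\]
Here $\alpha_0=1$ trivially. So the problem reduces to counting, for $k\in\{1,2\}$, the number of $k$-dimensional codes $C\le\F_q^n$ with minimum Hamming distance $>d$ (Remark \ref{frems}(2)). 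I would actually find it cleaner to instead compute the \emph{complement}: $w_2(q,n,d) = \sum_{U\in\mH(q,n,d),\ \rk(U)=2}\mu_{q,n,d}(U)$, partitioning the rank-$2$ elements $U$ of $\mH(q,n,d)$ according to an invariant (leading positions of a canonical basis), and computing $\mu_{q,n,d}(U)$ for each using Theorem \ref{th:muexpl}. The two approaches are dual; the displayed target formula, with its outer sum over pairs $1\le\ell_1<\ell_2\le n$ and the leading term $(q^{n-1}-1)\sum_j\binom{n}{j}(q-1)^{j-2}$, strongly suggests the rank-$2$-element approach with $\ell_1,\ell_2$ the two pivot columns.

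\textbf{Step 1: the leading term.} Every rank-$2$ element $U\in\mH(q,n,d)$ has a unique reduced-row-echelon basis $\{u_1,u_2\}$ with pivots at positions $\ell_1<\ell_2$. Enumerate all such $U$ ignoring $\mu$ first: sum over $\ell_1,\ell_2$, over the weight-$\le d$ choices of $u_1$ (pivot at $\ell_1$, weight $j$, contributing $\binom{n}{j}$-type factors) and similarly for $u_2$, then add the $\mu$-values. A cleaner route: by Theorem \ref{th:muexpl}, $\mu_{q,n,d}(U)$ depends only on $\alpha_j(U,\HH(q,n,d))$ for $j=0,1,2$, which in turn depend only on how many weight-$\le d$ lines $U$ contains and their configuration. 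The term $(q^{n-1}-1)\sum_{j=1}^d\binom{n}{j}(q-1)^{j-2}$ I expect to arise as the contribution of pairs $(u_1,u_2)$ where $u_2$ is ``generic'' inside the $(n-\ell_1)$-dimensional space below the first pivot — this is the $q^{n-1}$-type count of lines times the count of first generators — with the $-1$ absorbing a degenerate correction. I would isolate this by first writing $w_2$ as a double count over atoms $a_1<a_2$ (pairs of weight-$\le d$ lines) of $\mu$ of their join, using Theorem \ref{th:decomp} with $L=2$ or directly Theorem \ref{prop2}.

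\textbf{Step 2: the three correction sums.} For a fixed pair of pivot positions $(\ell_1,\ell_2)$, I would classify the rank-$2$ elements $U$ with those pivots by the number $\nu$ of coordinates on which the two canonical basis vectors ``agree'' in support in a precise sense — this is exactly where the \emph{agreement numbers} $\gamma_q(s,c,\nu)$ of Definition \ref{def:cn} enter. The recursion defining $\gamma_a(b,c,\nu)$ mirrors the inclusion–exclusion that counts, within a space $U$ spanned by two weight-controlled vectors, how many of the $q-1$ nonzero scalars $\lambda$ give $u_1+\lambda u_2$ still of weight $\le d$ (these extra small-weight lines affect $\mu_{q,n,d}(U)$, pushing it away from the Boolean value $(-1)^2=1$). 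The first correction sum $q^{n-\ell_1-1}\sum_{j=0}^{d-1}\binom{n-\ell_2}{j}(q-1)^j$ handles the case where $u_1$ is forced to large weight; the second double sum handles both vectors of weight $\ge d$ with no agreement; the third — with the $\gamma_q$ factor — handles the genuinely interacting case, summing over the support size $s$ of $u_2$ below $\ell_2$, the overlap parameter $t$, and the agreement count $\nu$ running from $d-t$ to $s$. The shift $c=s-d+t+2$ in $\gamma_q(s,s-d+t+2,\nu)$ is exactly the threshold at which a combination $u_1+\lambda u_2$ drops to weight $\le d$.

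\textbf{Main obstacle.} The genuine difficulty is Step 2's bookkeeping: correctly setting up the recursion so that $\gamma_q(s,c,\nu)$ counts precisely the right weighted configurations, and verifying that the $\mu_{q,n,d}$-values — computed via Theorem \ref{th:muexpl} from the $\alpha_j(U,\cdot)$ — reassemble into the three displayed sums with the correct binomial and $(q-1)$-powers and the correct summation ranges ($\nu$ from $d-t$ to $s$, $t$ from $0$ to $d-2$, etc.). Establishing the combinatorial interpretation of $\gamma_a(b,c,\nu)$ and checking it satisfies Definition \ref{def:cn}'s recursion and initial conditions is a self-contained lemma I would prove first; after that, the assembly is a (lengthy but mechanical) inclusion–exclusion over the possible small-weight lines inside each rank-$2$ $U$. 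I expect no new ideas are needed beyond Theorems \ref{th:muexpl}, \ref{prop2} and careful echelon-form casework, but the verification that every cross-term lands correctly will be the bulk of the work, and I would organize it by first handling $n\ge 2d$ (so all configurations of supports are "generic enough") and then noting the formula is polynomial-identity-stable down to $n\ge d$.
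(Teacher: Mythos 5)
Your overall strategy is the paper's: establish the combinatorial interpretation of the agreement numbers as a self-contained lemma (the paper's Lemma \ref{lem:intc}), then enumerate $2\times n$ reduced echelon forms indexed by the pivot pair $\ell_1<\ell_2$ and do casework on the weights of the two rows, with the agreement numbers absorbing the case where some combination $\xi u_1+u_2$ drops to weight $\le d$. One organizational difference: the paper does not sum $\mu_{q,n,d}(U)$ over rank-$2$ lattice elements $U$. It instead counts $\beta_2(q,n,d)$, the number of \emph{all} $2$-dimensional codes of minimum distance $\le d$ (a strictly larger set than the rank-$2$ elements of $\mH(q,n,d)$, since it includes codes with a single small-weight line), and converts via Theorem \ref{th:beta}, i.e.\ $w_2=\beta_1(q,n,d)\qbin{n-1}{1}{q}-\beta_2(q,n,d)$; the leading term is exactly $\beta_1\qbin{n-1}{1}{q}$ and is not a ``contribution of generic pairs'' in the direct count. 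Your route through Theorem \ref{th:muexpl} (which gives $\mu_{q,n,d}(U)=\#\{\text{small-weight lines in }U\}-1$ for rank-$2$ $U$) is equivalent after a double-counting rearrangement, but it will not reproduce the three displayed bracketed sums as written, because those sums partition all of $\beta_2$, not the lattice elements.

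Two concrete problems remain. First, your identification of the correction sums is wrong: the term $q^{n-\ell_1-1}\sum_{j=0}^{d-1}\binom{n-\ell_2}{j}(q-1)^j$ counts echelon matrices whose \emph{second} row has weight $\le d$ with the first row completely free (not ``$u_1$ forced to large weight''), and the double sum over $j\ge d$, $h\le d-1$ counts second row of weight $>d$ together with first row of weight $\le d$ (not ``both vectors of weight $\ge d$ with no agreement''). Second, the genuinely nontrivial step is left unformulated: one needs the analogue of the paper's Lemma \ref{lem:gvect}, namely that for a fixed full-support vector $w\in\F_q^m$ the number of $v$ with $\wH(v)=\nu$ such that $\wH(\xi v+w)\le \delta$ for some nonzero $\xi$ equals $\gamma_q(m,m-\delta,\nu)$; this is proved by rescaling coordinatewise to reduce to $w=(1,\dots,1)$ and then invoking the combinatorial interpretation. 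Without stating and proving that reduction, the appearance of $\gamma_q(s,s-d+t+2,\nu)$ with that particular threshold is not justified. Your closing suggestion to prove the formula only for $n\ge 2d$ and extend by ``polynomial-identity stability'' in $n$ is also not needed and would require its own argument; the echelon casework works uniformly for all $n\ge d\ge 2$.
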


  \begin{remark}
In Theorem \ref{th:polygamma}, we will show that $\gamma_a(b,c,\nu)$ is a polynomial in $a$, whose coefficients are expressions involving the Bernoulli numbers. We will then use this fact to prove that $w_2(q,n,d)$ is a polynomial in $q$. This does not seem immediate from Definition \ref{def:cn}
\end{remark}

 For the purpose of deriving Theorem \ref{th:main2}, it will be convenient to work with the following numerical quantities.
 
 \begin{notation} \label{notaz:beta}
 Let $q$ be a prime power. For $n \ge 1$ and $k,d \ge 0$, let
 $$\beta_k(q,n,d) \defi \qbin{n}{k}{q}-\alpha_k(q,n,d).$$
  \end{notation}

 By definition, $\beta_k(q,n,d)$ counts the number of $k$-dimensional codes $C \le \F_q^n$ with minimum Hamming distance $\dH(C) \le d$.
 Our plan is to use the following consequence of Theorem~\ref{prop2}.
 
 \begin{theorem} \label{th:beta}
 Let $n,d \ge 1$. For all $1 \le i \le n$ we have
$$(-1)^i w_i(q,n,d) = \sum_{k=1}^i \beta_k(q,n,d) \qbin{n-k}{i-k}{q} (-1)^{k-1} q^{\binom{i-k}{2}}.$$
 \end{theorem}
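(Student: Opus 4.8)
The plan is to derive Theorem~\ref{th:beta} directly from Theorem~\ref{prop2} by substituting the definition of $\beta_k$ and showing that one of the resulting sums collapses. Applying Theorem~\ref{prop2} to $X=\F_q^n$ and $A=\HH(q,n,d)$ gives
$$w_i(q,n,d) = \sum_{k=0}^i \alpha_k(q,n,d)\,\qbin{n-k}{i-k}{q}(-1)^{i-k}q^{\binom{i-k}{2}},$$
and by Notation~\ref{notaz:beta} we have $\alpha_k(q,n,d)=\qbin{n}{k}{q}-\beta_k(q,n,d)$. Substituting this splits the right-hand side into a ``main term'' $M_i \defi \sum_{k=0}^i \qbin{n}{k}{q}\qbin{n-k}{i-k}{q}(-1)^{i-k}q^{\binom{i-k}{2}}$ and a ``correction term'' $-\sum_{k=0}^i \beta_k(q,n,d)\qbin{n-k}{i-k}{q}(-1)^{i-k}q^{\binom{i-k}{2}}$.

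The key step is that $M_i=0$ for every $i\ge 1$. Using $\qbin{n}{k}{q}=\qbin{n}{n-k}{q}$, $\qbin{n-k}{i-k}{q}=\qbin{n-k}{n-i}{q}$ and the second identity of Lemma~\ref{proprq}, one rewrites $\qbin{n}{k}{q}\qbin{n-k}{i-k}{q}=\qbin{n}{i}{q}\qbin{i}{k}{q}$, so that $M_i=\qbin{n}{i}{q}\sum_{k=0}^i \qbin{i}{k}{q}(-1)^{i-k}q^{\binom{i-k}{2}}$. Reindexing by $j=i-k$ and invoking the $q$-Binomial Theorem in the form $\prod_{m=0}^{i-1}(1+q^m x)=\sum_{j=0}^{i}q^{\binom{j}{2}}\qbin{i}{j}{q}x^j$ evaluated at $x=-1$, the inner sum equals $\prod_{m=0}^{i-1}(1-q^m)$, which vanishes for $i\ge 1$ because the factor indexed by $m=0$ is $1-q^0=0$. (This is the $q$-analogue of $\sum_k\binom{i}{k}(-1)^k=0$, and is the only ``external'' input of the argument.)

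It remains to handle the correction term. Since the zero subspace avoids every subset of $X$, we have $\alpha_0(q,n,d)=1$, hence $\beta_0(q,n,d)=\qbin{n}{0}{q}-1=0$, so the $k=0$ summand disappears and we are left with $w_i(q,n,d)=-\sum_{k=1}^i \beta_k(q,n,d)\qbin{n-k}{i-k}{q}(-1)^{i-k}q^{\binom{i-k}{2}}$ for $1\le i\le n$. Multiplying both sides by $(-1)^i$ and using $(-1)^i(-1)^{i-k}=(-1)^{k}$, hence $-(-1)^k=(-1)^{k-1}$, yields exactly the claimed identity. There is no genuine obstacle here — the proof is a short algebraic manipulation — so the only thing that needs care is the bookkeeping of the $q$-binomial reindexings in Lemma~\ref{proprq} and the tracking of signs.
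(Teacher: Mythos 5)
Your proposal is correct and follows essentially the same route as the paper: apply Theorem~\ref{prop2}, substitute $\alpha_k=\qbin{n}{k}{q}-\beta_k(q,n,d)$, observe $\beta_0=0$, and kill the main term via the symmetry and product identities of Lemma~\ref{proprq} together with the $q$-Binomial Theorem evaluated at $-1$. The only cosmetic difference is that you write the extracted factor as $\qbin{n}{i}{q}$ where the paper uses $\qbin{n}{n-i}{q}$, which is the same quantity.
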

 
 \begin{proof}
 Fix an integer $1 \le i \le n$. 
By Theorem \ref{prop2} we have
\begin{equation}  \label{eest1}
w_i(q,n,d)= \sum_{k=0}^i \alpha_k(q,n,d) \qbin{n-k}{i-k}{q} (-1)^{i-k} q^{\binom{i-k}{2}}.
\end{equation}
By the definition of $\beta_k(q,n,d)$, Eq. (\ref{eest1}) can be re-written as
\begin{align}  
w_i(q,n,d) &= \sum_{k=0}^i \left( \qbin{n}{k}{q} -\beta_k(q,n,d) \right) \qbin{n-k}{i-k}{q} (-1)^{i-k} q^{\binom{i-k}{2}} \nonumber \\
&= \sum_{k=0}^i \qbin{n}{k}{q}\qbin{n-k}{i-k}{q} (-1)^{i-k} q^{\binom{i-k}{2}} \ - \  \sum_{k=1}^i \beta_k(q,n,d) \qbin{n-k}{i-k}{q} (-1)^{i-k} q^{\binom{i-k}{2}}.\label{eest2}
\end{align}
Using Lemma \ref{proprq} multiple times we compute
\begin{eqnarray*}
\sum_{k=0}^i \qbin{n}{k}{q}\qbin{n-k}{i-k}{q} (-1)^{i-k} q^{\binom{i-k}{2}} &=&
\sum_{k=0}^i \qbin{n}{n-k}{q}\qbin{n-k}{n-i}{q} (-1)^{i-k} q^{\binom{i-k}{2}} 
\\
&=&
\qbin{n}{n-i}{q}\ \sum_{k=0}^i \qbin{i}{k}{q}(-1)^{i-k} q^{\binom{i-k}{2}}  \\
&=&
\qbin{n}{n-i}{q}\ \sum_{k=0}^i \qbin{i}{i-k}{q}(-1)^{i-k} q^{\binom{i-k}{2}} \\
&=& \qbin{n}{n-i}{q}\ \sum_{k=0}^i \qbin{i}{k}{q}(-1)^{i} q^{\binom{i}{2}}=0,
\end{eqnarray*}
where the latter equality follows from the $q$-Binomial Theorem (note that $i \ge 1$ by assumption).
Therefore Eq.~(\ref{eest2}) simplifies to
\begin{equation*} \label{eest3}
w_i(q,n,d)=-\sum_{k=1}^i \beta_k(q,n,d) \qbin{n-k}{i-k}{q} (-1)^{i-k} q^{\binom{i-k}{2}}. \qedhere
\end{equation*}
 \end{proof}
 
 \begin{remark} \label{rem:look}
 In order to obtain Theorem \ref{th:main2} from Theorem \ref{th:beta}, we would now need an explicit expression for $\beta_1(q,n,d)$ and $\beta_2(q,n,d)$. By definition, we have
 $$\beta_1(q,n,d)= \sum_{j=1}^d \binom{n}{j} (q-1)^{j-1}\quad \mbox{for all $n \ge d \ge 1$.}$$
In the remainder of this section we compute $\beta_2(q,n,d)$, i.e., the number of $2$-dimensional codes $C \le \F_q^n$ of minimum Hamming distance $\le d$. We will give a closed formula for this quantity involving the agreement numbers,  introduced in Definition \ref{def:cn} via a recursion.
\end{remark}


 We start by providing a combinatorial interpretation for $\gamma_a(b,c,\nu)$, that also explains the choice of the terminology. 
 In the sequel, given a non-empty set $A$, an element $* \in A$, and an integer $b \ge 1$, we let
 $\wH^*(v)=|\{1 \le i \le b \mid v_i \neq *\}|$ for all $v \in A^b$.
 
 \begin{lemma} \label{lem:intc}
 Let $a,b \ge 1$ and $c,\nu \ge 0$ be integers. We have
$$
\gamma_a(b,c,\pe) = |\{v \in A^b \mid  \wH^*(v)= \pe, \ \exists \; S \subseteq \{1,...,b\} \mbox{ with } |S| \ge c \mbox{ and } v_i=v_j \neq *  \ \forall \  i,j \in S\}|,$$
where $A$ is any finite set of cardinality $a$, and $* \in A$ is a fixed element. In other words, $\gamma_a(b,c,\pe)$ counts the number of arrays $v \in A^b$ with the following properties:
\begin{itemize}[noitemsep]
\item $v$ has exactly $\nu$ entries different from $*$,
\item the entries of $v$ agree, and are not equal to $*$, in at least $c$ positions. 
\end{itemize}
  \end{lemma}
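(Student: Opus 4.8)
The plan is to prove the identity by induction on $a$, reading the recursion in Definition~\ref{def:cn} as a ``peel off one colour'' argument. Fix a finite set $A$ of cardinality $a$ with a distinguished element $* \in A$, and let $M_a(b,c,\nu)$ denote the right-hand cardinality in the statement: the number of $v \in A^b$ having exactly $\nu$ coordinates different from $*$ and admitting a set $S \subseteq \{1,\dots,b\}$ with $|S| \ge c$ on which $v$ is constant with a value $\ne *$. The goal is to show $M_a(b,c,\nu) = \gamma_a(b,c,\nu)$ for all $a,b,c,\nu$.

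First I would dispose of the degenerate cases, matching the initial conditions of Definition~\ref{def:cn}. If $a=1$ there is no value $\ne *$ available; if $\nu > b$ the support is too large; if $b<c$ or $\nu<c$ there cannot be $c$ coordinates carrying a common non-$*$ value, since any witnessing $S$ lies inside the support, of size $\nu$; and if $a \ge 2$ and $b=c=\nu$ every coordinate must be non-$*$ and they must all agree, so $v$ is a constant array with value in $A \setminus \{*\}$, giving $a-1$ arrays. In each case $M_a(b,c,\nu)$ equals the prescribed value. (Here it is cleanest to assume $c \ge 1$, which is the range relevant to Theorem~\ref{th:main2}, where $c = s-d+t+2 \ge 2$.)

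For the inductive step, assume $a \ge 2$, $b \ge c+1$ and $c \le \nu \le b$. Choose a fixed element $\bullet \in A \setminus \{*\}$ and set $A' = A \setminus \{\bullet\}$, a set of size $a-1$ still containing $*$. Given $v \in A^b$, put $S_\bullet = \{i : v_i = \bullet\}$ and $s = |S_\bullet|$, and classify $v$ according to $s$. The key point is that any witnessing set $S$ is monochromatic, hence either $S \subseteq S_\bullet$ or $S \cap S_\bullet = \emptyset$. Thus if $s \ge c$ the set $S_\bullet$ already witnesses the condition, so the coordinates outside $S_\bullet$ form an arbitrary array over $A'$ with exactly $\nu - s$ non-$*$ entries, contributing $\binom{b}{s}\binom{b-s}{\nu-s}(a-2)^{\nu-s}$ (with $0^0=1$); while if $s < c$ any witnessing set must avoid $S_\bullet$, so $v$ satisfies the condition if and only if its restriction to the coordinates outside $S_\bullet$ — an array over $A'$ with exactly $\nu-s$ non-$*$ entries — does so with parameters $(a-1,b-s,c,\nu-s)$, contributing $\binom{b}{s}\,\gamma_{a-1}(b-s,c,\nu-s)$ by the inductive hypothesis. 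Summing over $s$ from $c$ to $\nu$, respectively from $0$ to $c-1$, reproduces exactly the two sums of Definition~\ref{def:cn}, which finishes the induction.

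The hard part is getting the monochromaticity argument right: it is precisely what forbids a witnessing set from mixing $\bullet$-coordinates with others, and hence what cleanly separates the two regimes $s \ge c$ and $s < c$ and prevents double counting. Once that is in place, the bookkeeping $\nu = s + (\nu-s)$ together with the vanishing conventions ($\binom{n}{k}=0$ and $\gamma_{a-1}(\cdot,c,m)=0$ for negative arguments, and $0^0=1$ in the $a=2$ boundary of the second sum) makes the index ranges of the two sums match the recursion automatically, so no extra case analysis is needed beyond the degenerate cases handled at the outset.
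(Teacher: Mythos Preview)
Your proof is correct and follows essentially the same route as the paper's: fix a non-$*$ symbol, partition the arrays according to the set of positions carrying that symbol, and use the monochromaticity of any witnessing set to split into the two regimes $s \ge c$ and $s < c$, recovering exactly the two sums in Definition~\ref{def:cn}. The only cosmetic difference is that the paper phrases this as ``verify that the combinatorial counts satisfy the same recursion and initial conditions as $\gamma_a(b,c,\nu)$'' rather than as an explicit induction on $a$, and the paper does not single out the $c=0$ edge case.
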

  
  \begin{proof} 
  For all $a,b \ge 1$ and $c,\nu \ge 0$ define the set
$$
\Gamma_a(b,c,\nu) \defi \{v \in A^b \mid  \wH^*(v)= \pe, \ \exists \; S \subseteq \{1,...,b\} \mbox{ with } |S| \ge c \mbox{ and } v_i=v_j \neq * \ \forall \ i,j \in S\},$$
where $A$ is any set of cardinality $a$, and $* \in A$ is a fixed element. It is easy to see that cardinality of the set $\Gamma_a(b,c,\nu)$, which we denote by $\gamma'_a(b,c,\nu)$ in the sequel, does not depend on the choice of~$A$ and of~$*$. Therefore in order to prove the lemma it suffices to show that the numbers in the set
$\{\gamma'_a(b,c,\nu) \mid a,b \ge 1 \mbox{ and } c,\nu \ge 0\}$ satisfy the recursion and the initial conditions of Definition~\ref{def:cn}. 

We have
$\Gamma_a(b,c,\nu)= \emptyset$ if $a=1$ or $b<c$ or $\nu<c$ or $\nu >b$. Moreover, if $a \ge 2$ and $b=c=\nu$, then $\Gamma_a(b,c,\nu)$ contains exactly $a-1$ arrays.  
Now suppose $a \ge 2$, $b \ge c+1$, and $\nu \ge c$. Fix a symbol $x \in A$ with $x \neq *$. For $S \subseteq \{1,...,b\}$ let
$$\Gamma_a^S(b,c,\nu) \defi \{v \in \Gamma_a(b,c,\nu) \mid S=\{1 \le i \le b \mid v_i =x\}\}.$$ Then
\begin{equation} \label{sss1}
\gamma'_a(b,c,\nu)=\left| \Gamma_a(b,c,\nu)\right| = \sum_{S \subseteq \{1,...,b\}} \left| \Gamma_a^S(b,c,\nu)\right|.
\end{equation}
It is not difficult to see that for all $S \subseteq \{1,...,b\}$ one has
\begin{equation} \label{sss2}\left| \Gamma_a^S(b,c,\nu)\right|= \left\{ \begin{array}{cl} \displaystyle \binom{b-|S|}{\nu-|S|}(a-2)^{\nu-|S|} & \mbox{ if $c \le |S| \le \nu$,} \\ \\  \gamma'_{a-1}(b-|S|,c,\nu-|S|) & \mbox{ if $|S| < c$,} \\ \\ 0 & \mbox{ otherwise.}\end{array}\right.
\end{equation}
The desired result can now be obtained combining (\ref{sss1}) and (\ref{sss2}).
\end{proof}

The next step towards the proof of Theorem \ref{th:main2} is to count the number of vectors $v \in \F_q^n$ such that $\wH(v)=\nu$ and $\xi v +w$ contains a vector of Hamming weight upper bounded by $d$ for some $\xi \in \F_q \setminus \{0\}$, where $w \in \F_q^n$ is a vector with full support.  This quantity can be expressed in terms of the agreement numbers via Lemma \ref{lem:intc}.

\begin{lemma} \label{lem:gvect}
Let $n,d \ge 1$ and $\nu \ge 0$ be integers, and let $w \in \F_q^n$ with $\sH(w)=\{1,...,n\}$. The number of $v \in \F_q^n$ with $\wH(v)=\nu$ and such that the set $\{\xi v + w \mid \xi \in \F_q \setminus \{0\}\}$ contains a vector of Hamming weight at most $d$ is $\gamma_q(n,n-d,\nu)$.
\end{lemma}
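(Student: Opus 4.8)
The plan is to reduce the count to the one already computed in Lemma \ref{lem:intc} by means of an explicit bijection. The starting point is that $w$ has full Hamming support, so every $w_i$ is invertible. Given $v \in \F_q^n$, I would set $u \defi (v_1 w_1^{-1}, \ldots, v_n w_n^{-1}) \in \F_q^n$. The map $v \mapsto u$ is a bijection of $\F_q^n$ onto itself, and since $w_i \neq 0$ for all $i$ we have $v_i = 0 \iff u_i = 0$, so $\wH(v) = \wH(u)$; in particular $\wH(v) = \nu$ if and only if $u$ has exactly $\nu$ nonzero coordinates, i.e.\ $\wH^*(u) = \nu$ with $* = 0$.

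Next I would translate the coset condition. For a fixed $\xi \in \F_q \setminus \{0\}$, the $i$-th entry of $\xi v + w$ equals $\xi v_i + w_i$, which vanishes exactly when $v_i = -\xi^{-1} w_i$, that is, when $u_i = -\xi^{-1}$. Hence $\wH(\xi v + w) \le d$ holds if and only if the value $-\xi^{-1}$ occurs in at least $n-d$ of the coordinates of $u$. As $\xi$ runs over $\F_q \setminus \{0\}$, the element $-\xi^{-1}$ runs bijectively over $\F_q \setminus \{0\}$, so the condition ``$\{\xi v + w \mid \xi \in \F_q \setminus \{0\}\}$ contains a vector of Hamming weight $\le d$'' is equivalent to ``some nonzero element of $\F_q$ appears in at least $n-d$ coordinates of $u$''.

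Putting the two translations together, the bijection $v \mapsto u$ carries the set of $v$ we wish to count onto the set of $u \in \F_q^n$ having exactly $\nu$ nonzero entries and admitting a subset $S \subseteq \{1,\ldots,n\}$ with $|S| \ge n-d$ and $u_i = u_j \neq 0$ for all $i,j \in S$. In the notation of the proof of Lemma \ref{lem:intc} (with $A = \F_q$, $* = 0$, $b = n$, $c = n-d$), this is precisely the set $\Gamma_q(n, n-d, \nu)$, whose cardinality is $\gamma_q(n, n-d, \nu)$. This gives the claimed formula. I do not expect a genuine obstacle: the argument is short, and the only points requiring care are the bookkeeping of which scalars $-\xi^{-1}$ arise and a quick sanity check of the degenerate cases ($\nu = 0$, or $n \le d$) against the initial conditions in Definition \ref{def:cn}.
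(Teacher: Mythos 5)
Your argument is correct and is essentially the paper's own proof: the paper likewise rescales coordinatewise by $w$ (the map $v \mapsto (v_1/w_1,\ldots,v_n/w_n)$) to reduce to the all-ones case, translates $\wH(\xi v + w)\le d$ into ``$-\xi^{-1}$ occupies at least $n-d$ coordinates,'' and then invokes Lemma \ref{lem:intc}. No gaps.
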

\begin{proof}
For $w \in \F_q^n$, let $R(w) = \{v \in \F_q^n \mid \wH(v)=\nu, \, \mid \exists \, \xi \in \F_q \setminus \{0\} \mbox{ with } \wH(\xi v +w) \le d\}$. Now fix $w \in \F_q^n$ with $\sH(w)=\{1,...,n\}$, and denote by $\bf 1$ the all-1 vector in $\F_q^n$. We claim that  $R(w)$ and $R({\bf 1})$ have the same cardinality. To see this, define
$\varphi:
R(w) \to R({\bf 1})$ by $(v_1,...,v_n) \mapsto (v_1/w_1,...,v_n/w_n)$. We will show that $\varphi$ is a well-defined bijection.

Suppose that $v \in R(w)$. Then there exists
$\xi \in \F_q \setminus \{0\}$ and a set $S \subseteq \{1,...,n\}$ with $|S| \ge n-d$ and $\xi v_i+w_i=0$ for all $i \in S$. Since $w$ has support $\{1,...,n\}$, the latter is equivalent to
$\xi v_i /w_i+1=0$ for all $i \in S$. Moreover, $(v_1,...,v_n)$ and $(v_1/w_1,...,v_n/w_n)$ have the same weight.
Thus $(v_1/w_1,...,v_n/w_n) \in R({\bf 1})$.
This shows that $\varphi$ is well-defined. Using the same argument one sees that the map $\psi:R({\bf 1}) \to R(w)$ given by $(v_1,...,v_n) \mapsto (v_1w_1,...,v_nw_n)$ is also well-defined. Since $\varphi$ and $\psi$ are the inverse of each other, we have that $\varphi$ is bijective, as claimed.

To conclude the proof, observe that a vector $v \in \F_q^n$ of weight $\nu$ belongs to $R({\bf 1})$ if and only if there exists $\xi \in \F_q^n \setminus \{0\}$ and a set $S \subseteq \{1,...,n\}$ with $|S| \ge n-d$ and $v_i =-1/\xi$ for all $i \in S$. The number of such vectors is the agreement number $\gamma_q(n,n-d,\nu)$ by Lemma \ref{lem:intc}.
\end{proof}

We are now ready to apply Lemmas \ref{lem:intc} and \ref{lem:gvect} to an enumerative combinatorics problem in coding theory. By Remark \ref{rem:look}, this will be the last step towards the proof of Theorem \ref{th:main2}.

\begin{theorem} \label{thm:beta2expl}
Let $n \ge d \ge 2$ be integers. The number of $2$-dimensional codes $C \le \F_q^n$ with minimum Hamming distance $\dH(C) \le d$ is
 \begin{align*}
\beta_2(q,n,d) &=  \sum_{1 \le \ell_1<\ell_2 \le n} \ \Biggl[ q^{n-\ell_1-1}\left( \sum_{j=0}^{d-1} \binom{n-\ell_2}{j} (q-1)^j\right)  \\ 
&+ \sum_{j=d}^{n-\ell_2}\sum_{h=0}^{d-1} \binom{n-\ell_2}{j}   \binom{n-\ell_1-1}{h}(q-1)^{j+h} \\ 
&+ \sum_{s=d}^{n-\ell_2} \ \sum_{t=0}^{d-2} \binom{n-\ell_2}{s} \binom{n-\ell_1-1-s}{t} (q-1)^{s+t} \sum_{\nu=d-t}^s \gamma_q(s,s-d+t+2,\nu) \Biggr].
\end{align*}
\end{theorem}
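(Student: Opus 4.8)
The plan is to enumerate the $2$-dimensional codes $C \le \F_q^n$ with $\dH(C) \le d$ by picking a distinguished generator matrix in a normalized form, so that no code is counted twice. Concretely, for a $2$-dimensional subspace $C$ there is a unique generator matrix $G = \begin{pmatrix} w \\ v \end{pmatrix} \in \F_q^{2 \times n}$ in reduced row echelon form: let $\ell_1 = \lead(w)$ be the first pivot, so $\inn(w) = 1$ and $w_i = 0$ for $i < \ell_1$, and let $\ell_2 > \ell_1$ be the second pivot, so $v_{\ell_1} = 0$, $v_{\ell_2} = 1$, and $v_i = 0$ for $i < \ell_2$ together with $v_i=0$ at $i=\ell_1$. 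The index pair $1 \le \ell_1 < \ell_2 \le n$ explains the outer double sum; for each fixed pair we must count the pairs $(w, v)$ completing this echelon pattern for which the code $\langle w, v\rangle$ has a nonzero vector of Hamming weight $\le d$. Since $C$ is $2$-dimensional, its nonzero vectors are (up to scaling) the $v$'s, the $w$'s, and the combinations $\xi v + w$ with $\xi \in \F_q^*$; the echelon normalization already forces $\lead$ of every nonzero codeword to lie in $\{\ell_1, \ell_2\}$, and one checks $\wH(w) \ge 1$ always and $\wH(v) \ge 1$ always, so the ``small weight'' condition is realized through one of these codewords.

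The three inner summands should correspond to a disjoint case split according to \emph{which} codeword (up to scalar) achieves weight $\le d$, ordered so as not to double count. The first term, $q^{n-\ell_1-1}\sum_{j=0}^{d-1}\binom{n-\ell_2}{j}(q-1)^j$, should count the pairs where $\wH(v) \le d$ while we place no constraint on $w$ beyond the echelon form: the entries of $v$ in positions $> \ell_2$ number $n - \ell_2$ and we choose $j \le d-1$ of them to be nonzero (the total weight is then $1+j \le d$), giving $\sum_{j=0}^{d-1}\binom{n-\ell_2}{j}(q-1)^j$; simultaneously the free entries of $w$ are those in positions $> \ell_1$ except position $\ell_2$, i.e. $n - \ell_1 - 1$ of them, each arbitrary in $\F_q$, giving $q^{n-\ell_1-1}$. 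The second term handles the codes where $v$ has \emph{large} weight (weight $\ge d+1$, i.e. $j \ge d$ nonzero entries among the $n-\ell_2$ trailing positions) but $w$ itself has weight $\le d$: the weight of $w$ is $1 + (\text{number of nonzero entries in positions} > \ell_1)$, and among positions $>\ell_1$ the entry at $\ell_2$ is constrained by... here one must be slightly careful, but the combinatorics should collapse to $\sum_{h=0}^{d-1}\binom{n-\ell_1-1}{h}(q-1)^h$, matched against $\sum_{j=d}^{n-\ell_2}\binom{n-\ell_2}{j}(q-1)^j$ for the $v$-part. The third term is the genuinely delicate one: $v$ has weight $s \ge d$, $w$ has weight $\ge d+1$ (equivalently, the number $t$ of free nonzero entries of $w$ is at most $d-2$), and the small-weight codeword must be some $\xi v + w$. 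Fixing the support of $v$ of size $s$ (contributing $\binom{n-\ell_2}{s}$ and, once we also fix the $t$ extra nonzero coordinates of $w$ disjoint from... actually overlapping, hence the $\binom{n-\ell_1-1-s}{t}$ bookkeeping) and the $(q-1)^{s+t}$ for the nonzero values, we are left to count the number of ways the remaining entries of $v$ on the $s$ coordinates can be chosen so that $\xi v + w$ has weight $\le d$ for some $\xi \neq 0$ — and this is exactly $\sum_{\nu=d-t}^{s}\gamma_q(s, s-d+t+2, \nu)$ by Lemma \ref{lem:gvect} (restricted to the $s$ coordinates where $v$ and a full-support vector $w$ interact, after scaling $w$ to the all-ones vector as in the proof of that lemma), where $\nu$ ranges over the possible weight contribution and the parameter $s - d + t + 2 = s - (d - t - 2)$ encodes the cancellation threshold $|S| \ge s - (d-t)$ needed to drop the weight below $d$.

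I expect the main obstacle to be verifying that the three cases are genuinely disjoint and exhaustive and that the echelon-position bookkeeping (which coordinates of $w$ are ``free'', how the supports of $v$ and $w$ interact on the $n - \ell_1 - 1$ positions, the appearance of $n-\ell_1-1-s$ rather than $n-\ell_1-1$ in the third term) is done consistently — in particular making sure that when $\xi v + w$ is the small-weight codeword we have not already counted that code under the first or second term, which forces the strict inequalities $\wH(v) > d$ in the second term and $\wH(w) > d$ in the third. The reduction of the innermost count to $\gamma_q$ via Lemma \ref{lem:gvect} is the conceptual heart: one scales the restriction of $w$ to the relevant $s$ coordinates to the all-ones vector, so that ``$\wH(\xi v + w) \le d$ on those coordinates for some $\xi$'' becomes ``$v$ agrees with a constant $-1/\xi \neq 0$ in at least $s - (d - t)$ positions'', which is the agreement-number condition with $c = s - (d-t) = s - d + t$; the $+2$ shift and the range $\nu \ge d - t$ come from separating out the $t$ already-placed nonzero coordinates of $w$ and from the weight contributed outside the agreement set. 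Once the case analysis is pinned down, each piece is a routine (if lengthy) counting computation, and summing over $1 \le \ell_1 < \ell_2 \le n$ yields the stated formula.
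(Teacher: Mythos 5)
Your proposal follows essentially the same route as the paper's proof: count reduced row-echelon generator matrices indexed by the pivot pair $(\ell_1,\ell_2)$, split into the three disjoint cases according to whether the second row, the first row, or only a proper combination $\xi G_1 + G_2$ has weight at most $d$, and reduce the third case to Lemma \ref{lem:gvect} after projecting onto the support $S$ of the second row. The only points to tidy are ones you flag yourself: in the third case it is the \emph{second} row (which has full support on $S$) that plays the role of the full-support vector of Lemma \ref{lem:gvect}, while the restriction of the \emph{first} row to $S$ is the weight-$\nu$ vector being counted, and the $+2$ in $s-d+t+2$ comes from the two pivot entries at positions $\ell_1,\ell_2$, which are both nonzero in $\xi G_1+G_2$ and so shift the weight threshold from $d-t$ to $d-t-2$.
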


 \begin{proof}
 We count the number of $2 \times n$ matrices $G$ over $\F_q$ in reduced row-echelon form whose rows span a 2-dimensional code of minimum Hamming distance at most $d$.
Any $2 \times n$ matrix of rank 2 in reduced row-echelon form has the structure
 \begin{equation} \label{str}
 G= \begin{pmatrix} 
 0 & \cdots & 0 & 1 & \bullet & \cdots & \bullet & 0 & \bullet & \cdots & \bullet \\ 
 0 & \cdots & \cdots & \cdots & \cdots & \cdots & \cdots & 1 & \bullet & \cdots & \bullet
  \end{pmatrix},
  \end{equation}
  where the entries marked with a bullet are free. Given integers $1 \le \ell_1 < \ell_2 \le n$, we denote by $\mM(\ell_1,\ell_2)$ the set of matrices in the form of (\ref{str}) whose pivot columns are in positions $\ell_1$ and $\ell_2$, respectively. Moreover, we denote the $i$-th row of a matrix $G \in \mM(\ell_1,\ell_2)$ by $G_i$. Define
\begin{align*}
\mM_1(\ell_1,\ell_2) &= \{M \in \mM(\ell_1,\ell_2) \mid \wH(G_2) \le d\}, \\
\mM_2(\ell_1,\ell_2) &= \{M \in \mM(\ell_1,\ell_2) \mid \wH(G_2) >d, \ \wH(G_1) \le d \} \\
 \mM_3(\ell_1,\ell_2) &= \{M \in \mM(\ell_1,\ell_2) \mid \wH(G_2) > d, \ \wH(G_1)>d, \ \langle G_1,G_2 \rangle \mbox{ has min. H. distance $\le d$}\}.
\end{align*}
Since the three sets above are disjoint for any $\ell_1, \ell_2$, we have 
  \begin{equation} \label{eq:firstsum}
  \beta_2(q,n,d) = \sum_{1 \le \ell_1<\ell_2 \le n} |\mM_1(\ell_1,\ell_2)| + |\mM_2(\ell_1,\ell_2)| + |\mM_3(\ell_1,\ell_2)|.
  \end{equation}
 We will compute the cardinalities of $\mM_1(\ell_1,\ell_2)$, $\mM_2(\ell_1,\ell_2)$ and $\mM_3(\ell_1,\ell_2)$  for a given choice of the pivot indices $1 \le \ell_1 < \ell_2 \le n$. The size of $\mM_1(\ell_1,\ell_2)$ can be expressed as
 \begin{equation} \label{eq:firstterm}
 |\mM_1(\ell_1,\ell_2)| = q^{n-\ell_1-1} \cdot |\{v \in \F_q^{n-\ell_2} \mid \wH(v) \le d-1\}| = q^{n-\ell_1-1} \sum_{j=0}^{d-1} \binom{n-\ell_2}{j} (q-1)^j.
 \end{equation}
 The size of $\mM_2(\ell_1,\ell_2)$ is
 \begin{equation} \label{eq:midterm}
 |\mM_2(\ell_1,\ell_2)| =  \sum_{j=d}^{n-\ell_2} \binom{n-\ell_2}{j} (q-1)^j \sum_{h=0}^{d-1} \binom{n-\ell_1-1}{h}(q-1)^h.
 \end{equation}

 The computation of $|\mM_3(\ell_1,\ell_2)|$ is more involved (here is where the agreement numbers arise). Let $K=\{i \in \N \mid \ell_2 < i \le n\} \subseteq \{1,...,n\}$. For a subset $S \subseteq K$, define $$K(S)=\{i \in \N \mid \ell_1 < i < \ell_2\} \cup (K \setminus S).$$ Note that $|K|=n-\ell_2$ and 
 $|K(S)|=n-\ell_1-1-|S|$ for all $S \subseteq K$. We have
 \begin{align}
 |\mM_3(\ell_1,\ell_2)| &= \sum_{\substack{S \subseteq K \\ |S| \ge d}}
 |\{G \in \mM_2(\ell_1,\ell_2)\}\mid \sH(G_2) \cap K=S| \nonumber \\
 &= \sum_{\substack{S \subseteq K \\ |S| \ge d}} \ \sum_{T \subseteq K(S)}
 |\{G \in \mM_2(\ell_1,\ell_2) \mid \sH(G_2) \cap K=S, \ \sH(G_1) \cap K(S)=T\}|. \label{fdec}
 \end{align}
 Now fix subsets $S \subseteq K$ with $|S| \ge d$, $T \subseteq K(S)$, and a matrix $G \in \mM(\ell_1,\ell_2)$ with $\sH(G_2) \cap K=S$ and $\sH(G_1) \cap K(S)=T$. Denote by $s$ and $t$ the cardinalities of $S$ and $T$, respectively.
 If $t>d+2$, then $|\{G \in \mM_2(\ell_1,\ell_2) \mid \sH(G_2) \cap K=S, \ \sH(G_1) \cap K(S)=T\}|=0$.

 We henceforth assume $t \le d-2$.
 Observe that $G \in \mM_2(\ell_1,\ell_2)$ if and only if $\wH(G_1) \ge d+1$ and there exists $\xi \in \F_q \setminus \{0\}$ with $1 \le \wH(\xi G_1 + G_2) \le d$. Now observe that $\wH(G_1) = t+1+\wH(\pi_S(G_1))$. Moreover,
 for $\xi \in \F_q \setminus \{0\}$ one has
$$ \wH(\xi G_1+ G_2) = 2+t+\wH(\xi \pi_S(G_1) +\pi_S(G_2)).$$
  Therefore
  $\wH(G_1) \ge d+1$  and $1 \le \wH(\xi G_1 +G_2) \le d$ if and only if $\wH(\pi_S(G_1)) \ge d-t$ and there exists
  $\xi \in \F_q \setminus \{0\}$ with  
   $\wH(\xi \pi_S(G_1) + \pi_S(G_2)) \le d-t-2$. 
      To summarize,  
 $G \in \mM_2(\ell_1,\ell_2)$ if and only if $t \le d-2$, $\wH(\pi_S(G_1)) \ge d-t$, and there exists $\xi \in \F_q \setminus \{0\}$ with $\wH(\xi \pi_S(G_1) + \pi_S(G_2)) \le d-t-2$. By Lemma \ref{lem:gvect}, this shows that
 $$|\{G \in \mM_2(\ell_1,\ell_2) \mid \sH(G_2) \cap K=S, \ \sH(G_1) \cap K(S)=T\}| = (q-1)^{s+t} \sum_{\nu=d-t}^s \gamma_q(s,s-d+t+2,\nu).$$
 Finally, using (\ref{fdec}) we get
 \begin{equation*} \label{eq:secondterm}
 |\mM_2(\ell_1,\ell_2)| =\sum_{s=d}^{n-\ell_2} \ \sum_{t=d-s}^{d-2} \binom{n-\ell_2}{s} \binom{n-\ell_1-1-s}{t}
 (q-1)^{s+t}\sum_{\nu=d-t}^s  \; \gamma_q(s,s-d+t+2,\nu).
 \end{equation*}
 Combining this expression with (\ref{eq:firstsum}), (\ref{eq:firstterm}) and (\ref{eq:midterm}) one obtains the  desired formula.
 \end{proof}
 
The main result of this section, Theorem \ref{th:main2}, is now a straightforward consequence of Theorem~\ref{th:beta} and Theorem \ref{thm:beta2expl}, as described in Remark \ref{rem:look}.


\section{HWDL -- Density of Error-Correcting Codes}  \label{sec:HWDL5a}

 Although giving explicit formul{\ae} for the numbers $\beta_k(q,n,d)$ and $w_i(q,n,d)$ seems to be a very difficult task in general, quite detailed information can be obtained on their \textit{asymptotic behaviour} as the field size grows. In the next two sections we therefore study the asymptotics of $\beta_k(q,n,d)$ and $|w_i(q,n,d)|$ as $q \to +\infty$.

This section is devoted to the asymptotics of $\beta_k(q,n,d)$, which is closely related to the problem of computing the {density} of codes having minimum Hamming distance bounded from above.
We start by establishing the notation.

\begin{notation} \label{notaz:landau}
All the asymptotic estimates in the sequel are for $q \to +\infty$, unless otherwise stated. The other parameters ($n$, $d$, $k$, and $i$) are assumed to be fixed and therefore treated as constants. We shall use 
the standard Bachmann-Landau notation (``Big O'', ``Little O'', and ``$\sim$'') to express estimates for the growth rate of real-valued functions defined on an infinite subset $A \subseteq \N$. See \cite[Chapter 1]{de1981asymptotic} for a standard reference.
\end{notation}

The main result of this section is the following theorem, that provides a precise asymptotic estimate for the quantity $\beta_k(q,n,d)$.

\begin{theorem} \label{th:asbeta}
Fix integers $n > d \ge 2$. The following hold.
\begin{enumerate}[label=(\arabic*), noitemsep]
\item \label{pa1}
For all $1 \le k \le n-d$ we have
$$\beta_k(q,n,d) = \binom{n}{d} q^{(k-1)(n-k)+d-1} \; + o \left( q^{(k-1)(n-k)+d-1}\right).$$

\item \label{pa2}
In particular, 
$$\beta_k(q,n,d) \sim \left\{ \begin{array}{cl} 
\displaystyle 
\binom{n}{d} q^{(k-1)(n-k)+d-1} & \mbox{ if $1 \le k \le n-d,$} \\ 
q^{k(n-k)} & \mbox{ if $n-d+1 \le k \le n$.}\end{array} \right. $$

\item \label{pa3}
For all $1 \le k \le n$ we have
$$\beta_k(q,n,d) \in O \left( q^{(k-1)(n-k)+d-1}\right).$$
\end{enumerate}
\end{theorem}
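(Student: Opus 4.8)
The plan is to write $\beta_k(q,n,d)$ as the size of a union indexed by low-weight lines and then squeeze it between the first two Bonferroni sums. Call a $1$-dimensional subspace $L\le\F_q^n$ \emph{bad} if it is spanned by a vector of Hamming weight at most $d$; since scaling a vector does not change its weight, the number of bad lines is exactly $\beta_1(q,n,d)=\sum_{j=1}^d\binom{n}{j}(q-1)^{j-1}$, a polynomial in $q$ of degree $d-1$ with leading coefficient $\binom{n}{d}$ (see Remark~\ref{rem:look}). A $k$-dimensional code $C\le\F_q^n$ satisfies $\dH(C)\le d$ if and only if $C$ contains a bad line, so $\beta_k(q,n,d)$ is the number of $k$-dimensional subspaces of $\F_q^n$ lying above at least one bad line. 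A fixed line is contained in $\qbin{n-1}{k-1}{q}$ of the $k$-dimensional subspaces and a fixed $2$-dimensional subspace in $\qbin{n-2}{k-2}{q}$ of them, so Bonferroni's inequalities give, for every $1\le k\le n$ (with the convention $\qbin{n-2}{k-2}{q}=0$ for $k\le 1$),
\begin{equation}\label{bonfsand}
\beta_1(q,n,d)\,\qbin{n-1}{k-1}{q}-\binom{\beta_1(q,n,d)}{2}\qbin{n-2}{k-2}{q}\ \le\ \beta_k(q,n,d)\ \le\ \beta_1(q,n,d)\,\qbin{n-1}{k-1}{q}.
\end{equation}

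Part~\ref{pa3} drops out of the upper bound in (\ref{bonfsand}): the right-hand side is a polynomial in $q$ of degree $(d-1)+(k-1)(n-k)$ with leading monomial $\binom{n}{d}\,q^{(k-1)(n-k)+d-1}$, hence $\beta_k(q,n,d)\in O(q^{(k-1)(n-k)+d-1})$. For part~\ref{pa1} I would restrict to $1\le k\le n-d$ and control the second Bonferroni term: as $\beta_1(q,n,d)=O(q^{d-1})$ and $\qbin{n-2}{k-2}{q}=O(q^{(k-2)(n-k)})$, that term is $O(q^{(k-2)(n-k)+2d-2})$, and the exponent gap
$$\bigl((k-1)(n-k)+d-1\bigr)-\bigl((k-2)(n-k)+2d-2\bigr)=(n-k)-(d-1)\ \ge\ 1$$
forces it to be $o(q^{(k-1)(n-k)+d-1})$. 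Since moreover $\beta_1(q,n,d)\,\qbin{n-1}{k-1}{q}=\binom{n}{d}\,q^{(k-1)(n-k)+d-1}+o(q^{(k-1)(n-k)+d-1})$, the two sides of (\ref{bonfsand}) agree up to a $o$-term, which is part~\ref{pa1}.

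Part~\ref{pa2} then follows by cases. For $1\le k\le n-d$ it is a restatement of part~\ref{pa1}. For $n-d+1\le k\le n$ the Singleton bound (Theorem~\ref{th:sing}) forces every $k$-dimensional code to have minimum distance at most $n-k+1\le d$, so $\alpha_k(q,n,d)=0$ and therefore $\beta_k(q,n,d)=\qbin{n}{k}{q}\sim q^{k(n-k)}$. The only step requiring real care is the exponent comparison showing the quadratic Bonferroni correction is lower order; this is exactly where the hypothesis $k\le n-d$ is used, and it is the reason part~\ref{pa1} and the first case of part~\ref{pa2} stop at $k=n-d$. Everything else is routine degree bookkeeping with $q$-binomial coefficients.
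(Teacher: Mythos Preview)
Your proof is correct and in fact cleaner than the paper's. Both arguments are inclusion--exclusion / Bonferroni, but you and the paper index over different families. The paper covers the ``bad'' $k$-spaces by the $\binom{n}{d}$ coordinate subspaces $\F_q^n(S)$ with $|S|=d$; it then needs Corollary~\ref{coro:av} to evaluate the single-set terms $|\Gamma_{\{\ell\}}|$ and a separate technical bound (Lemma~\ref{lem:2spc}) to control pairwise intersections $|\Gamma_{\{\ell_1,\ell_2\}}|$, because two distinct coordinate $d$-spaces may share many lines (those supported on $S_{\ell_1}\cap S_{\ell_2}$). You instead index directly by the bad \emph{lines}: each single-line term is simply $\qbin{n-1}{k-1}{q}$, and any two distinct lines automatically span a $2$-space, so every pairwise term is exactly $\qbin{n-2}{k-2}{q}$. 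This eliminates the need for Lemma~\ref{lem:2spc} entirely and makes the exponent bookkeeping shorter. The paper's route, on the other hand, ties the computation to the coordinate-subspace decomposition used elsewhere (e.g.\ in Theorem~\ref{thm:dual_appl}) and yields along the way the exact polynomial expression for each $|\Gamma_{\{\ell\}}|$, but neither of these is needed for the asymptotic statement at hand.
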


We will establish Theorem \ref{th:asbeta} after a series of preliminary definitions and results. Applications of Theorem \ref{th:asbeta} to higher-weight Dowling lattices will be shown in Section~\ref{sec:HWDL5}.

 \begin{definition} \label{defpolyin}
 Let $A \subseteq \N$ be an infinite subset. We say that a function $f: A \to \R$ is a (\textbf{rational}) \textbf{polynomial} on $A$ if there exists a (unique) polynomial $p \in \Q[x]$ such that
 $f(a)=p(a)$ for all $a \in A$. We also say that $f$ is a \textbf{polynomial in $a$}, when the variable is clear from context.
 \end{definition}

Note that a polynomial $p$ as in Definition \ref{defpolyin} is necessarily unique by the Fundamental Theorem of Algebra and the infinitude of $A$.

The following preliminary result is well-known. It computes the growth rate of a polynomial, and states that the $q$-binomial coefficient is a polynomial in $q$.

\begin{lemma} \label{lem:aa}
\begin{enumerate}[label=(\arabic*), noitemsep]
\item \label{aa1} Let $r \ge 0$ be an integer and let $p=a_0+a_1x + \cdots + a_rx^r \in \Q[x]$ be a polynomial with $a_r>0$. We have $p(q) \sim a_rq^r$ as $q \to +\infty$, when $p$ is viewed as a function defined on an infinite subset of $\N$.
\item \label{aa2} For all integers $r \ge s \ge 0$ and all prime power $q$, the $q$-binomial coefficient of $r$ and $s$ is a polynomial in $q$ of degree $s(r-s)$ and with leading coefficient $1$. In particular, we have
$$\qbin{r}{s}{q} \sim q^{s(r-s)} \quad \mbox{as $q \to +\infty$}.$$
\end{enumerate}
\end{lemma}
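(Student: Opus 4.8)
The plan is to treat the two parts in turn by elementary means. For part~\ref{aa1}, assuming $r \ge 1$ (the case $r = 0$ being vacuous), I would observe that for every $q > 0$ one has
$$\frac{p(q)}{a_r q^r} = 1 + \frac{a_{r-1}}{a_r}\,\frac{1}{q} + \cdots + \frac{a_0}{a_r}\,\frac{1}{q^r},$$
and each summand after the leading $1$ tends to $0$ as $q \to +\infty$. Hence $p(q)/(a_r q^r) \to 1$, which is precisely the assertion $p(q) \sim a_r q^r$ in the sense of Notation~\ref{notaz:landau}.

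For part~\ref{aa2}, I would prove by induction on $r$ that for every $0 \le s \le r$ the quantity $\qbin{r}{s}{q}$ is a polynomial in $q$ with integer coefficients, of degree $s(r-s)$, and with leading coefficient $1$. The base case $r = 0$ (hence $s = 0$) is clear since $\qbin{0}{0}{q} = 1$. For the inductive step, fix $r \ge 1$ and $0 \le s \le r$; the cases $s = 0$ and $s = r$ are immediate because $\qbin{r}{0}{q} = \qbin{r}{r}{q} = 1$, matching the predicted degree $0$ and leading coefficient $1$. For $1 \le s \le r-1$ I would invoke the $q$-Pascal recursion
$$\qbin{r}{s}{q} = \qbin{r-1}{s-1}{q} + q^s\,\qbin{r-1}{s}{q}.$$
By the inductive hypothesis the first term is a polynomial of degree $(s-1)(r-s)$ with leading coefficient $1$, and the second term is $q^s$ times a polynomial of degree $s(r-1-s)$ with leading coefficient $1$, hence a polynomial of degree $s + s(r-1-s) = s(r-s)$ with leading coefficient $1$. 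Since $r - s \ge 1$ we have $(s-1)(r-s) < s(r-s)$, so adding the two terms does not disturb the top-degree monomial, and $\qbin{r}{s}{q}$ is a polynomial of degree $s(r-s)$ with leading coefficient $1$. The asymptotic equivalence $\qbin{r}{s}{q} \sim q^{s(r-s)}$ then follows by applying part~\ref{aa1} with leading coefficient $a_r = 1$.

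This argument is entirely routine; the only place demanding a little attention is the degree bookkeeping in the inductive step, and in particular isolating the boundary cases $s \in \{0,r\}$ — where the strict inequality $(s-1)(r-s) < s(r-s)$ collapses — before running the recursion on the range $1 \le s \le r-1$. An alternative would be to start from the closed product form $\qbin{r}{s}{q} = \prod_{i=1}^{s} \frac{q^{r-s+i}-1}{q^{i}-1}$, verify directly that the denominator divides the numerator in $\Z[q]$, and compare degrees; but the inductive proof keeps the polynomiality and the leading-coefficient claim most transparent, and there is no substantive obstacle beyond these bookkeeping details.
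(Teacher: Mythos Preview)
Your proof is correct; the paper itself treats this lemma as well-known and gives no proof, so there is nothing to compare against. Your inductive argument via the $q$-Pascal recursion is a standard and clean way to establish part~\ref{aa2}, and your handling of the boundary cases $s\in\{0,r\}$ is appropriate.
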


The next lemma provides an upper bound for the number of $k$-dimensional codes
$C \le \F_q^n$ that have at least a non-zero codeword whose support is contained in a set $S_1 \subseteq \{1,...,n\}$, and at least a non-zero codeword whose support is contained in another set $S_2 \subseteq \{1,...,n\}$. The bound will be crucial in the proof of Theorem \ref{th:asbeta}. See Definition \ref{defavoids} for the notation.

\begin{lemma} \label{lem:2spc}
Let $n >d \ge 1$ be integers, and let $S_1,S_2$ be distinct $d$-subsets of 
$\{1,...,n\}$. For all $1 \le k \le n$ we have
$$|\{C \le \F_q^n \mid \rk(C)=k, \, C \mee \F_q^n(S_1), \, C \mee \F_q^n(S_2)\}| \le \frac{q^{d-1}-1}{q-1} \qbin{n-1}{k-1}{q} + \left(\frac{q^d-1}{q-1}\right)^2 \qbin{n-2}{k-2}{q}.$$
\end{lemma}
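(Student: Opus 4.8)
The plan is a case analysis on the two ``short-support'' codewords that a code in the set is forced to contain, reducing in each case to the standard count of $k$-dimensional codes passing through a fixed small subspace.

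First I would record two elementary facts. (a) For a subset $T\subseteq\{1,\dots,n\}$ of size $m$, the number of lines (one-dimensional subspaces) $\langle v\rangle$ with $\sH(v)\subseteq T$ equals the number of points of $\F_q^n(T)$, namely $(q^m-1)/(q-1)$. (b) For a fixed $j$-dimensional subspace $D\le\F_q^n$, the number of codes $C\le\F_q^n$ with $\rk(C)=k$ and $D\le C$ equals $\qbin{n-j}{k-j}{q}$, via the bijection with $(k-j)$-dimensional subspaces of $\F_q^n/D$; here I use the convention that $\qbin{n-j}{k-j}{q}=0$ when $k<j$, so the small-$k$ cases need no separate treatment. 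Then I would split the codes $C$ with $\rk(C)=k$, $C\mee\F_q^n(S_1)$, $C\mee\F_q^n(S_2)$ into two (not necessarily disjoint) families according to whether $C$ contains a nonzero vector supported on $S_1\cap S_2$.

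If $C$ contains a nonzero $v$ with $\sH(v)\subseteq S_1\cap S_2$, then, since $S_1\ne S_2$ are $d$-subsets and hence $|S_1\cap S_2|\le d-1$, the code $C$ contains one of the at most $(q^{d-1}-1)/(q-1)$ lines supported on $S_1\cap S_2$; summing (b) with $j=1$ over these lines bounds this family by $\frac{q^{d-1}-1}{q-1}\qbin{n-1}{k-1}{q}$. Otherwise $C\cap\F_q^n(S_1\cap S_2)=\{0\}$, and since $\F_q^n(S_1)\cap\F_q^n(S_2)=\F_q^n(S_1\cap S_2)$, any nonzero $v_1\in C\cap\F_q^n(S_1)$ and $v_2\in C\cap\F_q^n(S_2)$ are linearly independent; thus $D\defi\langle v_1,v_2\rangle$ is a two-dimensional subspace of $C$ containing a line supported on $S_1$ and a distinct line supported on $S_2$. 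Every such $D$ is the span of some pair (line supported on $S_1$, line supported on $S_2$) of distinct lines, so by (a) the set of such $D$ has size at most $\big((q^d-1)/(q-1)\big)^2$; summing (b) with $j=2$ over these $D$ bounds the second family by $\big(\frac{q^d-1}{q-1}\big)^2\qbin{n-2}{k-2}{q}$. Adding the two bounds gives the claim.

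There is no genuine obstacle; the argument is purely a counting one. The only point to watch is the direction of the over\-counting: each code is charged to \emph{every} eligible line of $\F_q^n(S_1\cap S_2)$, resp.\ every eligible two-dimensional subspace, that it contains, which inflates the totals and is therefore harmless for an upper bound; together with the observation that the $D$'s genuinely range over (a subset of) the image of the pairing map, this is all that is needed.
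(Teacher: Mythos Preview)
Your proof is correct and follows essentially the same route as the paper's: both split the codes according to whether $C$ contains a nonzero vector supported on $S_1\cap S_2$, bound the first family by a union over the at most $(q^{d-1}-1)/(q-1)$ lines of $\F_q^n(S_1\cap S_2)$, and bound the second by a union over pairs of lines (one from each $\F_q^n(S_\ell)$) spanning a plane. The only cosmetic difference is that the paper phrases everything through minimal sets of representatives $A_1,A_2$ and invokes Proposition~\ref{prop:smalle}, whereas you work directly with lines and the count of $k$-spaces through a fixed $j$-space; the two are equivalent.
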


\begin{proof}
Let $A_1, A_2\subseteq \F_q^n$ be minimal sets of representatives for 
$\F_q^n(S_1)$ and $\F_q^n(S_2)$, respectively; see Definition \ref{def:msr}. Note that $|A_1|=|A_2|=(q^d-1)/(q-1)$. By Remark \ref{rmk:msr} we have
$$|\{C \le \F_q^n \mid \rk(C)=k, \, C \mee \F_q^n(S_1), \, C \mee \F_q^n(S_2)\}|  \ = \ 
|\{C \le \F_q^n \mid \rk(C)=k, \, C \mee A_1, \, C \mee A_2\}|.$$
Moreover, by definition,
\begin{equation} \label{bo1}
|\{C \le \F_q^n \mid \rk(C)=k, \, C \mee A_1, \, C \mee A_2\}| \le \beta+\gamma,
\end{equation}
where
\begin{eqnarray*}
\beta &=& |\{C \le \F_q^n \mid \rk(C)=k, \, C \mee A_1 \cap A_2\}|, \\
\gamma &=& |\{C \le \F_q^n \mid \rk(C)=k, \, C \mee A_1 \setminus A_2, \,
C \mee A_2 \setminus A_1\}|.
\end{eqnarray*}
The elements of $A_1 \cap A_2$ span a subspace of $\F_q^n(S_1) \cap \F_q^n(S_2)=\F_q^n(S_1 \cap S_2)$. Since $S_1$ and $S_2$ are distinct and both have cardinality $d$, the dimension of $\F_q^n(S_1 \cap S_2)$ is upper bounded by $d-1$. As a consequence, we have
$|A_1 \cap A_2| \le (q^{d-1}-1)/(q-1)$. Therefore we can apply Proposition \ref{prop:smalle} and conclude 
\begin{equation} \label{bo2}
\beta \le |A_1 \cap A_2| \cdot \qbin{n-1}{k-1}{q} \le \frac{q^{d-1}-1}{q-1} \qbin{n-1}{k-1}{q}.
\end{equation}
The next step is to obtain an upper bound for $\gamma$. It follows from the definitions that
\begin{equation*}
\gamma \le 
 \sum_{v \in A_1 \setminus A_2} \ \sum_{w \in A_2 \setminus A_1} |\{C \le \F_q^n \mid \rk(C)=k, \, \langle v, w \rangle \le C\}|.
\end{equation*}
For all $v \in A_1 \setminus A_2$ and $w \in A_2 \setminus A_1$ the space $\langle v,w \rangle$ has dimension two. Therefore
\begin{equation} \label{bo3}
\gamma \le |A_1 \setminus A_2| \cdot |A_2 \setminus A_1| \cdot \qbin{n-2}{k-2}{q} \le \left(\frac{q^d-1}{q-1}\right)^2 \qbin{n-2}{k-2}{q}.
\end{equation}
 The desired upper bound follows combining (\ref{bo1}), (\ref{bo2}) and (\ref{bo3}).
\end{proof}

We are now ready to establish the main results of this section.

\begin{proof}[Proof of Theorem \ref{th:asbeta}] \label{pth:asbeta}
The three statements are immediate if $k=1$, as 
$$\beta_1(q,n,k) = \sum_{j=1}^d \binom{n}{j} (q-1)^{j-1}.$$
We henceforth assume $k \ge 2$, and prove the statements separately.

Suppose $2 \le k \le n-d$, and denote by $S_1,...,S_L$ the $d$-subsets of $\{1,...,n\}$, where $L=\binom{n}{d}$. Note that $L>1$ since $n>d$ by assumption. We let $I=\{1,...,L\}$ to simplify the notation. For a subset $\emptyset \neq J \subseteq I$ define
$$\Gamma_J = \{C \le \F_q^n \mid \dim(C)=k, \, C \mee \F_q^n(S_\ell) \mbox{ for all $\ell \in J$}\}.$$
Then
$$\beta_k(q,n,d) = \left| \bigcup_{\emptyset \neq J \subseteq I} \Gamma_J \right|.$$
Using inclusion-exclusion we compute
\begin{equation} \label{psum}
\beta_k(q,n,d) = \sum_{\substack{J \subseteq I \\ |J|=1}} |\Gamma_J|  \ + \ \sum_{\substack{J \subseteq I \\ |J|\ge 2}} (-1)^{|J|+1} \ |\Gamma_J|.
\end{equation}
We treat separately the sets $J$ of cardinality one and those of cardinality two or more.
If $J \subseteq I$ has $|J|=1$, say $J=\{\ell\}$, then by Corollary \ref{coro:av} and the definition of $\Gamma_J$ we have
$$|\Gamma_J| = \qbin{n}{k}{q} - \alpha_k(\F_q^n,\F_q^n(S_\ell))= \sum_{i=1}^{\min\{k,d\}} (-1)^{i+1} q^{\binom{i}{2}} \qbin{d}{i}{q} \qbin{n-i}{k-i}{q},$$
as $\F_q^n(S_\ell) \le \F_q^n$ is a subspace of dimension $d$.
In particular, $|\Gamma_J|$ is a polynomial in $q$. To compute its leading coefficient observe that, by Lemma \ref{lem:aa}\ref{aa2}, for all $1 \le i \le \min\{k,d\}$ we have
$$(-1)^i q^{\binom{i}{2}} \qbin{d}{i}{q} \qbin{n-i}{k-i}{q} \sim q^{\binom{i}{2} + i(d-i)+(k-i)(n-k)}.$$ Using elementary methods from Calculus, one shows that the function $i \mapsto \binom{i}{2} + i(d-1)+(k-i)(n-k)$ attains its maximum for $i=1$ over the set $\{1,...,\min\{k,d\}\}$. Moreover, the value of such maximum is
$(k-1)(n-k)+d-1$. By Lemma \ref{lem:aa}\ref{aa1}, all of this shows that
\begin{equation} \label{ee1}
|\Gamma_J| = q^{(k-1)(n-k)+d-1} \ + \mbox{lower order terms in $q$,} \qquad \mbox{for all $J \subseteq I$ with $|J|=1$}.
\end{equation}

Now suppose that $J \subseteq I$ has cardinality $|J| \ge 2$. Take distinct elements $\ell_1,\ell_2 \in J$ and define $J'=\{\ell_1,\ell_2\} \subseteq J$. By definition, we have
$\Gamma_J \subseteq \Gamma_{J'}$. Thus by Lemma \ref{lem:2spc} we obtain
\begin{equation} \label{uu1}
|\Gamma_J| \le |\Gamma_{J'}| \le \frac{q^{d-1}-1}{q-1} \qbin{n-1}{k-1}{q} + \left(\frac{q^d-1}{q-1}\right)^2 \qbin{n-2}{k-2}{q}. 
\end{equation}
Using Lemma \ref{lem:aa}\ref{aa2} we compute the following asymptotic estimates:
$$\frac{q^{d-1}-1}{q-1} \qbin{n-1}{k-1}{q} \sim q^{(k-1)(n-k)+d-2}, \qquad 
\left(\frac{q^d-1}{q-1}\right)^2 \qbin{n-2}{k-2}{q} \sim q^{(k-2)(n-k)+2d-2}.$$
Since $k \le n-d$ by assumption, we have
$(k-2)(n-k)+2d-2 \le (k-1)(n-k)+d-2$. Therefore
\begin{equation} \label{uu2}
 \frac{q^{d-1}-1}{q-1} \qbin{n-1}{k-1}{q} + \left(\frac{q^d-1}{q-1}\right)^2 \qbin{n-2}{k-2}{q} \in O\left(q^{(k-1)(n-k)+d-2} \right). 
\end{equation}
Combining (\ref{uu1}) with (\ref{uu2}) we conclude
\begin{equation} \label{uu3}
|\Gamma_J| \in O\left(q^{(k-1)(n-k)+d-2} \right)   \qquad \mbox{for all $J \subseteq I$ with $|J| \ge 2$}.
\end{equation}
Finally, we obtain Part \ref{pa1} of the theorem by combining (\ref{psum}), (\ref{ee1}) and (\ref{uu3}).

Part \ref{pa2} follows from Part \ref{pa1} if $1 \le k \le n-d$. For $k \ge n-d+1$,
observe that by the Singleton Bound (Theorem \ref{th:sing}) we instead have
$$\beta_k(q,n,d)= \qbin{n}{k}{q} \sim q^{k(n-k)},$$
where the latter estimate follows from Lemma \ref{lem:aa}\ref{aa2}.
Finally, Part \ref{pa3} is a simple numerical consequence of Part \ref{pa2}.
\end{proof}

As a corollary of Theorem \ref{th:asbeta}, we can compute the asymptotics of the \textbf{density function} of the $k$-dimensional non-MDS codes in $\F_q^n$.
By definition, this is the map 
$$q \mapsto \delta_k(q,n) \defi \frac{\beta_k(q,n,n-k)}{\qbin{n}{k}{q}}.$$
The following result refines \cite[Corollary 5.2]{byrne2018partition} with new methods.

\begin{corollary}
Let $1 \le k \le n-2$ be integers. Then
$$\delta_k(q,n) \sim \binom{n}{k} q^{-1}.$$
\end{corollary}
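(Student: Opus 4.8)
The plan is to specialize Theorem~\ref{th:asbeta} to the threshold value $d=n-k$ and then do a short exponent count. First I would check the hypotheses of that theorem: since $1\le k\le n-2$, setting $d=n-k$ gives $d\ge 2$ and $n>d$, so Theorem~\ref{th:asbeta} applies. With this choice one has $n-d=k$, so the dimension $k$ lies precisely in the range $1\le k\le n-d$ covered by part~\ref{pa2} of the theorem; geometrically, $d=n-k$ is exactly the ``non-MDS threshold'' for $k$-dimensional codes, which is why $\beta_k(q,n,n-k)$ is the numerator of $\delta_k(q,n)$.

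Next, I would invoke part~\ref{pa2} of Theorem~\ref{th:asbeta}, which gives
\[
\beta_k(q,n,n-k)\ \sim\ \binom{n}{n-k}\,q^{(k-1)(n-k)+(n-k)-1}.
\]
Using the symmetry $\binom{n}{n-k}=\binom{n}{k}$ and the identity $(k-1)(n-k)+(n-k)=k(n-k)$, this becomes $\beta_k(q,n,n-k)\sim\binom{n}{k}\,q^{\,k(n-k)-1}$.

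Finally, by Lemma~\ref{lem:aa}\ref{aa2} we have $\qbin{n}{k}{q}\sim q^{\,k(n-k)}$. Since the relation $\sim$ is compatible with quotients, dividing the two asymptotic equivalences yields
\[
\delta_k(q,n)=\frac{\beta_k(q,n,n-k)}{\qbin{n}{k}{q}}\ \sim\ \frac{\binom{n}{k}\,q^{\,k(n-k)-1}}{q^{\,k(n-k)}}=\binom{n}{k}\,q^{-1},
\]
which is the assertion. There is no genuine obstacle left at this stage: all the analytic content is packaged inside Theorem~\ref{th:asbeta} (whose proof uses inclusion--exclusion over the $d$-subsets together with the two-subspace bound of Lemma~\ref{lem:2spc}), and what remains here is only the verification that $d=n-k$ meets the hypotheses and the bookkeeping of exponents.
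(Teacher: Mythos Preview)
Your proof is correct and follows essentially the same approach as the paper: set $d=n-k$, apply Theorem~\ref{th:asbeta} to get the asymptotics of $\beta_k(q,n,n-k)$, and divide by the asymptotics of $\qbin{n}{k}{q}$ from Lemma~\ref{lem:aa}\ref{aa2}. Your version is slightly more explicit in verifying the hypotheses and tracking the exponent algebra, but there is no substantive difference.
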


\begin{proof}
Let $d=n-k \ge 2$. By Theorem \ref{th:asbeta} we have $\beta_k(q,n,d) \sim
\binom{n}{k} q^{(k-1)(n-k)+n-k-1}$. Therefore by Lemma \ref{lem:aa}\ref{aa2} we conclude
\begin{equation*}
\delta_k(q,n) \sim \frac{ \binom{n}{k} \, q^{(k-1)(n-k)+n-k-1}}{q^{k(n-k)}}=\binom{n}{k} q^{-1} \qedhere
\end{equation*}
\end{proof}

\section{HWDL -- Asymptotics of Whitney Numbers}  \label{sec:HWDL5}

We devote this section to the asymptotics of the Whitney numbers of higher-weight Dowling lattices as the field size grows. We follow the notation of the previous section; see Notation \ref{notaz:landau}.

\begin{remark}
It is well-known that the Whitney numbers (of the first kind) of a geometric lattice alternate in sign. In particular, we have $|w_i(q,n,d)|=(-1)^iw_i(q,n,d)$
for all $i,q,n,d$.
\end{remark}

We start by computing the exact growth rate of $|w_i(q,n,d)|$ for $d \in \{1,2,n-1,n\}$ and all $n$ and~$i$. These estimates can be derived from the explicit formul{\ae} obtained in Section \ref{sec:HWDL3}, with the aid of Lemma~\ref{lem:aa}\ref{aa2}.

\begin{theorem} \label{th:knownas}
Let $n \ge d \ge 1$ and $1 \le i \le n$ be integers. We have
\begin{equation}|w_i(q,n,d)| \sim \left\{ 
\begin{array}{ccl}
\displaystyle \binom{n}{i} & & \mbox{if $d=1$,} \\
\ \\ 
\left(\displaystyle \sum_{2 \le j_1 < \cdots < j_i \le n} \ \prod_{t=1}^i (j_t-1) \right) q^i & & \mbox{if $n \ge 2$, $d=2$, $1 \le i \le n-1$,} \\
\ \\ 
(n-1)! \; q^{n-1} & & \mbox{if $n \ge 2$, $d=2$, $i=n$,} \\
\ \\ 
q^{i(n-i) + \binom{i}{2}} & & \mbox{if $d \ge n$, $1 \le i \le n$,} \\
\ \\ 
(n-1) \; q^{in-1-\binom{i+1}{2}} & & \mbox{if $n \ge 3$, $d=n-1$, $2 \le i \le n$,} \\ 
\ \\ 
n \, q^{n-2} & & \mbox{if $n \ge 3$, $d=n-1$, $i=1$.} 
\end{array}
\right.
\end{equation}
\end{theorem}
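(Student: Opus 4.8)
The plan is to treat the six cases separately, each time invoking the relevant explicit formula from Section \ref{sec:HWDL3} together with Lemma \ref{lem:aa}. For $d=1$ the lattice $\mH(q,n,1)$ is the Boolean algebra on $\{1,\dots,n\}$ (Remark \ref{frems}(3)), so $w_i(q,n,1)=(-1)^i\binom{n}{i}$ and the estimate is immediate, the right-hand side being constant in $q$. For $d\ge n$ the lattice equals $\mL(\F_q^n)$ (Remark \ref{frems}(4)), so $w_i(q,n,d)=(-1)^iq^{\binom{i}{2}}\qbin{n}{i}{q}$; Lemma \ref{lem:aa}\ref{aa2} gives $\qbin{n}{i}{q}\sim q^{i(n-i)}$, hence $|w_i(q,n,d)|\sim q^{i(n-i)+\binom{i}{2}}$.

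For $d=2$ I would start from Theorem \ref{th:fw2}: $(-1)^iw_i(q,n,2)=\sum_{1\le j_1<\cdots<j_i\le n}\prod_{t=1}^i\bigl(1+(j_t-1)(q-1)\bigr)$. Each factor $1+(j_t-1)(q-1)$ has degree $1$ in $q$ when $j_t\ge 2$ and degree $0$ when $j_t=1$, so the summand indexed by $(j_1,\dots,j_i)$ is a polynomial in $q$ of degree $\#\{t:j_t\ge 2\}$. When $1\le i\le n-1$ the index tuples lying entirely in $\{2,\dots,n\}$ contribute the top degree $i$, while tuples containing $1$ contribute only degree $i-1$; collecting the coefficient of $q^i$ over the whole sum gives the positive integer $\sum_{2\le j_1<\cdots<j_i\le n}\prod_{t=1}^i(j_t-1)$, and Lemma \ref{lem:aa}\ref{aa1} yields the claimed asymptotics. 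When $i=n$ the sum degenerates to the single product $\prod_{t=1}^n\bigl(1+(t-1)(q-1)\bigr)$, which has degree $n-1$ and leading coefficient $\prod_{t=2}^n(t-1)=(n-1)!$, again finishing via Lemma \ref{lem:aa}\ref{aa1}.

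For $d=n-1$ (so $n\ge 3$) I would use Theorem \ref{th:n-1}. After checking that the two summands appearing there carry the same sign — so that the sign alternation of the Whitney numbers recalled above gives no cancellation of leading terms — one obtains
$$|w_i(q,n,n-1)|=\qbin{n-1}{i}{q}q^{\binom{i}{2}}+\qbin{n-1}{i-1}{q}q^{\binom{i-1}{2}}\sum_{j=1}^{n-1}\binom{n-1}{j-1}(q-1)^{j-1}.$$
By Lemma \ref{lem:aa}\ref{aa2} the first summand is $\sim q^{i(n-1-i)+\binom{i}{2}}$; since $\sum_{j=1}^{n-1}\binom{n-1}{j-1}(q-1)^{j-1}=\sum_{m=0}^{n-2}\binom{n-1}{m}(q-1)^m\sim (n-1)q^{n-2}$, the second summand is $\sim (n-1)q^{(i-1)(n-i)+\binom{i-1}{2}+n-2}$. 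A short manipulation of binomial coefficients shows the exponent of the second summand equals $in-1-\binom{i+1}{2}$ and that of the first equals $in-i-\binom{i+1}{2}$, so for $i\ge 2$ the second strictly dominates and $|w_i(q,n,n-1)|\sim (n-1)q^{in-1-\binom{i+1}{2}}$. For $i=1$ the first summand is $\qbin{n-1}{1}{q}\sim q^{n-2}$ and the second is $\sum_{m=0}^{n-2}\binom{n-1}{m}(q-1)^m\sim (n-1)q^{n-2}$; the two have the same degree $n-2$, so adding leading coefficients gives $|w_1(q,n,n-1)|\sim n\,q^{n-2}$.

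I expect the only mildly delicate points to be the sign bookkeeping (ensuring the leading terms of Theorem \ref{th:n-1} add rather than cancel) and the exponent arithmetic in the $d=n-1$ case that decides which of the two summands dominates; the switch at $i=1$ versus $i\ge 2$ is precisely where the dominant term changes, which accounts for the separate line in the statement. None of the steps is a genuine obstacle.
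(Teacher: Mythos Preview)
Your proof is correct and follows essentially the same route as the paper's: both treat the six cases by invoking, respectively, Remark~\ref{frems}(3), Theorem~\ref{th:fw2}, Theorem~\ref{th:fw2} again, Remark~\ref{frems}(4) with Example~\ref{ex:primo}, Theorem~\ref{th:n-1}, and a direct computation of $w_1$, together with Lemma~\ref{lem:aa}. Your write-up is in fact more detailed than the paper's, which dispatches the $d=2$ cases with the phrase ``direct inspection of Theorem~\ref{th:fw2}'' and the $d=n-1$, $i\ge 2$ case with ``Theorem~\ref{th:n-1} with the aid of Lemma~\ref{lem:aa}\ref{aa2}''.

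One small point: for the case $d=n-1$, $i=1$ you apply the formula of Theorem~\ref{th:n-1}, but that theorem is stated only for $i\ge 2$. The formula does in fact remain valid for $i=1$ (the proof via Corollary~\ref{coro:three} goes through unchanged), so your computation is sound; but as written you are citing a result outside its stated range. The paper sidesteps this by computing $w_1(q,n,n-1)=-\sum_{j=1}^{n-1}\binom{n}{j}(q-1)^{j-1}$ directly from the definition (minus the number of atoms) and reading off the leading term $\binom{n}{n-1}q^{n-2}=n\,q^{n-2}$, which is slightly cleaner.
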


\begin{proof}
The first formula follows from the fact that $\mH(q,n,1)$ is isomorphic to the Boolean algebra over the set $\{1,...,n\}$, as observed in Remark \ref{frems}. The second and third estimates can be obtained by direct inspection of Theorem \ref{th:fw2}. The fourth expression follows from the fact that $\mH(q,n,n)$ is the latice of subspaces of $\F_q^n$, whose Whitney numbers can be found in Example~\ref{ex:primo}. The fifth estimate is obtained from Theorem \ref{th:n-1} with the aid of Lemma \ref{lem:aa}\ref{aa2}, while the sixth follows from the fact that
\begin{equation*}
w_1(q,n,n-1) = - \sum_{j=1}^{n-1} \binom{n}{n-1} (q-1)^{j-1} \sim 
\binom{n}{n-1} q^{n-2} = n \, q^{n-2}. \qedhere
\end{equation*}
\end{proof}


\begin{remark}
In Section \ref{sec:HWDL3} we derived explicit formul{\ae} for $w_i(q,n,d)$ for values of $i,n,d$ that are not covered by Theorem \ref{th:knownas}. This is the case, for example, for the tuples $(i,n,d)$ of the form $(i,n,n-2)$ with $n \ge 3$ and those of the form 
$(2,n,3)$ with $n \ge 6$; see Theorems \ref{th:n-2} and \ref{th:comput3}. Obtaining asymptotic estimates for these Whitney numbers is not immediate.
We will determine their asymptotics later in Theorems \ref{th:exprexpl} and \ref{th:exprexpl2}, respectively.
\end{remark}

One of the main results of this section is the following upper bound for the growth rate of the absolute value of $w_i(q,n,d)$.

\begin{theorem} \label{th:as1}
Let $d \ge 2$ and $n \ge d+2$ be integers. For all $2 \le i \le n$ we have
$$|w_i(q,n,d)|\in O \left( q^{d-1+n(i-1) - \binom{i+1}{2}}\right).$$
\end{theorem}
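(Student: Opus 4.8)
The plan is to bound $|w_i(q,n,d)|$ term by term via the identity of Theorem \ref{th:beta},
\[
(-1)^i w_i(q,n,d)=\sum_{k=1}^i T_k,\qquad T_k\defi \beta_k(q,n,d)\,\qbin{n-k}{i-k}{q}\,(-1)^{k-1}q^{\binom{i-k}{2}},
\]
and to show that the only summands which could a priori exceed the target order $q^{\,d-1+n(i-1)-\binom{i+1}{2}}$ are $T_1$ and $T_2$, and that these exceed it only through leading terms that cancel. By Lemma \ref{lem:aa}\ref{aa2} one has $\qbin{n-k}{i-k}{q}\sim q^{(i-k)(n-i)}$, and by Theorem \ref{th:asbeta}\ref{pa3} one has $\beta_k(q,n,d)\in O\big(q^{(k-1)(n-k)+d-1}\big)$; hence $T_k\in O\big(q^{E(k)}\big)$ with $E(k)\defi (k-1)(n-k)+d-1+(i-k)(n-i)+\binom{i-k}{2}$.

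A short computation---substituting $j=i-k$ shows $E$ is a concave quadratic in $j$---gives $E(k)=g(i)+1-\binom{k-1}{2}$, where $g(i)\defi d-1+n(i-1)-\binom{i+1}{2}$ is the exponent in the statement. In particular $E(1)=E(2)=g(i)+1$, while $E(k)\le g(i)$ for every $k\ge 3$. Thus $\sum_{k=3}^i T_k\in O\big(q^{g(i)}\big)$, and it remains only to prove $T_1+T_2\in O\big(q^{g(i)}\big)$, that is, that the coefficient of $q^{g(i)+1}$ in $T_1+T_2$ vanishes.

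For this I would use the sharpened form of Theorem \ref{th:asbeta}\ref{pa1} that its proof actually yields: for $1\le k\le n-d$ one has $\beta_k(q,n,d)=\binom{n}{d}q^{(k-1)(n-k)+d-1}+O\big(q^{(k-1)(n-k)+d-2}\big)$ (in the inclusion--exclusion there, the singleton index sets contribute a polynomial in $q$ of degree $(k-1)(n-k)+d-1$ with leading coefficient $\binom{n}{d}$, while the index sets of size $\ge 2$ contribute $O\big(q^{(k-1)(n-k)+d-2}\big)$). The hypothesis $n\ge d+2$ is precisely what makes $1,2\le n-d$, so this applies to both $\beta_1$ and $\beta_2$; for $\beta_1$ it is in any case immediate from $\beta_1(q,n,d)=\sum_{j=1}^d\binom{n}{j}(q-1)^{j-1}$. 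Combining these with $\qbin{n-1}{i-1}{q}=q^{(i-1)(n-i)}+O\big(q^{(i-1)(n-i)-1}\big)$ and $\qbin{n-2}{i-2}{q}=q^{(i-2)(n-i)}+O\big(q^{(i-2)(n-i)-1}\big)$ from Lemma \ref{lem:aa}\ref{aa2}, one computes $T_1=\binom{n}{d}q^{g(i)+1}+O\big(q^{g(i)}\big)$ and $T_2=-\binom{n}{d}q^{g(i)+1}+O\big(q^{g(i)}\big)$, so $T_1+T_2\in O\big(q^{g(i)}\big)$. Together with the bound on $\sum_{k\ge 3}T_k$, and the fact that the Whitney numbers of a geometric lattice alternate in sign, this proves $|w_i(q,n,d)|\in O\big(q^{g(i)}\big)$.

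The heart of the matter, and the one genuinely delicate point, is this cancellation of the leading coefficients of $T_1$ and $T_2$: the crude term-by-term estimate only gives $O\big(q^{g(i)+1}\big)$, so one power of $q$ must be recovered from algebraic structure rather than from size estimates. That it cannot be avoided is shown by the boundary case $n=d+1$, where $\beta_2(q,n,d)=\qbin{n}{2}{q}$ has leading coefficient $1\neq\binom{n}{n-1}$, the cancellation fails, and indeed (by Theorem \ref{th:n-1}) the degree of $w_2(q,d+1,d)$ in $q$ equals $g(2)+1$. A secondary, purely bookkeeping, point is the upgrade from the ``$o$'' in Theorem \ref{th:asbeta}\ref{pa1} to an ``$O$'' of one lower degree used above; this is harmless because that asymptotic estimate is obtained from a finite inclusion--exclusion over polynomials in $q$.
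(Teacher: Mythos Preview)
Your argument is correct and follows essentially the same route as the paper's proof: both start from Theorem~\ref{th:beta}, bound the summands for $k\ge 3$ via Theorem~\ref{th:asbeta}\ref{pa3}, and then exhibit the cancellation of the leading $\binom{n}{d}q^{g(i)+1}$ contributions from $T_1$ and $T_2$ using Theorem~\ref{th:asbeta}\ref{pa1} (which requires $n\ge d+2$). Your closed form $E(k)=g(i)+1-\binom{k-1}{2}$ and your explicit remarks on the $o$-to-$O$ upgrade and on the boundary case $n=d+1$ make the exposition a bit tighter than the paper's, but the strategy is the same.
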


The idea behind the proof is to extract information on the aymptotics of $|w_i(q,n,d)|$ from the asymptotics of the quantities $\beta_k(q,n,d)$ via Theorem~\ref{th:beta}.

\begin{proof}[Proof of Theorem \ref{th:as1}]
Fix $2 \le i \le n$. 
By Theorem \ref{th:beta} we have
\begin{equation} \label{abs}
|w_i(q,n,d)| =(-1)^i w_i(q,n,k)=  \sum_{k=1}^i \beta_k(q,n,d) \qbin{n-k}{i-k}{q} (-1)^{k-1} q^{\binom{i-k}{2}}.
\end{equation}
We will treat separately the summands corresponding to $k \in \{1,2\}$ and those corresponding to $k \ge 3$. We have
\begin{multline} \label{2summd}
\sum_{k=1}^2 \beta_k(q,n,d) \qbin{n-k}{i-k}{q} (-1)^{k-1} q^{\binom{i-k}{2}} = \left( \sum_{j=1}^d \binom{n}{j} (q-1)^{j-1} \right) \qbin{n-1}{i-1}{q}q^{\binom{i-1}{2}}  \\ + \left( \binom{n}{d} q^{n+d-3} + o \left( q^{n+d-3}\right)\right)\qbin{n-2}{i-2}{q} (-1) \; q^{\binom{i-2}{2}},
\end{multline}
where the latter estimate follows from Theorem \ref{th:asbeta}\ref{pa1}
and the fact that $n \ge d+2$ by assumption.
Applying Lemma \ref{lem:aa}\ref{aa2} one shows that
\begin{eqnarray*}
\left( \sum_{j=1}^d \binom{n}{j} (q-1)^{j-1} \right) \qbin{n-1}{i-1}{q}q^{\binom{i-1}{2}} &=& \binom{n}{d} q^{d+n(i-1)-\binom{i+1}{2}} + o \left(q^{d+n(i-1)-\binom{i+1}{2}} \right),\\
 \left( \binom{n}{d} q^{n+d-3} + o \left( q^{n+d-3}\right) \right)\qbin{n-2}{i-2}{q} (-1) \; q^{\binom{i-2}{2}}&=& \binom{n}{d} q^{d+n(i-1)-\binom{i+1}{2}} +o \left(q^{d+n(i-1)-\binom{i+1}{2}} \right).
\end{eqnarray*}
Therefore using~(\ref{2summd}) we conclude
\begin{equation} \label{eess1}
\sum_{k=1}^2 \beta_k(q,n,d) \qbin{n-k}{i-k}{q} (-1)^{k-1} q^{\binom{i-k}{2}} \in O \left( q^{d-1+n(i-1)-\binom{i+1}{2}}\right).
\end{equation}

We now turn to the summands in (\ref{abs}) corresponding to $k \ge 3$, deriving an asymptotic estimate for them. Note that these are only present when $i \ge 3$.
By Theorem \ref{th:asbeta}\ref{pa3} we have 
\begin{equation*} \label{eest4}
\beta_k(q,n,d) \in O \left( q^{(k-1)(n-k)+d-1}\right) \quad \mbox{for all $3 \le k \le n$}.
\end{equation*}
Therefore, for all $3 \le k \le i$, by Lemma \ref{lem:aa}\ref{aa2} we compute
\begin{equation} \label{eest5}
\left|\beta_k(q,n,d) \qbin{n-k}{i-k}{q} (-1)^{k-1} q^{\binom{i-k}{2}}\right| \in O \left( q^{\frac{1}{2} \left( -k^2+3k-i^2-i+2ni+2d-2-2n\right)}\right).
\end{equation}
The function $k \mapsto -k^2+3k-i^2-i+2ni+2d-2-2n$
attains its maximum for $k=3$ over the interval $\{3,...,i\}$. Moreover, the value of such maximum is
$$d-1+n(i-1) - \binom{i+1}{2}.$$
Combining this fact with Eq. (\ref{eest5}) we therefore obtain
\begin{equation} \label{eess2}
\sum_{k=3}^i \beta_k(q,n,d) \qbin{n-k}{i-k}{q} (-1)^{k-1} q^{\binom{i-k}{2}} \in O \left( q^{d-1+n(i-1)-\binom{i+1}{2}}\right).
\end{equation}
Since
\begin{eqnarray*}
|w_i(q,n,d)| &=&  \left| \sum_{k=1}^i \beta_k(q,n,d) \qbin{n-k}{i-k}{q} (-1)^{k-1} q^{\binom{i-k}{2}} \right| \\
&\le& \left| \sum_{k=1}^2 \beta_k(q,n,d) \qbin{n-k}{i-k}{q} (-1)^{k-1} q^{\binom{i-k}{2}} \right| + \left| \sum_{k=3}^i \beta_k(q,n,d) \qbin{n-k}{i-k}{q} (-1)^{k-1} q^{\binom{i-k}{2}} \right|,
\end{eqnarray*}
using (\ref{eess1}) and (\ref{eess2}) one obtains
\begin{equation*}
|w_i(q,n,d)| \in O \left( q^{d-1+n(i-1)-\binom{i+1}{2}}\right),
\end{equation*}
which is the desired result.
\end{proof}

It turns out that the asymtotic estimate of Theorem \ref{th:as1} is sharp for some values of the parameters. In the following theorem we compute the exact growth rate of $w_2(q,n,n-2)$ for all $n \ge 4$, and show that it meets the bound of Theorem \ref{th:as1}. The proof can be found in Appendix~\ref{app:A} and it uses the (partial) duality between the Whitney numbers of
$\mH(q,n,2)$ and $\mH(q,n,n-2)$, expressed by Theorem \ref{thm:dual_appl}.

\begin{theorem} \label{th:exprexpl}
For all $n \ge 4$ we have 
$$w_2(q,n,n-2) \sim \left( \frac{1}{8}n^4-\frac{3}{4}n^3+\frac{19}{8}n^2-\frac{11}{4}n \right) q^{2n-6}.$$
In particular, Theorem \ref{th:as1} is sharp for all $n \ge 4$ and $d=n-2$.
\end{theorem}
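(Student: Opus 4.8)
The plan is to extract the asymptotics from the partial duality of Theorem \ref{thm:dual_appl} between $\mH(q,n,2)$ and $\mH(q,n,n-2)$. Taking $d=2$ there gives
$$\sum_{i=0}^{n-2} \qbin{n-i}{2}{q}\, w_i(q,n,2) \;=\; \sum_{i=0}^{2} \qbin{n-i}{2-i}{q}\, w_i(q,n,n-2).$$
Since $w_0(q,n,2)=w_0(q,n,n-2)=1$ and $\qbin{n-2}{0}{q}=1$, the two $i=0$ terms both equal $\qbin{n}{2}{q}$ and cancel, so solving for the unknown quantity yields
$$w_2(q,n,n-2) \;=\; \sum_{i=1}^{n-2} \qbin{n-i}{2}{q}\, w_i(q,n,2) \;-\; \qbin{n-1}{1}{q}\, w_1(q,n,n-2).$$
Every term on the right is a polynomial in $q$: the Gaussian binomials by Lemma \ref{lem:aa}, the $w_i(q,n,2)$ by Theorem \ref{th:fw2}, and $w_1(q,n,n-2)=-\beta_1(q,n,n-2)=-\sum_{j=1}^{n-2}\binom{n}{j}(q-1)^{j-1}$ by Remark \ref{rem:look}. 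So the task reduces to identifying the leading two terms of each factor and tracking the coefficient of $q^{2n-6}$.

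The next step is the degree bookkeeping. Since $\qbin{n-i}{2}{q}$ has degree $2(n-i-2)$ (Lemma \ref{lem:aa}), the $i$-th summand has degree $2n-i-4$, which is maximal at $i=1$ (degree $2n-5$), then $i=2$ (degree $2n-6$), and is at most $2n-7$ for $i\ge 3$; meanwhile $\qbin{n-1}{1}{q}\,w_1(q,n,n-2)$ has degree $(n-2)+(n-3)=2n-5$. I would record that the two top coefficients of $\qbin{n-1}{2}{q}$ are both $1$ for $n\ge 4$ (from the partition interpretation of Gaussian binomials: the only partitions of $2n-6$ and $2n-7$ into at most two parts of size at most $n-3$ are $(n-3,n-3)$ and $(n-3,n-4)$), that $w_1(q,n,2)=-\binom{n}{2}q+\binom{n}{2}-n$ by Theorem \ref{th:fw2}, that $w_2(q,n,2)$ has leading coefficient $e_2:=\sum_{2\le j_1<j_2\le n}(j_1-1)(j_2-1)$, and that the two top coefficients of $\qbin{n-1}{1}{q}\,w_1(q,n,n-2)$ are $-\binom{n}{2}$ (in degree $2n-5$) and $\binom{n}{2}(n-4)-\binom{n}{3}$ (in degree $2n-6$). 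One first checks the $q^{2n-5}$-coefficients cancel, so $\deg w_2(q,n,n-2)\le 2n-6$; then, collecting the $q^{2n-6}$-contributions — namely $-n$ from the $i=1$ summand, $e_2$ from the $i=2$ summand, nothing for $i\ge 3$, and $-\big(\binom{n}{2}(n-4)-\binom{n}{3}\big)$ from the subtracted term — gives
$$[q^{2n-6}]\,w_2(q,n,n-2) \;=\; -n \,+\, e_2 \,-\, \binom{n}{2}(n-4) \,+\, \binom{n}{3}.$$

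Finally I would substitute the standard symmetric-function identity $e_2=\tfrac{1}{24}n(n-1)(n-2)(3n-1)$ and $\binom{n}{2}(n-4)-\binom{n}{3}=\tfrac13 n(n-1)(n-5)$, put everything over $24$, and simplify to $\tfrac{1}{24}\bigl(3n^4-18n^3+57n^2-66n\bigr)=\tfrac18 n^4-\tfrac34 n^3+\tfrac{19}{8}n^2-\tfrac{11}{4}n$. This polynomial is positive for $n\ge 4$ (it equals $11$ at $n=4$ and has positive leading term), so $w_2(q,n,n-2)$ is a polynomial in $q$ of degree exactly $2n-6$ with positive leading coefficient, and Lemma \ref{lem:aa} gives the stated asymptotic. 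Sharpness of Theorem \ref{th:as1} at $d=n-2$, $i=2$ is then immediate, since that theorem only bounds $|w_2(q,n,n-2)|\in O\bigl(q^{(n-2)-1+n-\binom{3}{2}}\bigr)=O(q^{2n-6})$, which is now attained. The main obstacle is the $q^{2n-6}$-coefficient bookkeeping: one must verify the top-degree cancellation and use the correct second coefficient $1$ of $\qbin{n-1}{2}{q}$, since an error there would shift the constant by $\binom{n}{2}$; the remaining manipulations are routine.
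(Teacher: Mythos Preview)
Your proposal is correct and follows essentially the same approach as the paper: both apply the duality relation of Theorem~\ref{thm:dual_appl} with $d=2$ to express $w_2(q,n,n-2)$ in terms of the Whitney numbers of $\mH(q,n,2)$, and then extract the $q^{2n-5}$ and $q^{2n-6}$ coefficients to verify cancellation and obtain the stated leading term. The paper organizes the subtracted term as $(q^{n-1}-1)\sum_{j=1}^{n-2}\binom{n}{j}(q-1)^{j-2}$ rather than keeping it as $\qbin{n-1}{1}{q}\,w_1(q,n,n-2)$, but the computations are otherwise identical and arrive at the same closed form $\tfrac18 n^4-\tfrac34 n^3+\tfrac{19}{8}n^2-\tfrac{11}{4}n$.
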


It is natural to ask if the estimate in Theorem \ref{th:as1} is sharp when $n$ is large with respect to $d$. The answer to this question is negative in general, as the following result shows.

\begin{corollary} \label{th:exprexpl2} Let $n \ge 6$. We have
\begin{equation} \label{eq:sta}
w_2(q,n,3) \sim \left(\frac{1}{72}n^6 - \frac{1}{12}n^5 + \frac{1}{18}n^4 - \frac{1}{2}n^3 + \frac{77}{72}n^2 - \frac{7}{12}n \right) q^4.
\end{equation}
\end{corollary}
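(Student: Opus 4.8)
The plan is to obtain the estimate directly from the closed form of $w_2(q,n,3)$ recorded in Theorem~\ref{th:comput3bis}. For every fixed $n \ge 6$, that theorem exhibits $w_2(q,n,3)$ as a polynomial in $q$; collecting the terms of top $q$-degree, one sees that this polynomial has degree $4$ in $q$, and its leading coefficient is exactly the polynomial in $n$ appearing on the right-hand side of~\eqref{eq:sta}. So the first step is simply to isolate the $q^4$-part of the formula in Theorem~\ref{th:comput3bis} and record it as a rational polynomial $c(n)$ in $n$.

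The second step is to check that $c(n) > 0$ for all $n \ge 6$, which is what makes $c(n)\,q^4$ the genuine leading term. Since $c(n)$ is a degree-$6$ polynomial with positive leading coefficient $\tfrac{1}{72}$, I would verify positivity either by a crude tail estimate dominating the lower-order terms by $\tfrac{1}{72}n^6$ for large $n$, together with direct evaluation at the first few values $n = 6, 7, \ldots$, or by factoring $c(n)$ explicitly. Once this is done, Lemma~\ref{lem:aa}\ref{aa1} — a rational polynomial $a_0 + a_1 q + \cdots + a_r q^r$ with $a_r > 0$ satisfies $p(q) \sim a_r q^r$ as $q \to +\infty$ — applies with $r = 4$ and $a_4 = c(n)$, giving $w_2(q,n,3) \sim c(n)\,q^4$, which is~\eqref{eq:sta}.

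I do not anticipate a serious obstacle: the argument is bookkeeping on top of Theorem~\ref{th:comput3bis}. The only place the hypothesis $n \ge 6$ is genuinely used (beyond being the regime where Theorem~\ref{th:comput3bis} is valid) is in guaranteeing that the $q^4$-coefficient does not vanish, so that the polynomial really has degree $4$ in $q$ and Lemma~\ref{lem:aa}\ref{aa1} is applicable with $r=4$; one should also take a moment to confirm that Theorem~\ref{th:comput3bis} gives the \emph{complete} $q$-expansion, so that no lower-degree-in-$q$ cancellation has been overlooked.

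As a cross-check — or as an alternative route that avoids expanding the full formula of Theorem~\ref{th:comput3bis} — one can start from the identity $w_2(q,n,3) = (q-1)^4 A(n) + B(q,n)$ of Theorem~\ref{th:comput3}: here $(q-1)^4 A(n) \sim A(n)\,q^4$, while $B(q,n)$ is a polynomial in $q$ of degree at most $4$ whose coefficient of $q^4$ is read off from the known values of $w_2(q,t,3)$ for $t \in \{4,5\}$, namely $3\binom{n}{4}(5-n) + 30\binom{n}{5}$. Adding $A(n)$ to this recovers the same leading coefficient $c(n)$, and Lemma~\ref{lem:aa}\ref{aa1} again finishes the proof; this route also makes transparent why the crude bound of Theorem~\ref{th:as1} is not sharp here, since the dominant contributions $A(n)q^4$ from $(q-1)^4A(n)$ and from $B(q,n)$ do not cancel.
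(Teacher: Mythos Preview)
Your proposal is correct and takes essentially the same approach as the paper: extract the $q^4$-coefficient from Theorem~\ref{th:comput3bis}, verify it is positive for $n\ge 6$, and invoke Lemma~\ref{lem:aa}\ref{aa1}. The paper accomplishes the positivity check by noting that this coefficient, viewed as a polynomial in $n$, has roots $0,1,2,3$ (indeed it factors as $\tfrac{1}{72}n(n-1)(n-2)(n-3)(n^2-7)$), which is precisely your ``factoring $c(n)$ explicitly'' option.
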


\begin{proof}  The result follows from Theorem \ref{th:comput3bis} and the fact that the coefficient of $q^4$ in the right-hand side of (\ref{eq:sta}) is a polynomial in $n$ whose roots are $0$, $1$, $2$, $3$.
\end{proof}

In fact, when $n$ is large with respect to $d$ we can improve the bound of Theorem~\ref{th:as1} with the aid 
of Theorem \ref{th:compl}. The following result closes the section.

 \begin{theorem} \label{th:asnlarge}
Let $n,d \ge 3$ and $2 \le i \le n$ be integers. Suppose $n \ge id$. Then 
$$|w_i(q,n,d)| \in O \left( q^M \right), \quad \mbox{where } M= \max \left\{ i(d+1)-1-\binom{i+1}{2}, \ d(i^2-i+1)-i-\binom{i+1}{2} \right\}.$$
 \end{theorem}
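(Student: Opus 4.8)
The plan is to extract the asymptotic bound from Theorem~\ref{th:compl}, exactly as Theorem~\ref{th:as1} was extracted from Theorem~\ref{th:beta}. Theorem~\ref{th:compl} gives, under the hypothesis $n \ge id$,
\begin{equation*}
w_i(q,n,d) = (-1)^i (q-1)^{i(d-1)} \, T_1 \ + \ \sum_{t=i}^{id-1} \binom{n}{t} w_i(q,t,d) \, T_{2,t},
\end{equation*}
where $T_1 = \sum_{1 \le \ell_1 < \cdots < \ell_i \le n-d+1} \prod_{j=1}^i \binom{n-\ell_j-d(i-j)}{d-1}$ is a \emph{constant} (independent of $q$) and each $T_{2,t} = \sum_{s=t}^{id-1}\binom{n-t}{n-s}(-1)^{s-t}$ is also a constant. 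So $|w_i(q,n,d)|$ is bounded, up to constants depending only on $n,d,i$, by $q^{i(d-1)}$ coming from the first term, plus $\max_{i \le t \le id-1}|w_i(q,t,d)|$ coming from the second sum. The first contributes growth rate exactly $i(d-1)$. For the second, I would like to apply Theorem~\ref{th:as1} recursively to each $|w_i(q,t,d)|$ with $i \le t \le id-1$, which would give $|w_i(q,t,d)| \in O\bigl(q^{d-1+t(i-1)-\binom{i+1}{2}}\bigr)$; the largest such exponent over $t \in \{i,\dots,id-1\}$ is attained at $t = id-1$, giving $d-1+(id-1)(i-1)-\binom{i+1}{2} = d(i^2-i+1)-i-\binom{i+1}{2}$ after expanding $(id-1)(i-1) = i^2 d - id - i + 1$ and simplifying. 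Taking the maximum of this with the first-term exponent $i(d-1)$ — and noting $i(d-1) \le i(d+1)-1-\binom{i+1}{2}$ is \emph{not} quite what appears, so I must be a little more careful about which of the two listed quantities $i(d+1)-1-\binom{i+1}{2}$ and $d(i^2-i+1)-i-\binom{i+1}{2}$ dominates $i(d-1)$ — yields the claimed $M$.

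First I would verify the arithmetic: expand both exponents in the statement and check $i(d-1) \le \max\{i(d+1)-1-\binom{i+1}{2},\ d(i^2-i+1)-i-\binom{i+1}{2}\}$ for all $i \ge 2$, $d \ge 3$. For $i = 2$ one has $i(d-1) = 2d-2$ and $i(d+1)-1-\binom{i+1}{2} = 2d+2-1-3 = 2d-2$, so equality holds there; for larger $i$ the second quantity $d(i^2-i+1)-i-\binom{i+1}{2}$ grows quadratically in $i$ and dominates everything. The point is simply that the contribution $q^{i(d-1)}$ of the first (geometric, $q$-independent-coefficient) term never exceeds $q^M$. Then I would invoke Lemma~\ref{lem:aa}\ref{aa1} to pass from the polynomial $(q-1)^{i(d-1)}$ to $O(q^{i(d-1)})$, invoke the inductive bound from Theorem~\ref{th:as1} on the finitely many terms $w_i(q,t,d)$ with $i \le t \le id-1$ (valid since $t \ge i$ and, for the base-case subtlety, noting $w_i(q,t,d)=0$ when $t < i$ so only $t \ge i$ matters; and that Theorem~\ref{th:as1} requires $t \ge d+2$, which needs separate handling for the finitely many small $t$), and combine with the triangle inequality as in the proof of Theorem~\ref{th:as1}.

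The main obstacle I anticipate is the hypothesis mismatch when applying Theorem~\ref{th:as1}: that theorem assumes $n \ge d+2$ (in its notation), so I can only apply it to $w_i(q,t,d)$ when $t \ge d+2$. For the finitely many $t$ with $i \le t \le \min\{d+1,\, id-1\}$, I must instead bound $|w_i(q,t,d)|$ directly. Since $t \le d+1$ and $d \ge 3$, these fall under the cases $d \ge t$ (lattice of all subspaces, $w_i(q,t,d) \sim q^{i(t-i)+\binom{i}{2}}$ by Example~\ref{ex:primo}) or $d = t-1$ (covered by Theorem~\ref{th:n-1} and Theorem~\ref{th:knownas}). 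In each of these the growth rate is $O(q^{i t})$ or smaller, hence $O(q^{i(d+1)})$, which is comfortably below $q^M$; I would spell this out to close the gap. Apart from this bookkeeping, the argument is a routine combination of Theorem~\ref{th:compl}, Lemma~\ref{lem:aa}, and the already-established Theorem~\ref{th:as1}.
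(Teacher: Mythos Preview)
Your overall strategy is exactly the paper's: apply Theorem~\ref{th:compl}, note the first term contributes $O(q^{i(d-1)})$, and for the second term bound each $|w_i(q,t,d)|$ with $i \le t \le id-1$ case-by-case, using Theorem~\ref{th:as1} once $t \ge d+2$ and direct formulas for smaller $t$. The arithmetic $d-1+(id-1)(i-1)-\binom{i+1}{2} = d(i^2-i+1)-i-\binom{i+1}{2}$ is correct, and so is the observation that the first-term exponent $i(d-1)$ is eventually dominated.

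The gap is in your handling of the small-$t$ cases. You write that for $t \le d+1$ the growth is ``$O(q^{it})$ or smaller, hence $O(q^{i(d+1)})$, which is comfortably below $q^M$.'' This last claim is false. For instance with $i=2$, $d=3$ one has $M = \max\{4,4\} = 4$ but $i(d+1) = 8$. The crude bound $O(q^{i(d+1)})$ is therefore \emph{not} absorbed by $q^M$, and the argument as written does not close.

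The fix is to use the sharp exponents you already cited rather than the crude $it$. For $t=d+1$, Theorem~\ref{th:knownas} (the case $d=n-1$, $n=t\ge 4$, $i\ge 2$) gives $|w_i(q,t,d)| \sim (t-1)\,q^{it-1-\binom{i+1}{2}}$, so the exponent is exactly $i(d+1)-1-\binom{i+1}{2}$ --- and this is precisely the first quantity in the definition of $M$. That term is in $M$ \emph{because} of this case; it cannot be dispensed with. For $i \le t \le d$, the exponent $i(t-i)+\binom{i}{2}$ must be checked against the second quantity in $M$ (the paper verifies $i(t-i)+\binom{i}{2} \le d(i^2-i+1)-i-\binom{i+1}{2}$ directly). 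Once you replace the crude $O(q^{i(d+1)})$ by these sharp estimates, your proof plan goes through and matches the paper's argument.
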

 
 \begin{proof}
By Theorem \ref{th:compl}, we have
 \begin{multline} \label{ff}
w_i(q,n,d) \ = \ (-1)^i  (q-1)^{i(d-1)} \sum_{1 \le \ell_1< \ell_2 < \cdots < \ell_i \le n-d+1} \quad   \prod_{j=1}^i  \binom{n-\ell_j-d(i-j)}{d-1}
\\ + \sum_{t=i}^{id-1} \binom{n}{t} w_i(q,t,d)\sum_{s=t}^{id-1} \binom{n-t}{n-s} (-1)^{s-t}.
\end{multline}
We now derive upper bounds on the asymptotic growth of $w_i(q,t,d)$ for all integers $1 \le t \le id-1$, distinguishing four cases.

\underline{Case 1}: $i>t$. We have $w_i(q,t,d)=0$ by definition of Whitney number.

\underline{Case 2}: $i \le t \le d$. By Theorem \ref{th:knownas} we have $$w_i(q,t,d) \in O\left( q^{i(t-i) + \binom{i}{2}}\right).$$
Moreover, using the inequalities $2 \le i \le t \le d$ one easily shows that $$i(t-i) + \binom{i}{2} \le d(i^2-i+1)-i-\binom{i+1}{2}.$$

\underline{Case 3}: $i \le t =d+1$. Since $d \ge 3$ by assumption, we have $t \ge 4$. Thus again by Theorem \ref{th:knownas} we conclude
$$w_i(q,t,d)=w_i(q,d+1,d) \in O\left( q^{i(d+1)-1-\binom{i+1}{2}} \right).$$

\underline{Case 4}: $i \le t$ and $t \ge d+2$. Applying Theorem~\ref{th:as1} we obtain
$$w_i(q,t,d) \in O \left( q^{d-1+t(i-1) - \binom{i+1}{2}}\right).$$ Observing that
$$
d-1+t(i-1) - \binom{i+1}{2} = d-1+(id-1)(i-1) - \binom{i+1}{2} =
d(i^2-i+1)-i - \binom{i+1}{2},
$$ one gets 
$$w_1(q,t,d) \in O \left( q^{d(i^2-i+1)-i-\binom{i+1}{2}} \right).$$

Therefore, given the four previous analyses and the formula in (\ref{ff}), to conclude the proof it suffices to show that for all $i \ge 2$ and $d \ge 3$ we have
\begin{equation} \label{ff2}
i(d-1) \le d(i^2-i+1)-i-\binom{i+1}{2}.
\end{equation}
To see this, observe that for all $i \ge 2$ we have
$i^2-i+1 \ge i+ \frac{1}{3} \binom{i+1}{2}$,
as one verifies using elementary methods from Calculus. Therefore for $i \ge 2$ and $d \ge 3$ we have
$$d(i^2-i+1)-i-\binom{i+1}{2} \ge d \left(i+ \frac{1}{3} \binom{i+1}{2} \right)-i-\binom{i+1}{2} \ge id-i=i(d-1).$$
This establishes the inequality in (\ref{ff2}) and concludes the proof.
\end{proof}

\begin{remark}
Note that, by Theorem \ref{th:exprexpl2}, the bound of Theorem \ref{th:asnlarge} is sharp for $i=2$, $d=3$, and any $n \ge 6$.
\end{remark}

\section{HWDL -- Polynomiality in $q$ and Bernoulli Numbers}  \label{sec:HWDL6}

In this last section of the paper we discuss the polynomiality in $q$ of 
$\beta_k(q,n,d)$ and $w_i(q,n,d)$. These are clearly polynomials in $q$ for $k,i \in \{0,1\}$. We prove that they are polynomials also for
$k,i  =2$, and compute or upper bound their degrees. In the proof of these results, the celebrated Bernoulli numbers will arise.
We start by showing that $\gamma_a(b,c,\nu)$ is a polynomial in $a$. 


\begin{theorem} \label{th:polygamma}
Fix integers $b \ge 1$ and $c,\nu \ge 0$. The function $a \mapsto \gamma_a(b,c,\nu)$ defined on the set $\{a \in \N \mid a \ge 1\}$ is a polynomial in the variable $a$.
\end{theorem}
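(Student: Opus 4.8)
The natural approach is induction on $a$. The base case $a=1$ is immediate from the initial conditions of Definition~\ref{def:cn}, where $\gamma_1(b,c,\nu)=0$ (a constant, hence a polynomial in $a$ evaluated at $a=1$; more precisely, one should set up the induction so as to produce a single polynomial valid for all $a\ge 1$). For the inductive step, fix $b\ge 1$ and $c,\nu\ge 0$ and assume that for every $b'<b$ (and all $c',\nu'$) the function $a\mapsto\gamma_a(b',c',\nu')$ agrees with a polynomial $p_{b',c',\nu'}\in\Q[x]$ on $\{a\ge 1\}$. I would then inspect the recursion
$$\gamma_a(b,c,\nu)=\sum_{s=0}^{c-1}\binom{b}{s}\gamma_{a-1}(b-s,c,\nu-s)+\sum_{s=c}^{\nu}\binom{b}{s}\binom{b-s}{\nu-s}(a-2)^{\nu-s},$$
valid for $a\ge 2$, $b\ge c+1$, $\nu\ge c$. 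The second sum is manifestly a polynomial in $a$ (a finite $\Z$-linear combination of powers $(a-2)^{\nu-s}$, using $0^0=1$). The first sum is a finite $\Z$-linear combination of terms $\gamma_{a-1}(b-s,c,\nu-s)$ with $b-s<b$, each of which, by the inductive hypothesis, equals $p_{b-s,c,\nu-s}(a-1)$, a polynomial in $a$. Hence the right-hand side defines a polynomial $p_{b,c,\nu}(a)$, and the recursion shows $\gamma_a(b,c,\nu)=p_{b,c,\nu}(a)$ for all $a\ge 2$; since a polynomial is determined by infinitely many values and $\{a\ge 2\}$ is infinite, this is the required polynomial (its value at $a=1$ may differ from $\gamma_1(b,c,\nu)=0$, but the statement only asserts that \emph{some} polynomial agrees with the function — and in fact one can check directly that it matches, or simply note Definition~\ref{defpolyin} only requires agreement on the infinite set, cf.\ the discussion after that definition).

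One subtlety to handle carefully: the induction is really a \emph{double} induction, since in the first sum the third argument $\nu-s$ decreases but $c$ stays fixed, while $b-s$ decreases. The cleanest bookkeeping is to induct on $b$ alone (for all $c,\nu$ simultaneously), because in every term of the first sum the first argument strictly drops from $b$ to $b-s\le b-1$ (as $s\ge 0$, and when $s=0$ the term is $\binom{b}{0}\gamma_{a-1}(b,c,\nu)$ — wait, that does \emph{not} decrease $b$). Here I must be more careful: when $s=0$ the first summand is $\gamma_{a-1}(b,c,\nu)$, so the recursion expresses $\gamma_a(b,c,\nu)$ partly in terms of $\gamma_{a-1}$ with the \emph{same} $b$. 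This means the induction must be on $a$ (with $b,c,\nu$ free), using the inductive hypothesis at $a-1$ for \emph{all} smaller triples including the same $b$. So the correct scheme is: induct on $a$; the hypothesis is that for the fixed $a-1$, $\gamma_{a-1}(b',c',\nu')$ is a polynomial \emph{in $a$} — but $a-1$ is a single value, not a variable, so this phrasing is wrong too.

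Let me state the clean version: I would prove by induction on $a\ge 1$ the statement $P(a)$: ``for all $b\ge 1$, $c,\nu\ge 0$, there is a polynomial $p_{a,b,c,\nu}\in\Q[x]$ with $\gamma_{a'}(b,c,\nu)=p_{a,b,c,\nu}(a')$ for $1\le a'\le a$'' — no, this still does not capture uniformity. The genuinely correct route, and the one I would write up, is a \textbf{triple induction on $(a,b,\nu)$ ordered lexicographically}, or more transparently: induct on $a$, and within fixed $a$, induct on $b$; but since the $s=0$ term reduces $a$ by one without reducing $b$, the $a$-induction handles that, and since all other terms reduce $b$, we may at level $a$ freely use the result at level $a$ for smaller $b$ \emph{and} the result at level $a-1$ for all $b$. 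Concretely: assume the function $a'\mapsto\gamma_{a'}(b',c',\nu')$ is eventually-polynomial for all $(b',c',\nu')$; that's circular.

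The non-circular formulation I will commit to: \emph{prove by strong induction on $b$ that for every $c,\nu$ the function $a\mapsto\gamma_a(b,c,\nu)$ is a polynomial in $a$, after first establishing, by an inner induction on $\nu$ for that fixed $b$, the sub-claim, unwinding the $s=0$ term}. Actually the $s=0$ term $\gamma_{a-1}(b,c,\nu)$ together with the left side $\gamma_a(b,c,\nu)$ gives a first-order linear recurrence in $a$: $\gamma_a(b,c,\nu)-\gamma_{a-1}(b,c,\nu)=\big(\text{polynomial in }a\big)+\sum_{s=1}^{c-1}\binom{b}{s}\gamma_{a-1}(b-s,c,\nu-s)$, where the sum on the right involves only smaller $b$ and is, by the outer induction on $b$, a polynomial in $a$. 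So $\gamma_a(b,c,\nu)$ is a telescoping sum of polynomials in $a$, hence a polynomial in $a$ — and \textbf{this is exactly where the Bernoulli numbers enter}, since summing a polynomial of degree $r$ yields a polynomial of degree $r+1$ whose coefficients are given by Faulhaber's formula in terms of Bernoulli numbers (cf.\ Remark~\ref{rem:ber} and the use of Faulhaber's formulas in the proof of Theorem~\ref{th:comput3bis}). The main obstacle is precisely getting this induction structure airtight — making sure the $s=0$ term is isolated and absorbed into a telescoping/summation step, and that the remaining terms genuinely decrease $b$ — after which everything is a routine verification that sums and $\Z$-linear combinations of polynomials are polynomials.
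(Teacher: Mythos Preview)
Your final paragraph lands on exactly the argument the paper gives: induct on $b$ (with $c$ fixed), peel off the $s=0$ term so that the recursion becomes $\gamma_a(b,c,\nu)-\gamma_{a-1}(b,c,\nu)=\sum_{s=1}^{c-1}\binom{b}{s}\gamma_{a-1}(b-s,c,\nu-s)+\sum_{s=c}^{\nu}\binom{b}{s}\binom{b-s}{\nu-s}(a-2)^{\nu-s}$, note that the right-hand side is a polynomial in $a$ by the inductive hypothesis on $b$ (since every surviving term has $b-s<b$), telescope using $\gamma_1(b,c,\nu)=0$, and invoke Faulhaber's formula to conclude that $\gamma_a(b,c,\nu)=\sum_{i=2}^a(\text{polynomial in }i)$ is a polynomial in $a$; this is precisely the paper's proof, including the appearance of the Bernoulli numbers.

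Two small clean-ups for the write-up: discard the earlier false starts (the attempted induction on $a$ and the lexicographic triple induction are red herrings), and state the base case explicitly --- the initial conditions of Definition~\ref{def:cn} cover $b<c$, $\nu<c$, $\nu>b$ (giving the zero polynomial) and $b=c=\nu$ (giving $a-1$), so the induction on $b$ starts at $b=c$ and the recursion is only invoked for $b\ge c+1$, $\nu\ge c$, exactly as in the definition.
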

\begin{proof}
Observe that, by Definition \ref{def:cn}, $\gamma_a(b,c,\nu)$ is the evaluation at $a$ of the polynomial in $\Q[x]$
\begin{equation} \label{pini}
p^{b,c,\nu}= \left\{ \begin{array}{cl} 
x-1 & \mbox{if $b=c=\nu$,} \\ 0 & \mbox{if $b<c$ or $\nu<c$ or $\nu>b$}. \end{array} \right.
\end{equation}

We will show that for evey $b \ge 1$ and $c,\nu \ge 0$ with $\nu \ge c$ and
$b \ge c$
there exists
a polynomial $p^{b,c,\nu} \in \Q[x]$ such that $\gamma_a(b,c,\nu)=p^{b,c,\nu}(a)$ for all $a \ge 1$. We fix $c$ and proceed by induction on $b \ge c$. If $b=c$, then by (\ref{pini}) we have
that $\gamma_a(b,c,\nu)$ is the evaluation at $a$ of $x-1 \in \Q[x]$ if $\nu=c$, and of the zero polynomial otherwise. This establishes the induction hypothesis.

Now suppose that $b \ge c+1$, and that $\gamma_a(b',c,\nu')$ is a polynomial in $a$
for all $c \le b'<b$ and all $\nu' \ge c$.
We start by showing that 
\begin{equation} \label{eq:recu}
\gamma_a(b,c,\nu)=\sum_{i=2}^a \sum_{s=1}^{c-1} \binom{b}{s} \gamma_{i-1}(b-s,c,\nu-s)
+ \sum_{s=c}^\nu \binom{b}{s}\binom{b-s}{\nu-s} (i-2)^{\nu-s} \quad \mbox{for all $a \ge 1$},
\end{equation}
where the sum over an empty index set is zero by definition.
To see this, define  
\begin{equation} \label{def:fa}
f_a(b,c,\nu) = \sum_{s=1}^{c-1} \binom{b}{s} \gamma_{a-1}(b-s,c,\nu-s) + \sum_{s=c}^\nu \binom{b}{s}\binom{b-s}{\nu-s} 
 (a-2)^{\nu-s} \quad \mbox{for } a \ge 2.
\end{equation}
 Then
  \begin{equation} \label{chain}
 \gamma_a(b,c,\nu) = \sum_{i=2}^a f_i(b,c,\nu) \quad \mbox{ for all $a \ge 1$.}
 \end{equation}
 Indeed, using Definition \ref{def:cn} directly we find $\gamma_1(b,c,\nu)=0$ and, for all $a \ge 2$,
 $$\gamma_a(b,c,\nu) = \gamma_{a-1}(b,c,\nu)+f_a(b,c,\nu) =
\gamma_{a-2}(b,c,\nu) + f_{a-1}(b,c,\nu) +f_a(b,c,\nu) = \cdots 
= \sum_{i=2}^a f_i(b,c,\nu).$$
%
%
%
%
%
%
%
%
%

 We can now continue the inductive proof.
By the induction hypothesis and Eqs.~(\ref{pini}) and~(\ref{eq:recu})
one has
$$\gamma_a(b,c,\nu) = \sum_{i=2}^a r^{b,c,\nu}(i)$$
for a suitable polynomial $r^{b,c,\nu} \in \Q[x]$.
Write 
 $$r^{b,c,\nu}(x)=\sum_{j=0}^{t} r_j^{b,c,\nu} x^j,$$
 where $t=t^{b,c,\nu}$ denotes the degree of $r^{b,c,\nu}$. We then have
  \begin{equation} \label{chain2}
  \gamma_a(b,c,\nu) = \sum_{i=2}^a \, \sum_{j=0}^t r_j^{b,c,\nu} \, i^j
=\sum_{j=0}^t r_j^{b,c,\nu} \sum_{i=2}^a i^j \quad \mbox{ for all $a \ge 1$.}
\end{equation}
Using a celebrated formula of Faulhaber-Bernoulli-Jacobi \cite[page 106]{conway2012book} we can write
$$\sum_{i=2}^a i^j =-1+ \sum_{i=1}^a i^j =-1+\frac{1}{j+1} \sum_{i=0}^j (-1)^i \binom{j+1}{i} B_i \, a^{j+1-i},$$
where $B_i$ is the $i$-th Bernoulli number of the first kind, i.e.,
$$\frac{x}{e^x-1} = \sum_{i \ge 0} B_i \, \frac{x^i}{i!}.$$
Combining this with~(\ref{chain2}) we conclude that, for all $a \ge 1$, $\gamma_a(b,c, \nu)$ is the evaluation at $a$ of 
\begin{equation*}
p^{b,c,\nu}=\sum_{j=0}^t r_j^{b,c,\nu} \left(-1+ \frac{1}{j+1} \sum_{i=0}^j (-1)^i \binom{j+1}{i} B_i \, x^{j+1-i}\right) \in \Q[x].
\qedhere
\end{equation*}
\end{proof}

\begin{remark} \label{rem:ber}
The proof of Theorem \ref{th:polygamma} also shows how to recursively construct the polynomials~$p^{b,c,\nu} \in \Q[x]$ such that $\gamma_a(b,c,\nu)=p^{b,c,\nu}(a)$ for all
$a \ge 1$.
The recursion is given by
$$p^{b,c,\nu}=\sum_{j=0}^t u_j \left(-1+ \frac{1}{j+1} \sum_{i=0}^j (-1)^i \binom{j+1}{i} B_i \, x^{j+1-i}\right)  \quad \mbox{ for $b \ge c+1$ and $\nu \ge c$},$$
where
$$u=\sum_{s=1}^{c-1} \binom{b}{s} p^{b-s,c,\nu-s}(x-1) +\sum_{s=c}^\nu \binom{b}{s}\binom{b-s}{\nu-s} (x-2)^{\nu-s} \in \Q[x] \quad \mbox{ and } \quad t=\deg(u),$$
with initial condition
$$p^{b,c,\nu}= \left\{ \begin{array}{cl}  0 & \mbox{if $b<c$ or $\nu<c$ or $\nu>b$}, \\
x-1 & \mbox{if $b=c=\nu$.}  \end{array} \right.$$
\end{remark}

Combining Theorems \ref{th:polygamma}, \ref{th:main2} and \ref{thm:beta2expl} we obtain the following polynomiality result.

\begin{corollary}
For all $n \ge d \ge 2$, $\beta_2(q,n,d)$ and $w_2(q,n,d)$ are polynomials in $q$.
\end{corollary}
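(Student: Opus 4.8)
The plan is to read both statements off the closed formul{\ae} of Theorem~\ref{thm:beta2expl} and Theorem~\ref{th:beta}, using Theorem~\ref{th:polygamma} as the only substantial input. Once the agreement numbers are known to be polynomials in $q$, what remains is the observation that the formul{\ae} in question are assembled, by finitely many sums and products, out of functions of $q$ that are visibly polynomial.

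First I would treat $\beta_2(q,n,d)$. Fix $n \ge d \ge 2$. In the expression for $\beta_2(q,n,d)$ provided by Theorem~\ref{thm:beta2expl}, the outer sum over $1 \le \ell_1 < \ell_2 \le n$ is finite, and so are all the inner sums over $j$, $h$, $s$, $t$, $\nu$, whose ranges depend only on $n$, $d$, $\ell_1$, $\ell_2$. Each summand is a product of: a monomial $q^{n-\ell_1-1}$; a power $(q-1)^{j}$, $(q-1)^{j+h}$, or $(q-1)^{s+t}$; integer constants of the shape $\binom{\cdot}{\cdot}$; and, in the third family of terms, a factor $\gamma_q(s,\,s-d+t+2,\,\nu)$. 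By Theorem~\ref{th:polygamma}, for each fixed triple $(b,c,\nu)$ there is a polynomial $p^{b,c,\nu} \in \Q[x]$ with $\gamma_a(b,c,\nu)=p^{b,c,\nu}(a)$ for all $a \ge 1$; applying this with $b=s$ and $c=s-d+t+2$ (both integers in the admissible range, since $d\ge 2$) shows that $q \mapsto \gamma_q(s,\,s-d+t+2,\,\nu)$ is a polynomial in $q$. Hence every summand is a polynomial in $q$, and therefore so is the finite sum $\beta_2(q,n,d)$. The resulting polynomial lies a priori in $\Q[x]$, which is all that Definition~\ref{defpolyin} demands; of course it takes integer values at every prime power.

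For $w_2(q,n,d)$ I would then invoke Theorem~\ref{th:beta} with $i=2$, which yields
\[ w_2(q,n,d) \;=\; \qbin{n-1}{1}{q}\,\beta_1(q,n,d) \;-\; \beta_2(q,n,d). \]
Here $\qbin{n-1}{1}{q} = 1+q+\cdots+q^{n-2}$ and $\beta_1(q,n,d) = \sum_{j=1}^d \binom{n}{j}(q-1)^{j-1}$ are polynomials in $q$, and $\beta_2(q,n,d)$ is one by the previous step; hence $w_2(q,n,d)$ is a polynomial in $q$. Equivalently, one may quote the explicit formula of Theorem~\ref{th:main2} directly, after noting, exactly as above, that each $\gamma_q(s,\,s-d+t+2,\,\nu)$ occurring there is polynomial in $q$ and that the prefactor $(q^{n-1}-1)(q-1)^{-1}$ in its leading term is the polynomial $\qbin{n-1}{1}{q}$.

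The only genuine obstacle is Theorem~\ref{th:polygamma} itself, already established through the classical Faulhaber--Bernoulli formula; granting it, the present Corollary is pure bookkeeping. If desired I would append a short remark tracking the highest power of $q$ through the formula of Theorem~\ref{thm:beta2expl} — the dominant contribution coming from the terms carrying the $\gamma_q$-factors — so as to record the degrees of these polynomials and check consistency with the asymptotic estimates of Section~\ref{sec:HWDL5}.
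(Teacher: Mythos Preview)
Your proposal is correct and follows essentially the same approach as the paper, which simply cites Theorems~\ref{th:polygamma}, \ref{th:main2} and \ref{thm:beta2expl} in a single sentence. You have spelled out the bookkeeping in more detail, including the observation that the term $(q^{n-1}-1)(q-1)^{-1}$ arising from the $j=1$ summand is the polynomial $\qbin{n-1}{1}{q}$, which is a useful remark to make explicit.
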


It is not clear if $\beta_k(q,n,d)$ or $w_i(q,n,d)$ are polynomials in $q$ for $k \ge 2$ and $i \ge 2$. If they are, their degrees can be computed or upper bounded as follows using Theorems \ref{th:asbeta}, \ref{th:as1}, and \ref{th:asnlarge}.

\begin{corollary}
Let $n>d \ge 2$ and $1 \le k \le n-d$ be integers. If $\beta_k(q,n,d)$ is a polynomial in~$q$, then its degree is $(k-1)(n-k)+d-1$.
\end{corollary}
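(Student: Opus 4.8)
The plan is to read off the degree directly from the asymptotic estimate already proved in Theorem~\ref{th:asbeta}. Recall that for $1 \le k \le n-d$ part~\ref{pa2} of that theorem gives
$$\beta_k(q,n,d) \sim \binom{n}{d}\, q^{(k-1)(n-k)+d-1} \qquad \text{as } q \to +\infty,$$
so the growth rate of $\beta_k(q,n,d)$ along the prime powers is completely determined.

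Now suppose $\beta_k(q,n,d) = p(q)$ for every prime power $q$, where $p = a_0 + a_1 x + \cdots + a_r x^r \in \Q[x]$ with $a_r \neq 0$ and $r = \deg p$ (such a $p$ is unique by the infinitude of the set of prime powers, cf. Definition~\ref{defpolyin}). First I would note that $a_r > 0$: indeed $\beta_k(q,n,d)$ is a non-negative integer for every $q$, and by the displayed equivalence it tends to $+\infty$, so $p$ is non-constant and its leading coefficient cannot be negative. Then Lemma~\ref{lem:aa}\ref{aa1} applies and gives $p(q) \sim a_r q^r$.

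Finally I would combine the two asymptotic equivalences: from $\beta_k(q,n,d) = p(q)$ we get $a_r q^r \sim \binom{n}{d}\, q^{(k-1)(n-k)+d-1}$, and dividing one side by the other the ratio tends to $1$, which is possible only if the two exponents agree. Hence $r = (k-1)(n-k)+d-1$ (and, as a byproduct, $a_r = \binom{n}{d}$), which is the claimed degree. There is essentially no obstacle here; the only point needing a word of care is the positivity of the leading coefficient of $p$, which is immediate since $\beta_k(q,n,d) \ge 0$ grows without bound.
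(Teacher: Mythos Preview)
Your argument is correct and is exactly the approach the paper has in mind: it states the corollary as a direct consequence of Theorem~\ref{th:asbeta} without giving further details, and your write-up simply spells out the obvious step of matching the known asymptotic $\beta_k(q,n,d)\sim\binom{n}{d}q^{(k-1)(n-k)+d-1}$ with the leading term of the putative polynomial via Lemma~\ref{lem:aa}\ref{aa1}. The only detail you add beyond what the paper leaves implicit is the remark on the positivity of the leading coefficient, which is indeed needed to invoke Lemma~\ref{lem:aa}\ref{aa1} as stated.
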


\begin{corollary}
Let $d \ge 2$, $n \ge d+2$ and $2 \le i \le n$ be integers. If 
$w_i(q,n,d)$ is a polynomial in~$q$, then its degree is upper bounded by
$d-1+n(i-1)-\binom{i+1}{2}$. Moreover, if $d \ge 3$, $i \ge 2$, and 
 $n \ge id$, then its degree is upper bounded by
$$\max \left\{ i(d+1)-1-\binom{i+1}{2}, \ d(i^2-i+1)-i-\binom{i+1}{2} \right\}.$$
\end{corollary}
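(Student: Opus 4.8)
The plan is to derive this corollary immediately from the asymptotic estimates proved in Theorems \ref{th:as1} and \ref{th:asnlarge}, together with the elementary observation in Lemma \ref{lem:aa}\ref{aa1} relating the degree of a rational polynomial to its growth rate as $q \to +\infty$.

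First I would note that for $2 \le i \le n$ the lattice $\mH(q,n,d)$ is geometric of rank $n$ (the standard basis vectors of $\F_q^n$ have Hamming weight one, hence are atoms and span $\F_q^n$), so its $i$-th Whitney number is a nonzero integer for every prime power $q$; consequently, if $w_i(q,n,d)$ coincides with a polynomial $p \in \Q[x]$ on the infinite set of prime powers, then $p \ne 0$. Set $r = \deg p$ and let $a_r \ne 0$ be its leading coefficient. Since $|w_i(q,n,d)| = |p(q)|$ for all prime powers $q$, Lemma \ref{lem:aa}\ref{aa1} (applied to $p$ or to $-p$, according to the sign of $a_r$) gives $|w_i(q,n,d)| \sim |a_r|\, q^r$; in particular $|w_i(q,n,d)| \notin O(q^s)$ whenever $s < r$.

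Next I would plug in the two available bounds. Under the hypotheses $d \ge 2$, $n \ge d+2$, $2 \le i \le n$, Theorem \ref{th:as1} yields $|w_i(q,n,d)| \in O\!\left(q^{d-1+n(i-1)-\binom{i+1}{2}}\right)$, which by the previous paragraph forces $r \le d-1+n(i-1)-\binom{i+1}{2}$, the first claimed bound. Under the additional hypotheses $d \ge 3$, $i \ge 2$, $n \ge id$, Theorem \ref{th:asnlarge} gives $|w_i(q,n,d)| \in O(q^M)$ with $M = \max\{\, i(d+1)-1-\binom{i+1}{2},\ d(i^2-i+1)-i-\binom{i+1}{2}\,\}$, and the identical reasoning gives $r \le M$.

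There is no genuine obstacle here: all the analytic content has already been carried out in Sections \ref{sec:HWDL5a} and \ref{sec:HWDL5}, and the only point requiring care is the (routine) passage from an $O$-estimate to an inequality between degrees, which is exactly Lemma \ref{lem:aa}\ref{aa1}. If one prefers to avoid invoking the nonvanishing of $w_i(q,n,d)$, one can argue contrapositively instead: were $w_i(q,n,d)$ represented by a polynomial of degree $r$ exceeding the stated bound, then $|w_i(q,n,d)|$ would grow like $q^r$, contradicting the relevant $O$-estimate.
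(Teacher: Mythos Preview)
Your proposal is correct and matches the paper's own justification: the paper does not give an explicit proof of this corollary but simply states that it follows from Theorems~\ref{th:as1} and~\ref{th:asnlarge}, and you have spelled out precisely that deduction via Lemma~\ref{lem:aa}\ref{aa1}. The extra care you take with the nonvanishing of $w_i(q,n,d)$ (and the alternative contrapositive phrasing) is a nice touch and fills in a detail the paper leaves implicit.
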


Note moreover that Corollary \ref{coro:relations} implies the following.

\begin{corollary}
Let $n,d \ge 1$ and $0 \le t \le n$ be integers. The following are equivalent:
\begin{enumerate} [label=(\arabic*), noitemsep]
\item $\alpha_k(q,n,d)$ is a polynomial in $q$ for all $0 \le k \le t$;
\item $w_i(q,n,d)$ is a polynomial in $q$ for all $0 \le i \le t$;
\item $\beta_k(q,n,d)$ is a polynomial in $q$ for all $0 \le k \le t$.
\end{enumerate}
\end{corollary}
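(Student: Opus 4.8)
The plan is to derive the equivalence formally from the invertible, $q$-polynomial relations between the $\alpha$'s and the $w$'s established earlier. The first step is to record, once and for all, the elementary closure fact underpinning everything: the functions $q \mapsto f(q)$ defined on the (infinite) set of prime powers which agree there with some rational polynomial form a ring — the representing polynomial being automatically unique, as in Definition~\ref{defpolyin} — and this ring contains every $q$-binomial coefficient $\qbin{r}{s}{q}$ with $r \ge s \ge 0$, by Lemma~\ref{lem:aa}\ref{aa2}. After this observation, each of the implications is a one-liner.

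For $(1) \Leftrightarrow (2)$ I would apply Theorems~\ref{prop2} and~\ref{prop1} (equivalently, parts~(2) and~(3) of Corollary~\ref{coro:relations}) to $A = \HH(q,n,d) \subseteq \F_q^n$. Theorem~\ref{prop1} writes each $\alpha_k(q,n,d)$ as a finite linear combination, with coefficients polynomial in $q$, of $w_0(q,n,d), \dots, w_k(q,n,d)$; hence, if $w_0(q,n,d), \dots, w_t(q,n,d)$ are all polynomials in $q$, so are $\alpha_0(q,n,d), \dots, \alpha_t(q,n,d)$ — that is $(2) \Rightarrow (1)$. Conversely, Theorem~\ref{prop2} writes each $w_i(q,n,d)$ as a finite linear combination, with coefficients polynomial in $q$, of $\alpha_0(q,n,d), \dots, \alpha_i(q,n,d)$, which gives $(1) \Rightarrow (2)$.

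For $(1) \Leftrightarrow (3)$ I would use Notation~\ref{notaz:beta}, namely $\beta_k(q,n,d) = \qbin{n}{k}{q} - \alpha_k(q,n,d)$, together with the fact that $\qbin{n}{k}{q}$ is a polynomial in $q$: for each fixed $k$, $\alpha_k(q,n,d)$ is a polynomial in $q$ if and only if $\beta_k(q,n,d)$ is, and this holds for all $0 \le k \le t$ at once. Chaining $(3) \Leftrightarrow (1) \Leftrightarrow (2)$ yields the corollary.

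I do not expect any genuine obstacle here: the mathematical content lies entirely in Theorems~\ref{prop1} and~\ref{prop2}, and the present statement is a purely formal consequence. The one point deserving a careful sentence is the meaning of ``polynomial in $q$'' — agreement with a rational polynomial on the prime powers — and the closure of that class under the ring operations, which is what licenses absorbing the $q$-binomial coefficients appearing as coefficients in the transition formulas; without it a reader might be distracted by those coefficients. Optionally, for transparency one could remark that the transition matrix between $(\alpha_k)_{0 \le k \le t}$ and $(w_i)_{0 \le i \le t}$ is unitriangular over $\Q[q]$, so the two finite data sets determine one another polynomially, but this refinement is not needed for the proof.
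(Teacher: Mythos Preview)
Your proposal is correct and matches the paper's approach: the paper simply states that the corollary follows from Corollary~\ref{coro:relations}, leaving the unpacking to the reader, while you spell out explicitly the triangular transitions from Theorems~\ref{prop1}--\ref{prop2} for $(1)\Leftrightarrow(2)$ and the identity $\beta_k=\qbin{n}{k}{q}-\alpha_k$ for $(1)\Leftrightarrow(3)$. Your added care about the ring closure of polynomials in $q$ and the polynomiality of the $q$-binomial coefficients is a useful clarification rather than a different argument.
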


%
%

\bigskip

\appendix

\section{Some Proofs} \label{app:A}

\begin{proof}[Proof of Theorem \ref{teo:av}] 
%
%
Fix $x \in \mL$ and define the integer
$$\Sigma \ \defi \ \sum_{\substack{t \in \mL \\ t \le x}} \ \sum_{\substack{x_A \in \mL(A), \: x_B \in \mL(B) \\ x_B^{A} =0\\ x_A \vee x_B\le t}} \: \mu_\mL(t,x) \:  \mu_{A}(x_A) \:  \mu_{B}(x_B).$$
On the one hand,
\begin{align}
\Sigma &= \sum_{\substack{x_A \in \mL(A), \: x_B \in \mL(B) \\ x_B^{A} =0}} \mu_{A}(x_A) \:  \mu_{B}(x_B) 
\sum_{\substack{t \in \mL \\ x_A \vee x_B \le t \le x}} \mu_\mL(t,x) \nonumber \\ &=
\sum_{\substack{x_A \in \mL(A), \: x_B \in \mL(B) \\ x_B^{A} =0 \\ x_A \vee x_B =x}} \mu_{A}(x_A) \: \mu_{B}(x_B).
\label{eq:11}
\end{align}
On the other hand, for all $t \in \mL$ we have
$$\Sigma_t \ \defi \ \sum_{\substack{x_A \in \mL(A), \: x_B \in \mL(A) \\ x_B^{A} =0 \\ x_A \vee x_B \le t}}  
\mu_{A}(x_A) \: \mu_{B}(x_B) = \sum_{\substack{x_B \in \mL(B) \\ x_B^{A} =0 \\ x_B \le t}} \mu_{B}(x_B)
 \sum_{\substack{x_A \in \mL(A) \\ x_A \le t}} \mu_{A}(x_A).$$
 By Lemma \ref{lem:toA} and the properties of $\mu_A$ we have 
 $$\sum_{\substack{x_A \in \mL(A) \\ x_A \le t}} \mu_{A}(x_A)=
\sum_{\substack{x_A \in \mL(A) \\ x_A \le t^{A}}} \mu_{A}(x_A)=0$$
 unless $t^{A}=0$, in which case 
 \begin{equation} \label{interme}
 \Sigma_t =  \sum_{\substack{x_B \in \mL(B) \\ x_B^{A} =0 \\ x_B \le t}} \mu_{B}(x_B) =  \sum_{\substack{x_B \in \mL(B) \\ x_B \le t}} \mu_{B}(x_B).
 \end{equation}
The latter equality follows from the fact that $x_B^{A} =0$ is implied by $x_B \le t$ and $t^{A}=0$.
Applying again Lemma \ref{lem:toA} to Equation (\ref{interme}) we conclude that $\Sigma_t=0$ unless $t^{A} = 0$ and $t^{B} = 0$,
in which case $\Sigma_t=1$. Since $\mL=\mL(A \cup B)$, for all $t \in \mL$ we have 
$t^{A} = 0$ and $t^{B}=0$ if and only if $t=0$.
Therefore using the definition of $\Sigma$ we obtain
\begin{equation}
\Sigma= \sum_{\substack{t \in \mL \\ t \le x}} \mu_\mL(t,x) \: \Sigma_t = 
\mu_\mL(x). \label{eq:22}
 \end{equation}
Now compare (\ref{eq:11}) and (\ref{eq:22}).
\end{proof}

\bigskip

\begin{proof}[Proof of Theorem \ref{twospacesS}]
By Corollary \ref{coro:three} we have
\begin{equation} \label{eeee} 
w_i(\mL)= \sum_{j=0}^i w_j([0,A_1]) \sum_{\substack{U \in \mL(A_2) \\ \rk(U)=i-j 
\\ U \cap A_1=\{0\}}} \mu_{A_2}(U).
\end{equation}
Note that $[0,A_1]$ is simply the lattice of subspaces of $A_1$. Moreover, an element $U \in \mL(A_2)$ satisfies $U \cap A_1=\{0\}$ if and only if
$U \cap (A_1 \cap A_2)=\{0\}$. Therefore
\begin{eqnarray*}
w_i(\mL) &=& \sum_{j=0}^i (-1)^jq^{\binom{j}{2}} \qbin{\rk(A_1)}{j}{q} \alpha_{i-j}(A_2,A_1 \cap A_2) \; (-1)^{i-j} q^{\binom{i-j}{2}} \\
&=& \sum_{j=0}^i (-1)^i q^{\binom{j}{2} + \binom{i-j}{2}} \qbin{\rk(A_1)}{j}{q} \ 
\sum_{h=0}^{i-j} (-1)^h q^{\binom{h}{2}} \qbin{\rk(A_1 \cap A_2)}{h}{q} \qbin{\rk(A_2)-h}{i-j-h}{q},
\end{eqnarray*}
where the latter equality follows from Corollary \ref{coro:av}. 
\end{proof}

\bigskip

\begin{proof}[Proof of Lemma \ref{lem:numer}]
There is a simple combinatorial way to obtain the desired identity: Both sides of Eq. (\ref{fo}) count the elements of the set
\begin{equation*}
S=\{(S_1,S_2) \mid S_1,S_2 \subseteq \{1,...,n\}, \, |S_1|=|S_2|=d, \, S_1 \cap S_2=\emptyset, \, \min(S_1)<\min(S_2)\}. 
\end{equation*}
Indeed, on the one hand we have
\begin{align*}
 |S|  & = \begin{multlined}[t]
  \sum_{1 \le \ell_1<\ell_2 \le n-d+1} |\{(S_1,S_2) \mid S_1,S_2 \subseteq \{1,...,n\}, \, |S_1|=|S_2|=d, \, S_1 \cap S_2=\emptyset, \\
  \min(S_1)=\ell_1, \, \min(S_2)=\ell_2\}|
 \end{multlined} \\
 & = \sum_{1 \le \ell_1<\ell_2 \le n-d+1} \binom{n-\ell_1-d}{d-1}\binom{n-\ell_2}{d-1}.
\end{align*}
On the other hand,
\begin{align*}
 |S|  & =
  \sum_{\ell=1}^{n-2d+1} |\{(S_1,S_2) \mid S_1,S_2 \subseteq \{1,...,n\}, \, |S_1|=|S_2|=d, \, S_1 \cap S_2=\emptyset,  \, 
  \min(S_1)=\ell \}| \\
 & = \sum_{\ell=1}^{n-2d+1} \binom{n-\ell}{d-1}\binom{n-\ell-d+1}{d}. \qedhere
\end{align*}
\end{proof}

\bigskip

\begin{proof}[Proof of Theorem \ref{th:exprexpl}]
Applying Theorem \ref{thm:dual_appl} with $d=2$ we obtain
$$\sum_{i=0}^{n-2} \qbin{n-i}{2}{2} w_i(n,2) = \qbin{n}{2}{q} + \qbin{n-1}{1}{q} w_1(n,n-2) +w_2(n,n-2).$$
Since
$$w_1(n,n-2)= -\sum_{j=1}^{n-2} \binom{n}{j}(q-1)^{j-1},$$
we have
$$w_2(n,n-2)= (q^{n-1}-1) \sum_{j=1}^{n-2} \binom{n}{j}(q-1)^{j-2} + \sum_{i=1}^{n-2} \qbin{n-i}{2}{q} w_i(n,2).$$
Thus by Theorem \ref{th:fw2} we conclude
\begin{multline} \label{deli1}
w_2(q,n,n-2)=(q^{n-1}-1) \sum_{j=1}^{n-2} \binom{n}{j} (q-1)^{j-2}  \\ + \ \sum_{i=1}^{n-2}  (-1)^i \qbin{n-i}{2}{q} \ \sum_{1 \le j_1 < \cdots < j_i \le n} \ \prod_{t=1}^i \left(1+(j_t-1)(q-1) \right).
\end{multline}
In particular, $w_2(q,n,n-2)$ is a polynomial in $q$. We will compute its degree and leading term. We start by observing that
\begin{align} \label{deli2}
  (q^{n-1}-1) &\sum_{j=1}^{n-2} \binom{n}{j} (q-1)^{j-2} 
 \notag \\
    &= (q^{n-1}-1) \left[ \binom{n}{n-2} (q-1)^{n-4} + \binom{n}{n-3} (q-1)^{n-5} + o\left(q^{n-5}\right) \right] \notag \\
    &= q^{n-1} \left[ \binom{n}{n-2}  \left( q^{n-4} - (n-4) q^{n-5} + o \left(q^{n-5} \right) \right) + \binom{n}{n-3} q^{n-5} + o \left(q^{n-5} \right)\right] \notag \\
    &= \binom{n}{n-2} q^{2n-5} + \left[ \binom{n}{n-3} - \binom{n}{n-2}(n-4)\right] q^{2n-6} + o\left(q^{2n-6}\right).
\end{align}
Using Lemma \ref{lem:aa}\ref{aa2}, one shows that for $i \ge 3$ we have
$$\qbin{n-i}{2}{q} \ \sum_{1 \le j_1 < \cdots < j_i \le n} \ \prod_{t=1}^i \left(1+(j_t-1)(q-1) \right) \in o\left(q^{2n-6} \right).$$ Therefore
\begin{align} \label{deli3}
\sum_{i=1}^{n-2}  (-1)^i \qbin{n-i}{2}{q} \ &\sum_{1 \le j_1 < \cdots < j_i \le n} \ \prod_{t=1}^i \left(1+(j_t-1)(q-1) \right) \notag \\
&= -\qbin{n-1}{2}{q} \left(n+ \binom{n}{2} (q-1) \right) \notag \\ & \qquad \qquad + \qbin{n-2}{2}{q} \ 
\sum_{0 \le j_1 < j_2 \le n-1} \ \prod_{t=1}^2 \left(1+j_t(q-1) \right) \ + o\left( q^{2n-6}\right).
\end{align}
Tedious computations show that
\begin{equation} \label{deli4}
\qbin{n-1}{2}{q} \left(n+ \binom{n}{2} (q-1) \right) =
 \binom{n}{n-2} q^{2n-5} + nq^{2n-6} + o\left( q^{2n-6}\right)
\end{equation}
and
\begin{equation} \label{deli5}
\qbin{n-2}{2}{q} \ 
\sum_{0 \le j_1 < j_2 \le n-1} \ \prod_{t=1}^2 \left(1+j_t(q-1) \right) = \left( \sum_{1 \le j_1 < j_2 \le n-1} j_1j_2\right) q^{2n-6} \ + o\left( q^{2n-6}\right).
\end{equation}
Now observe that
\begin{eqnarray} \label{deli6}
\sum_{1 \le j_1 < j_2 \le n-1} j_1j_2 &=& \sum_{i=2}^{n-1} i (1+ \cdots + i-1) \notag \\
&=& \sum_{i=2}^{n-1} i \ \frac{i(i-1)}{2} \ = \ \sum_{i=1}^{n-1} i \ \frac{i(i-1)}{2} \notag \\
&=& \  \sum_{i=1}^{n-1} i^3/2 \; - \; \sum_{i=1}^{n-1} i^2/2 \notag \\
&=& \frac{n^2(n-1)^2}{8} \; - \; \frac{n(n-1)(2n-1)}{12}. \label{deli4}
\end{eqnarray}
Therefore combining Eqs.~(\ref{deli3})--(\ref{deli6}) we  conclude
\begin{multline} \label{deli3a}
\sum_{i=1}^{n-2}  (-1)^i \qbin{n-i}{2}{q} \ \sum_{1 \le j_1 < \cdots < j_i \le n} \ \prod_{t=1}^i \left(1+(j_t-1)(q-1) \right)  \\
=- \binom{n}{n-2} q^{2n-5} \; - \; \left( n - \frac{n^2(n-1)^2}{8} + \frac{n(n-1)(2n-1)}{12}\right)  q^{2n-6} \ + o\left( q^{2n-6}\right).
\end{multline}
Using (\ref{deli1}), (\ref{deli2}) and (\ref{deli3a}) we then find
\begin{multline*}
w_2(q,n,n-2) \\ = \left( \binom{n}{n-3} - \binom{n}{n-2} (n-4) - n + \frac{n^2(n-1)^2}{8} - \frac{n(n-1)(2n-1)}{12} \right) q^{2n-6} \ +o\left( q^{2n-6}\right).
\end{multline*}
Lengthy computations show that
$$\binom{n}{n-3} - \binom{n}{n-2} (n-4) - n + \frac{n^2(n-1)^2}{8} - \frac{n(n-1)(2n-1)}{12} \ = \ \frac{1}{8}n^4-\frac{3}{4}n^3+\frac{19}{8}n^2-\frac{11}{4}n.$$
Since the latter expression is a polynomial in $n$ whose roots are $0$ and $2$, the desired asymptotic estimate for $w_2(q,n,n-2)$ follows for $n \ge 4$. 
\end{proof}

\bigskip


\bibliographystyle{siam}

\bibliography{bibliog2}

\end{document}